\documentclass[12pt]{amsart}

\usepackage{amsmath,amssymb,amsthm,graphicx,color,amscd}
\usepackage[usenames,dvipsnames]{xcolor}
\usepackage{enumitem}

\usepackage[small,nohug,heads=vee]{diagrams}

\usepackage{hyperref}

\numberwithin{equation}{section}
\setcounter{secnumdepth}{2}
\setcounter{tocdepth}{1}
\hypersetup{bookmarksdepth=3}

\theoremstyle{plain}
\newtheorem{theorem}[equation]{Theorem}

\newtheorem{conjecture}[equation]{Conjecture}
\newtheorem{regularity_conjecture}[equation]{Regularity Conjecture}

\newtheorem*{claim}{Claim}
\newtheorem{proposition}[equation]{Proposition}
\newtheorem{lemma}[equation]{Lemma}

\theoremstyle{definition}
\newtheorem{definition}[equation]{Definition}
\newtheorem{example}[equation]{Example}

\theoremstyle{remark}
\newtheorem{remark}[equation]{Remark}


\newcommand{\acts}{\operatorname{\curvearrowright}}
\newcommand{\ad}{\operatorname{ad}}

\newcommand{\al}{\alpha}

\newcommand{\aut}{\operatorname{Aut}}

\newcommand{\be}{\beta}
\newcommand{\ben}{\begin{enumerate}}
\newcommand{\bit}{\begin{itemize}}
\newcommand{\C}{\mathbb{C}}

\newcommand{\cof}{\operatorname{cof}}

\newcommand{\de}{\delta}

\newcommand{\Div}{\operatorname{div}}
\newcommand{\diam}{\operatorname{diam}}

\newcommand{\een}{\end{enumerate}}
\newcommand{\eit}{\end{itemize}}

\newcommand{\eps}{\varepsilon}

\newcommand{\fg}{\mathfrak{g}}
\newcommand{\fh}{\mathfrak{h}}

\newcommand{\F}{\mathbb{F}}

\newcommand{\ga}{\gamma}
\newcommand{\Ga}{\Gamma}

\renewcommand{\H}{\mathbb{H}}

\newcommand{\id}{\operatorname{id}}

\newcommand{\imag}{\operatorname{\frak{Im}}}

\newcommand{\la}{\lambda}
\newcommand{\La}{\Lambda}

\newcommand{\loc}{\operatorname{loc}}
\newcommand{\lra}{\longrightarrow}

\newcommand{\ol}{\overline}
\newcommand{\om}{\omega}
\newcommand{\Om}{\Omega}

\newcommand{\R}{\mathbb{R}}
\newcommand{\ra}{\rightarrow}

\newcommand{\rank}{\operatorname{rank}}
\definecolor{gray}{gray}{0.7}

\newcommand{\real}{\operatorname{\frak{Re}}}

\newcommand{\restr}{\mbox{\Large \(|\)\normalsize}}

\newcommand{\si}{\sigma}

\newcommand{\Span}{\operatorname{span}}

\newcommand{\stab}{\operatorname{Stab}}
\newcommand{\sympl}{\operatorname{Sym}}

\renewcommand{\th}{\theta}

\newcommand{\we}{\wedge}

\newcommand{\wt}{\operatorname{wt}}
\newcommand{\Z}{\mathbb{Z}}

%

\def\XXint#1#2#3{{\setbox0=\hbox{$#1{#2#3}{\int}$ }
\vcenter{\hbox{$#2#3$ }}\kern-.6\wd0}}


\begin{document}

\title{Sobolev mappings between nonrigid Carnot groups }

\author{Bruce Kleiner}
\thanks{BK was supported by NSF grants DMS-1711556 and DMS-2005553, and a Simons Collaboration grant.}
\email{bkleiner@cims.nyu.edu}
\address{Courant Institute of Mathematical Science, New York University, 251 Mercer Street, New York, NY 10012}
\author{Stefan M\"uller}
\thanks{SM has been supported by the Deutsche Forschungsgemeinschaft (DFG, German Research Foundation) through
the Hausdorff Center for Mathematics (GZ EXC 59 and 2047/1, Projekt-ID 390685813) and the 
collaborative research centre  {\em The mathematics of emerging effects} (CRC 1060, Projekt-ID 211504053).  This work was initiated during a sabbatical of SM at the Courant Institute and SM would like to thank  R.V. Kohn and the Courant Institute
members and staff for 
their  hospitality and a very inspiring atmosphere.}
\email{stefan.mueller@hcm.uni-bonn.de}
\address{Hausdorff Center for Mathematics, Universit\"at Bonn, Endenicher Allee 60, 53115 Bonn}
\author{Xiangdong Xie}
\thanks{XX has been supported by Simons Foundation grant \#315130.}
\email{xiex@bgsu.edu}
\address{Dept. of Mathematics and Statistics, Bowling Green State University, Bowling Green, OH 43403}

\maketitle

\begin{abstract}
We consider mappings $f:G\supset U\ra G'$ where $G$ and $G'$ are Carnot groups.  In this paper, which is a continuation of \cite{KMX1}, we focus on Carnot groups which are nonrigid in the sense of Ottazzi-Warhurst.  We show that quasisymmetric homeomorphisms are reducible in the sense that they preserve a special type of coset foliation, unless the group is isomorphic to $\R^n$ or a real or complex Heisenberg group (where the assertion fails).  We use this to prove the quasisymmetric rigidity conjecture for such groups.  The starting point of the proof is the pullback theorem established in \cite{KMX1}.  
\end{abstract}

\tableofcontents

\section{Introduction}

\bigskip\bigskip
This is the second in a series of papers \cite{KMX1,kmx_approximation_low_p,kmx_rumin,kmx_iwasawa} on geometric mapping theory in Carnot groups, in which we establish new regularity, rigidity, and partial rigidity results for bilipschitz, quasiconformal, or more generally, Sobolev mappings, between Carnot groups.  Our motivation comes from the analytical theory of quasiconformal homeomorphisms, geometric group theory (especially rigidity phenomena and Gromov hyperbolic spaces), analysis on metric spaces, the differential geometry of subriemannian manifolds, and the literature on rigidity and oscillatory solutions to partial differential equations (and relations).

While our main interest in this paper is in quasiconformal homeomorphisms -- and more generally Sobolev mappings -- between open subsets of Carnot groups,  for context we first consider the special case of quasiconformal diffeomorphisms. 

For the remainder of the introduction $G$ will be a step $s$ Carnot group with Lie algebra $\fg$, grading $\fg=\oplus_{j=1}^sV_j$, dilation group $\{\de_r:G\ra G\}_{r\in (0,\infty)}$, and homogeneous dimension $\nu=\sum_jj\dim V_j$.  Without explicit mention, $G$ will be equipped with Haar measure and a Carnot-Caratheodory metric denoted generically by $d_{CC}$.  We will use the same notation with primes for another Carnot group $G'$.

It is easy to see that a diffeomorphism $f:U \ra U'$ between open subsets $U,U'\subset G$ is locally quasiconformal if and only if it is contact,
i.e. the differential $Df$ preserves the horizontal subbundle $V_1\subset TG$.   The study of contact diffeomorphisms has a long and fascinating history intertwined with the theory of Lie pseudogroups, overdetermined systems, and $G$-structures (in the sense of E. Cartan); 
the literature extends back to the 19th century,
    with major contributions in 1900-10 by Cartan, in 1955-70 by Kuranishi, Singer, Sternberg, Guillemin, Quillen, 
and Tanaka (among many others); there has been a resurgence of interest  in recent decades, coming from new connections with geometric group theory and quasiconformal mappings.   
The literature on this topic is substantial, so we will mention just a few points which are directly relevant to our setting, and refer the interested reader to \cite{singer_sternberg_infinite_groups_lie_cartan,tanaka_differential_systems_graded_lie_algebras,cowling_ottazzi} for references and more discussion. 
The contact condition is a nonlinear system of PDEs which is formally overdetermined except when $G$ is the Engel group or a product $\H_k\times \R^\ell$ for some $k,\ell\geq 0$, and hence one expects some form of rigidity in the generic case; here $\H_k$ denotes the $k^{th}$ Heisenberg group.   However, the analytical character of the condition is quite different for different groups: 
\bit
\item When $G=\H\times\H$ contact diffeomorphisms must be products (locally) but otherwise are quite flexible \cite{cowling_reimann_three_examples}.
\item When $G=\H_n^\C$ is the complexification of $\H_n$ the contact condition is locally flexible, but still ``hypoelliptic'', i.e. contact diffeomorphisms are holomorphic or antiholomorphic \cite{reimann_ricci}.
\item When $G$ is an $H$-type group with center of dimension at least $3$, (e.g. one of the Carnot groups studied by Pansu) or a free Carnot  group of step $s\geq 3$,  then the smooth contact embeddings $G\supset U\ra G$ form a finite dimensional family when $U$ is a connected open subset \cite{pansu,reimann_h_type,warhurst_tanaka_prolongation_free}.
\eit 
Although it has been formalized in different ways, there is a dichotomy of contact systems into finite (or rigid) type  and infinite (or nonrigid) type  according to whether the contact diffeomorphisms are locally determined by finitely many, or infinitely many parameters \cite{cartan_1904,kuranisha_local_theory_continuous_pseudogroups_i,guillemin_sternberg_algebraic_model_transitive_differential_geometry,singer_sternberg_infinite_groups_lie_cartan,tanaka_differential_systems_graded_lie_algebras}.     This was greatly clarified by Ottazzi-Warhurst \cite{ottazzi_warhurst} who proved a fundamental result characterizing rigidity/nonrigidity of the Carnot group $G$ in terms of the complexification $\fg^\C$ of the Lie algebra $\fg$, and showed that $C^2$ contact diffeomorphisms of rigid Carnot groups are $C^\infty$ (which readily implies that they are real analytic)\footnote{Recently  Jonas Lelmi improved this by replacing the $C^2$ regularity assumption with $C^1$ (or even Euclidean bilipschitz); the same result was shown 
by Alex Austin for  the $(2,3,5)$ distribution \cite{lelmi,austin_235}.}.    The prolongation theory in \cite{tanaka_differential_systems_graded_lie_algebras} gives an algebraic approach to determine the local contact diffeomorphisms,  at least in principle;  however, as it stands currently,  the general theory is based on the Cartan-K\"ahler theorem and therefore only provides a satisfactory picture for real analytic diffeomorphisms.   In particular, to our knowledge apart from some specific groups which have been analyzed \cite{reimann_ricci,warhurst_filiform,warhurst_jet_spaces}, in the nonrigid case  
there has been limited progress toward understanding $C^k$ contact diffeomorphisms, $1\leq k\leq\infty$; even a conjectural description is lacking.   

We now drop the smoothness assumption.    Motivated by the above regularity result in the smooth case, for Ottazzi-Warhurst rigid Carnot groups we have
(cf. \cite[p.2]{ottazzi_warhurst_algebraic_prolongation,ottazzi_warhurst}):
\begin{regularity_conjecture}\label{conj_regularity_conjecture}
If $G$ is a rigid Carnot group, then any quasiconformal homeomorphism $G\supset U\ra U'\subset G$ is $C^\infty$.  
\end{regularity_conjecture}
\noindent
Together with earlier work of Tanaka \cite{tanaka_differential_systems_graded_lie_algebras}, the conjecture implies a description of quasiconformal mappings in algebraic terms, at least in principle.   Conjecture~\ref{conj_regularity_conjecture} has been proven for groups whose graded automorphisms act conformally on the first layer, by subelliptic regularity \cite{capogna,capogna_cowling}; it follows from \cite{KMX1} that the  conjecture holds for a product of Carnot groups $\prod_iG_i$ if it holds for all of the factors $G_i$.  
We expect that   Conjecture~\ref{conj_regularity_conjecture} holds under weaker regularity assumptions, for instance for suitably nondegenerate $W^{1,p}_{\loc}$-mappings when $p$ is strictly larger than the homogeneous dimension $\nu$ of $G$; on the other hand it is  unclear what to expect when $p$ is small (or $p=1$), i.e. whether regularity/rigidity holds, or if such mappings are more flexible (cf. \cite[p.2]{ottazzi_warhurst}).   We will present further results on rigid groups elsewhere \cite{kmx_in_preparation}, and now turn to the case of nonrigid groups.  

We recall that a quasiconformal homeomorphism between open subsets of G is a Sobolev mapping with respect to the Carnot distance -- it
belongs to $W^{1,p}_{\loc}$ for some $p >\nu$.  More generally,  let $f:G\supset U\ra G'$ be  a $W^{1,p}_{\loc}$-mapping for some $p>\nu$, where  $U\subset G$ is open.  Then $f$  has a well-defined  Pansu differential at a.e. $x\in U$; this is a graded group homomorphism $D_Pf(x):G\ra G'$ which we often conflate with the associated homomorphism of graded Lie algebras $D_Pf(x):\fg\ra \fg'$.  If $f$ is a quasiconformal homeomorphism, then  $\det(D_Pf(x))
\neq 0$ for a.e. $x\in U$, and has the same sign as the local degree of $f$.
See  \cite{pansu}  or Theorem 2.12 in \cite{KMX1}.

Our main result is that apart from some exceptional cases, for nonrigid Carnot groups one always has partial rigidity -- mappings (virtually) preserve a foliation.

\begin{theorem}[See Section~\ref{sec_qs_rigidity_carnot_groups} for definitions]
\label{thm_nonrigid_intro}
If $G$ is a nonrigid Carnot group (in the sense of Ottazzi-Warhurst) with homogeneous dimension $\nu$, then one of the following holds:
\ben
\item $G$ is isomorphic to $\R^n$ or to a real or complex Heisenberg group $\H_n$, $\H_n^\C$ for some $n\geq 1$.
\item There is a closed horizontally generated subgroup $\{e\}\subsetneq H\subsetneq G$, a constant $K$, and a finite    set   $A$ of graded automorphisms of $G$ with the following properties:    
\bit
\item For every $p>\nu$, $x\in G$, $r\in(0,\infty)$, and every $W^{1,p}_{\loc}$-mapping $$f:G\supset B(x,r)\ra G$$ such that the sign of $\det(D_Pf)$ is constant almost everywhere, then for some $\Phi\in A$ the restriction of the composition $\Phi\circ f$ to the subball $B(x,\frac{r}{K})$ preserves the coset foliation of $H$. In particular, the conclusion holds for quasiconformal homeomorphisms. 
\item The Lie algebra of $H$ is generated by a linear subspace $\{0\}\subsetneq W\subsetneq V_1$ with $[W,V_j]=\{0\}$ for all $j\geq 2$.
\eit
\een
\end{theorem}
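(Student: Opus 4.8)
\emph{Overview.} The plan is to split the argument into (i) an algebraic structure theorem for the Lie algebra $\fg$ of a nonrigid Carnot group and (ii) an analytic rigidity step resting on the pullback theorem of \cite{KMX1}. The algebraic target is the dichotomy: either $G\cong\R^n$, $\H_n$, or $\H_n^\C$, or else there is a proper nonzero subspace $W\subsetneq V_1$ with $[W,V_j]=\{0\}$ for all $j\geq 2$, whose generated subalgebra $\fh:=W\oplus[W,W]$ (genuinely a subalgebra of step $\leq 2$, since $[W,[W,W]]\subseteq[W,V_2]=\{0\}$) lies in a \emph{finite} orbit $\mathcal{O}$ under the graded automorphism group $\aut_{gr}(\fg)$. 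Granting this I put $H:=\exp\fh$ and choose $A$ to contain, for each $\fk\in\mathcal{O}$, one graded automorphism $\Phi$ with $\Phi_*^{-1}(\fh)=\fk$, so $A$ is finite.

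\emph{Algebraic part.} I would begin from the Ottazzi--Warhurst criterion \cite{ottazzi_warhurst}: $G$ is nonrigid exactly when the first Tanaka prolongation $\fg_1$ is nonzero. Fix $0\neq T\in\fg_1$; unwinding the prolongation identities presents $T$ as a linear map $V_1\to\operatorname{Der}_{gr}(\fg)$ together with maps $V_j\to V_{j-1}$ for $j\geq 2$, obeying a derivation compatibility, and the existence of such a $T$ forces a degeneracy in the bracket. For step $s\geq 3$ I expect that the algebraically canonical subspace $W_0:=\{v\in V_1:[v,V_j]=\{0\}\text{ for all }j\geq 2\}$ must then be nonzero --- the mechanism being that a nonzero element of $\fg_1$ acting on the top layers yields horizontal directions whose adjoint action annihilates $V_{\geq 2}$ --- while $W_0\neq V_1$, since $W_0=V_1$ would force $V_3=[V_1,V_2]=\{0\}$; then one takes $W=W_0$, so $\fh$ is $\aut_{gr}$-invariant, $\mathcal{O}=\{\fh\}$, and $A=\{\operatorname{id}\}$. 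For step $s=2$ the condition $[W,V_{\geq 2}]=\{0\}$ is vacuous, and the point is that a step-two nonrigid $G$ admitting no proper decomposition into graded ideals is a real or complex Heisenberg group; here I would analyze the pencil of alternating forms $V_1\times V_1\to V_2$, using that a nontrivial decomposition into Carnot ideals corresponds to a degenerate member of the pencil, while a nonsingular pencil together with nonrigidity (which excludes centers of dimension $\geq 3$, those algebras being $H$-type and rigid) forces $\dim V_2=1$ (giving $\H_n$) or $\dim V_2=2$ with a compatible complex structure (giving $\H_n^\C$) --- this is the classification of indecomposable step-two nonrigid Carnot algebras (cf.\ \cite{ottazzi_warhurst,reimann_ricci}). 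When $G$ does decompose, take $\fh$ to be a nontrivial indecomposable graded-ideal summand, with $W=\fh\cap V_1$; by Krull--Schmidt uniqueness of the decomposition into indecomposables, $\aut_{gr}(\fg)$ permutes these summands, so the orbit $\mathcal{O}$ of $\fh$ is finite.

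\emph{Analytic part.} Let $f\colon B(x,r)\to G$ be as in the statement. Since $p>\nu$, $f$ has a Pansu differential $D_Pf(y)$ at a.e.\ $y$, which by the constant-sign hypothesis on $\det(D_Pf)$ is a graded automorphism of $\fg$ of fixed orientation a.e.\ (cf.\ \cite{pansu} or Theorem~2.12 of \cite{KMX1}). For each $\fk\in\mathcal{O}$ fix invariant forms that detect the coset foliation of $\exp\fk$ and are closed in the Rumin complex of $G$ --- these are available by the structure of $\fh$, transported to other members of $\mathcal{O}$ by graded automorphisms, which preserve the Rumin complex. By the pullback theorem of \cite{KMX1}, the $f$-pullbacks of these closed invariant forms are again closed on $B(x,r)$, with common kernel at $y$ equal to $D_Pf(y)^{-1}(\fk)$, which a priori takes finitely many values in $\mathcal{O}$. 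A closedness argument --- a positive-measure jump between two of the finitely many integrable invariant distributions of $\mathcal{O}$ is incompatible with closedness of the pullbacks --- forces $y\mapsto D_Pf(y)^{-1}(\fk)$ to be constant on a subball, for each $\fk\in\mathcal{O}$. As $\mathcal{O}$ is finite, the permutation of $\mathcal{O}$ induced by $D_Pf(y)^{-1}$ is thus one and the same for a.e.\ $y$ in a subball $B(x,r/K)$, and hence $D_Pf(y)(\fh)$ equals a fixed $\fh_\star\in\mathcal{O}$ there; choose $\Phi\in A$ with $\Phi_*^{-1}(\fh)=\fh_\star$. Then $D_P(\Phi\circ f)(y)=\Phi_*D_Pf(y)$ preserves the $\fh$-distribution a.e.\ on $B(x,r/K)$, and a Fubini/absolute-continuity argument along the leaves of the $H$-coset foliation (valid since $p>\nu$), followed by continuity, upgrades this to: $\Phi\circ f$ preserves the coset foliation of $H$ on $B(x,r/K)$. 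The constant $K$ absorbs the shrinkage. (When $\fh$ is $\aut_{gr}$-invariant this simplifies, with $D_Pf(y)(\fh)=\fh$ automatically and $A=\{\operatorname{id}\}$.) For the three excluded families the conclusion genuinely fails, as witnessed by the M\"obius self-maps of $\R^n$, the large quasiconformal group of $\H_n$, and the biholomorphisms of $\H_n^\C$.

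\emph{Main obstacle.} I expect the crux to be the algebraic dichotomy --- ruling out nonrigid groups admitting no reducing foliation beyond $\R^n$, $\H_n$, $\H_n^\C$. For step $\geq 3$ this is the implication $\fg_1\neq\{0\}\Rightarrow W_0\neq\{0\}$, requiring careful bookkeeping of the prolongation identities across the top layers; for step $2$ it is the classification of indecomposable step-two nonrigid Carnot algebras, where $\H_n$ and $\H_n^\C$ must be separated from $H$-type algebras and from reducible products. The secondary technical difficulty is the constancy step in the analytic part --- deducing from closedness of the pulled-back forms that a finitely-valued invariant integrable distribution on a ball is constant --- where the hypothesis $p>\nu$ (hence the pullback theorem of \cite{KMX1}) and the integrability encoded in $\fh$ being a subalgebra both enter.
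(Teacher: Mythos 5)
Your overall architecture (algebraic trichotomy, then an analytic constancy step via the pullback theorem) matches the paper's plan, but your proposal has a serious gap in the step-$2$ algebraic classification, and the analytic step is stated in a way that misrepresents what the pullback theorem gives you.

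The algebraic gap is the decisive one. You claim that a step-two nonrigid Carnot algebra admitting no proper decomposition into graded ideals must be a real or complex Heisenberg algebra, and that in the decomposable case one simply takes $\fh$ to be an indecomposable ideal summand and invokes Krull--Schmidt. This is false. The paper's Example~\ref{ex_diagonal_product_quotient} (the diagonal product quotient: $\tilde\fg=\oplus_{j=1}^n\fh_1$ with $n\geq 3$, $K=\Span(\tilde Y_1+\cdots+\tilde Y_n)$) is nonrigid, step two, admits no nontrivial aut-invariant $W\subsetneq V_1$ with $[W,V_2]=\{0\}$, and is \emph{indecomposable} as a direct sum of graded Lie algebras (the finest $K$-compatible partition is trivial), yet is neither $\H_n$ nor $\H_n^\C$. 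The correct trichotomy is Theorem~\ref{thm_irreducible_nonrigid_structure}: beyond the exceptional cases one gets a \emph{product quotient} $\tilde G/\exp(K)$ with $K\subset\tilde V_2$ possibly nonzero. Here the candidate subgroup $H$ is generated by a single first-layer factor $V_{1,i}$ and is generally \emph{not} an ideal (its second layer is the $K$-coset of $\tilde V_{2,i}$, which other factors need not commute with). Your reduction to ideal summands and Krull--Schmidt therefore skips precisely the case that occupies Sections~\ref{sec_rigidity_product_quotients}--\ref{sec_higher_product_quotients} of the paper. Your ``pencil of alternating forms'' heuristic is likewise unreliable: centers of dimension $\geq 3$ do occur for nonrigid step-two groups (e.g.\ the diagonal quotient with $n\geq 4$), and such algebras need not be $H$-type.

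The analytic step as written also overstates what is available. The pullback theorem (Theorem~\ref{co:pull_back2} here, or \cite[Theorem~4.2]{KMX1}) does not say that $f_P^*\alpha$ is closed for a closed left-invariant $\alpha$; it gives the integral identity $\int_U f_P^*(\alpha)\wedge d(\varphi\beta)=0$ only for pairs $(\alpha,\beta)$ of closed left-invariant forms meeting the degree condition $\deg\alpha+\deg\beta=N-1$ and the weight condition $\wt\alpha+\wt\beta\leq -\nu+1$. Finding such pairs that actually ``see'' the permutation of the factors $\fg_1,\dots,\fg_n$ induced by $D_Pf(x)$ is the core difficulty, and the construction differs substantially between $\dim K=1$ (Section~\ref{sec_dim_k_equals_1}, using forms like $\omega_{ij}=(\gamma_i+\gamma_j)\wedge\tau_{i,j}$ and codegree-$4$ contractions of the volume form), conformal product quotients with $\dim K\geq 2$ (Section~\ref{sec_conformal_case}, using a Liouville-type argument with degree-$2$ and codegree-$2$ forms), and higher/complex Heisenberg factors (Section~\ref{sec_higher_product_quotients}, exploiting nontrivial kernels $\ker L_{[\,]}\cap\Lambda_2 V_{1,i}$). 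Your ``closedness argument forces constancy'' is gesturing at the right kind of conclusion, but without the specific form constructions there is no argument. By contrast, your step-$\geq 3$ reduction (take $W_0=\{v\in V_1: [v,V_{\geq 2}]=0\}$, observe it is aut-invariant and proper, put $\fh=\langle W_0\rangle$, $A=\{\id\}$) and your use of a Fubini/accessibility argument (Lemma~\ref{lem_preservation_cosets}) to pass from Pansu-differential constraints to leaf preservation are both in line with the paper's Case~1.
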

Thus, apart from the exceptional cases in (1), quasiconformal homeomorphisms  preserve a foliation, up to post-composition with a graded automorphism.   This suggests that for the (nonexceptional) nonrigid cases there may be a more detailed description of quasiconformal homeomorphisms along the lines of \cite{xie_filiform} for model filiform groups.  To our knowledge such a description is not known even for smooth contact diffeomorphisms, either locally or globally.

\begin{remark}
\label{rem_better_sobolev_exponent}
Using more refined version of the pullback theorem from \cite{kmx_approximation_low_p} rather than the version in \cite{KMX1} applied here, in some cases one can reduce the requirements on the Sobolev exponent $p$ in Theorem~\ref{thm_nonrigid_intro}.
\end{remark}
Another aspect of quasiconformal rigidity/flexibility  has to do with the Sobolev exponent, i.e. higher integrability of the derivative.  Quasiconformal homeomorphisms $f:G\supset U\ra U'\subset G$ are always in $W^{1,p}_{\loc}$ for some $p$ strictly larger than the homogeneous dimension of $G$, where $p$  depends on $G$ and the quasiconformal distortion of $f$ \cite{heinonen_koskela}.  However, except for $\R^n$ and the Heisenberg groups $\H_n$ (see for example \cite{balogh_non_bilipschitz}), all known examples of quasiconformal homeomorphisms are in $W^{1,\infty}_{\loc}$, i.e. are locally bilipschitz.  This inspired the following:
\begin{conjecture}[Xie]
\label{conj_qs_rigidity}
If $G$ is a Carnot group other than $\R^n$ or $\H_n$ for some $n$, then every quasiconformal homeomorphism $f:G\supset U\ra U'\subset G$ is locally bilipschitz; moreover, if $U=G$ then $f$ is bilipschitz.
\end{conjecture}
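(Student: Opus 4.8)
The plan is to derive the conjecture, for the nonrigid Carnot groups, from Theorem~\ref{thm_nonrigid_intro} by induction on the homogeneous dimension $\nu$, with the reducibility statement furnishing the inductive step. First I would dispose of the exceptional groups of Theorem~\ref{thm_nonrigid_intro}(1): $\R^n$ and $\H_n$ are excluded in the statement of the conjecture, while the complex Heisenberg groups $\H_n^\C$---for which the reducibility theorem fails---are treated by a separate argument built on the pullback theorem of \cite{KMX1} and the hypoellipticity of the complex Heisenberg contact system (cf.\ \cite{reimann_ricci}). For $G$ nonrigid and not on that list, let $f:G\supset U\ra U'$ be quasiconformal; then $f\in W^{1,p}_{\loc}$ for some $p>\nu$ and $\det(D_Pf)$ has constant sign a.e., so Theorem~\ref{thm_nonrigid_intro}(2) applies on every subball. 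Since $H$ is a homogeneous subgroup its coset foliation is dilation-invariant, and since $A$ is finite, a connectedness argument (the set of points near which a given $\Phi\circ f$ is foliation-preserving is clopen in $U$, and these sets cover $U$) shows that a single $\Phi\in A$ works on all of $U$; as graded automorphisms are bilipschitz for $d_{CC}$, after replacing $f$ by $\Phi\circ f$ we may assume $f$ preserves the coset foliation of $H$ throughout $U$ (and throughout $G$ when $U=G$).

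The inductive step is the structure theory of foliation-preserving quasiconformal maps. Preservation of the $H$-coset foliation forces $D_Pf(x)(\fh)\subseteq\fh$ for a.e.\ $x$, so relative to a graded complement of $\fh$ the Pansu differential is block-triangular with diagonal blocks $A(x)$ on $\fh$ and $C(x)$ on the complement; accordingly $f$ restricts on each leaf $L$ (a left coset of $H$, isometric to the Carnot group $H$) to a map $f_L$ and descends to a map $\bar f$ on the local leaf space. Since the restriction of $D_Pf$ to $\fh$ is exactly $A$ and the differential induced by $\bar f$ on the leaf space is $C$, both $f_L$ and $\bar f$ are quasiconformal with the same distortion constant. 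From $[W,V_j]=\{0\}$ for $j\geq 2$ one obtains $[W,[W,W]]=\{0\}$ and $\fh=W\oplus[W,W]$, so $H$ is $2$-step of homogeneous dimension $\dim W+2\dim[W,W]<\nu$, and---this is a step that requires real work---the leaf space is identified, up to bilipschitz equivalence, with a Carnot group of homogeneous dimension $<\nu$. When $H$ and this quotient are themselves nonrigid and not on the exceptional list, the inductive hypothesis makes $f_L$ and $\bar f$ locally bilipschitz, and feeding this back into the quasiconformal distortion inequality---which couples the gauge-stretches of $A$ to those of $C$---keeps all stretch factors of $D_Pf$ within uniform bounds, so $f$ is locally bilipschitz, and bilipschitz when $U=G$ since the constants are uniform.

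The main obstacle is the base of the induction: the case in which the leaf group $H$ or the quotient is one of the flexible groups $\R^k$ or $\H_m$, for which the inductive conclusion fails, or a rigid group, for which bilipschitz rigidity must be supplied from outside the induction. In these cases the coupling across the triangular structure does not by itself force bilipschitz behaviour---for instance, a quasiconformal map preserving the $\R^2$-coset foliation of $\H_1\times\R^2$ has a quasiconformal leaf map of $\R^2$ and a quasiconformal quotient map of $\H_1$, neither bilipschitz in isolation---so one must instead, in the spirit of the analysis of model filiform groups in \cite{xie_filiform}, exploit the precise triangular form of $\Phi\circ f$ together with the quasiconformal inequality (iterating the reducibility theorem to refine the foliation where needed) to show that rigidity does propagate across the leaf--transverse coupling. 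Carrying this propagation argument out in the generality of Theorem~\ref{thm_nonrigid_intro}---and, as part of it, pinning down the metric structure of the leaf space when $\fh$ is not an ideal---is the technical heart of the proof.
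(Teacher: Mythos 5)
Your proposal diverges from the paper's proof at the point where reducibility is converted into the bilipschitz estimate, and it leaves the decisive step open. The paper does not induct on the homogeneous dimension. After reducing to a foliation-preserving quasisymmetric homeomorphism via Theorem~\ref{thm_nonrigid_intro}, it proves directly (Proposition~\ref{lem_subalgebra_ideal}) that any $\eta$-quasisymmetric homeomorphism of a ball which respects the coset foliation of $H$ is $L(\eta)$-bilipschitz on a concentric subball, with no reference to the group structure of the leaf $H$ or of the quotient. The mechanism is a compactness/contradiction argument in the spirit of Le Donne--Xie: one selects a nearby coset $z_jH$ whose image coset is controlled via the normalizer of $H$ (which strictly contains $H$ because $[W,V_i]=\{0\}$ for $i\geq 2$), then invokes Proposition~\ref{pr:one_sided_almost_parallel} --- that two cosets are either uniformly parallel in some horizontal direction or diverge --- to propagate a too-small stretch at one point along a horizontal chain of controlled subballs, contradicting the diameter normalization.

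Your inductive scheme has a genuine gap exactly where you flag the ``technical heart.'' As you observe, in the $\H_1\times\R^2$ example the leaf map and the quotient map are each quasiconformal but neither need be bilipschitz in isolation, so the inductive hypothesis gives nothing to feed back; at that point you invoke a coupling of the gauge-stretches of the diagonal blocks $A$ and $C$ of $D_Pf$, but you supply no such inequality, and in general none exists pointwise: the quasiconformal condition bounds the ratio of extremal stretches of the full block-triangular differential, which is compatible with $A$ and $C$ being simultaneously unbounded or simultaneously degenerate (off-diagonal terms absorb the distortion). The coupling that actually does the work in the paper is geometric, not pointwise --- the parallel-or-diverging dichotomy of Proposition~\ref{pr:one_sided_almost_parallel} converts transverse compression into a chain of small-diameter images running along a leaf, which then contradicts the global normalization. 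That dichotomy has no counterpart in your sketch, and without it the inductive step does not close. (Subsidiary but real omissions: you do not verify that $f_L$ and $\bar f$ are quasiconformal with controlled distortion, nor identify the metric on the leaf space when $\fh$ is not an ideal, nor track the quantitative dependence $L=L(\eta)$ needed to pass from the local estimate to the global bilipschitz statement when $U=G$; the paper's compactness argument delivers the last of these for free.)
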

Using Theorem~\ref{thm_nonrigid_intro} and a variation on \cite{Shan_Xie, Xie_Pacific2013,LeDonne_Xie} we show:
\begin{theorem}
Conjecture~\ref{conj_qs_rigidity} holds for nonrigid Carnot groups.  
\end{theorem}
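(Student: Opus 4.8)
The plan is to derive Conjecture~\ref{conj_qs_rigidity} for nonrigid $G$ from Theorem~\ref{thm_nonrigid_intro} together with a rigidity statement for foliation-preserving quasiconformal homeomorphisms, treating the two alternatives of Theorem~\ref{thm_nonrigid_intro} separately. First consider alternative (1). If $G$ is not isomorphic to $\R^n$ or $\H_n$, then alternative (1) forces $G\cong\H_n^\C$; for this group Theorem~\ref{thm_nonrigid_intro} produces no foliation, so it must be handled directly. Here I would use that a quasiconformal homeomorphism $f$ has, almost everywhere, a Pansu differential which is a graded isomorphism preserving $V_1$, so $f$ is a weak solution of the contact system of $\H_n^\C$. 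Since that system is hypoelliptic \cite{reimann_ricci}, $f$ is $C^\infty$ and hence, again by \cite{reimann_ricci}, holomorphic or antiholomorphic; a biholomorphism has everywhere nonsingular differential and is contact, hence locally bilipschitz for $d_{CC}$, while a global holomorphic or antiholomorphic contact automorphism of $\H_n^\C$ is a composition of translations, dilations and linear graded automorphisms, hence bilipschitz. This disposes of $G\cong\H_n^\C$ (one could alternatively appeal to \cite{kmx_iwasawa}).

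From now on assume alternative (2) of Theorem~\ref{thm_nonrigid_intro}, with the closed horizontally generated subgroup $H$, the constant $K$, and the finite set $A$ of graded automorphisms. Each $\Phi\in A$ commutes with the dilations, hence is bilipschitz, and as $A$ is finite there is one constant controlling all of them; therefore $f$ is (locally, respectively globally) bilipschitz if and only if $\Phi\circ f$ is. The crux is the ``variation on \cite{Shan_Xie,Xie_Pacific2013,LeDonne_Xie}'' alluded to in the introduction, which I would state as: any quasiconformal homeomorphism $g\colon G\supset V\to V'\subset G$ preserving the coset foliation of $H$ is locally bilipschitz, and globally bilipschitz when $V=G$. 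Granting this, the theorem follows quickly. For a ball $B(x,r)\subset U$, Theorem~\ref{thm_nonrigid_intro}(2) gives $\Phi\in A$ such that $\Phi\circ f$ preserves the coset foliation of $H$ on $B(x,\frac{r}{K})$, so $\Phi\circ f$, and hence $f$, is bilipschitz there; since $x$ and $r$ are arbitrary, $f$ is locally bilipschitz. When $U=G$, choose an exhaustion $B(x,r_j)\uparrow G$; by finiteness of $A$ one $\Phi\in A$ serves infinitely many $j$, and the foliation-preservation of $\Phi\circ f$ on the corresponding subballs $B(x,\frac{r_j}{K})$ assembles (leaves being disjoint or equal) into foliation-preservation on all of $G$, so the global form of the rigidity statement gives that $\Phi\circ f$, and hence $f$, is globally bilipschitz. (The upper Lipschitz bound also propagates along geodesics from a uniform local bound, an alternative route to the global conclusion.)

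The main obstacle is the foliation rigidity statement, i.e. the variation on \cite{Shan_Xie,Xie_Pacific2013,LeDonne_Xie}. The structural hypothesis on $H$ --- that its Lie algebra is generated by a subspace $\{0\}\subsetneq W\subsetneq V_1$ with $[W,V_j]=\{0\}$ for all $j\geq 2$ --- is precisely what makes the foliation rigid: it forces the leaves, which are cosets of the Carnot subgroup $H$, and the local leaf space to fit together into a metric fibration with a product-like structure, in contrast to the situation for $\H_n$ (where the analogous foliations do admit non-bilipschitz fiber-preserving quasiconformal maps, and where Theorem~\ref{thm_nonrigid_intro} correctly falls into alternative (1)). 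To exploit this I would show that a foliation-preserving quasiconformal homeomorphism $g$ restricts on almost every leaf to a quasiconformal homeomorphism between open subsets of $H$ and descends to a quasisymmetric homeomorphism of the local leaf space, and then use the product-like structure coming from $[W,V_j]=\{0\}$, $j\geq 2$, together with the quasiconformal distortion bounds, to bound the Pansu differential of $g$ from above; applying the same reasoning to $g^{-1}$, which is again quasiconformal, bounds it from below, so $g\in W^{1,\infty}_{\loc}$ and $g$ is bilipschitz. Adapting \cite{Shan_Xie,Xie_Pacific2013,LeDonne_Xie} is the technical heart: the novelties are the general Carnot-group structure of $H$ and of the leaf space, and working under quasiconformal rather than bilipschitz hypotheses, so that the induced leafwise and transverse maps first have to be shown to be quasiconformal, respectively quasisymmetric, at the relevant regularity before the distortion estimates apply.
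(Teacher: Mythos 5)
Your overall strategy---reduce to Theorem~\ref{thm_nonrigid_intro}, dispose of the complex Heisenberg case directly, and in the second alternative invoke a foliation-preserving quasiconformal rigidity statement---mirrors the paper's route through Theorem~\ref{qs_rigidity_theorem}. However, the two places where you depart from the paper both contain genuine gaps.

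For $G\cong\H_n^\C$, you claim that hypoellipticity upgrades a quasiconformal (hence merely $W^{1,p}_{\loc}$) homeomorphism to a $C^\infty$ contact diffeomorphism, and then invoke Reimann--Ricci. But the Reimann--Ricci holomorphic/antiholomorphic classification applies to contact maps that are \emph{a priori} smooth; the regularity upgrade from Sobolev to $C^\infty$ is not established for $\H_n^\C$ by their work, and it does not follow from \cite{capogna,capogna_cowling} either, since the graded automorphism group of $\H_n^\C$ does not act conformally on the first layer. That upgrade is exactly what the Regularity Conjecture asks for, and the paper does not assert it for $\H_n^\C$ (the introduction's use of ``hypoelliptic'' for this group is in scare quotes and refers to the smooth theory). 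The paper instead handles $\H_n^\C$ via \cite[Corollary 8.3]{KMX1}, an argument through the Pullback Theorem and the $(\fg^\C)_{\pm i}$ decomposition which needs no regularity improvement, so your route here does not close.

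The second gap is the foliation rigidity statement, which you correctly flag as the technical heart but only sketch. Your plan---restrict to leaves to get quasiconformal maps of $H$, descend to a quasisymmetric map of the local leaf space, and bound the Pansu differential above and below---is not what the paper does, and as sketched it already stalls at step one: the restriction of a quasiconformal homeomorphism to a positive-codimension leaf need not itself be quasiconformal, and nothing in the hypotheses yields that for free. The paper's Proposition~\ref{lem_subalgebra_ideal} avoids leafwise restrictions entirely. It proves (Proposition~\ref{pr:one_sided_almost_parallel}) a ``one-sided almost-parallel'' estimate: using $[W,V_j]=\{0\}$ for $j\geq 2$ to linearize the Baker--Campbell--Hausdorff conjugation, it shows that along a horizontal $W$-direction the distance to a transverse coset of $H$ cannot decay in both the forward and backward directions, then runs a normalization/compactness/chaining argument using only $\eta$-quasisymmetry to contradict failure of bilipschitzness. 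To make your proposal rigorous you would need either to supply an argument of that geometric type or to justify the leafwise quasiconformality and the transverse quasisymmetry, together with the quantitative coupling between the two that your ``product-like structure'' would have to provide.
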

\noindent
See Theorem~\ref{qs_rigidity_theorem} for a more precise statement.
Note that for rigid groups Conjecture~\ref{conj_qs_rigidity} would follow from the Regularity Conjecture~\ref{conj_regularity_conjecture} and \cite{cowling_ottazzi}.

We now give some indication of the proof of Theorem~\ref{thm_nonrigid_intro}.  

Let $G$ be a nonrigid Carnot group with graded Lie algebra $\fg$.  By \cite[Theorem 1]{ottazzi_warhurst} (see also \cite{doubrov_radko}), the first layer $V_1^\C$ of the complexification $\fg^\C$ contains a nonzero element $Z$ such that $[Z,\fg^\C]$ is a subspace of dimension at most $1$. The first phase of the proof of Theorem~\ref{thm_nonrigid_intro}, which is implemented in Section~\ref{sec_structure_nonrigid_irreducible_first_layer}, is to work out the  algebraic implications of this, which leads to the following trichotomy\footnote{The trichotomy is a  variation on unpublished work of the third author \cite{Xie_quasiconformal_on_non_rigid}.}:
\ben[label={(\alph*)}]
\item $G$ is isomorphic  to $\R^n$, $\H_n$, or $\H_n^\C$ for some $n\geq 1$.
\item There is a linear subspace $\{0\}\subsetneq W\subsetneq V_1$ which is invariant under graded automorphisms of $\fg$, such that $[W,V_j]=\{0\}$ for all $j\geq 2$.
\item $G$ is a {\bf product quotient}: it is of the form $G=\tilde G/\exp(K)$ where $\tilde G=\prod_{j=1}^n\tilde G_j$ and the $\tilde G_j$s are copies of $\H_m$ or $\H_m^\C$ for some $m\geq 1$, and $K\subset\tilde\fg$ is a linear subspace of the second layer $\tilde V_2\subset \tilde \fg$ which satisfies several conditions, see Definition~\ref{def_product_quotient}.  
\een
In case (a) assertion (1) of Theorem~\ref{thm_nonrigid_intro} holds, so we are done.  In case (b), letting $\fh\subset \fg$ be the Lie subalgebra generated by $W$, and $H:=\exp(\fh)$, the $\aut(\fg)$-invariance of $W$ implies that the Pansu differential of $f$ respects the coset foliation of $H$, and then (2) follows by an integration argument.  In case (c), the main obstacle is that there are automorphisms $\Phi:\tilde G/\exp(K)=G\ra G=\tilde G/\exp(K)$ induced by an automorphism $\tilde \Phi:\tilde G\ra  \tilde G$ which permutes the factors of $\tilde G=\prod_j\tilde G_j$, and one has to show that the Pansu differentials  of $f:B(x,r)\ra G$ at different points in $B(x,r)$ induce the same permutation of the factors.  The easiest case is when $K=\{0\}$, i.e. $G=\tilde G$ is a product, and then the constancy of the permutation was addressed in the product rigidity theorem in   \cite{KMX1}.  This follows easily by applying the Pullback Theorem from \cite{KMX1} (See Subsection~\ref{subsec_pullback_theorem})  to the volume forms coming from the factors of $G$.  The treatment of  case (c) in general breaks down into three main subcases depending on the dimension of $K$ and the type of the factors $\tilde G_j$.  The arguments are lengthy and hard to summarize briefly; we refer the reader to Section~\ref{sec_rigidity_product_quotients} and to the beginning of Sections~\ref{sec_dim_k_equals_1}-\ref{sec_higher_product_quotients} for more explanation; see also below for suggestions on how to read this paper.

\bigskip
\subsection*{ Suggestions for reading}~

We suggest reading the paper in the following order:
\bit
\item The Pullback Theorem, Subsection~\ref{subsec_pullback_theorem}.
\item Subsection~\ref{subsec_statement_results_initial_reductions}: precise statement of main results, and reduction to results proven later in the paper.
\item The statements of the Theorem~\ref{thm_irreducible_nonrigid_structure} and Lemma~\ref{lem_converse_product_quotient}, and the definition of product quotient, Definition~\ref{def_product_quotient}.
\item Examples~\ref{ex_diagonal_product_quotient}-\ref{ex_decomposable_product_quotients}.
\item The proof of product rigidity from \cite[Subsection 7.1]{KMX1}.
\item Section~\ref{sec_dim_k_equals_1} on product quotients with $\dim K=1$.
\item Section~\ref{sec_higher_product_quotients} on product quotients of complex Heisenberg groups and higher Heisenberg groups.  This could also be read after Section~\ref{sec_conformal_case}, but the proof is relatively simple and independent of Sections~\ref{sec_dim_k_equals_1} and \ref{sec_conformal_case}. 
\item Lemma~\ref{lem_decomposition_conformally_compact} giving a canonical decomposition of a product quotient in conformal product quotients.
\item Section~\ref{sec_conformal_case} on conformal product quotients with $\dim K\geq 2$.
\eit

\bigskip
\section{Preliminaries}
\label{sec_prelim}~

We refer to \cite{KMX1} for most of the preliminaries, which are common to both papers; this includes background as well as notation and conventions.  Apart from the pullback theorem, the results collected here are specific to the needs of this paper.

\bigskip
\subsection{The Pullback Theorem}
\label{subsec_pullback_theorem}
We give a concise presentation here, and refer the reader to \cite{KMX1} for more details.

Let $G$  be a step $s$ Carnot group with Lie algebra $\fg$, grading $\fg=\oplus_{j=1}^sV_j$, dilation group $\{\de_r:G\ra G\}_{r\in (0,\infty)}$, dimension $N$, and homogeneous dimension $\nu=\sum_jj\dim V_j$.   We let $\{X_j\}_{1\leq j\leq N}$ be a graded basis for $\fg$, and $\{\th_j\}_{1\leq j\leq N}$ be the dual basis. 

The {\em weight} of a subset $I\subset \{1,\ldots,N\}$ is given by 
$$
\wt I:=-\sum_{i\in I}\deg i
$$
where $\deg:\{1,\ldots,N\}\ra \{1,\ldots, s\}$ is defined by $\deg i=j$ iff $X_i\in V_j$.  For a non-zero  left-invariant form
$\alpha = \sum_{I} a_I \theta_I$ we define $\wt(\alpha) = \max \{ \wt I : a_I \ne 0\}$ and set $\wt(0):=-\infty$; here $\th_I$ denotes the wedge product $\La_{i\in I}\th_i$.

We now let $G'$ be a second Carnot group, use primes for the associated structure.

Fix $p>\nu$, and let $f:U\ra G'$ be a $W^{1,p}_{\loc}$-mapping for some open subset $U\subset G$.  If $\om\in\Om^k(G')$ is a differential $k$-form with continuous coefficients, the {\em Pansu pullback of $\om$} is the $k$-form with measurable coefficients $f_P^*\om$ given by 
$$
f_P^*\om(x):=(D_Pf(x))^*\om(f(x))\,,
$$ 
where  $D_Pf(x):\fg\ra \fg'$  is the Pansu differential of $f$ at $x\in U$.

 We will use the following special case of  \cite[Theorem 4.2]{KMX1}:
\begin{theorem}[Pullback Theorem (special case)]  \label{co:pull_back2}
Suppose $\varphi \in C_c^\infty(U)$ and  that $\alpha$ and $\beta$ are closed left invariant forms
which satisfy 
\begin{equation} \label{eq:weight_special}
\deg \alpha + \deg \beta = N -1 \quad \hbox{and} \quad  \wt(\alpha) + \wt(\beta) \le -\nu + 1.
\end{equation}
Then 
\begin{equation} \label{eq:pull_back_identity_special}
 \int_U f_P^*(\alpha) \wedge d(\varphi \beta) = 0.
\end{equation}
\end{theorem}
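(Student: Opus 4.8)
The plan is to obtain this as a direct specialization of the general Pullback Theorem \cite[Theorem 4.2]{KMX1}; the two conditions in \eqref{eq:weight_special} are simply an instance of its hypotheses. The degree condition $\deg\alpha+\deg\beta=N-1$ makes $f_P^*\alpha\wedge d(\varphi\beta)$ a top-degree form on $U\subset G$, so the integral is meaningful. Since $\alpha$ and $\beta$ are closed, the $d\alpha$- and $d\beta$-terms that appear in the general statement drop out and $d(\varphi\beta)=d\varphi\wedge\beta$; what remains of the hypotheses of \cite[Theorem 4.2]{KMX1} is exactly the weight bound $\wt(\alpha)+\wt(\beta)\le-\nu+1$, and the conclusion there is \eqref{eq:pull_back_identity_special}. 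So the only step is to check that the hypotheses match.

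It is worth recording why the identity should hold. If $f$ were smooth, then the ordinary pullback $f^*\alpha$ would be closed since $\alpha$ is, $\varphi$ has compact support in $U$, and Stokes' theorem gives $0=\int_U d(f^*\alpha\wedge\varphi\beta)=\pm\int_U f^*\alpha\wedge d(\varphi\beta)$, using $d\beta=0$. The Pansu pullback $f_P^*\alpha$ is the grading-principal part of $f^*\alpha$: because $D_Pf(x)$ is a graded homomorphism, $(D_Pf(x))^*$ carries each basis form $\th'_{i'}$ to a combination of the $\th_j$ with $\deg j=\deg i'$, so $\wt(f_P^*\alpha(x))\le\wt(\alpha)$. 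Expanding $f^*\alpha\wedge d\varphi\wedge\beta$ into monomials $\th_I\wedge\th_i\wedge\th_J$, such a monomial is of top degree $N$ only when $I\sqcup\{i\}\sqcup J=\{1,\dots,N\}$, which forces $\wt I+\wt J-\deg i=-\nu$; combined with $\wt(\alpha)+\wt(\beta)\le-\nu+1$ and $\deg i\ge1$ this pins $\deg i=1$, $\wt I=\wt(\alpha)$, $\wt J=\wt(\beta)$, and one checks from this that only the Pansu part of $f^*\alpha$ can contribute to the integral. Hence $\int_U f_P^*\alpha\wedge d(\varphi\beta)=\int_U f^*\alpha\wedge d(\varphi\beta)=0$ for smooth $f$.

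To remove the smoothness, approximate $f\in W^{1,p}_{\loc}$ (with $p>\nu$) by smooth maps $f_k\to f$ in $W^{1,p}_{\loc}$. Since $\alpha$ is left invariant, $f_P^*\alpha(x)$ depends only on $D_Pf(x)$, and $D_Pf$ is a universal bracket-polynomial in $D_Pf|_{V_1}$, which in turn is linear in $Df|_{V_1}$; thus the $\th_I$-component of $f_P^*\alpha$ is a polynomial of degree $-\wt I$ in the coefficients of $Df$. The components relevant to \eqref{eq:pull_back_identity_special} have $\wt I\ge-\nu$, hence degree $\le\nu$, so they lie in $L^{p/\nu}_{\loc}\subset L^1_{\loc}$ and $f_{k,P}^*\alpha\to f_P^*\alpha$ there; as $d(\varphi\beta)$ is a fixed bounded, compactly supported form, $\int_U f_{k,P}^*\alpha\wedge d(\varphi\beta)\to\int_U f_P^*\alpha\wedge d(\varphi\beta)$, and the left-hand sides vanish by the smooth case.

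The serious content -- the part that makes \cite[Theorem 4.2]{KMX1} a genuine theorem -- is the weight bookkeeping in the smooth case: making precise which weight components of $f^*\alpha$ survive being wedged with $d\varphi\wedge\beta$, keeping in mind that for non-contact maps $f^*\alpha-f_P^*\alpha$ need not be purely of lower weight, and organizing the integration by parts (including, in the general theorem, the $d\alpha$- and $d\beta$-contributions). None of this need be redone here: it suffices to quote \cite[Theorem 4.2]{KMX1} and observe that \eqref{eq:weight_special} is an instance of its hypotheses.
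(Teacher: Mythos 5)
Your proposal is correct and matches the paper's treatment: Theorem~\ref{co:pull_back2} is introduced there simply as a special case of \cite[Theorem 4.2]{KMX1}, with no independent proof, exactly as you do. The heuristic you supply (smooth Stokes plus weight bookkeeping plus approximation) is useful motivation but, as you yourself flag, not a proof---in particular smooth approximations of Sobolev maps into a Carnot group are generally not contact, so $f_k^*\alpha\neq f_{k,P}^*\alpha$ and the identification of the two integrals is itself the content of the cited theorem; since you explicitly defer to that theorem, this is fine.
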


\bigskip

\subsection{Exterior algebra and interior products}  \label{se:interior_products}
Here we recall some basic facts about interior products. This will be useful for constructing closed left invariant forms of low 
codegree that will be useful in the application of the Pullback Theorem.  In this subsection we work purely in the setting of multilinear algebra. Exterior differentiation of forms will be considered in the next 
subsection.
 
Given a finite   dimensional  vector space $V$ over $\R$ we denote the space of $m$-vectors by $\Lambda_m V$ and the space of alternating
$m$-linear forms by $\Lambda^m V$. The elements of $\Lambda^m V$ can be identified with linear functionals
on $\Lambda_m V$ (or $m$-covectors).

The interior product of a vector $X \in V$ and a $p$-form $\alpha \in \Lambda^p V$ is the $(p-1)$-form
$i_X \alpha$ given by 
\begin{equation} \label{eq:interior_product_wiki}  i_X \alpha(X_2, \ldots, X_p) = \alpha(X, X_2, \ldots, X_p).
\end{equation}
Using the dual pairing of vectors and covectors this can be rewritten as
$$ i_X \alpha(Z) = \alpha(X \wedge Z).$$
For $q \le p$, $X \in \Lambda_q V$ and $\alpha \in \Lambda^p V$ one defines
 $i_X \alpha \in \Lambda^{p-q} V$
by
\begin{equation} \label{eq:interior_product_higher} 
(i_X \alpha)(Z) = \alpha(X \wedge Z)  \quad \forall Z \in \Lambda_{p-q} V.
\end{equation}
Then for $X \in \Lambda_q$ and $Y \in \Lambda_{q'}$
$$ (i_X i_Y \alpha)(Z) = i_Y \alpha(X \wedge Z) = \alpha(Y \wedge X \wedge Z)$$
and thus  
\begin{equation}  \label{eq:i_circ_i}
 i_X \circ i_Y = i_{Y \wedge X}  \quad \forall X \in \Lambda_q, \, \, Y \in \Lambda_{q'}.
 \end{equation}
The interior  product satisfies the graded Leibniz rule
\begin{equation} \label{eq:Leibniz_interior}
 i_X (\beta \wedge \gamma) = i_X \beta \wedge \gamma + (-1)^{\deg \beta} \beta \wedge i_X \gamma 
 \quad \forall X \in V.
 \end{equation}

If $\omega$ is a top-degree form on $V$, then 
\begin{equation} \label{eq:inner_product_volume_form}
\alpha \wedge i_X \omega = i_X \alpha \wedge \omega = \alpha(X) \omega\,,\quad\forall \alpha  \in \Lambda^p V\,,\quad X \in \Lambda_p V.
\end{equation}
This can easily be checked using the standard basis and dual basis and verifying the assertion
for $X$ and $\beta$ being simple vectors in terms of these basis vectors.

We finally briefly discuss the interior product on quotient spaces. Let $\tilde V$ be a finite-dimensional vector space
over $\R$, let $K \subset \tilde V$ be a subspace and let $V = \tilde V/ K$. 
We say that $\beta \in \Lambda^p \tilde V$ annihilates $K$ if $\beta(X_1, \ldots, X_p) = 0$ whenever at least
one of the $X_i$ satisfies $X_i \in K$. If $\tilde \alpha \in \Lambda^p \tilde V$ annihilates $K$ then
it gives rise to a unique  form $\alpha \in \Lambda^p V$ defined by
\begin{equation}  \label{eq:lift_form_annihilate_K}
 \alpha(X_1 + K, \ldots, X_p + K) = \tilde \alpha(X_1, \ldots, X_p)  \quad \forall X_i \in \tilde V.
 \end{equation}
Conversely for every $\alpha \in \Lambda^p V$ there exists a unique lift $\tilde \alpha \in \Lambda^p \tilde V$ 
which annihilates $K$ and satisfies \eqref{eq:lift_form_annihilate_K}. Consider such $\alpha$ and $\tilde \alpha$
as well as $X \in \tilde V$. Then 
$$\widetilde{ i_{X + K} \alpha }= i_X \tilde \alpha.$$
To simplify the notation we sometimes use the notation $i_X \alpha$ for the form  $i_{X + K} \alpha$
and identify
$i_X \alpha$ and $i_X \tilde \alpha$.

 \subsection{Exterior derivatives of left invariant forms with low codegree}
 \label{se:exterior_derivative_volume}

In this subsection we recall some useful formulae for exterior derivatives of forms on unimodular Lie groups, particularly for low codegree forms defined using interior products of vectors with the volume form.  Recall that  Carnot groups are unimodular \cite[Theorem 1.2.10]{corwin_greenleaf_nilpotent_lie_groups}; we will only use the results in this section in the case of Carnot groups, but for the sake of logical clarity we have formulated them for unimodular groups, because they hold in such generality.

Let $G$ be a Lie group. 
We identify the corresponding  Lie algebra $\fg$ with the space of left invariant vector  fields on $G$, i.e., with the left
invariant sections in the tangent bundle   $TG$. 
Similarly we identify the space of $1$-forms $\Lambda^1 \fg$
with the space of    left invariant 
    sections of the cotangent bundle $T^*G$ and the spaces of $p$-vectors in $\fg$ and $p$-forms
on $\fg$ with the spaces of  
left invariant sections of the corresponding tensor bundles.

For a vector   field $X$ let $\mathcal L_X$ denote the Lie derivative with respect to $X$. 
We will use Cartan's formula
\begin{equation} \label{eq:cartan_magic}
\mathcal L_X \alpha = d(i_X \alpha) + i_X d \alpha
\end{equation}
as well as the relation
\begin{equation} \label{eq:commutator_Lie_derivative}
i_{[X,Y]} = [\mathcal L_X, i_Y], 
\end{equation}
see, e.g., \cite[Theorem 9.9]{Michor}.

\begin{lemma}
\label{lem_volume_form_contraction_exterior_derivative} 
Let $G$ be a unimodular  group and let $\omega$ be a bi-invariant  volume form on $G$. Then 
\begin{eqnarray}  \label{eq:exterior_codegree1}
d(i_X \omega) &=& \mathcal L_X \omega = 0 \quad \forall X \in \fg, \\
  \label{eq:exterior_codegree2}
d(i_X i_Y  \omega) &= & i_{[X,Y]}  \omega   \quad \forall X, Y \in \fg, \\
 \label{eq:exterior_codegree3}
d(i_X i_Y i_Z \omega) &=& i_A  \omega   \quad \forall X, Y, Z \in \fg, \quad
\hbox{where} \\
 \label{eq:exterior_codegree3bis}
A &=& 
X\wedge [Y, Z]+Y\wedge[Z,X]+Z\wedge[X,Y] .
\end{eqnarray}
Finally if
\begin{equation}  \label{eq:exterior_codegree5_assumption}
X, X', X'', Z \in \fg \quad \hbox{and} \quad [X,Z] = [X', Z] = [X'', Z] = 0
\end{equation}
then
\begin{equation} 
  \label{eq:exterior_codegree5}
d(i_Z  i_{X''} i_{X'} i_X  \omega) = -  i_Z d( i_{X''} i_{X'} i_X  \omega)
\end{equation}
\end{lemma}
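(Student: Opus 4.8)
The plan is to derive \eqref{eq:exterior_codegree5} from \eqref{eq:exterior_codegree3} together with the hypotheses \eqref{eq:exterior_codegree5_assumption}. First I would apply \eqref{eq:exterior_codegree3} in the form it is stated to the triple $X'', X', X$ (note the order): since $i_X i_Y i_Z\omega$ for any three vectors has codegree $3$, the formula gives $d(i_{X''}i_{X'}i_X\omega) = i_B\omega$ for the $2$-vector $B = X''\wedge[X',X] + X'\wedge[X,X''] + X\wedge[X'',X']$. The key point is that $B$ is a combination of brackets among $X,X',X''$ only, and crucially it does not involve $Z$; I would record this as an intermediate observation.

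Next I would contract with $i_Z$ and use the graded commutation rule for interior products together with \eqref{eq:i_circ_i} or, more directly, the Cartan-type identities from the preceding subsection. The cleanest route is to use Cartan's magic formula \eqref{eq:cartan_magic} applied to $\alpha = i_{X''}i_{X'}i_X\omega$: we have $\mathcal L_Z\alpha = d(i_Z\alpha) + i_Z d\alpha$, so $d(i_Z\alpha) = \mathcal L_Z\alpha - i_Z d\alpha$. It therefore suffices to show that $\mathcal L_Z\alpha = 0$. To see this I would use \eqref{eq:commutator_Lie_derivative}, which says $i_{[Z,W]} = [\mathcal L_Z, i_W]$, repeatedly: $\mathcal L_Z$ commutes with $i_X$ up to $i_{[Z,X]} = 0$, with $i_{X'}$ up to $i_{[Z,X']} = 0$, and with $i_{X''}$ up to $i_{[Z,X'']} = 0$ by the hypotheses in \eqref{eq:exterior_codegree5_assumption}; and $\mathcal L_Z\omega = 0$ by \eqref{eq:exterior_codegree1} (unimodularity, $\omega$ bi-invariant). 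Pushing $\mathcal L_Z$ through the three interior products and landing on $\omega$ therefore kills everything, so $\mathcal L_Z\alpha = 0$, which gives exactly $d(i_Z\alpha) = -i_Z d\alpha$, i.e. \eqref{eq:exterior_codegree5}.

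An alternative, purely computational route avoiding Cartan's formula would be: compute $d(i_Z i_{X''}i_{X'}i_X\omega)$ directly by \eqref{eq:exterior_codegree3}-style bookkeeping (codegree $4$), obtaining a $3$-vector $A'$ built from brackets of the four vectors $Z, X'', X', X$; the hypotheses make every bracket involving $Z$ vanish, so $A'$ reduces to $Z\wedge B$ with $B$ as above; then observe $i_{Z\wedge B}\omega = i_Z i_B\omega = i_Z d(i_{X''}i_{X'}i_X\omega)$ up to the sign dictated by \eqref{eq:i_circ_i} and \eqref{eq:inner_product_volume_form}, which accounts for the minus sign in \eqref{eq:exterior_codegree5}. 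I would present the Cartan-formula argument as the main proof since it is shorter and makes the role of unimodularity transparent.

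The only real obstacle is bookkeeping: getting the signs right when commuting $\mathcal L_Z$ past the $i_{X''}, i_{X'}, i_X$ (each commutator term being $i_{[Z,\cdot]}$ up to a fixed sign, all of which vanish anyway, so signs here are harmless), and then matching the sign in the final identity — in the direct approach this comes from \eqref{eq:i_circ_i} applied with a $3$-vector and a $1$-vector, $i_Z\circ i_B = i_{B\wedge Z}$ versus $i_{Z\wedge B}$, producing $(-1)^{2} = +1$ or a sign from reordering that yields the stated $-1$. Since in the Cartan-formula proof the sign is forced purely by \eqref{eq:cartan_magic} with no choices to make, I expect no genuine difficulty; the lemma follows in a few lines.
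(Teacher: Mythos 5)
Your argument for \eqref{eq:exterior_codegree5} is correct and is in fact identical to the paper's: apply Cartan's formula to $\alpha = i_{X''}i_{X'}i_X\omega$, reduce $d(i_Z\alpha)$ to $-i_Z d\alpha$ once you know $\mathcal L_Z\alpha = 0$, and establish $\mathcal L_Z\alpha = 0$ by pushing $\mathcal L_Z$ past each $i_{X^{(j)}}$ via \eqref{eq:commutator_Lie_derivative} (the commutators $i_{[Z,\cdot]}$ all vanish by hypothesis) until it lands on $\omega$, where it is killed by \eqref{eq:exterior_codegree1}. Your opening paragraph, in which you compute the $2$-vector $B$ from \eqref{eq:exterior_codegree3}, is superfluous for this route and can be deleted; the alternative ``direct'' route you sketch is not actually fully supported by the lemma as stated (it would need a codegree-$4$ analogue of \eqref{eq:exterior_codegree3} that you would have to derive), but this doesn't matter since the Cartan-formula argument stands on its own.

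The genuine gap is that the lemma asserts four identities, and you prove only the last one. You cite \eqref{eq:exterior_codegree1}, \eqref{eq:exterior_codegree2} and \eqref{eq:exterior_codegree3} as if they were already available, but they are part of the statement and have to be proved. In particular \eqref{eq:exterior_codegree1} is the one place where the unimodularity hypothesis enters (the flow of a left-invariant field is right translation, and bi-invariance of $\omega$ gives $\mathcal L_X\omega = 0$), and your argument for \eqref{eq:exterior_codegree5} leans on it; \eqref{eq:exterior_codegree2} is the Cartan-formula computation $d(i_Xi_Y\omega) = \mathcal L_X(i_Y\omega) = [\mathcal L_X,i_Y]\omega = i_{[X,Y]}\omega$ using \eqref{eq:exterior_codegree1}; and \eqref{eq:exterior_codegree3} is the same calculation one step deeper, giving the cyclic $2$-vector $A$ after collecting terms via \eqref{eq:i_circ_i}. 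You should supply these three proofs to complete the lemma.
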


\begin{proof} 
Since $\omega$ has top degree it is closed and thus Cartan's formula gives
\begin{equation}  \label{eq:Lie_codegree1}
d(i_X \omega) = \mathcal L_X \omega
 \quad \forall X \in \fg.
\end{equation}
Since the flow of a left invariant vector field is given by right translation and since $\omega$ is also 
        right                  invariant     
we have $\mathcal L_X \omega = 0$.

To show \eqref{eq:exterior_codegree2} we use Cartan's formula  and 
 \eqref{eq:exterior_codegree1} to get
\begin{eqnarray*}
d(i_X i_Y \omega) &= & 
  \mathcal L_X (i_Y \omega) - i_X d (i_Y \omega) =   \mathcal L_X (i_Y \omega) \\ 
  &=&    \mathcal L_X (i_Y \omega)  - i_Y (\mathcal L_X \omega)  
  \underset{\eqref{eq:commutator_Lie_derivative}}{=}
   i_{[X,Y]} \omega.
\end{eqnarray*}

To prove \eqref{eq:exterior_codegree3} we start again from Cartan's formula
\begin{eqnarray*}
d(i_X i_Y i_Z \omega) &=& \mathcal L_X (i_Y i_Z \omega) - i_X d(i_Y i_Z \omega) \\
&=& [\mathcal L_X, i_Y] i_Z \omega + i_Y \mathcal L_X i_Z \omega - i_X i_{[Y,Z]} \omega \\
&=& i_{[X,Y]} i_Z \omega + i_Y [\mathcal L_X, i_Z] \omega + i_Y i_Z \underbrace{\mathcal L_X \omega}_{=0}
- i_X i_{[Y,Z]} \omega.
\end{eqnarray*}
Now the formula for $A$ follows from \eqref{eq:commutator_Lie_derivative} and  \eqref{eq:i_circ_i}.

To prove   \eqref{eq:exterior_codegree5} we use that $Z$ commutes with  each of $X''$, $X'$, $X$.     
   Thus successive application of  \eqref{eq:commutator_Lie_derivative} and
  \eqref{eq:exterior_codegree1}
give
$$ \mathcal L_Z (i_{X''} i_{X'} i_X \omega) = i_{X''} \mathcal L_Z (i_{X'} i_X \omega) = \ldots =
i_{X''} i_{X'} i_X \mathcal L_Z \omega = 0.
$$
Now  \eqref{eq:exterior_codegree5} follows from Cartan's formula  \eqref{eq:cartan_magic}.
\end{proof}

The exterior derivative of a left invariant form is left invariant since left translation is smooth
and hence commutes with exterior differentiation. Thus the exterior derivative induces a linear map on $\Lambda^* \fg$
via
\begin{equation}  
 d \alpha := (d \tilde \alpha)(e).
 \end{equation}
where $\tilde \alpha$ is the left invariant extension of $\alpha$. 
\begin{lemma}  \label{le:d_acts_on_algebra} Let $G$ be a Lie group and let $\alpha \in \Lambda^k \fg$. Then
\begin{equation}  \label{eq:exterior_derivative_on_algebra}
d\alpha(X_0, \ldots, X_k) = \sum_{0 \le i < j \le k }  (-1)^{i+j} \alpha([X_i, X_j], X_0, \ldots, \widehat X_i, \ldots, \widehat X_j, \ldots, X_k).
\end{equation}
\end{lemma}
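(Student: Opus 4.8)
The plan is to prove the Cartan formula \eqref{eq:exterior_derivative_on_algebra} by reducing to the case of left-invariant vector fields on $G$ and invoking the standard coordinate-free formula for the exterior derivative. First I would let $\tilde\alpha$ denote the left-invariant extension of $\alpha$ to a $k$-form on $G$, and similarly view $X_0,\dots,X_k\in\fg$ as left-invariant vector fields. Then $d\alpha$ is by definition $(d\tilde\alpha)(e)$, so it suffices to evaluate $(d\tilde\alpha)(X_0,\dots,X_k)$ at an arbitrary point and specialize to $e$; in fact since all objects involved are left-invariant the value of $(d\tilde\alpha)(X_0,\dots,X_k)$ is a constant function on $G$, so evaluating at $e$ loses nothing.

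The key step is the invariant formula for the exterior derivative of a $k$-form $\om$ on a manifold, applied to $\om=\tilde\alpha$ with the left-invariant vector fields $X_0,\dots,X_k$:
\begin{align*}
d\tilde\alpha(X_0,\dots,X_k)
&= \sum_{i=0}^k (-1)^i X_i\bigl(\tilde\alpha(X_0,\dots,\widehat X_i,\dots,X_k)\bigr)\\
&\quad + \sum_{0\le i<j\le k} (-1)^{i+j}\tilde\alpha\bigl([X_i,X_j],X_0,\dots,\widehat X_i,\dots,\widehat X_j,\dots,X_k\bigr).
\end{align*}
Here I would use the fact that the Lie bracket of left-invariant vector fields is again left-invariant and agrees with the Lie algebra bracket, so each term $[X_i,X_j]$ in the second sum is the bracket in $\fg$, and the $(k-1)$-form $\tilde\alpha([X_i,X_j],\dots)$ has constant value determined by $\alpha$. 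For the first sum I would observe that $\tilde\alpha(X_0,\dots,\widehat X_i,\dots,X_k)$ is a \emph{constant} function on $G$: evaluating the left-invariant form $\tilde\alpha$ on left-invariant vector fields yields a left-invariant, hence constant, scalar function. Therefore every directional derivative $X_i(\cdot)$ in the first sum vanishes, and only the second sum survives. Evaluating at $e$ and replacing $\tilde\alpha$ by $\alpha$ then gives exactly \eqref{eq:exterior_derivative_on_algebra}.

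The main (and only real) obstacle is purely bookkeeping: one must be sure the sign conventions in the chosen invariant formula for $d\om$ match the conventions used elsewhere in the paper (the factor $(-1)^{i+j}$ and the order of arguments $[X_i,X_j],X_0,\dots,\widehat X_i,\dots,\widehat X_j,\dots,X_k$ rather than some permutation of them). I would settle this once and for all by checking the case $k=1$ against \eqref{eq:exterior_codegree2}–\eqref{eq:exterior_codegree3bis} already established in Lemma~\ref{lem_volume_form_contraction_exterior_derivative}: formula \eqref{eq:exterior_derivative_on_algebra} for a $1$-form reads $d\theta(X_0,X_1)=-\theta([X_0,X_1])$, and for a $2$-form it reproduces the cyclic expression appearing in $A$ in \eqref{eq:exterior_codegree3bis}. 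This consistency check fixes the conventions and completes the argument; no nontrivial analysis is needed since everything takes place in the finite-dimensional algebra $\Lambda^*\fg$.
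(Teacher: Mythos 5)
Your argument is correct and is the standard proof of this identity: extend $\alpha$ to a left-invariant $k$-form, apply the coordinate-free formula for $d$, and observe that the first sum vanishes because a left-invariant form evaluated on left-invariant vector fields is a constant function. The paper simply cites \cite[Lemma 14.14]{Michor} for this fact, and the proof given there is essentially the one you have written out, so there is no substantive difference in approach.
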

\begin{proof} See \cite[Lemma 14.14]{Michor}.
\end{proof}
For one forms this reduces to 
\begin{equation}  \label{eq:exterior_derivative_on_algebra_one_form}
d\alpha(X_0, X_1) = - \alpha([X_0, X_1]).
\end{equation}

Let $\Phi \in \aut(\fg)$. We define the pullback     of $\alpha \in \Lambda^k \fg$ by
$(\Phi^* \alpha)(X_1, \ldots, X_k) := \omega(\Phi(X_1), \ldots, \Phi(X_k))$ for $X_i \in \fg$. 
Then  \eqref{eq:exterior_derivative_on_algebra}  implies that
 \begin{equation} \label{eq:d_commutes_with_aut(fg)}
d \Phi^*\omega = \Phi^* d \omega.
\end{equation}
Note that if $\Phi$ is the tangent map of group homomorphism, then the above definition of pullback agrees with the usual one, i.e. one first extends $\al\in \Lambda^k\fg$ to a left invariant form and applies the usual pullback operation.

\bigskip

\subsection{Miscellaneous}

The following Lemma is the distributional version of the statement: ``$g$ constant along a line
implies $h \circ g$ constant on this line''. 

We recall that  the flow $\tau_s$ of a left invariant vector field $X$ is given by right translation by the $1$-parameter group generated by $X$, i.e.  $\tau_s(p)=p\exp(sX)$  see \cite{Lee}, Thm. 5.74 or \cite[4.17]{Michor}.  We also recall that Carnot groups are unimodular, since they are nilpotent.   See, for example, \cite[Theorem 1.2.10]{corwin_greenleaf_nilpotent_lie_groups}.

\begin{lemma}  \label{le:compact_directional_constancy} 
Let $G$ be a unimodular Lie group with bi-invariant Haar measure $\mu$,   $U \subset G$ be an open subset,  $g \in L^1_{loc}(U; \R^l)$.  For every  $X \in \fg$, if   $\tau_s:=r_{\exp sX}$ denotes $1$-parameter group of    right
translations generated by $X$, then:  

\begin{enumerate}
\item We have 
\begin{equation}  \label{eq:directional_constant}  \int_{U}   g  \,  X \varphi  \, d\mu  = 0 
\quad \forall \varphi \in C_c^\infty(U)
\end{equation}
if and only if for every compact set $V \subset U$ there exists an $\eps > 0$ such that
\begin{equation}  \label{eq:directional_constant_bis} 
\hbox{$g \circ \tau_s = g$ \hbox{ a.e.\  on $V$}  \quad $\forall s \in (-\eps, \eps)$.}
\end{equation}
\item Let $h: \R^l \to \R$ be a Borel function with $|h(t)| \le C_1 |t| + C_2$. 
Then  \eqref{eq:directional_constant} implies that
\begin{equation}  \label{eq:directional_constant_ter} 
   \int_{U}  (h \circ  g)   \,  X \varphi  \, d\mu  = 0 \quad \forall \varphi \in C_c^\infty(U).
   \end{equation}
\end{enumerate}
\end{lemma}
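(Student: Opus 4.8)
The plan is to prove part (1) first, and then derive part (2) from it by a truncation-and-limit argument.

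For part (1), I would work locally and use the standard correspondence between the distributional identity \eqref{eq:directional_constant} and invariance under the flow. Fix a relatively compact open set $V'$ with $V\subset V'\subset\subset U$, and choose $\eps>0$ so small that $\tau_s(x)\in V'$ for all $x$ in a neighbourhood of $V$ and all $|s|<\eps$; this is possible by continuity of the flow and compactness. The key point is that $X$ generates the right translations $\tau_s$, and since $\mu$ is bi-invariant, $\tau_s$ is measure-preserving. I would mollify: let $g_\delta:=g*\rho_\delta$ (convolution in suitable local coordinates, or via the group structure) be smooth approximations on a slightly smaller set, converging to $g$ in $L^1_{\loc}$. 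Testing \eqref{eq:directional_constant} against $\varphi$ of the form $\psi\circ\tau_{-s}$ minus $\psi$, or more cleanly, observing that \eqref{eq:directional_constant} says exactly that the distributional derivative $\frac{d}{ds}\big|_{s=0}(g\circ\tau_s)$ vanishes, and that this is equivalent (by the group property $\tau_{s+t}=\tau_s\circ\tau_t$ and the fact that $\mu\circ\tau_s=\mu$) to $\frac{d}{ds}(g\circ\tau_s)=0$ as a distribution for all small $s$, which integrates to $g\circ\tau_s=g$ a.e. on $V$. Concretely: the function $s\mapsto \int g\,(\varphi\circ\tau_{-s})\,d\mu$ has vanishing derivative in $s$ (using \eqref{eq:directional_constant} with the translated test function, legitimate since $\tau_{-s}$ preserves $\mu$ and $C_c^\infty$), hence is constant in $s$; unwinding this with the change of variables $y=\tau_s(x)$ gives $\int (g\circ\tau_s)\varphi\,d\mu=\int g\,\varphi\,d\mu$ for all $\varphi\in C_c^\infty$ supported in a fixed neighbourhood of $V$ and all $|s|<\eps$, hence $g\circ\tau_s=g$ a.e.\ on $V$. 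The converse direction is the reverse of this computation: differentiating the identity \eqref{eq:directional_constant_bis} in $s$ at $s=0$ against a test function and using that $\tau_s$ preserves $\mu$ yields \eqref{eq:directional_constant}.

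For part (2), assume \eqref{eq:directional_constant} holds; by part (1), for each compact $V$ we have $g\circ\tau_s=g$ a.e.\ on $V$ for $|s|<\eps$. Then for a.e.\ $x\in V$ and all such $s$, $h(g(\tau_s(x)))=h(g(x))$, i.e.\ $(h\circ g)\circ\tau_s=h\circ g$ a.e.\ on $V$. The growth bound $|h(t)|\le C_1|t|+C_2$ together with $g\in L^1_{\loc}$ gives $h\circ g\in L^1_{\loc}(U)$ (Borel measurability of $h$ ensures $h\circ g$ is measurable). Now apply the converse direction of part (1) with $g$ replaced by $h\circ g$: since $(h\circ g)\circ\tau_s=h\circ g$ a.e.\ on every compact set for small $s$, we conclude \eqref{eq:directional_constant_ter}.

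The main obstacle is the careful bookkeeping in part (1): making the change of variables $y=\tau_s(x)$ rigorous for merely $L^1_{\loc}$ functions, ensuring the test functions stay compactly supported in $U$ uniformly for $|s|<\eps$ (this is where the compactness of $V$ and continuity of the flow are used), and handling the fact that the flow $\tau_s$ of a left-invariant field is globally defined on a Lie group but one still must stay inside $U$. The mollification step must be done compatibly with the flow so that $g_\delta\circ\tau_s=g_\delta$ follows from $g\circ\tau_s=g$; alternatively one avoids mollification entirely by the weak formulation above, testing directly against $C_c^\infty$ functions and their $\tau_s$-translates, which is cleaner and is the route I would actually take.
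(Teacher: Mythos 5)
Your proposal is correct and follows essentially the same route as the paper: it reduces the distributional identity $\int g\,X\varphi\,d\mu=0$ to its integrated form $\int g\,(\varphi\circ\tau_s-\varphi)\,d\mu=0$ via testing against $\tau_s$-translated test functions, converts this by bi-invariance of $\mu$ to $g\circ\tau_s=g$ a.e., and then obtains part (2) by applying $h$ to the a.e.\ identity and invoking the converse of part (1). The only minor imprecision is the quantifier order in ``for a.e.\ $x\in V$ and all such $s$''; the argument should be read as: for each fixed $s$, $g\circ\tau_s=g$ a.e.\ on $V$, hence $(h\circ g)\circ\tau_s=h\circ g$ a.e.\ on $V$, which is what the paper says and what you in fact use.
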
   

\begin{proof} 
The main point is that \eqref{eq:directional_constant} is equivalent to
\begin{equation} \label{eq:distributional_derivative_integrated}
\int_{U}  g \, ( \varphi \circ \tau_s - \varphi)  \, d\mu  = 0 \quad \forall  \varphi \in C_c^\infty(U),
s \in (-\eps_\varphi, \eps_\varphi)
\end{equation}
where $\eps_\varphi$ depends on the support of $\varphi$, $U$ and $X$. 
 Since $\mu$ is bi-invariant \eqref{eq:distributional_derivative_integrated} is equivalent to
$$ \int_{U}  (g \circ \tau_{-s} - g  ) \,  \varphi \, d\mu  = 0 \quad \forall  \varphi \in C_c^\infty(U),
s \in (-\eps_\varphi, \eps_\varphi)$$
or to $g \circ \tau_s = g$ a.e. This concludes the proof of the first assertion. 

To prove the second assertion note that  the condition $h$ is  Borel ensures  that $h \circ g$ is measurable.
The growth condition implies that $h \circ g \in L^1_{loc}(U)$. Now apply $h$ to both sides of 
\eqref{eq:directional_constant_bis} and use the equivalence of \eqref{eq:directional_constant_bis} 
and \eqref{eq:directional_constant}. 
\end{proof}

\bigskip\bigskip
Now let $G$, $G'$ be Carnot groups, where $G$ has homogeneous dimension $\nu$.  Let  $\fh\subset\fg$, $\fh'\subset\fg'$ be Lie subalgebras, where $\fh$ is horizontally generated (i.e. $\fh$ is generated by $\fh\cap V_1$), and let $H:=\exp(\fh)$, $H':=\exp(\fh')$.  Note that $H$, $H'$ are closed subgroups since the exponential maps are diffeomorphisms.

\begin{lemma}
\label{lem_preservation_cosets}
There exists $\La\in [1,\infty)$ such that if $x\in G$, $r\in (0,\infty)$, $f:B(x,\La r)\ra G'$ is a $W^{1,p}_{\loc}$-mapping for some $p>\nu$, and the Pansu differential satisfies $D_Pf(x)(\fh)\subset\fh'$, then for every $y\in G$ the image of $yH\cap B(x,r)$ under $f$ is contained in a single left coset of $H'$.
\end{lemma}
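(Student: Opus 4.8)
The plan is to reduce the statement to the scalar directional-constancy result of Lemma~\ref{le:compact_directional_constancy}, applied simultaneously to a family of coordinate functions that detect the coset foliation of $H'$. First I would fix a codimension-$\dim\fh'$ linear projection: choose left-invariant $1$-forms $\eta_1,\dots,\eta_m$ on $G'$ (with $m=\dim\fg'-\dim\fh'$) spanning the annihilator of $\fh'$ in $(\fg')^*$, and let $\pi:G'\to G'/H'$ be the quotient map onto the space of left cosets. The point is that the statement ``$f$ maps $yH\cap B(x,r)$ into a single left $H'$-coset'' is equivalent to ``$\pi\circ f$ is constant along each plaque of the $H$-foliation inside $B(x,r)$.'' Since $\fh$ is horizontally generated, being constant along plaques is detected by the directional derivatives $X(\pi\circ f)$ for $X$ ranging over a basis of $\fh\cap V_1$; so it suffices to show each such directional derivative, interpreted distributionally, vanishes on $B(x,r)$.

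The key input linking the hypothesis on $D_Pf$ to these directional derivatives is the chain rule for Pansu differentials together with the fact that $D_Pf(x)(\fh)\subset\fh'$. Since $f\in W^{1,p}_{\loc}$ with $p>\nu$, $f$ has a Pansu differential a.e., and for a.e.\ $x$ and every $X\in\fh\cap V_1$ one has $D_Pf(x)(X)\in\fh'$, hence $\eta_k(D_Pf(x)(X))=0$ for all $k$. Because $\pi$ is a submersion with $d\pi$ killing $\fh'$, this says precisely that the appropriate directional derivative of $\pi\circ f$ vanishes pointwise a.e.; rewriting this as a distributional identity $\int_{B} (\pi\circ f)\, X\varphi\,d\mu=0$ requires an approximation/integration-by-parts step, using that $f$, and hence $\pi\circ f$, lies in a Sobolev space on which such integration by parts is valid. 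Here one must be slightly careful: $\pi\circ f$ is only locally bounded/Sobolev on the smaller ball, which is why a Lipschitz constant $\La$ appears — we apply the hypothesis on $B(x,\La r)$ and derive the conclusion on the subball $B(x,r)$, using interior estimates for $W^{1,p}_{\loc}$-mappings (distortion of balls under $f$, and the horizontal Poincaré/Sobolev inequality on $G$) to control the relevant quantities.

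Once the distributional identity $\int (\pi\circ f)\,X\varphi\,d\mu=0$ is established for each $X$ in a basis of $\fh\cap V_1$ and each $\varphi\in C_c^\infty(B(x,r))$, Lemma~\ref{le:compact_directional_constancy}(1) gives that $\pi\circ f$ is invariant under the one-parameter right-translation flows $\tau_s=r_{\exp sX}$, locally in $s$, a.e.\ on compact subsets. Since $\fh$ is generated by $\fh\cap V_1$, the orbits of the subgroup $H$ acting on the right are exactly the plaques of the coset foliation $yH$; chaining together finitely many such flows (staying inside $B(x,r)$, which is where $\La$ must be large enough relative to $1$) shows $\pi\circ f$ is a.e.\ constant on $yH\cap B(x,r)$ for every $y$. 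Unwinding the definition of $\pi$, this means $f(yH\cap B(x,r))$ lies in a single left coset of $H'$, after adjusting on a null set — and since $f$ is continuous (being $W^{1,p}_{\loc}$ with $p>\nu$, hence Hölder), the null set can be absorbed and the inclusion holds genuinely. The main obstacle I anticipate is the middle step: passing cleanly from the a.e.\ pointwise vanishing of $\eta_k(D_Pf(x)(X))$ to the global distributional identity on the subball, i.e.\ justifying the integration by parts for the nonsmooth map $\pi\circ f$ and tracking the geometric constants (how $f$ distorts balls, forcing the factor $\La$); this is where the quantitative Sobolev/quasiconformal machinery from \cite{KMX1} has to be invoked carefully.
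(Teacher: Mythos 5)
Your strategy --- encode the hypothesis ``$D_Pf$ sends $\fh$ into $\fh'$'' as the distributional vanishing of horizontal derivatives of scalar functions built from $\pi:G'\to G'/H'$, apply Lemma~\ref{le:compact_directional_constancy}, and chain the flows --- is genuinely different from the paper's proof. The paper instead cites \cite[Proposition 3.4]{Xie_Pacific2013} to get directly that $f$ maps constant-horizontal-velocity paths inside $H$-cosets into single $H'$-cosets, and then proves and uses a quantitative accessibility statement (Lemma~\ref{lem_quantitative_accessibility}) to connect any two points of $yH\cap B(x,r)$, up to a density argument, by piecewise-horizontal paths staying in $B(x,\La r)$.

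There are two substantive gaps. First, the ``approximation/integration-by-parts step'' you flag is the technical heart of the matter, not a routine detail: passing from a.e.\ pointwise vanishing of $X(\phi_i\circ\pi\circ f)$ to the distributional identity requires the ACL characterization of $W^{1,p}_{\loc}$-maps between Carnot groups (absolute continuity along a.e.\ horizontal line, plus Fubini); this is precisely what \cite[Proposition 3.4]{Xie_Pacific2013} packages at the pathwise level, which is why the paper cites it rather than re-deriving it. Second, and more seriously, your explanation of the constant $\La$ is wrong. It has nothing to do with ``distortion of balls under $f$'' or interior Sobolev estimates; it is a purely geometric constant depending only on $(H,G)$. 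The point is that two points of $yH$ that are close in the ambient Carnot metric can be much farther apart in the intrinsic metric of the coset (the comparison $d_{CC}\le d_{CC}^H\le C_1 d_{CC}$), so piecewise-horizontal paths connecting them are forced out of $B(x,r)$ but can be kept inside $B(x,\La r)$; one needs the accessibility Lemma~\ref{lem_quantitative_accessibility} here, which your proposal neither states nor replaces, and without which the chaining-of-flows step cannot be completed. A smaller issue: the left-invariant $1$-forms $\eta_k$ are in general not closed, so they are not the coordinates you need; the scalar functions to feed into Lemma~\ref{le:compact_directional_constancy} should be honest global coordinate functions on $G'/H'$, which exist because $G'/H'\cong\R^m$ for $G'$ simply connected nilpotent and $H'$ a closed connected subgroup --- worth saying explicitly.
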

\begin{proof}
This is a slight variation on \cite[Proposition 3.4]{Xie_Pacific2013}.

Since $\fh$ is horizontally generated, it is invariant under the dilation $\de_r:G\ra G$ for every $r\in (0,\infty)$.  Therefore there is a direct sum decomposition $\fh=\oplus_jV_j^{\fh}$ where $V_j^{\fh}:=\fh\cap V_j$, and  $(H,\fh=\oplus_jV_j^{\fh})$ is a Carnot group.  We may equip the first layer $V_1^{\fh}$ with the restriction of the inner product on $V_1$, and let $d_{CC}^H$ be the corresponding Carnot-Caratheodory metric on $H$.  

Note that there is a constant $C_1$ such that 
$$
d_{CC}(x,x')\leq d_{CC}^H(x,x')\leq C_1 d_{CC}(x,x')\,;
$$ 
the first inequality follows directly from the definitions, while the second follows for pairs $(x,x')=(e,y)$ with $d_{CC}(e,y)=1$ by compactness, and for general pairs by applying translation and scaling.

Let $\La:=2C_1\La_0+1$, where $\La_0$ is the constant given by Lemma~\ref{lem_quantitative_accessibility}.  Suppose  $x$, $r$, and $f$ are as in the statement of Lemma~\ref{lem_preservation_cosets}; by rescaling and left translation we may assume that $x=e$ and $r=1$.  Arguing as in \cite[Proposition 3.4]{Xie_Pacific2013}, for every path $ \ga:  [0,1]\ra B(e,\La)$ with constant horizontal velocity  and with  image   lying  in  a coset of $H$,    the image $f(\ga([0,1]))$ is contained in a single coset of $H'\subset G'$.  

Choose $y\in G$, and $x,x'\in yH\cap B(e,1)$.  Then $d_{CC}(x,x')<2$, so $d_{CC}^H(x,x')<2C_1$.  Therefore  by Lemma~\ref{lem_quantitative_accessibility}, for every $\eps>0$ there is a piecewise horizontal path $\ga:[0,1]\ra yH\cap B(e,\La)$     with  $\ga(0)=x$ and $\ga(1)\in B(x',\eps)$; since  $f(\ga([0,1]))$ lies in a single coset of $H'$ and $\eps$ is arbitrary, by continuity it follows that $x$ and $x'$ belong to the same coset of $H'$.    
\end{proof}

\bigskip\bigskip

\begin{lemma}
\label{lem_quantitative_accessibility}
Let $H$ be a Carnot group.  Then there exists $\La_0>1$ with the following property.

Let $\Gamma$ be the collection of paths $\ga:[0,1]\ra H$ such that $\ga(0)=e$ 
and $\ga'$ is piecewise constant and horizontal, i.e. there exists a partition $0=t_0<\ldots<t_n=1$, and for every  $1\leq j\leq n$  elements $X_j\in V_1$, $h_j\in H$,  such that 
$$
\ga(t)=h_j\exp((t-t_{j-1})X_j)\quad\text{for all}\quad t\in [t_{j-1},t_j].
$$  

Then the set 
$$
\{\ga(1)\mid \ga\in \Gamma\,,\;\ga([0,1])\subset B(e,\La_0)\}\cap B(e,1)
$$ 
is dense in $B(e,1)$.
\end{lemma}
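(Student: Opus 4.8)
The plan is to show that $\Lambda_0 = 2$ works (any $\Lambda_0 > 1$ would serve equally well), by approximating horizontal paths contained in $B(e,1)$ by polygonal horizontal paths contained in $B(e,2)$.

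First I would reduce the assertion to the following: for every $y \in B(e,1)$ and every $\eps > 0$ there is a path $\ga \in \Gamma$ with $\ga([0,1]) \subset B(e,2)$ and $d_{CC}(\ga(1), y) < \eps$. Granting this, since $B(e,1)$ is open, for $\eps$ small enough the endpoint $\ga(1)$ lies in $B(e,1)$; letting $y$ range over $B(e,1)$ and $\eps \to 0$ then yields density of the set in the statement.

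Now fix $y$ and $\eps$. Since $d_{CC}(e,y) < 1$, the definition of the Carnot--Carath\'eodory distance provides a horizontal path $\si \colon [0,1] \to H$ with $\si(0) = e$, $\si(1) = y$ and $\length(\si) = \ell < 1$. Reparametrizing $\si$ with constant speed, every point satisfies $d_{CC}(e, \si(t)) \le \ell t \le \ell$, so $\si([0,1]) \subset \ol B(e,\ell) \subset B(e,1)$, and $\dot\si(t) = dL_{\si(t)} u(t)$ for some $u \in L^\infty([0,1]; V_1)$. I would then discretize: given a partition $0 = t_0 < \dots < t_n = 1$, set $\bar u_j := \tfrac{1}{t_j - t_{j-1}}\int_{t_{j-1}}^{t_j} u(t)\, dt \in V_1$, and define $\ga$ by $\ga(0) = e$ and $\ga(t) = \ga(t_{j-1})\exp\bigl((t - t_{j-1})\bar u_j\bigr)$ for $t \in [t_{j-1}, t_j]$. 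Then $\ga \in \Gamma$, with $h_j = \ga(t_{j-1})$ and $X_j = \bar u_j$, and $\ga$ solves $\dot\ga(t) = dL_{\ga(t)} \bar u(t)$ where $\bar u$ is the piecewise-constant control equal to $\bar u_j$ on $[t_{j-1}, t_j]$; moreover $\|\bar u - u\|_{L^1} \to 0$ as the mesh of the partition tends to $0$.

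The technical heart of the proof -- and the only place I expect real work -- is the estimate showing $\ga \to \si$ uniformly as the mesh $\to 0$. I would carry this out in exponential coordinates of a graded basis, in which $H \cong \R^{\dim H}$ and the horizontal left-invariant vector fields have polynomial coefficients; on the compact set $\ol B(e,2)$ these coefficients are Lipschitz, so a Gr\"onwall estimate comparing the solutions of $\dot\si = dL_\si u$ and $\dot\ga = dL_\ga \bar u$ -- whose controls are $L^1$-close and satisfy $\|u\|_{L^1} \le 1$ -- gives $\sup_t |\si(t) - \ga(t)| \le C\,\|u - \bar u\|_{L^1}$ with $C$ depending only on $H$, valid on any subinterval where $\ga$ stays in $\ol B(e,2)$. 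A standard continuation argument then keeps $\ga$ inside $B(e,2)$ on all of $[0,1]$ once the mesh is small: since the exponential-chart topology of $H$ coincides with that of $d_{CC}$, the identity map $\ol B(e,2) \to \ol B(e,2)$ from the Euclidean to the Carnot--Carath\'eodory metric is uniformly continuous, so for a fine enough partition $d_{CC}(\si(t), \ga(t)) < 1 - \ell$ for all $t$, whence $d_{CC}(e, \ga(t)) < 1$, i.e.\ $\ga([0,1]) \subset B(e,1) \subset B(e,2)$; simultaneously $d_{CC}(\ga(1), y) = d_{CC}(\ga(1), \si(1)) < \eps$. This produces the required $\ga$, and taking $\Lambda_0 = 2$ completes the argument.
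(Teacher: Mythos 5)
Your proof is correct, and it takes a genuinely different route from the paper's. The paper argues softly: (i) the closure of the subgroup generated by the horizontal one-parameter subgroups is a Lie subgroup whose Lie algebra contains $V_1$, hence equals $H$, so endpoints of paths in $\Gamma$ are dense in $H$; (ii) one selects a finite collection $\ga_1,\dots,\ga_n\in\Gamma$ whose endpoints form a $\tfrac12$-net of $B(e,1)$, and these finitely many paths lie in some ball $B(e,\La_1)$; (iii) for a target $y$, one builds a Cauchy sequence $y_j\to y$ by concatenating the rescaled-and-translated paths $\ell_{y_j}\circ\de_{2^{-j}}\circ\ga_{i_j}$, whose images stay in $B(e,2\La_1)$. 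Your argument is instead a direct control-theoretic approximation: take a horizontal path $\si$ of length $\ell<1$ from $e$ to $y$, replace its $L^\infty$ control $u$ by piecewise-constant averages $\bar u$, and use a Gr\"onwall estimate in exponential coordinates (together with a continuation argument and the local equivalence of the Euclidean and Carnot--Carath\'eodory topologies) to show the polygonal path converges uniformly to $\si$ as the mesh shrinks. The trade-off: your approach costs the ODE/Gr\"onwall machinery and the $L^1$-approximation-by-step-functions step, but it gives the quantitatively sharper result that \emph{any} $\La_0>1$ works (e.g.\ $\La_0=2$); the paper's iterative/self-similar argument avoids that machinery entirely but leaves $\La_0=2\La_1$ implicit and potentially large. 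Both conclusions are equally adequate for the lemma's use in Lemma~\ref{lem_preservation_cosets}.
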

\begin{proof}
Let $\hat H$ be the closure of the subgroup generated by the collection of horizontal $1$-parameter subgroups.  Then $\hat H$ is a Lie subgroup, and its Lie algebra $\hat \fh$ contains $V_1$, so $\hat H=H$.  It follows that the collection $\{\ga(1)\mid \ga\in\Gamma\}$ is dense in $H$.  

Now choose a finite collection $\Gamma_1=\{\ga_1,\ldots,\ga_n\}\subset \Gamma$ such that the endpoints are $\frac12$-dense in $B(e,1)$, i.e. for every $x\in B(e,1)$ we have $d(x,\{\ga_j(1)\}_{1\leq j\leq n}\})<\frac12$.  Then for some $\La_1$  and every $1\leq j\leq n$ the image $\ga_j([0,1])$ is contained in the ball $B(e,\La_1)$. 

Pick $y\in B(e,1)$.  By induction, there is a sequence $y_0=e, y_1,\ldots$ and for every $j\geq 1$ an element $i_j\in\{1,\ldots, n\}$ such that $d(y_j,y)< 2^{-j}$ for all $j\geq 0$, and the path $\ell_{y_j}\circ\de_{2^{-j}}\circ\ga_{i_j}$ joins $y_j$ to $y_{j+1}$.  By concatenating, for every $j$ we obtain a path $\ga\in \Ga$ where $\ga(1)=y_j$ and $\ga([0,1])\subset B(e,\La_0)$, where $\La_0:=2\La_1$.
\end{proof}

\bigskip

\section{Quasisymmetric rigidity of nonrigid Carnot groups}
\label{sec_qs_rigidity_carnot_groups}~

In this section we state our main  rigidity results,   and prove them using results that will be established later in the paper. We also prove a localized version of a result of Le Donne-Xie which deduces quasisymmetric rigidity (i.e. automatic improvement to bilipschitz regularity) for mappings that preserve certain types of foliations.

\subsection{Statement of results and some initial reductions}
\label{subsec_statement_results_initial_reductions}

\begin{theorem}[Quasisymmetric rigidity]
\label{qs_rigidity_theorem}
Let $G$ be a Carnot group that is nonrigid in the sense of Ottazzi-Warhurst, and assume that $G$ is not isomorphic to $\R^n$ or a real Heisenberg group.  Let  $f:G\supset U\ra U'\subset G$ be an $\eta$-quasisymmetric homeomorphism between open subsets, where $U=B(x,r)$ for some $x\in G$, $r\in (0,\infty)$.
Then, modulo post-composition with a Carnot rescaling, the restriction of $f$ to the subball $B(x,\frac{r}{K})$  is a $K$-bilipschitz homeomorphism onto its image, where $K=K(\eta)$.  

Consequently, modulo post-composition with a Carnot rescaling
every locally $\eta$-quasisymmetric homeomorphism between open subsets of $G$ is locally $K=K(\eta)$-bilipschitz, and any global $\eta$-quasisymmetric homeomorphism onto its image $G\ra U'\subset G$ is a (surjective) $K=K(\eta)$-bilipschitz homeomorphism $G\ra G$.
\end{theorem}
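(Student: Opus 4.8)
The plan is to deduce Theorem~\ref{qs_rigidity_theorem} from Theorem~\ref{thm_nonrigid_intro} together with a localized, quantitative version of the Le Donne--Xie argument (as in \cite{LeDonne_Xie, Shan_Xie, Xie_Pacific2013}) that converts preservation of a suitable coset foliation into bilipschitz regularity. First I would dispose of the rigid-looking exceptional cases: since $G$ is nonrigid and is not $\R^n$ or a real Heisenberg group $\H_n$, Theorem~\ref{thm_nonrigid_intro} tells us that either $G\cong\H_n^\C$ for some $n$, or case (2) of that theorem holds, i.e. there is a closed horizontally generated subgroup $\{e\}\subsetneq H\subsetneq G$, a constant $K_0$, and a finite set $A$ of graded automorphisms such that for any $W^{1,p}_{\loc}$-map $f:B(x,r)\to G$ with $\det(D_Pf)$ of constant sign, some $\Phi\in A$ makes $\Phi\circ f$ preserve the coset foliation of $H$ on the subball $B(x,\tfrac{r}{K_0})$. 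A quasisymmetric homeomorphism is in $W^{1,p}_{\loc}$ for some $p>\nu$ by \cite{heinonen_koskela}, and $\det(D_Pf)$ has constant sign (equal to the local degree), so the hypotheses apply.

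Next I would handle the complex Heisenberg case $\H_n^\C$ separately, since Theorem~\ref{thm_nonrigid_intro} gives no foliation there: here one invokes the known rigidity of $\H_n^\C$ (quasiconformal homeomorphisms are ``holomorphic or antiholomorphic'' in the sense of \cite{reimann_ricci}, cf.\ the discussion in the introduction), which in particular forces local bilipschitz regularity, with quantitative constant depending only on $\eta$ via the standard compactness/normal-families argument for quasisymmetric maps. (Alternatively, $\H_n^\C$ is a product quotient in the sense of case (c) of the trichotomy, and the product-quotient rigidity results developed later in the paper apply; but the cleanest route is to cite the classical description.) The remaining, main case is case (2): post-composing with the appropriate $\Phi\in A$ we get $g:=\Phi\circ f$ preserving the coset foliation of $H$ on $B:=B(x,\tfrac{r}{K_0})$, and by Lemma~\ref{lem_preservation_cosets} the Pansu differential satisfies $D_Pg(y)(\fh)\subseteq\fh$ at a.e.\ $y$, where the Lie algebra $\fh$ of $H$ is generated by a subspace $\{0\}\subsetneq W\subsetneq V_1$ with $[W,V_j]=\{0\}$ for $j\ge 2$. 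Because $W$ is $\aut(\fg)$-invariant (this invariance is recorded in case (b) of the trichotomy and underlies Theorem~\ref{thm_nonrigid_intro}(2)) and $D_Pg(y)$ is a graded automorphism of $\fg$, in fact $D_Pg(y)(\fh)=\fh$, so $g$ preserves the coset foliation of $H$ \emph{and} the transverse structure; one then runs the Le Donne--Xie scheme: the foliation of $H$-cosets is (after a Carnot rescaling normalizing the derivative on the complementary directions) rigid enough that $g$ restricted to a smaller subball $B(x,\tfrac{r}{K})$ must be bilipschitz, with $K=K(\eta)$ obtained from the quasisymmetry data by the usual rescaling-and-compactness argument. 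Finally, composing back with $\Phi^{-1}$ (a Carnot rescaling, or absorbed into one) yields the stated conclusion for $f$; the global statement follows by exhausting $G$ by balls and using that a global quasisymmetric self-embedding that is locally bilipschitz with uniform constant is globally bilipschitz, hence surjective onto $G$.

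The hard part will be making the Le Donne--Xie step genuinely \emph{quantitative and localized}: the cited results \cite{LeDonne_Xie, Xie_Pacific2013, Shan_Xie} are typically stated for homeomorphisms of the whole group, whereas here we only know foliation-preservation on a subball and only after post-composition with one of finitely many automorphisms (which one may vary if we do not know $A$ reduces to a single element -- one must check the chosen $\Phi$ can be taken uniform on $B$, which is exactly the point that $D_Pg$ is a.e.\ defined and the foliation is connected, so the coset through any two points is well-defined and the relevant $\Phi$ is forced). Concretely, the obstruction is to show that preservation of the $H$-coset foliation forces $g$ to split, on the subball, into a map that is an isometry (up to rescaling) in the $W$-directions and is controlled in the transverse directions -- this is where the conditions $[W,V_j]=\{0\}$ for $j\ge2$ and the $\aut$-invariance of $W$ are used, and where one must port the Le Donne--Xie foliated-rigidity argument into the Sobolev/local setting and track constants. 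I would isolate this as a separate lemma (the ``localized Le Donne--Xie theorem'' alluded to in the section introduction, Theorem~\ref{qs_rigidity_theorem}'s preamble), prove it once for foliations of the above special type, and then the deduction of Theorem~\ref{qs_rigidity_theorem} from Theorem~\ref{thm_nonrigid_intro} is a short bookkeeping argument.
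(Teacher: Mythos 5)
Your plan for the main case matches the paper's proof: the key missing ingredient you correctly anticipate and propose to isolate as a separate lemma is exactly Proposition~\ref{lem_subalgebra_ideal} (together with Proposition~\ref{pr:one_sided_almost_parallel}), which is the localized, quantitative version of the Le Donne--Xie/Shan--Xie/Xie foliation-to-bilipschitz argument stated precisely for coset foliations of a horizontally generated $H$ whose first-layer generating subspace $W$ satisfies $[W,V_j]=\{0\}$ for $j\ge 2$.  Given that proposition, the deduction from Theorem~\ref{thm_nonrigid_intro} is the short bookkeeping you describe (Sobolev regularity of QS maps from \cite[Theorem 2.12]{KMX1} or \cite{heinonen_koskela}, constancy of the sign of $\det D_Pf$ on the connected ball, and then post-composing with the appropriate $\Phi\in A$ and absorbing $\Phi^{-1}$ into the bilipschitz constant).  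One small caution: when you write that $W$ is $\aut(\fg)$-invariant, that is only true in case (b) of the trichotomy; in case (c) $W=V_{1,i}$ is only invariant up to a finite permutation group, which is precisely why the finite set $A$ appears in Theorem~\ref{thm_nonrigid_intro}.  This does not affect your deduction since you only use the conclusion of Theorem~\ref{thm_nonrigid_intro}, but the parenthetical justification is misleading.

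There is, however, a genuine gap in your handling of the complex Heisenberg case $G=\H_n^\C$.  You propose to invoke \cite{reimann_ricci} to conclude that quasiconformal homeomorphisms are ``holomorphic or antiholomorphic,'' and that this ``forces local bilipschitz regularity.''  Neither step is right as stated.  The Reimann--Ricci result is a regularity statement for \emph{smooth contact diffeomorphisms}, and passing from a quasiconformal homeomorphism to a smooth contact map is exactly the sort of regularity upgrade that is open in general (the paper's Regularity Conjecture~\ref{conj_regularity_conjecture} concerns rigid groups, and no such assertion is claimed for $\H_n^\C$).  More importantly, even for a smooth holomorphic or antiholomorphic map, holomorphy does \emph{not} imply bilipschitz control: holomorphic maps may have degenerating derivative, so this implication is simply false.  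Your ``alternative route'' via product-quotient rigidity also does not apply: $\H_n^\C$ is a product quotient with $n=1$ factor in the canonical decomposition of Theorem~\ref{thm_irreducible_nonrigid_structure}, and Theorem~\ref{thm_rigidity_product_quotient} is vacuous for $n=1$; this is precisely why $\H_n^\C$ is an exceptional case in Theorem~\ref{thm_nonrigid_intro}(1).  The paper handles $\H_n^\C$ by citing a specific result from the companion paper, \cite[Corollary 8.3]{KMX1}, which establishes the bilipschitz rigidity of quasisymmetric homeomorphisms of complex Heisenberg groups directly; you would need that result (or an independent proof of it) to close this case.
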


\begin{remark}
If desired, one may replace $\frac{r}{K}$ with $\frac{r}{2}$ in the statement of Theorem~\ref{qs_rigidity_theorem} at the cost of increasing $K$; this follows readily by combining $\eta$-quasisymmetry with the bilipschitz control on small balls.
\end{remark}

We will show that apart from the complex Heisenberg case, quasisymmetric homeomorphisms are ``reducible'' in the following sense.

\begin{definition}
\label{def_preserving_coset_foliation}
Let $H$ be a closed subgroup of a Lie group $G$, and $f:G\supset U\ra U'\subset G$ be a homeomorphism between open subsets.  Then {\bf $f$ preserves the coset foliation of $H$} if for every $g\in G$ there is a $g'\in G$ such that $f(U\cap gH)=U'\cap g'H$.
\end{definition}

\begin{proof}[Proof of Theorem~\ref{qs_rigidity_theorem} using Theorem~\ref{thm_nonrigid_intro}]
By \cite[Theorem 2.12]{KMX1} we know that $f$ is a $W^{1,p}_{\loc}$-mapping for some $p>\nu$. Also, since $B(x,r)$ is connected, the sign of the determinant of Pansu differential $D_Pf(x):\fg\ra \fg$ is constant almost everywhere.  Theorem~\ref{qs_rigidity_theorem} now follows from \cite[Corollary 8.3]{KMX1}   when $G$ is a complex Heisenberg group; otherwise it follows from Theorem~\ref{thm_nonrigid_intro}  and Proposition~\ref{lem_subalgebra_ideal} below.  

\end{proof}

\bigskip

\begin{proof}[Proof of Theorem~\ref{thm_nonrigid_intro}]
Let  $f: G\supset B(x,r)\ra G$ be a $W^{1,p}$-mapping   as in the statement of the theorem.

{\em Case 1. There is  an $\aut(\fg)$-invariant linear subspace $\{0\}\neq W \subsetneq V_1$ such that $[W,V_i]=\{0\}$ for $i\geq 2$.}  Let $\fh$ be the subalgebra generated by $W$, and $H$ the (closed) subgroup with Lie algebra $\fh$.  Then the Pansu differentials of $f$ preserve the tangent space of the coset foliation of $H$, so by Lemma~\ref{lem_preservation_cosets}  the restriction of $f$ to the subball $B(x,\frac{r}{K})$ preserves the cosets of $H$, for $K=K(G)$.   So we are done in this case. 

{\em Case 2. There is no subspace $W\subset V_1$ as in Case 1.}  In this situation, by using the fact that $G$ is a nonrigid group and analyzing the algebraic implications, we are able to deduce that $G$ has a very special structure.   Specifically, we may  apply Theorem~\ref{thm_irreducible_nonrigid_structure}, so conclusions (1)-(5) hold; moreover, since $G$ is not a complex Heisenberg group, we have $n\geq 2$.  This implies that $G$ is a product quotient $\tilde G/\exp(K)$ where $\tilde G$ has at least two factors, see Definition~\ref{def_product_quotient}, Lemma~\ref{lem_converse_product_quotient}, and Remark~\ref{rem_product_quotient_same_as_classification}.   Let  $W=V_{1,i}$ for some $1\leq i\leq n$, and let $H$ be the subgroup generated by $W$;   note that since $G$ is a step $2$ Carnot group, the Lie algebra $\fh$ of $H$ satisfies $[\fh,V_i]=\{0\}$ for all $i\geq 2$.  We may apply Theorem~\ref{thm_rigidity_product_quotient} and Lemma~\ref{lem_preservation_cosets}  to obtain that the restriction of $f$ to the subball $B(p,\frac{r}{K})$ preserves the cosets of $H$, after possibly post-composing with an automorphism which permutes the subalgebras $\fg_1,\ldots,\fg_n$; this theorem is proved by means of a subtle analysis based on the pullback theorem (Sections~\ref{sec_rigidity_product_quotients}-\ref{sec_conformal_case}). Here we are using the fact that the permutation of the subalgebras $\fg_i$ induced by the Pansu differential is locally constant (by Theorem~\ref{thm_rigidity_product_quotient}), and therefore and therefore constant since the ball $B(p,r)$ is connected.
\end{proof}

\bigskip
\subsection{Promoting locally quasisymmetric homeomorphisms to locally bilipschitz homeomorphisms}

\mbox{}
We recall that it was shown in \cite{LeDonne_Xie} (see also \cite{Shan_Xie, Xie_Pacific2013}) that quasisymmetric homeomorphisms are necessarily bilipschitz provided they preserve certain types of foliations.  The purpose of this subsection is to prove a localized version of this assertion in certain cases, which include the ones arising in this paper.  These results may be viewed as refinements of the elementary observation that if $f_1:\R^k\ra \R^k$, $f_2:\R^l\ra\R^l$ are quasisymmetric homeomorphisms, then the  product 
$$
f=(f_1,f_2):\R^k\times\R^l\lra \R^k\times \R^l
$$
is quasisymmetric only if $f_1$ and $f_2$ (and hence also $f$) are bilipschitz.  The rough idea of \cite{Shan_Xie, Xie_Pacific2013,LeDonne_Xie}, in the simplest case (as above) where the leaves of the foliation are parallel,  is that for any quasisymmetric homeomorphism preserving the foliation,  the expansion/compression transverse to the foliation is comparable to the expansion/compression along the leaves.   

\bigskip

\begin{proposition}
\label{lem_subalgebra_ideal}

 Let $G$ be a Carnot group with graded Lie algebra $\fg = \oplus_{i=1}^r V_i$, and let $W \subset V_1$ be a subspace of the first stratum. 
Assume that $\{0\} \ne W \subsetneq V_1$ and 
\begin{equation} \label{eq:W_commutes}
[W, V_i] = \{0\} \quad \forall i \ge 2.
\end{equation}
Let $\fh$ be the subalgebra generated by $W$ and let $H$ be the subgroup  with Lie algebra   $\fh$.  Then there is an $L=L(\eta)$ such that any $\eta$-quasisymmetric homeomorphism 
$$
f: G\supset B(x,r)\ra U\subset G
$$  
which respects the coset foliation of $H$ (see Definition~\ref{def_preserving_coset_foliation}) is, modulo postcomposition with a suitable Carnot rescaling, $L$-bilipschitz on the subball $B(x,L^{-1}r)$.  
\end{proposition}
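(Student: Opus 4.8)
The plan is to exploit the structural hypothesis \eqref{eq:W_commutes} to show that $H$ and a complementary subgroup $H^\perp$ (generated by a complement of $W$ in $V_1$) together exhibit $G$ as a quotient of a semidirect-type product, and then run the Le Donne--Xie comparison argument on the two transverse families of cosets. First I would set up the geometry: choose an $\aut$-invariant-free but orthogonal complement $W^\perp$ of $W$ inside $V_1$ with respect to the fixed inner product, let $\fh^\perp$ be the subalgebra generated by $W^\perp$, and $H^\perp=\exp(\fh^\perp)$. Because $H$ and $H^\perp$ are both horizontally generated, they are closed, and by \eqref{eq:W_commutes} every bracket $[W,V_i]$ with $i\ge 2$ vanishes, which forces $\fh$ to be ``small'' in a precise sense (its higher layers come only from iterated brackets of $W$ with itself). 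The first key step is a volume/measure decomposition: the coset foliation of $H$ and the coset foliation of $H^\perp$ are transverse, and Haar measure on $G$ disintegrates as a product of the Haar measures on the leaves $gH$ and the quotient $G/H$ (equivalently the $H^\perp$-cosets), up to a bounded multiplicative factor coming from the grading.

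The second step is the core Le Donne--Xie argument, localized. Since $f$ respects the coset foliation of $H$, it descends to a homeomorphism $\bar f$ of (an open subset of) $G/H$, and on each leaf $gH$ it restricts to a homeomorphism onto the corresponding leaf $g'H$. Both $\bar f$ and the leaf restrictions inherit quasisymmetry (with controlled $\eta$) from $f$, using that $G/H$ and $H$ with their Carnot metrics sit inside $G$ comparably, as in Lemma~\ref{lem_preservation_cosets}. Now one runs the ``expansion transverse $\asymp$ expansion along leaves'' estimate: for a point $p$ and small radius $t$, the image $f(B(p,t))$ is pinched between a ball and a product-box whose transverse and longitudinal dimensions are each controlled by $\eta$ evaluated at the ratio of the other. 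Comparing at two nearby points and iterating the quasisymmetry inequality forces the transverse scaling factor and the longitudinal scaling factor to be comparable at a definite scale; after post-composing with the Carnot rescaling $\delta_\lambda$ that normalizes this common factor to $1$, one gets that $\bar f$ and each leaf map are $L$-bilipschitz on a subball $B(x,L^{-1}r)$, hence $f$ itself is $L$-bilipschitz there, with $L=L(\eta)$. The reduction to a subball (rather than all of $B(x,r)$) is exactly what makes the localized statement go through: the comparison only needs room to place auxiliary points inside $B(x,r)$, which costs a definite fraction of the radius.

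The main obstacle I anticipate is the genuinely non-abelian, non-product nature of the situation: unlike the toy case $\R^k\times\R^l$, the complement $H^\perp$ is only a subgroup, not a normal one, and $G$ need not be a direct product $H\times (G/H)$ or even a semidirect product in a way that makes the disintegration of Haar measure literally a product measure. The condition \eqref{eq:W_commutes} is precisely what is needed to control the ``twisting'': it guarantees that moving along an $H$-leaf does not distort the transverse directions at higher order, so that the Jacobian of $f$ along a leaf and transverse to it decouple up to bounded factors. Making this quantitative --- i.e. producing the clean box estimate $B(p',c^{-1}\rho)\cdot B_H(e,c^{-1}\sigma)\subset f(\text{box})\subset B(p',c\rho)\cdot B_H(e,c\sigma)$ with $c=c(\eta)$ and $\rho,\sigma$ the transverse/longitudinal radii --- is the technical heart, and is where I would lean most heavily on the metric comparisons established in the proof of Lemma~\ref{lem_preservation_cosets} together with the foliation-preservation hypothesis. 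Once that box estimate is in hand, the extraction of bilipschitz bounds is a routine (if slightly tedious) chase through the definition of $\eta$-quasisymmetry, following \cite{Shan_Xie, Xie_Pacific2013, LeDonne_Xie}.
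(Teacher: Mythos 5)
The high‐level slogan you invoke -- ``expansion transverse to the leaves must match expansion along the leaves'' -- is indeed the correct guiding intuition, coming from \cite{Shan_Xie, Xie_Pacific2013, LeDonne_Xie}. But your proposed implementation departs substantially from what the paper does, and in ways that leave genuine gaps.

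The most serious issue is your reliance on a complementary subgroup $H^\perp=\exp(\fh^\perp)$, where $\fh^\perp$ is generated by a complement $W^\perp$ of $W$ in $V_1$. There is no reason for $H\cdot H^\perp$ to equal $G$, and in simple examples it does not: in a model filiform algebra with $V_1=\Span\{X_1,X_2\}$ and $W=\Span\{X_2\}$, the subalgebra generated by $W^\perp=\Span\{X_1\}$ is just the line $\Span\{X_1\}$, so $H\cdot H^\perp$ is a small subset of $G$. Consequently the promised Haar-measure disintegration as a product over $H$- and $H^\perp$-leaves, the claimed quotient map $\bar f$ on $G/H$ inheriting quasisymmetry, and the ``box estimate'' $B\cdot B_H\subset f(\text{box})\subset B\cdot B_H$ are not set up correctly, and you explicitly defer their justification (``the technical heart'') rather than providing it.

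The paper's proof avoids $H^\perp$ entirely. It argues by contradiction and compactness: normalize $f_j(e)=e$, $\diam f_j(B(e,1))=1$, and assume a sequence of pairs $x_j,y_j\to e$ with $d(f_j(x_j),f_j(y_j))\le j^{-1}d(x_j,y_j)$. The two inputs you are missing are (i) the claim that one can replace $y_j$ by a point $z_j$ whose image coset $f_j(z_j)H$ is \emph{parallel} to $f_j(x_j)H$, achieved by forcing $f_j(x_j)^{-1}f_j(z_j)$ to lie in the normalizer of $H$; and (ii) the genuinely non-trivial one-sided almost-parallel estimate (Proposition~\ref{pr:one_sided_almost_parallel}), which uses the hypothesis \eqref{eq:W_commutes} via Baker--Campbell--Hausdorff to show that moving from $x_j$ along an $X\in W$-direction stays a definite factor $c\cdot r_j$ away from the transversal coset $z_jH$ in at least one of the two time directions. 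Once you have (i) and (ii), the collapse forced by quasisymmetry near $x_j$ propagates along a whole $H$-leaf segment of definite length, forcing $\diam f_j(B(e,1))\to 0$ and contradicting the normalization. Without these two ingredients -- especially Proposition~\ref{pr:one_sided_almost_parallel}, which is where the algebraic hypothesis on $W$ actually enters quantitatively -- the comparison argument you sketch cannot be closed.
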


The following result and its proof were  inspired by Lemma 4.4 in \cite{LeDonne_Xie} 
 where it is shown that cosets are either parallel or diverge at $\infty$. 
\begin{proposition} \label{pr:one_sided_almost_parallel}
Let $\fh$, $\fg$, $H$ and $G$  be as in Proposition~\ref{lem_subalgebra_ideal}. Then there exists a constant $c >0$
with the following property. If
\begin{equation}
x_0, y \in G, \quad x_0^{-1} y \notin H  \quad  \hbox{and  $X \in W$}
\end{equation}
then at least one of the following holds: 
\begin{equation} 
\label{eq:almost_parallel_plus}
d(x_0 \exp(tX), yH) \ge c d(x_0, yH) \quad \forall t \ge 0
\end{equation}
or 
\begin{equation} 
d(x_0 \exp(tX), yH) \ge c d(x_0, yH) \quad \forall t \le 0.
\end{equation}
\end{proposition}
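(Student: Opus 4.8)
The plan is to exploit the hypothesis $[W, V_i] = \{0\}$ for $i \geq 2$, which is the structural heart of the statement. Because $W$ commutes with all higher strata, the subalgebra $\fh$ generated by $W$ is simply $\fh = W \oplus [W,W]$ (a step $\le 2$ nilpotent Lie algebra sitting inside $\fg$), and more importantly the cosets $yH$ are ``flat'' in the directions transverse to $\fh$ in a very controlled way. I would first set up coordinates adapted to a splitting $\fg = \fh \oplus \fm$ where $\fm$ is a graded complement, and write $x_0^{-1} y$ in these coordinates; the condition $x_0^{-1} y \notin H$ means the $\fm$-component (or at least its $V_1$-part, once we project appropriately) is nonzero. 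The quantity $d(x_0 \exp(tX), yH)$ should be understood via the projection $G \to G/H$ of left cosets (note $H$ need not be normal, but $G/H$ is still a manifold and the distance to $yH$ is a well-behaved function on $G$).

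**Key computation.** The main step is to understand how $d(x_0 \exp(tX), yH)$ varies with $t$. Write $x_0 = y h z$ for suitable $h \in H$ and $z$ in a transversal slice, so that $d(x_0, yH) \asymp \|z\|$ in a suitable homogeneous sense. Then $x_0 \exp(tX) = y h z \exp(tX)$, and I would commute $\exp(tX)$ past $z$ using the Baker–Campbell–Hausdorff formula. Because $X \in W$ and $[W, V_i] = 0$ for $i \geq 2$, the only commutator terms that survive involve $[X, (\text{the } V_1\text{-part of } z)]$, which lands in $[W, V_1] \subset V_2$. The upshot is that $z \exp(tX) = \exp(tX') \cdot z'$ where $X'$ is a horizontal translate and $z'$ is a ``path'' in the transversal slice whose displacement from $z$ is of the form (something linear in $t$) $\oplus$ (something bilinear, $[X, \cdot]$, in $t$). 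Crucially the transversal component moves along an affine line (or affine-plus-quadratic curve) in the complement $\fm$, parametrized by $t$, with the $t=0$ point at $z \ne 0$. The distance to $yH$ is then controlled below by the homogeneous norm of this transversal component.

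**Extracting the dichotomy.** Now the ``one-sided'' alternative appears exactly as in Lemma 4.4 of \cite{LeDonne_Xie}: an affine curve $t \mapsto z + tv + t^2 w$ in a normed vector space either stays bounded below by a fixed fraction of $\|z\|$ for all $t \ge 0$, or for all $t \le 0$ (it can fail on both sides only if the curve actually passes near the origin, which forces a specific geometric configuration; but then it can pass near the origin only once, so it is bounded away on at least one ray). I would make this elementary planar/linear-algebra fact precise with an explicit constant $c$ depending only on the dimension and the choice of norms — hence only on $G$ and the choice of complement $\fm$ — and then transfer it back through the comparability $d(\cdot, yH) \asymp \|(\cdot)_{\fm}\|$ to get the stated inequality for $d(x_0 \exp(tX), yH)$ versus $d(x_0, yH)$.

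**Main obstacle.** The delicate point is the comparison between the Carnot–Carathéodory distance $d(x_0 \exp(tX), yH)$ and the homogeneous norm of the transversal ($\fm$-valued) component of $x_0 \exp(tX)$ relative to $y$ — i.e., making ``distance to the coset $\asymp$ size of transversal coordinate'' rigorous, uniformly. For a normal subgroup this would be immediate via $G/H$; here $H$ is only horizontally generated, so I would either work on the homogeneous space $G/H$ directly (using that left translation by $y$ acts isometrically and that the CC metric on $G$ descends to a well-behaved quotient pseudometric), or prove the comparison by a compactness-plus-dilation argument: the function $x \mapsto d(x, eH)/\|\proj_{\fm}(x)\|$ is continuous and positive on a compact ``unit sphere'' transversal to $H$, hence bounded above and below, and this extends to all scales by the dilation covariance of both $d$ and $H$ (the latter using that $\fh$ is graded). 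Once this comparison is in hand with a uniform constant, the rest is the BCH bookkeeping and the elementary affine-curve dichotomy, both of which are routine. I expect the affine-curve step to be short and the distance-comparison step to require the most care.
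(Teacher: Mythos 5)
Your starting points match the paper's: both exploit $[W,V_i]=\{0\}$ for $i\geq 2$ (which forces $\fh=W\oplus[W,W]$ and $(\ad A)^2=0$ for $A\in\fh$), both run a Baker--Campbell--Hausdorff expansion of $x(t)^{-1}y^{-1}x(t)$ in which the only $t$-dependence is a single linear term $t[X,Y_1]\in V_2$, and both cite Lemma~4.4 of LeDonne--Xie as the inspiration. The difference is in how the dichotomy is extracted, and this is where your proposal has a real gap.

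The issue is that $d(x_0\exp(tX),yH)$ is not the quasinorm of a single curve; it is $\min_{X'\in\fh}\bigl\|-Y-X'+t[X,Y_1]+N(-X'_1,-Y_1)\bigr\|$, so the optimizer $X'$ changes with $t$. For each fixed $X'$ the map $t\mapsto\|\cdots\|$ is unimodal (the $t$-dependence sits in a single layer), and a one-sided dichotomy with $c=1$ is easy. But the pointwise minimum of a family of unimodal functions is not unimodal, so the ``affine curve passes near the origin only once'' heuristic is not available for $\phi(t)=d(x(t),yH)$ itself, and your reduction to an elementary affine-curve fact doesn't go through. A related issue: you also need a uniform $c$ over all $y,X$, which the affine-curve picture does not obviously deliver. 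The paper handles both problems at once with a compactness argument: rescale so $d(e,y)=d(e,yH)=1$, suppose the dichotomy fails along a sequence $(y_n,X_n,s_n,-t_n)$, record the two near-optimal $X_n^+,X_n^-\in\fh$ and the ratio $\lambda_n=t_n/s_n\in(0,1]$, pass to subsequential limits layer by layer, and conclude $\bar Y\in W\oplus[W,W]=\fh$, contradicting $d(e,y_nH)=1$. This route also sidesteps entirely the delicate ``distance to coset $\asymp$ transversal norm'' comparison you flag as the main obstacle: the paper never projects onto a complement $\fm$, working directly with the minimum over $\fh$. Finally, the $t^2w$ term in your ansatz does not actually occur here, since $(\ad A)^2=0$ for $A\in\fh$ makes the conjugation exactly linear in $A_1$; the higher-order corrections that do appear (e.g.\ the $N(-X'_1,-Y_1)$ term) are independent of $t$, which is precisely what makes the compactness argument close.
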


Replacing $y$ by $x_0^{-1} y$ we see that it suffices to prove the proposition for $x_0 = e$. 
The heart of the matter is to estimate the distance from $\exp(tX)^{-1} y^{-1} \exp(tX)$ to $H$ from below. 
The key calculation is contained in the following proposition.
Recall   that the exponential map $\exp:\fg\ra G$ is a diffeomorphism, so we may pull back the group operation to the algebra.  We denote  by $\ast$ the resulting operation, i.e.
$\exp(A * B) = \exp A \exp B$. For $A \in \fg$ we denote by $A_i$ the projection to the $i$-th stratum.

\begin{lemma}
Let $\fh$ be as in Proposition~\ref{pr:one_sided_almost_parallel}.
Define
$$N(A,B) := A \ast B - (A + B).$$
Then 
\begin{equation} \label{eq:almost_parallel_commute_h2}
[[W, W], \fg] = \{0\}
\quad \hbox{and hence} \quad \fh = W \oplus [W,W].
\end{equation}
Moreover, 
\begin{equation} \label{eq:almost_parallel_induct_commute}
[X, \oplus_{i \ge 2} V_i] = \{0\}  \quad \Longrightarrow \quad [ [X, \fg], \oplus_{i \ge 2} V_i] = \{0\}, 
\end{equation}
and for all $A, A' \in \fh$ and $B \in \fg$
\begin{eqnarray} \label{eq:conjugation_compression}
(-A) \ast B \ast A &=& B - [A_1, B_1],   \\ 
 \label{eq:bch_compression}
N(A',B) &=& N(A'_1, B_1),\\ 
\label{eq:full_bch_compression}
 A' * (- A)  \ast B \ast A &=& A' + B   - [A_1,B_1] + N(A'_1, B_1).
\end{eqnarray}
\end{lemma}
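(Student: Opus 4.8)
The plan is to exploit the structural hypotheses on $\fh$ and $W$ step by step, first establishing the algebraic facts \eqref{eq:almost_parallel_commute_h2} and \eqref{eq:almost_parallel_induct_commute}, and then feeding these into a Baker--Campbell--Hausdorff computation to obtain the conjugation and translation formulas \eqref{eq:conjugation_compression}--\eqref{eq:full_bch_compression}.

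First I would prove \eqref{eq:almost_parallel_induct_commute}. Suppose $X\in V_1$ with $[X,\oplus_{i\ge 2}V_i]=\{0\}$. Given $Y\in\fg$ and $U\in\oplus_{i\ge 2}V_i$, I want $[[X,Y],U]=0$. Since $G$ has step $r$, write $Y=\sum_j Y_j$ with $Y_j\in V_j$; for $j\ge 2$ we have $[X,Y_j]=0$, so $[X,Y]=[X,Y_1]\in V_2\subset\oplus_{i\ge 2}V_i$. Now use the Jacobi identity: $[[X,Y_1],U]=[X,[Y_1,U]]-[Y_1,[X,U]]$. The second term vanishes because $[X,U]=0$ by hypothesis. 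For the first term, $[Y_1,U]\in\oplus_{i\ge 3}V_i\subset\oplus_{i\ge 2}V_i$, so $[X,[Y_1,U]]=0$ again by hypothesis. Hence \eqref{eq:almost_parallel_induct_commute} holds. Applying this with $X$ ranging over a basis of $W$ (using \eqref{eq:W_commutes} as the base case), and iterating, one gets that every element of $\fh$ commutes with $\oplus_{i\ge 2}V_i$; in particular $[W,[W,W]]=\{0\}$, and then for any $A\in[W,W]$ and any $B\in\fg$ one has $[A,B]=[A,B_1]$, and since $A\in[W,W]\subset V_2$ while $B_1\in V_1$, iterating once more (or using that $[A,B_1]\in V_3\subset\oplus_{i\ge 2}V_i$ commutes with $W$ and hence with $\fh$) forces $[[W,W],\fg]=\{0\}$. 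This gives \eqref{eq:almost_parallel_commute_h2}; the decomposition $\fh=W\oplus[W,W]$ then follows because $W$ generates $\fh$ and $[W,W]$ is central in $\fh$, so no higher brackets can be produced.

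Next I would turn to the conjugation and BCH identities. The central point, established above, is that every $A\in\fh$ satisfies $[A,\oplus_{i\ge 2}V_i]=\{0\}$ and that $\fh$ is two-step with $[\fh,\fh]=[W,W]$ central in $\fg$. For \eqref{eq:conjugation_compression}, expand $(-A)\ast B\ast A$ using BCH. All iterated brackets involving $A$ either contain a factor $[A,\text{(something in }\oplus_{i\ge2}V_i)]$ — which vanishes — or reduce to $[A,B_1]$; moreover $[A,B_1]=[A_1,B_1]+[A_2,B_1]$ and $A_2\in[W,W]$ is central, so $[A_2,B_1]=0$, giving $[A,B_1]=[A_1,B_1]$. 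One checks that the only surviving term in the BCH expansion of $(-A)\ast B\ast A$ beyond $B$ is $-[A_1,B_1]$ (the symmetric combination $\tfrac12[-A,B]+\tfrac12[B,A]=[B,A]=-[A_1,B_1]$, and all higher terms die), yielding \eqref{eq:conjugation_compression}. For \eqref{eq:bch_compression}, $N(A',B)=A'\ast B-(A'+B)$ is, by BCH, a sum of iterated brackets each with at least one $A'$-entry and at least one $B$-entry; every such bracket either hits a factor $[A',(\text{degree}\ge 2)]=0$ or reduces to an expression in $[A'_1,B_1]$ and its further brackets, which vanish since $[A'_1,B_1]\in V_2$ is killed by $\fh$ and by the degree-$\ge2$ part of $\fg$ in every way it could be bracketed further — so $N(A',B)$ depends only on $A'_1$ and $B_1$, which is \eqref{eq:bch_compression}. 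Finally \eqref{eq:full_bch_compression} follows by combining \eqref{eq:conjugation_compression} and \eqref{eq:bch_compression}: write $A'\ast\big((-A)\ast B\ast A\big)=A'\ast\big(B-[A_1,B_1]\big)=A'+\big(B-[A_1,B_1]\big)+N\big(A',B-[A_1,B_1]\big)$, and then observe that $N(A',B-[A_1,B_1])=N(A'_1,(B-[A_1,B_1])_1)=N(A'_1,B_1)$ since $[A_1,B_1]\in V_2$ contributes nothing to the first stratum.

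The main obstacle I anticipate is the bookkeeping in the BCH expansions of \eqref{eq:conjugation_compression}--\eqref{eq:full_bch_compression}: one must argue cleanly that \emph{all} higher-order Dynkin terms vanish, and the cleanest way is probably not to expand BCH term by term but to work in the quotient-friendly setup and use repeatedly the single structural fact ``$\fh$ commutes with $\oplus_{i\ge2}V_i$ and $[\fh,\fh]$ is central,'' which collapses any bracket monomial containing two $\fh$-entries or one $\fh$-entry and one higher-stratum entry. Once that reduction principle is isolated and stated as a sublemma, the three formulas follow by short direct checks. Everything else — \eqref{eq:almost_parallel_commute_h2} and \eqref{eq:almost_parallel_induct_commute} — is a routine Jacobi-identity induction on the grading.
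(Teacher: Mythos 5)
Your overall strategy matches the paper's: prove \eqref{eq:almost_parallel_induct_commute} and \eqref{eq:almost_parallel_commute_h2} by Jacobi, then feed these structural facts into Baker--Campbell--Hausdorff to kill or simplify the higher terms. However, a couple of details deserve flagging. For \eqref{eq:conjugation_compression} the paper avoids term-by-term BCH bookkeeping by using the conjugation identity $(-A)\ast B\ast A = e^{-\ad A}B$; since $[A,\oplus_{i\ge 2}V_i]=0$ one gets $(\ad A)^2B = [A,[A,B_1]]=0$, so the exponential truncates to $B-[A,B]=B-[A_1,B_1]$ immediately. This is the cleaner route you hypothesized would exist. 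For \eqref{eq:bch_compression} your argument as written is not quite right: you assert that the ``further brackets'' of $[A'_1,B_1]$ vanish because $[A'_1,B_1]\in V_2$ is killed by $\fh$ and by $\oplus_{i\ge 2}V_i$. But $[A'_1,B_1]$ may also be bracketed against $B_1\in V_1$, and $(\ad B_1)^j A'_1$ for $j\ge 2$ need not vanish. The correct statement, which the paper supplies, is that the only surviving BCH monomials are of the form $(\ad B)^jA'$ (any second occurrence of $A'$ kills the term since it would hit something in $\oplus_{i\ge 2}V_i$), and then $(\ad B)^jA'=(\ad B_1)^jA'_1$ by \eqref{eq:almost_parallel_commute_h2} together with an inductive application of \eqref{eq:almost_parallel_induct_commute}; these terms are not zero, but they are the same as the corresponding terms for $N(A'_1,B_1)$, which is exactly what \eqref{eq:bch_compression} claims. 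Your derivation of \eqref{eq:full_bch_compression} from \eqref{eq:conjugation_compression} and \eqref{eq:bch_compression}, and your handling of \eqref{eq:almost_parallel_commute_h2}--\eqref{eq:almost_parallel_induct_commute}, coincide with the paper's.
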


\begin{proof} The  identities in   \eqref{eq:almost_parallel_commute_h2}
and  \eqref{eq:almost_parallel_induct_commute} follow from the Jacobi identity.
  To prove  \eqref{eq:conjugation_compression} recall that 
  $(- A) \ast B \ast A = e^{-\ad A} B$ and use that $(\ad A)^2=0$ since $[A, \oplus_{i \ge 2} V_i] = 0$. 
  The Baker-Campbell-Hausdorff formula implies that $N(A',B)$ is a linear combination of
  iterated commutators of $A'$ and $B$. Since $[A',\oplus_{i \ge 2} V_i] = 0$ only commutators of the 
  form $(\ad B)^j A'$ appear. Now \eqref{eq:almost_parallel_commute_h2} and 
  inductive application of  \eqref{eq:almost_parallel_induct_commute}
  yield $(\ad B)^j A' = (\ad B)^j A'_1 = (\ad B_1)^j A'_1$. This implies \eqref{eq:bch_compression}.
Finally  \eqref{eq:full_bch_compression} follows by applying \eqref{eq:bch_compression} with 
$B$ replaced with $(-A) * B * A$ and using  \eqref{eq:conjugation_compression}.
\end{proof}

Recall that we may define a quasinorm on $\fg$ by
$$
\| X \|:=\sum_j\|X_j\|_0^{\frac{1}{j}}\,,
$$
where $\|\cdot\|_0$ denotes some fixed Euclidean norm on $\fg$, and $X=\sum_jX_j$ is the layer decomposition of $X$. 
The distance $d$ on $G$ is equivalent to the quasinorm, i.e. 
$C^{-1} \| A \| \le d(\exp A, e) \le C \| A\|$, see \cite[2.15 and 2.19]{heinonen_calculus_carnot_groups} or \cite[Proposition 11.15]{hajlasz_koskela_sobolev_met_poincare}.

\begin{proof}[ Proof of Proposition~\ref{pr:one_sided_almost_parallel}]
We may assume $x_0=e$.  Using a good representative of $yH$ and a suitable dilation we may further assume that
$d(e,y) = d(e, yH) = 1$.
Let 
$ x(t) = \exp(tX)$.  
Then
\begin{equation} \label{eq:almost_parallel_dist}
\begin{aligned}
d( x(t), yH) &= d(y^{-1} x(t), H) = d(x(t)^{-1} y^{-1} x(t), H)\\
& = \min_{x' \in H} d( x'^{-1} x(t)^{-1} y^{-1} x(t) , e).
\end{aligned}
\end{equation}
Let 
$ y = \exp Y$ and 
$x' = \exp X'$.
Then \eqref{eq:full_bch_compression} implies that  
\begin{equation}  \label{eq:almost_parallel_XprimeZ}
\begin{aligned}
&\exp^{-1} (x'^{-1}  x(t)^{-1} y^{-1} x(t)) \\=& (-X') \ast (-tX) \ast (-Y) \ast (tX)\\
=&  -Y - X'  +  t[X,Y_1] + N(-X'_1, -Y_1).
\end{aligned}
\end{equation}

Thus, if the assertion is false then there exist 
$$Y_n \in \fg, \quad X_n \in W, \quad s_n > 0, \quad t_n > 0, \quad X_n^\pm \in \fh$$
such that 
\begin{equation} \label{eq:almost_parallel_contra1}
d(e, y_n) = d(e, y_n H)  = 1,  \quad y_n = \exp Y_n
\end{equation} 
and
\begin{eqnarray} \label{eq:almost_parallel_contra2}
\|  -Y_n - X_n^+  +  s_n [X_n,Y_{n,1}] + N(-X^+_{n,1}, -Y_{n,1})\| &\to&  0,\\
 \label{eq:almost_parallel_contra2bis}
\|  -Y_n - X_n^-  -  t_n [X_n,Y_{n,1}] + N(-X^-_{n,1}, -Y_{n,1})\| &\to&  0.
\end{eqnarray} 
Changing the sign of $X_n$ if needed we may assume that 
$ \lambda_n := t_n/s_n\le 1$.
By assumption $d(\exp Y_n, e) = 1$. Hence $\| Y_n\|$ is bounded. 
Passing to  subsequences we may assume that
$$ Y_n \to \bar Y, \quad \lambda_n = \frac{t_n}{s_n} \to \lambda \in [0,1].$$
Using  \eqref{eq:almost_parallel_contra2} 
and  \eqref{eq:almost_parallel_contra2bis}
we get for the first layer quantities 
\begin{eqnarray} \label{eq:almost_parallel_first_layer}
X_{n,1}^\pm &\to& - \bar Y_1  \quad \hbox{and} \quad \bar Y_1 \in \fh.
\end{eqnarray}
Hence 
\begin{equation}  \label{eq:almost_parallel_first_layer_bis}
N(-X^\pm_{n,1}, -Y_{n,1}) \to N(\bar Y_1, - \bar Y_1) = 0.
\end{equation}
Note that $X_n \in V_1$ and thus $[X_{n}, Y_{n,1}] \in V_2$.  Moreover $X^\pm_n  \in \fh \subset V_1 \oplus V_2$. Thus 
 \eqref{eq:almost_parallel_contra2}  
 and \eqref{eq:almost_parallel_first_layer_bis}
 imply
\begin{equation}
\bar Y_i  = 0   \quad \forall i \ge 3. \label{eq:almost_parallel_ith_layer}
\end{equation}
It remains to analyze the behaviour in the second stratum. 
Set 
$ a_n = s_n [X_n, Y_{n,1}]=s_n[X_{n,1},Y_{n,1}]$. Equations 
 \eqref{eq:almost_parallel_contra2},  \eqref{eq:almost_parallel_contra2bis} 
 and \eqref{eq:almost_parallel_first_layer_bis}
and the definition $\lambda_n = t_n/ s_n$ imply that 
\begin{eqnarray}
-Y_{n,2} - X_{n,2}^+ + a_n  \to 0, \quad  
-Y_{n,2} - X_{n,2}^- - \lambda_n  a_n \to 0.
\end{eqnarray}
Let $P^+: V_2 \to V_2$ denote the orthogonal projection onto the orthogonal complement of $\fh\cap V_2 = [W,W]$.
Then we get 
$$ P^+ a_n \to P^+ \bar Y_2,  \quad  \lambda_n P^+ a_n \to - P^+  \bar Y_2.$$
Since $\lambda_n \to \lambda \in [0,1]$ we deduce that $\lambda P^+ \bar Y_2 = - P^+ \bar Y_2$
and hence $P^+ \bar Y_2 = 0$. Together with \eqref{eq:almost_parallel_first_layer}
and \eqref{eq:almost_parallel_ith_layer} it follows that 
$ Y_n \to   \bar Y \in \fh$
and hence $(y_n)^{-1} = \exp (-Y_n) \to \exp (-\bar Y)  \in H$. This contradicts the assumption
$d((y_n)^{-1}, H) = d(e, y_n H) = 1$.
\end{proof}

\bigskip\bigskip
\begin{proof}[Proof of Proposition~\ref{lem_subalgebra_ideal}]
It suffices to consider the case when $x=e$, $r=1$, $f(e)=e$, and the image $f(B(e,1))$ has diameter $1$.   We will show that for $L=L(\eta)$, and  every $x,y\in B(e,L^{-1})$, we have $d(f(x),f(y))\geq L^{-1}d(x,y)$.  The lemma then follows by applying this estimate to the inverse homeomorphism, and adjusting $L$.

To prove the assertion by contradiction, assume that there is a sequence $\{f_j:B(e,1)\ra U\subset G\}$ of $\eta$-quasisymmetric homeomorphisms with $f(e)=e$, $\diam(f_j(B(e,1)))=1$, and there are sequences $\{x_j\},\{y_j\}\subset B(e,j^{-1})$ with 
$d(f_j(x_j),f_j(y_j))\leq j^{-1}d(x_j,y_j)$.  By $\eta$-quasisymmetry and our normalization, it follows that $r_j:=d(x_j,y_j)\ra 0$ and $d(f_j(x_j), e) \to 0$.
Let $K = \{ g \in G :  g^{-1} h g \in H \, \, \forall h \in H\}$ be the normalizer of $H$ in $G$ and let $\rho_j = d(f_j(x_j), f_j(y_j))$.

\begin{claim}
There exist $z_j$ such that
\begin{equation}  \label{eq:exists_good_zj}
d(x_j, z_j H) = r_j, \quad d(f_j(x_j), f_j(z_j) H) \le C_1 \rho_j
\end{equation}
where $C_1 = \eta(1)$ 
and
\begin{equation} \label{eq:images_parallel}
f_j(x_j)^{-1} f_j(z_j) \in K.
\end{equation}
\end{claim}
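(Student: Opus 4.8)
The claim asks us to find points $z_j$ that are, simultaneously, at exact distance $r_j = d(x_j,y_j)$ from $x_j$ within their $H$-coset (measured transversally: $d(x_j, z_jH) = r_j$), comparably close after applying $f_j$ ($d(f_j(x_j), f_j(z_j)H) \le C_1\rho_j$), and whose images lie in the same $H$-coset in the normalizer sense ($f_j(x_j)^{-1}f_j(z_j)\in K$, i.e. $f_j(x_j)H$ and $f_j(z_j)H$ are ``parallel''). The natural strategy is to first produce \emph{some} point $w_j$ with $d(x_j, w_jH) = r_j$ and $w_jH$ not equal to $x_jH$, then push it through $f_j$, and then apply Proposition~\ref{pr:one_sided_almost_parallel} to correct it to a genuinely parallel coset. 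Concretely, I would start with $w_j = y_j$ itself if $y_j\notin x_jH$; otherwise $y_j\in x_jH$ and I would instead pick a point on a horizontal segment $\exp(tX)$, $X\in W$, emanating from $x_j$, transverse to $H$, at the scale $r_j$ --- the subspace $W$ is nontrivial and the cosets of $H$ foliate $G$, so such transverse directions exist (moving in a $V_1$-direction not in $W$ leaves the coset $x_jH$).

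\emph{The main construction.} Given the preliminary point $w_j$ with $d(x_j, w_jH) = r_j$, consider $f_j(w_j)$. Since $f_j$ respects the coset foliation, $f_j(w_jH\cap B(e,1)) = f_j(w_j)H \cap U$, and similarly for $x_j$. I want to replace $w_jH$ by a coset $z_jH$ that is parallel to $f_j(x_j)H$ on the image side. The key tool is the dichotomy in Proposition~\ref{pr:one_sided_almost_parallel}: for any non-parallel pair of cosets, moving along a $W$-direction from a point on one coset \emph{increases} the distance to the other coset (by a fixed factor $c$) in at least one of the two time directions. Running this on the image side: if $f_j(x_j)H$ and $f_j(w_j)H$ are not parallel, then along $f_j(x_j)\exp(tX')$ the distance to $f_j(w_j)H$ grows, so by following the preimage foliation leaf through $x_j$ (which $f_j$ maps into $f_j(x_j)H$) out to large scale and using $\eta$-quasisymmetry, I can find a point $z_j$ on the leaf $x_jH$-transverse configuration --- more precisely I should look for $z_j$ in the leaf $w_jH$ near $w_j$ --- hmm, let me reconsider: the parallelism must hold for the images, so I should parametrize candidate cosets $z_jH$ by sliding $z_j$ along a horizontal direction transverse to $H$ and use an intermediate-value / compactness argument, choosing $z_j$ so that $f_j(x_j)^{-1}f_j(z_j)$ lands in $K$; Proposition~\ref{pr:one_sided_almost_parallel} guarantees that outside of $K$ the relevant distance is bounded below, which both constrains the search and, via quasisymmetry (the distortion function $\eta$ applied at ratio $1$ gives $C_1 = \eta(1)$), yields the bound $d(f_j(x_j), f_j(z_j)H)\le C_1\rho_j$ once $d(x_j, z_jH) = r_j = d(x_j,y_j)$.

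\emph{The hard part.} The genuinely delicate step is arranging \eqref{eq:images_parallel} --- that $f_j(x_j)H$ and $f_j(z_j)H$ are exactly parallel, not merely close --- while keeping the transverse distance on the source side pinned at exactly $r_j$ and not losing quantitative control of the image-side distance. On $\R^k\times\R^l$ this is trivial (all cosets are parallel), but here ``parallel'' means lying in a coset of the normalizer $K$, and a priori nothing forces the image cosets into such a position. The resolution should come from combining Proposition~\ref{pr:one_sided_almost_parallel} with the coset-foliation-preserving property: if $f_j(x_j)^{-1}f_j(z_j)\notin K$ then by the Proposition the distance $d(f_j(x_j)\exp(tX'), f_j(z_j)H)$ stays $\ge c\,d(f_j(x_j), f_j(z_j)H)$ for all $t$ of one sign, and following this back through $f_j^{-1}$ (which also preserves the foliation, being a homeomorphism respecting leaves) one sees that the leaf $x_jH$ would have to diverge from the leaf $z_jH$ at a definite rate --- but both leaves pass through a small ball around $e$ relative to $\diam f_j(B(e,1)) = 1$, and this divergence, propagated by quasisymmetry, forces $f_j(x_j)^{-1}f_j(z_j)$ back into $K$ after an appropriate choice of $z_j$ along the transverse family. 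So the claim will follow by a contradiction/selection argument internal to the already-fixed contradictory sequence $f_j$, and the main work is bookkeeping the scales $r_j, \rho_j, \diam f_j(B(e,1))$ through $\eta$ and verifying the constant is $C_1 = \eta(1)$.
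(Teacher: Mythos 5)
You correctly identify that the delicate point is the ``parallel'' condition \eqref{eq:images_parallel}, and you correctly intuit that some intermediate-value argument is needed. But you never actually construct $z_j$ with $f_j(x_j)^{-1}f_j(z_j)\in K$, and the strategy you outline cannot produce one. You propose sliding $z_j$ along a one-parameter horizontal curve in the \emph{source}. The set of points $z$ satisfying $f_j(x_j)^{-1}f_j(z)\in K$ is the $f_j$-preimage of the coset $f_j(x_j)K$, which has positive codimension in $G$ whenever $K\neq G$; a generic one-parameter horizontal slide in the source has no reason to intersect it at the scale you need, and no selection principle you cite forces it to. You acknowledge this yourself (``a priori nothing forces the image cosets into such a position''), and the proposed ``resolution'' in the final paragraph is hand-waving that invokes Proposition~\ref{pr:one_sided_almost_parallel} as if it could \emph{produce} parallelism; that proposition only gives a one-sided lower bound on the distance from a segment to a non-parallel coset, i.e.\ it is a measuring tool, not a correcting tool, and it in fact plays no role in the paper's proof of the Claim (it is used only \emph{after} the Claim).

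The move you are missing is to parametrize on the \emph{image} side. The paper fixes $\zeta\in K\setminus H$ with $d(\zeta,H)=d(\zeta,e)=1$, and for $s\ge 0$ defines $z_j(s):=f_j^{-1}\bigl(f_j(x_j)\,\delta_s\zeta\bigr)$. Then $f_j(x_j)^{-1}f_j(z_j(s))=\delta_s\zeta\in K$ is \emph{automatic} because $K$, being the normalizer of $H$ (a Lie subgroup determined by a graded subalgebra), is invariant under Carnot dilations. This turns \eqref{eq:images_parallel} into a built-in feature rather than something to be engineered. The remaining two conditions then follow from a single intermediate-value argument applied to $g(s)=d(x_j,z_j(s)H)$: $g$ is continuous, $g(0)=0$, and the assumption $g(C_1\rho_j)<r_j$ contradicts $\eta$-quasisymmetry (which is where $C_1=\eta(1)$ enters). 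A minor further slip: you write ``a horizontal segment $\exp(tX)$, $X\in W$, emanating from $x_j$, transverse to $H$,'' but $X\in W\subset\fh$ moves \emph{within} the coset $x_jH$, not transverse to it; you need $X\in V_1\setminus W$, as your own parenthetical remark then says. As written, the proposal does not constitute a proof.
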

Note that  \eqref{eq:images_parallel} implies that the cosets $f_j(x_j)H$ and $f_j(z_j) H$ are parallel and in particular
\begin{equation}  \label{eq:images_parallel2}
\begin{aligned}
d(f_j(x_j) h, &f_j(z_j) H) = d(f_j(x_j), f_j(z_j) H) \\
&\le C_1 \rho_j \le   C_1 j^{-1} r_j \quad \forall h \in H.
\end{aligned}\end{equation}
\begin{proof}[Proof of claim]
We first note that $K \ne H$; this holds more generally see, e.g., \cite[Lemma 4.2]{LeDonne_Xie}, however in our case one can see this as follows. By equation (\ref{eq:W_commutes}) from the hypotheses, the normalizer contains $\oplus_{i\geq 2}V_i$,  so we are done unless $\fh$ contains $\oplus_{i\geq 2}V_i$, in which case $\fh$ is an ideal, so its normalizer is all of $\fg$.

To see that such a $z_j$ exists, note that there exists $\zeta \in K$ such that $d(\zeta, H) = d(\zeta, e) \ne 0$. Applying a Carnot dilation we may assume in addition that
$d(\zeta, e) = 1$. For $s> 0$  let $\de_s$ denote the Carnot
   dilation by $s$  and define
$$ g(s) := d(x_j,  f_j^{-1}( f_j(x_j) \de_s \zeta) H).$$
If there exists an $s_j \in [0, C_1 \rho_j]$ such that   $g(s_j) = r_j$  then $z_j = f_j^{-1}(f_j(x_j) \de_{s_j} \zeta)$ has the desired properties    as $\delta_s(K)=K$.
Now  $g$ is continuous, $g(0) = 0$ and so  it suffices to show that $g(C_1 \rho_j)  \ge r_j$. 
Assume $g(C_1 \rho_j) < r_j$ and let $s  =  C_1 \rho_j$. Then there exist $\bar z_j \in  f_j^{-1}( f_j(x_j) \de_s \zeta) H$
such that $d(x_j, \bar z_j) < r_j$. By quasisymmetry  we get a contradiction:   $ d(x_j,  \bar{z}_j)<r_j=d(x_j, y_j)$, so 
 $$s=d(f_j(x_j), f_j(\bar z_j) H)  \le  d(f_j(x_j), f_j(\bar{z}_j))<C_1  d(f_j(x_j), f_j(y_j))=s.$$
   Here we are using the fact that the distortion function $\eta: [0, \infty) \to [0, \infty)$ is chosen as strictly increasing (i.e. a homeomorphism). 
\end{proof}

\bigskip
Let $X \in \fh\cap V_1$   with $\| X \| = 1$.  After passing to a subsequence and replacing $X$ by $-X$ if necessary, by 
Proposition~\ref{pr:one_sided_almost_parallel} we may assume that 
$$d(x_j \exp(tX), z_j H) \ge c d(x_j, z_j H) = c r_j   \quad \forall t \in [0, \infty)$$
where $c = c(H,G)$. 
Let $t \in [0, \frac12]$ and let $x'_j = x_j \exp(tX)$. Then $x'_j \in B(e, \frac34)$ (if 
   $j>4$  ). 
Note that since the open ball $B(x_j', c r_j)$ is disjoint from the coset $z_j H$, the image $f_j(B(x_j', c r_j))$ is disjoint from $f_j(z_jH\cap B(e,1))$, 
 which has distance at most $C_1j^{-1}r_j$ from $f_j(x'_j)$.  It follows that $\diam(f_j(B(x_j',c r_j)))\leq C_2j^{-1}r_j$ for $C_2=C_2(\eta)$.
 For large $j$, we may find a finite sequence of points $x_j=x_{j,1},\ldots,x_{j,N_j}$ of the form 
 $x_{j, i} = x_j \exp(t_i X)$ with $t_i \in [0, \frac12]$,
  where $N_j\leq c^{-1} r_j^{-1}$, $d(x_{j,k},x_{j,k+1})\leq c r_j$, and $t_{N_j} = \frac12$ so that 
  $$d(e, x_{j, N_j}) \ge  d(x_j, x_j \exp \frac12 X) - j^{-1} \ge  \frac14.$$ 
Here we have used the fact that   $d(e, \exp \frac12 X) = \frac12$ for $X \in V_1$ and $\| X\| = 1$, which follows from the definition of the Carnot distance.  
 It follows that 
\begin{align*}
d(f_j(x_j),f_j(x_{j,N_j}))&\leq \sum_k d(f_j(x_{j,k}),f_j(x_{j,k+1}))\\
&\leq c^{-1} r_j^{-1}\cdot C_2j^{-1}r_j\leq c^{-1} C_2j^{-1}\lra 0 
\end{align*}
as $j\ra \infty$.  Thus $d(e, f_j(x_{j, N_j})) \to 0$.  Since $d(e, x_{j, N_j}) \ge \frac14$ (for large $j$) we have
$f_j(B(e,1)) \subset B(e, \eta(4) d(e, f_j(x_{j,N_j}))$. This contradicts our normalization $\diam(f_j(B(e,1))=1$
     . \\

\end{proof}

\section{Structure of nonrigid Carnot groups}
\label{sec_structure_nonrigid_irreducible_first_layer}

In this section we analyze the algebraic structure of graded Lie algebras corresponding to Carnot groups which are nonrigid in the sense of Ottazzi-Warhurst.  Starting from the fact that the first layer must contain nontrivial elements with rank at most $2$ \cite{ottazzi_warhurst} (see also \cite{doubrov_radko}), we establish a trichotomy: either the first layer contains a special type of automorphism invariant subspace, or the Carnot group is $\R^n$ or a real Heisenberg group, or the Carnot  group  must have a very special structure  -- it must be a quotient of a product of real or complex Heisenberg groups by a specific type of subgroup (see Theorem~\ref{thm_irreducible_nonrigid_structure}).  This result is a variation of an unpublished classification theorem of the third author \cite{Xie_quasiconformal_on_non_rigid}.  In the concluding section we show that such product quotients admit a canonical graded product decomposition into factors whose second layers admit automorphism invariant conformal structures.

After some preliminaries, the structure theorem (Theorem \ref{thm_irreducible_nonrigid_structure})         will be given  in Subsection~\ref{subsec_irred_classification}.  We then discuss the characterization in terms of quotients in Subsection~\ref{subsec_characterization_quotients}, and in Subsection~\ref{subsec_decomposition_conformally_compact} we show that product quotients have a graded direct sum decomposition into conformal product quotients. 
\subsection{Preliminaries}

\noindent

In what follows, $\F$ will always be either $\R$ or $\C$, and if $X$ is an element of a Lie algebra $\fg$ over $\F$, then $\rank_\F X:=\rank_\F \ad_X=\dim_\F[X,\fg]$.   Given an $\F$-linear subspace $W\subset\fg$, and $r\geq 0$, we let $\rank_\F(r,W)$ and $\rank_\F(\leq r,W)$ be the collections of elements $X\in W$ with $\rank_\F X=r$, and $\rank_\F X\leq r$, respectively.  Although the field $\F$ is implicit, and therefore strictly speaking the subscript is redundant, in what follows we will sometimes have algebras over $\R$ and $\C$ in the same context, and so prefer to have the subscript to eliminate any potential ambiguity.

We denote the complexification $V\otimes\C$ of an $\R$-vector space by $V^\C$.

\begin{lemma}[Complexification of complex Lie algebras]
\label{lem_complexification_complex_lie_algebras}
Let $\fg$ be a Lie algebra over $\C$, with the complex multiplication denoted by $J$; we let $\bar\fg$ be the Lie algebra over $\C$ with the same underlying Lie algebra over $\R$, but with complex multiplication given by $-J$.  Viewing $\fg$ as a Lie algebra over $\R$,  we let $\fg^\C:=\fg\otimes_\R\C$ denote the complexification of $\fg$, considered as a vector space over $\C$ with complex multiplication denoted by $i$.  
We denote by $J^\C:=J\otimes \id_\C:\fg^\C\ra \fg^\C$  the map induced by $J$.

Then: 
\ben
\item $\fg^\C$ decomposes   as  a direct sum $\fg^\C=\fg^\C_i\oplus\fg^\C_{-i}$ of eigenspaces of $J^\C$, which are interchanged by complex conjugation.  
\item $\fg^\C_{i}$ and $\fg^\C_{-i}$ are  Lie subalgebras isomorphic over $\C$ to $\fg$ and $\bar\fg$, respectively; the isomorphisms  are  given 
   by taking twice the real part. 
\item $[\fg^\C_i,\fg^\C_{-i}]=\{0\}$.
\een
\end{lemma}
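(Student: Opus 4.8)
The plan is to work entirely with the complexified vector space $\fg^\C = \fg \otimes_\R \C$ and the two commuting complex structures: $J^\C$ (coming from the original complex structure on $\fg$) and the complex scalar multiplication $i$ on $\fg^\C$. First I would observe that $J^\C$ is $\C$-linear for the $i$-structure (since $J^\C = J \otimes \id_\C$) and satisfies $(J^\C)^2 = -\id$, so $\fg^\C$ splits as the direct sum of the $\pm i$-eigenspaces $\fg^\C_i$ and $\fg^\C_{-i}$ of $J^\C$; the standard projections are $X \mapsto \tfrac12(X \mp i J^\C X)$. Complex conjugation on $\fg^\C$ (with respect to the real form $\fg$) is $\C$-antilinear and commutes with $J^\C$ (because $J^\C$ is defined over $\R$), hence it carries the $i$-eigenspace of $J^\C$ to the $(-i)$-eigenspace and vice versa; this gives assertion (1).

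Next, for assertion (2), I would check that $\fg^\C_i$ is a Lie subalgebra: the bracket on $\fg^\C$ is $\C$-bilinear for $i$ and commutes with $J^\C$ (again since $J^\C$ is defined over $\R$), so $J^\C$ acts as a derivation-compatible operator and the $i$-eigenspace is closed under the bracket; likewise for $\fg^\C_{-i}$. To identify $\fg^\C_i$ with $\fg$ as complex Lie algebras, I would use the map $\fg \to \fg^\C_i$, $X \mapsto \tfrac12(X - i J X) = X \otimes 1 - (JX)\otimes i$, and note that its inverse is precisely "twice the real part" $Z \mapsto Z + \bar Z$ restricted to $\fg^\C_i$ (landing in $\fg \otimes 1 \cong \fg$). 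This map is additive and real-linear; the key point is that it intertwines the complex structures — multiplication by $i$ on $\fg^\C$ restricted to $\fg^\C_i$ corresponds to the action of $J$ on $\fg$, since on $\fg^\C_i$ we have $i Z = J^\C Z$ by definition of the eigenspace. Finally it is a Lie algebra homomorphism because the bracket on $\fg^\C$ restricts from the $\C$-bilinear (for $i$) extension of the real bracket on $\fg$. For $\fg^\C_{-i}$ the same computation gives that $i$ acts as $-J^\C$, i.e.\ as $-J$, which is exactly the complex structure of $\bar\fg$, so $\fg^\C_{-i} \cong \bar\fg$.

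For assertion (3), I would argue as follows: for $Z \in \fg^\C_i$ and $W \in \fg^\C_{-i}$ we have $J^\C Z = i Z$ and $J^\C W = -i W$. Since $J^\C$ commutes with the bracket (being the $\C$-linear-for-$i$ extension of the real-linear map $J$, which is $\C$-linear for the original complex structure on $\fg$ and hence satisfies $J[X,Y] = [JX, Y] = [X, JY]$ on $\fg$), we get on one hand $J^\C[Z,W] = [J^\C Z, W] = i[Z,W]$ and on the other hand $J^\C[Z,W] = [Z, J^\C W] = -i[Z,W]$. Hence $i[Z,W] = -i[Z,W]$, forcing $[Z,W] = 0$.

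\textbf{Main obstacle.} None of the individual verifications is deep; the one place requiring care is the bookkeeping around three interacting structures — the real bracket extended $\C$-bilinearly over $i$, the operator $J^\C$, and complex conjugation — and in particular pinning down that $J^\C$ genuinely acts as a bracket-compatible operator (this rests on $J$ being a complex structure \emph{for the Lie algebra} $\fg$, so that $[JX,Y]=[X,JY]=J[X,Y]$ holds already over $\R$). Once that identity is in hand, parts (1)–(3) all follow by the eigenspace manipulations sketched above. The only mild subtlety in (2) is checking that "twice the real part" is well-defined and inverse to $X \mapsto \tfrac12(X - iJX)$, which is a short direct computation, and that it is $\C$-linear for the respective complex structures rather than merely $\R$-linear.
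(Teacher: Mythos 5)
Your proposal is correct and follows essentially the same approach as the paper: the same eigenspace decomposition via the projections $X \mapsto \tfrac12(X \mp iJ^\C X)$, the same observation that complex conjugation swaps the eigenspaces, the same core identity $J^\C[Z,Z'] = [J^\C Z, Z'] = [Z, J^\C Z']$ (coming from $\C$-bilinearity of the bracket on $\fg$), and "twice the real part" as the isomorphism. The only cosmetic difference is that you deduce (3) directly from applying the identity twice, while the paper first shows each $\fg^\C_{\pm i}$ is an ideal and then obtains (3) as an intersection statement; these are equivalent one-line manipulations.
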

\begin{proof}
Note that 
$$
V_{\pm i}:=\{X\mp iJX\,|\, X\in \fg\}\subset \fg^\C
$$ 
is a $\C$-linear subspace contained in the $\pm i$ eigenspace of $J^\C$, and complex conjugation interchanges $V_i$ and $V_{-i}$.  Since we have $\fg^\C=V_i\oplus V_{-i}$, (1) follows.

If $Z\in \fg^\C_{\pm i}$, $Z'\in \fg^\C$, then
$$
J^\C[Z,Z']=[J^\C Z,Z']=[\pm i Z,Z']=\pm i[Z,Z']\,,
$$
so $[Z,Z']\in \fg^\C_{\pm i}$. Hence $\fg^\C_{\pm i}$ is an ideal, assertion (3) holds, and we have a direct sum decomposition $\fg^\C=\fg^\C_i\oplus\fg^\C_{-i}$.  Taking real parts gives (up to a factor) the isomorphisms $\fg^\C_{i}\simeq \fg$, $\fg^\C_{-i}\simeq \bar\fg$.
\end{proof}

\bigskip
\begin{remark}
If in Lemma~\ref{lem_complexification_complex_lie_algebras} the graded algebra $\fg$ itself happens to be the complexification of some graded algebra over $\R$ -- for instance if $\fg$ is a complex Heisenberg algebra --  then there is a $\C$-linear graded isomorphism $\fg\ra\bar\fg$ given by complex conjugation.  Therefore in this case  all four graded algebras $\fg$, $\bar\fg$, $\fg^\C_i$, $\fg^\C_{-i}$ are graded isomorphic over $\C$.
\end{remark}

\bigskip
\begin{lemma}\label{lem_rank_1_commutes_layers_geq_2}
Suppose $(\fg,\{V_j\}_{j=1}^s)$ is a graded Lie algebra over a field $\F$.  If $X\in V_1$ and $\dim_\F [X,\fg]\leq 1$, then $[X,V_j]=\{0\}$ for all $j\geq 2$.
\end{lemma}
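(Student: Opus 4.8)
The plan is to exploit the grading together with the Jacobi identity. Write $\dim_\F[X,\fg]\leq 1$, so either $[X,\fg]=\{0\}$ (in which case there is nothing to prove) or $[X,\fg]=\F Y$ for a single nonzero vector $Y$. Since $X\in V_1$ and the bracket is graded, $[X,V_j]\subset V_{j+1}$; I claim that for $j\geq 2$ the component of $Y$ in $V_{j+1}$ must vanish. The first step is to observe that if $[X,V_j]\neq\{0\}$ for some $j\geq 2$, then $Y$ has a nonzero component $Y_{j+1}\in V_{j+1}$, and since $[X,\fg]$ is one-dimensional, $[X,Z]$ is a scalar multiple of $Y$ for every $Z$; choosing $Z\in V_j$ with $[X,Z]\neq 0$ forces $[X,Z]\in\F Y\cap V_{j+1}=\F Y_{j+1}$, so in fact $Y\in V_{j+1}$, i.e. $Y$ is homogeneous of degree $j+1\geq 3$.

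The main step is then to derive a contradiction from the existence of such a homogeneous $Y$ of degree $\geq 3$ in the image of $\ad_X$. Here I would use that $\fg$ is generated by $V_1$ (this is implicit in ``graded Lie algebra'' in the Carnot sense; if only a grading is assumed one uses instead that $Y\in V_{j+1}$ is a sum of iterated brackets — but in any case $Y_{j+1}$ lies in the subalgebra generated by $V_1$). Writing $Y$ (up to scalar, $Y=[X,Z]$ with $Z\in V_j$, $j\geq 2$) and applying the Jacobi identity to $X$, $Z$, and an element $T\in V_1$:
\[
[X,[Z,T]] = [[X,Z],T] + [Z,[X,T]] = [Y,T] + [Z,[X,T]].
\]
Now $[X,T]\in V_2$ and $[X,T]\in[X,\fg]=\F Y$; but $Y\in V_{j+1}$ with $j+1\geq 3$, while $[X,T]\in V_2$, so $[X,T]=0$. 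Hence $[X,[Z,T]]=[Y,T]$ for every $T\in V_1$. On the other hand, $[Z,T]\in V_{j+1}$ so $[X,[Z,T]]\in[X,\fg]=\F Y\subset V_{j+1}$, whereas $[X,[Z,T]]\in V_{j+2}$; thus $[X,[Z,T]]=0$, giving $[Y,T]=0$ for all $T\in V_1$. Since $\fg$ is generated by $V_1$, this means $[Y,\fg]=\{0\}$, i.e. $Y$ is central.

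To close the argument I would now iterate: $Y=[X,Z]$ with $Z\in V_j$, and $Z$ itself is a sum of iterated brackets of elements of $V_1$, say (a summand) $Z=[T_1,[T_2,\dots]]$ with $T_i\in V_1$; by the computation above, bracketing $X$ past the innermost $T_i$ costs nothing (each $[X,T_i]\in V_2\cap\F Y=\{0\}$ since $\deg Y\geq 3$), so $[X,Z]$ can be rewritten, via repeated Jacobi, as a sum of terms each containing a factor $[X,T_i]=0$ — hence $Y=[X,Z]=0$, contradicting $Y\neq 0$. More cleanly: since $[X,V_1]\subset V_2$ and $[X,\fg]\subset\F Y$ with $Y\in V_{j+1}$, $j+1\geq 3$, we get $[X,V_1]=\{0\}$; then by induction on $k$, using Jacobi $[X,[T,V_{k}]]\subset[[X,T],V_k]+[T,[X,V_k]]$ and $[X,T]=0$, we obtain $[X,V_k]=\{0\}$ for all $k\geq 1$, so $[X,\fg]=\{0\}$, contradicting $Y\neq 0$.

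The one point requiring care — and the only real obstacle — is making sure the grading hypothesis is used correctly: the key mechanism is that $[X,\fg]$ is simultaneously one-dimensional (hence spanned by a single, necessarily homogeneous, vector $Y$) and lands in $V_2$ when applied to $V_1$; if $Y$ had degree $\geq 3$ these two facts force $[X,V_1]=\{0\}$, and then a one-line induction using Jacobi propagates this to all of $\fg$, contradicting $Y\neq 0$. So in fact $Y$, if nonzero, must lie in $V_2$, which is exactly the assertion $[X,V_j]=\{0\}$ for $j\geq 2$. I would present it in this last, streamlined form: assume $[X,\fg]\neq\{0\}$, pick the spanning vector $Y$, split it into homogeneous components, use one-dimensionality to see $Y$ is homogeneous, and if $\deg Y\geq 3$ run the induction to reach a contradiction.
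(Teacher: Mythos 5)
Your proof is correct and uses the same ingredients as the paper's: the grading pushes $[X,V_j]$ into $V_{j+1}$, one-dimensionality of $[X,\fg]$ confines it to a single homogeneous layer $V_{j+1}$, and the stratification hypothesis ($V_1$ generates $\fg$, equivalently the centralizer of $X$ is a subalgebra) rules out that layer having degree $\geq 3$. The paper's write-up is more compressed — it case-splits directly on whether $[X,V_1]=\{0\}$, so in the nonzero case one gets $[X,\fg]=[X,V_1]\subset V_2$ in one step without passing through homogeneity of $Y$ or a contradiction — but the substance is the same.
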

\begin{proof}
Suppose $[X,V_1]=\{0\}$.   Since the centralizer of an element is a subalgebra and $V_1$ generates $\fg$, we get $[X,\fg]=\{0\}$ and we're done in this case.  Now suppose $[X,V_1]\neq\{0\}$. Since $\dim_\F[X, \fg] \le 1$ we deduce that
 $[X,\fg]=[X,V_1]\subset V_2$, and for all $j\geq 2$ we have $[X,V_j]\subset V_{j+1}\cap V_2=\{0\}$.
\end{proof}

\begin{lemma}\label{lem_characterization_heisenberg}
Suppose $(\fg,\{V_j\}_{j=1}^2)$ is a step $2$ graded Lie algebra over a field $\F$, such that $\dim_\F V_2= 1$ and the center of $\fg$ intersects $V_1$ trivially:  
$$
\{X\in V_1\mid [X,\fg]=\{0\}\}=\{0\}\,.
$$  
Then $\fg$ is graded isomorphic to a Heisenberg algebra over $\F$.
\end{lemma}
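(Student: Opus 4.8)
The plan is to identify the Lie algebra structure on $\fg$ with the single datum of a nondegenerate alternating bilinear form, and then to invoke the symplectic normal form. First I would pick a nonzero vector $Z\in V_2$; since $\dim_\F V_2=1$, this identifies $V_2$ with $\F$, and it lets me define an alternating $\F$-bilinear form $\om\colon V_1\times V_1\to\F$ by $[X,Y]=\om(X,Y)\,Z$ for $X,Y\in V_1$. Because $\fg$ has step $2$, every bracket of length $\geq 3$ vanishes (indeed $[\fg,[\fg,\fg]]\subset[\fg,V_2]\subset V_3=\{0\}$), so the Jacobi identity is automatically satisfied and $\om$ is subject to no further constraint; in other words, once $Z$ is fixed, the whole Lie bracket of $\fg$ is encoded by the pair $(V_1,\om)$.

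Next I would translate the centrality hypothesis. For $X\in V_1$ one has $[X,\fg]=[X,V_1]+[X,V_2]=[X,V_1]$, since $[V_1,V_2]\subset V_3=\{0\}$; and $[X,V_1]=\{0\}$ if and only if $\om(X,\cdot)\equiv 0$. Hence the assumption $\{X\in V_1:[X,\fg]=\{0\}\}=\{0\}$ says exactly that $\om$ is nondegenerate. In particular $\om\not\equiv 0$, so $[V_1,V_1]=\F Z=V_2$; note also $V_1\neq\{0\}$, for otherwise $V_2=[V_1,V_1]=\{0\}$ (using that the grading is a stratification, as is our standing convention), contradicting $\dim_\F V_2=1$. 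Since $V_1$ carries a nondegenerate alternating form, $2n:=\dim_\F V_1$ is even, with $n\geq 1$.

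Finally I would apply the standard inductive construction of a symplectic basis: a finite-dimensional $\F$-vector space with a nondegenerate alternating bilinear form admits a basis $X_1,Y_1,\ldots,X_n,Y_n$ with $\om(X_i,Y_j)=\de_{ij}$ and $\om(X_i,X_j)=\om(Y_i,Y_j)=0$ for all $i,j$ (for $\F\in\{\R,\C\}$ this is the usual argument: choose $X_1,Y_1$ with $\om(X_1,Y_1)=1$, which exists since $\om$ is nondegenerate, pass to the $\om$-orthogonal complement of $\Span\{X_1,Y_1\}$, which is again a nondegenerate symplectic space, and induct). Let $\fn$ be the step $2$ graded $\F$-Lie algebra with graded basis $P_1,\ldots,P_n,Q_1,\ldots,Q_n$ in degree $1$ and $C$ in degree $2$, and with nonzero brackets $[P_i,Q_i]=C$ --- the $n$-th Heisenberg algebra over $\F$. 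The $\F$-linear map $X_i\mapsto P_i$, $Y_i\mapsto Q_i$, $Z\mapsto C$ is a bijection carrying $V_1$ onto the degree-$1$ part and $V_2$ onto the degree-$2$ part, and by the symplectic relations it intertwines the brackets ($[X_i,Y_j]=\de_{ij}Z\mapsto\de_{ij}C=[P_i,Q_j]$, all other generator brackets zero on both sides); hence it is the desired graded isomorphism $\fg\cong\fn$.

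I do not expect a genuine obstacle here. The only steps deserving a word of care are the observation that the step $2$ hypothesis makes the Jacobi identity vacuous (so that matching the alternating form is all that is required), the faithful translation of ``the center meets $V_1$ trivially'' into ``$\om$ is nondegenerate'', and citing the existence of a symplectic basis over the relevant field; all three are routine.
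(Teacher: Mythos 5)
Your proof is correct and follows essentially the same route as the paper's: identify $V_2$ with $\F$, read off a nondegenerate alternating form on $V_1$ from the bracket, and invoke the symplectic normal form (the paper cites Jacobson's \emph{Basic Algebra} for the inductive argument you spell out). The only thing you add is the careful bookkeeping around the degenerate possibility $V_1=\{0\}$, which you correctly rule out by the standing stratification convention; this is tacit in the paper's version.
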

\begin{proof}
Identifying $V_2$ with a copy of $\F$ by an $\F$-linear isomorphism, the Lie bracket defines a skew-symmetric bilinear form $[\cdot,\cdot]:V_1\times V_1\ra \F$, which is nondegenerate.  Such forms are $\F$-linearly equivalent to a direct sum of standard $2$-dimensional symplectic forms
  by a straightforward induction argument (see for example \cite[p.349]{jacobson_basic_algebra}).
 This yields the isomorphism to the Heisenberg algebra.
\end{proof}

\bigskip
\begin{lemma}\label{lem_symplectic_transitive}
For $\F\in\{\R,\C\}$, let $\sympl(m,\F)$ be the symplectic group, i.e. the stabilizer of the standard non-degenerate skew form on $\F^{2m}$.  For  $n\geq 1$, consider an $\R$-linear subspace $K\subset (\F^{2m})^n$ which is invariant under the product action $(\sympl(m,\F))^n\acts (\F^{2m})^n$.    Then for some subset $J\subset \{1,\ldots,n\}$, the subspace $K$ is the span of the factors indexed by $J$, i.e.
$$
K=\{(x_1,\ldots,x_n)\in(\F^{2m})^n\,|\,x_k=0\,\text{ if }\,k\not\in J\}\,.
$$
\end{lemma}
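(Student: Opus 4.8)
The statement is about $\R$-linear subspaces of $(\F^{2m})^n$ invariant under the product action of $(\sympl(m,\F))^n$. The plan is to reduce to the case $n=1$ and handle that case directly, then bootstrap to general $n$ using a projection argument. First I would record the key one-factor fact: if $K_0\subset\F^{2m}$ is an $\R$-linear subspace invariant under $\sympl(m,\F)$, then $K_0$ is either $\{0\}$ or all of $\F^{2m}$. To see this, observe that $\sympl(m,\F)$ acts transitively on $\F^{2m}\setminus\{0\}$ (this is classical, since any nonzero vector can be completed to a symplectic basis), so any nonzero invariant subset, in particular any nonzero invariant subspace, must contain every nonzero vector and hence equal $\F^{2m}$. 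Note this already uses $\R$-linearity only mildly; transitivity does the work.

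For general $n$, let $\pi_k:(\F^{2m})^n\to\F^{2m}$ be the projection onto the $k$-th factor, and let $\iota_k$ be the inclusion of the $k$-th factor. The plan is to set $J:=\{k : \pi_k(K)\neq\{0\}\}$ and show $K$ equals the span of the factors indexed by $J$. For the inclusion $K\subset\bigoplus_{k\in J}\iota_k(\F^{2m})$: if $k\notin J$ then $\pi_k(K)=\{0\}$ by definition, so every element of $K$ has vanishing $k$-th component, giving this inclusion. For the reverse inclusion, fix $k\in J$; I claim $\iota_k(\F^{2m})\subset K$. The subspace $\iota_k(\F^{2m})\cap K$ is invariant under the $k$-th copy of $\sympl(m,\F)$ (acting only on the $k$-th coordinate, which fixes $K$ as a whole since $K$ is invariant under the full product action, and fixes $\iota_k(\F^{2m})$ coordinatewise), hence by the one-factor fact it is either $\{0\}$ or all of $\iota_k(\F^{2m})$. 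I then need to rule out the first alternative: since $\pi_k(K)\neq\{0\}$, pick $v\in K$ with $\pi_k(v)\neq 0$, and average $v$ over the $k$-th symplectic group — more precisely, apply to $v$ the difference $g\cdot v - v$ where $g$ ranges over the $k$-th factor; since $\sympl(m,\F)$ acts transitively on $\F^{2m}\setminus\{0\}$ and this action is not trivial on $\pi_k(v)$, one can produce an element of $K$ whose only nonzero component is in the $k$-th slot, showing $\iota_k(\F^{2m})\cap K\neq\{0\}$. Concretely, choose $g$ in the $k$-th factor fixing all coordinates except acting on the $k$-th; then $g\cdot v$ and $v$ agree outside the $k$-th coordinate, so $g\cdot v - v\in K$ lies in $\iota_k(\F^{2m})$, and one can choose $g$ so that $g\pi_k(v)\neq\pi_k(v)$ (possible precisely because $\pi_k(v)\neq 0$ and $\dim_\F\F^{2m}\geq 2$). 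Thus $\iota_k(\F^{2m})\cap K=\iota_k(\F^{2m})$, i.e. $\iota_k(\F^{2m})\subset K$, for every $k\in J$, and summing over $k\in J$ gives $\bigoplus_{k\in J}\iota_k(\F^{2m})\subset K$.

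The main obstacle, and the only genuinely nontrivial input, is the transitivity of $\sympl(m,\F)$ on nonzero vectors and the related claim that for a nonzero $w\in\F^{2m}$ there is $g\in\sympl(m,\F)$ with $gw\neq w$; both follow from the standard symplectic basis extension theorem. A secondary subtlety is bookkeeping around the fact that $K$ is only assumed $\R$-linear, not $\F$-linear, but this causes no trouble: all the group-averaging and projection arguments respect $\R$-linear structure, and $\iota_k(\F^{2m})$ is of course $\R$-linear. One should be slightly careful that when $\F=\C$ the group $\sympl(m,\C)$ still acts $\R$-linearly (indeed $\C$-linearly) on $\C^{2m}\cong\R^{4m}$, and transitivity on nonzero vectors holds over $\C$ as well, so the one-factor fact goes through verbatim.
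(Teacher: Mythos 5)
Your proof is correct and follows essentially the same route as the paper's: in both arguments one takes a vector $x\in K$ with a nonzero $j$-th component, uses an element $T$ of the $j$-th symplectic factor to produce $y=Tx\in K$ with $y-x$ supported in the $j$-th slot alone, and then exploits transitivity of $\sympl(m,\F)$ on $\F^{2m}\setminus\{0\}$ together with the $\R$-span to conclude the whole $j$-th factor lies in $K$. Your packaging of the last step as a ``one-factor fact'' (nonzero invariant $\R$-subspace must be everything) is a tidy reorganization but not a different idea.
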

\begin{proof}
Suppose for some $j\in \{1,\ldots,n\}$ we have $x_j\neq 0$ for some $x\in K$.  Choosing $T\in \sympl(m,\F)$ such that $Tx_j\neq x_j$, we get that $y\in K$, where $y_k=x_k$ if $k\neq j$, and $y_j=Tx_j$.  Then $y-x\in K$ has precisely one nonzero component, namely $Tx_j-x_j$.   Applying $\sympl(m,\F)$ and taking the linear span over $\R$, we get that the $j^{th}$ factor of $(\F^{2m})^n$ is contained in $K$.  

Applying the above to every $j\in \{1,\ldots,n\}$, the lemma follows.
\end{proof}

\bigskip
\begin{lemma}
\label{lem_complex_heisenberg_auts}
Let $\fh_m$ be the $m^{th}$ Heisenberg algebra, and $\fh_m^\C$ be the complexification, i.e. the $m^{th}$ complex Heisenberg algebra.   Let $\aut_\R(\fh_m^\C)$ and $\aut_\C(\fh_m^\C)$ denote the groups of graded $\R$-linear and $\C$-linear automorphisms, respectively.  Then $\aut_\R(\fh_m^\C)$ is generated by $\aut_\C(\fh_m^\C)$ and complex conjugation.  In particular, if $\phi:\fh_m^\C\ra\fh_m^\C$ is a  graded   $\R$-linear automorphism, then either:
\begin{itemize}
\item $\phi$ is $\C$-linear and $\det_\R\phi\restr_{V_2}>0$.
\item  $\phi$ is $\C$-antilinear and $\det_\R\phi\restr_{V_2}<0$.
\end{itemize}
\end{lemma}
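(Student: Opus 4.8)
The plan is to reduce everything to one claim: a graded $\R$-linear automorphism $\phi$ of $\fg:=\fh_m^\C$ either commutes or anticommutes with the complex structure $J$, i.e. is $\C$-linear or $\C$-antilinear. Granting this, the lemma follows quickly. Complex conjugation $\tau$ on $\fh_m^\C$ is itself a graded $\R$-linear automorphism: it preserves each layer, and it respects the bracket because $\fh_m^\C$ is defined over $\R$. Hence $\langle\aut_\C(\fh_m^\C),\tau\rangle\subseteq\aut_\R(\fh_m^\C)$. Conversely, a $\C$-linear $\phi$ lies in $\aut_\C(\fh_m^\C)$, and if $\phi$ is $\C$-antilinear then $\phi\circ\tau$ is $\C$-linear, so $\phi=(\phi\circ\tau)\circ\tau\in\aut_\C(\fh_m^\C)\cdot\tau$; this gives the first assertion. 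For the determinant dichotomy, observe that $V_2$ is one complex dimensional: a $\C$-linear $\phi|_{V_2}$ is multiplication by some $\lambda\in\C^\times$, with real determinant $|\lambda|^2>0$, while a $\C$-antilinear $\phi|_{V_2}$ has the form $z\mapsto\lambda\bar z$ and real determinant $-|\lambda|^2<0$.

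To prove the main claim I would complexify. View $\fg=\fh_m^\C$ as a real Lie algebra with complex structure $J$, form $\fg^\C=\fg\otimes_\R\C$, set $J^\C=J\otimes\id_\C$, and note that the grading $\fg=V_1\oplus V_2$ induces a grading $\fg^\C=V_1^\C\oplus V_2^\C$. By Lemma~\ref{lem_complexification_complex_lie_algebras}, $\fg^\C=\fg^\C_i\oplus\fg^\C_{-i}$ splits as the sum of the $\pm i$-eigenspaces of $J^\C$, which are complementary ideals, so in particular $[\fg^\C_i,\fg^\C_{-i}]=0$, and each of $\fg^\C_i,\fg^\C_{-i}$ (being isomorphic to $\fg$, resp. to $\bar\fg$) is a two-step graded complex Lie algebra with one dimensional centre and nondegenerate bracket form. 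Consequently $V_2^\C$ is two dimensional over $\C$ and splits as $V_2^+\oplus V_2^-$, with $V_2^\pm$ the centre of $\fg^\C_{\pm i}$, and the bracket $\beta\colon\Lambda^2_\C V_1^\C\to V_2^\C$ restricts to nondegenerate alternating forms $V_1^\pm\times V_1^\pm\to V_2^\pm$ with no cross terms, where $V_1^\pm$ is the degree-one part of $\fg^\C_{\pm i}$. The key elementary observation is that for a nonzero $\xi\in(V_2^\C)^*$ the pulled-back form $\xi\circ\beta$ on $V_1^\C$ is degenerate exactly when $\xi$ annihilates one of the two distinguished lines $V_2^+$, $V_2^-$, and its radical is then the corresponding $V_1^\pm$. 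Hence any graded $\C$-linear automorphism $\Psi$ of $\fg^\C$, which preserves $V_1^\C$ and $V_2^\C$ and satisfies $\Psi([X,Y])=[\Psi X,\Psi Y]$ for $X,Y\in V_1^\C$, must permute the pair of lines $\{V_2^+,V_2^-\}$, hence permutes the pair of radicals $\{V_1^+,V_1^-\}$, and therefore permutes the pair of ideals $\{\fg^\C_i,\fg^\C_{-i}\}$.

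Finally I would apply this with $\Psi=\phi^\C:=\phi\otimes\id_\C$, which is a graded $\C$-linear automorphism of $\fg^\C$. If $\phi^\C$ preserves each of $\fg^\C_i$ and $\fg^\C_{-i}$, then it preserves each eigenspace of $J^\C$, hence commutes with $J^\C$; restricting to $\fg=\fg\otimes1$ yields $\phi J=J\phi$, so $\phi$ is $\C$-linear. If instead $\phi^\C$ interchanges $\fg^\C_i$ and $\fg^\C_{-i}$, then it carries the $i$-eigenspace into the $-i$-eigenspace and conversely, so it anticommutes with $J^\C$; restricting yields $\phi J=-J\phi$, so $\phi$ is $\C$-antilinear. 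This proves the main claim, and with it the lemma. The one nonroutine step is the observation that, after complexifying, the two factors of $\fg^\C$ are canonically singled out by which functionals on its centre make the bracket form degenerate; everything else is bookkeeping. Note that this cannot be arranged directly over $\R$: every nonzero real-linear functional on the centre $V_2$ of $\fh_m^\C$ pulls the bracket $V_1\times V_1\to V_2$ back to a nondegenerate real alternating form on $V_1$, which is exactly why the passage to $\fg^\C$ is essential.
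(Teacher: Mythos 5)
Your proof is correct, and it follows the same overall strategy as the paper: complexify $\fg=\fh_m^\C$ (viewed over $\R$), invoke Lemma~\ref{lem_complexification_complex_lie_algebras} to get the canonical splitting $\fg^\C=\fg^\C_i\oplus\fg^\C_{-i}$, show that $\phi^\C$ must preserve or swap the two ideals, and then translate back to $\C$-linearity versus $\C$-antilinearity of $\phi$ and compute the sign of $\det_\R\phi|_{V_2}$. The one point where you diverge from the paper is the mechanism used to see that $\phi^\C$ permutes $\{\fg^\C_{i},\fg^\C_{-i}\}$. The paper characterizes $V_1^\C\cap(\fg^\C_i\cup\fg^\C_{-i})$ as the locus of first-layer elements of $\C$-rank at most $1$ and notes that $\phi^\C$ preserves this locus; this ties directly into the Ottazzi--Warhurst rank criterion that runs through the whole paper. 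You instead work dually on $(V_2^\C)^*$, observing that the nonzero functionals $\xi$ for which $\xi\circ\beta$ is degenerate on $V_1^\C$ are exactly those annihilating $V_2^+$ or $V_2^-$, and that $\phi^\C$ preserves this degeneracy locus; your verification that this characterization holds (block-diagonal structure, nondegeneracy of each Heisenberg block) and that the resulting radicals are $V_1^\pm$ is right, and the concluding remark that no such degenerate real functional exists on $V_2$ of $\fh_m^\C$ (since $[X,V_1]=V_2$ for $X\neq 0$) is also correct. The two invariants are essentially dual (rank of $\ad_X$ for $X\in V_1^\C$ versus degeneracy of $\xi\circ\beta$ for $\xi\in(V_2^\C)^*$), so the proofs are close in spirit; the paper's version is a bit shorter, while yours is self-contained and makes the symplectic structure explicit.
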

\begin{proof}
The $m=1$ case appears in  \cite[Section 6]{reimann_ricci}, although their argument clearly works for general $m$.  

We give a short argument here.

Let $\fg:=\fh_m^\C$, and let $\fg=V_1\oplus V_2$ be the grading.  Viewing this as a graded algebra over $\R$, we  use Lemma~\ref{lem_complexification_complex_lie_algebras} to see that $\fg^\C=\fg^\C_i\oplus \fg^\C_{-i}$, and the first layer elements of $\C$-rank $\leq 1$ in $\fg^\C$ are precisely $V_1^\C\cap(\fg^\C_i\cup\fg^\C_{-i})$.  

If $\phi\in \aut_\R(\fg)$, then $\phi^\C:\fg^\C\ra\fg^\C$ must preserve the subset $V_1^\C\cap(\fg^\C_i\cup\fg^\C_{-i})$.  Since $\phi^\C$ is a $\C$-linear automorphism  it must preserve the collection $\{\fg^\C_{\pm i}\}$, so we have two cases: either $\phi^\C$ maps $\fg^\C_{\pm i}$ to itself, or it interchanges the two.   In the
first case $\phi^\C$ induces a $\C$-linear map from $\fg^\C_{i}$ to itself. Since the real part map $\real: \fg^\C_i \to \fg$
is $\C$-linear it follows that $\phi$ is $\C$-linear on $\fg$. In the other case $\real: \fg^\C_{-i} \to \bar\fg$ is $\C$-linear,
so $\phi:\fg\ra\bar\fg$ is $\C$-linear, which means that $\phi:\fg\ra\fg$ is $\C$-antilinear.

Finally note that an invertible $\C$-linear map on the real two-dimensional space $V_2$ has
positive determinant and an invertible $\C$-antilinear map has negative determinant.
\end{proof}

\bigskip
We now give a generalization of Lemma~\ref{lem_complex_heisenberg_auts}.

Let  $\mathfrak g=\oplus_{j=1}^s V_j$ be a Carnot  Lie algebra    over $\C$ and $I\subset \{1, \cdots, s\}$.
  Set $V_I=V_{I, \mathfrak g}:=\oplus_{j\in I}V_j$.  For $X\in \mathfrak g$,    
   let  $\text{rank}_{I,\mathfrak g}(X)=\dim_{\mathbb C}(\text{ad}_X(V_I))$.    
      Define  $r_I=r_{I, \mathfrak g}:=\min\{ \text{rank}_{I, \mathfrak g}(X)|X\in V_1\backslash \{0\}\}$  and  
$R_I=R_{I, \mathfrak g}:=\max\{ \text{rank}_{I,\mathfrak g}(X)|X\in V_1\}$.

\begin{lemma}
\label{lem_complexified_group_auts}
  Let $\mathfrak g$ be the complexification  of  a  Carnot Lie algebra.
Assume there is some $I\subset \{1, \cdots, s\}$  such that 
  $R_I< 2 r_I$. Then every graded  $\R$-linear  automorphism  of  
  $\mathfrak g$ is either $\mathbb C$-linear or $\mathbb C$-antilinear.  
\end{lemma}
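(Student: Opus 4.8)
The plan is to imitate the proof of Lemma~\ref{lem_complex_heisenberg_auts}, with the set of first-layer elements of $\C$-rank $\le 1$ replaced by $S:=\{Z\in V_{1,\fg^\C}:\rank_{I,\fg^\C}(Z)\le R_I\}$; the hypothesis $R_I<2r_I$ is precisely what makes $S$ behave the same way.  To begin, regard $\fg$ as a complex Carnot Lie algebra with complex multiplication $J$ and grading $\fg=\oplus_jV_j$, and apply Lemma~\ref{lem_complexification_complex_lie_algebras}: $\fg^\C=\fg^\C_i\oplus\fg^\C_{-i}$ as $\pm i$-eigenspaces of $J^\C$, with $[\fg^\C_i,\fg^\C_{-i}]=\{0\}$ and $\C$-linear isomorphisms $\fg^\C_i\cong\fg$, $\fg^\C_{-i}\cong\bar\fg$ given by the real-part map.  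Since $V_{j,\fg^\C}=V_j^\C$ splits compatibly as $\{X-iJX:X\in V_j\}\oplus\{X+iJX:X\in V_j\}$, these isomorphisms are graded, the grading of $\fg^\C$ restricts to gradings of $\fg^\C_{\pm i}$, and $\fg^\C_{\pm i}$ are again Carnot.

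Next, the rank bookkeeping.  Since $\ad_X$ is $\C$-linear on a complex Lie algebra, $\ad_X(V_I)$ is invariant under both $J$ and $-J$, so its complex dimension does not depend on which complex structure is used; hence $\rank_{I,\bar\fg}(X)=\rank_{I,\fg}(X)$ for all $X$, and transporting along the isomorphisms yields $r_{I,\fg^\C_i}=r_{I,\fg^\C_{-i}}=r_I$ and $R_{I,\fg^\C_i}=R_{I,\fg^\C_{-i}}=R_I$.  Using $[\fg^\C_i,\fg^\C_{-i}]=\{0\}$ and $V_{I,\fg^\C}=V_{I,\fg^\C_i}\oplus V_{I,\fg^\C_{-i}}$, for $Z=Z_++Z_-\in V_{1,\fg^\C}$ with $Z_\pm\in\fg^\C_{\pm i}$ we have $\rank_{I,\fg^\C}(Z)=\rank_{I,\fg^\C_i}(Z_+)+\rank_{I,\fg^\C_{-i}}(Z_-)$.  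Therefore $\rank_{I,\fg^\C}(Z)\ge 2r_I>R_I$ whenever $Z_+\ne 0\ne Z_-$, while $\rank_{I,\fg^\C}(Z)\le R_I$ whenever $Z\in\fg^\C_i$ or $Z\in\fg^\C_{-i}$, so
\[
S=(V_{1,\fg^\C}\cap\fg^\C_i)\cup(V_{1,\fg^\C}\cap\fg^\C_{-i}).
\]

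Now the invariance argument.  A graded $\R$-linear automorphism $\phi$ of $\fg$ induces a graded $\C$-linear automorphism $\phi^\C=\phi\otimes\id_\C$ of $\fg^\C$, and since $S$ is defined purely from the complex bracket and the grading of $\fg^\C$, one has $\phi^\C(S)=S$.  Because $\phi^\C$ is linear and a linear subspace contained in a union of two subspaces lies in one of them, $\phi^\C$ either preserves each of $V_{1,\fg^\C}\cap\fg^\C_{\pm i}$ or interchanges them; as $\fg^\C_{\pm i}$ are generated (as Lie algebras) by these first layers, $\phi^\C$ accordingly either fixes or swaps the pair $\{\fg^\C_i,\fg^\C_{-i}\}$.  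Finally, writing $\fg^\C_i=\{X-iJX:X\in\fg\}$ and using $\phi^\C=\phi\otimes\id_\C$, the condition $\phi^\C(\fg^\C_i)=\fg^\C_i$ is equivalent to $\phi\circ J=J\circ\phi$ (i.e.\ $\phi$ is $\C$-linear) and $\phi^\C(\fg^\C_i)=\fg^\C_{-i}$ is equivalent to $\phi\circ J=-J\circ\phi$ (i.e.\ $\phi$ is $\C$-antilinear); this is the asserted dichotomy.

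The step I expect to need the most care is the rank bookkeeping: justifying that $\rank_{I,\fg^\C}$ of a first-layer element is the sum of the ranks of its two eigenspace components — which uses $[\fg^\C_i,\fg^\C_{-i}]=\{0\}$ and the compatibility of the grading with the eigenspace splitting — together with the small but essential observation that the complex rank functions of $\fg$ and $\bar\fg$ agree.  Everything else is a transcription of the proof of Lemma~\ref{lem_complex_heisenberg_auts}.
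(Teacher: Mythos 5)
Your proof is correct and follows essentially the same route as the paper's: decompose $\fg^\C=\fg^\C_i\oplus\fg^\C_{-i}$ via Lemma~\ref{lem_complexification_complex_lie_algebras}, use $[\fg^\C_i,\fg^\C_{-i}]=\{0\}$ and the hypothesis $R_I<2r_I$ to characterize $V_{1,\fg^\C}\cap(\fg^\C_i\cup\fg^\C_{-i})$ intrinsically as the set of first-layer elements with $\rank_{I,\fg^\C}\le R_I$, conclude that $\phi^\C$ preserves or swaps $\{\fg^\C_{\pm i}\}$, and read off $\C$-(anti)linearity. The only cosmetic difference is that you verify $\rank_{I,\bar\fg}=\rank_{I,\fg}$ directly and spell out the final equivalence in terms of $J$, whereas the paper invokes the isomorphism $\fg\cong\bar\fg$ (valid since $\fg$ is a complexification) and refers back to the argument of Lemma~\ref{lem_complex_heisenberg_auts}; both are fine.
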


\begin{proof}
By Lemma \ref{lem_complexification_complex_lie_algebras},  the complexification  $\mathfrak g^{\mathbb C} $  of $\mathfrak g$
 admits a  decomposition $\mathfrak g^{\mathbb C} =\mathfrak g_i^{\mathbb C}\oplus \mathfrak g_{-i}^{\mathbb C}$ into a  direct sum of two graded 
  subalgebras   over $\mathbb C$ and 
 $[\mathfrak g_i^{\mathbb C},   \mathfrak g_{-i}^{\mathbb C}]=0$. 
  Let $\phi\in \text{Aut}_{\mathbb R}(\mathfrak g)$.
     Then  $\phi^{\mathbb C}: \mathfrak g^{\mathbb C} \ra \mathfrak g^{\mathbb C} $  is a graded  $\mathbb C$-linear automorphism.   We shall show that 
   $\phi^{\mathbb C}$ either 
    maps $\mathfrak g_{\pm i}^{\mathbb C}$ to itself or switches the two.   The lemma then follows by the argument in the proof of Lemma~\ref{lem_complex_heisenberg_auts}. 

Let  $0\not=X\in V_{1,\mathfrak g_i^{\mathbb C}}\cup V_{1,\mathfrak g_{-i}^{\mathbb C}}$.
  Since  both  $ \mathfrak g_i^{\mathbb C}$ and $ \mathfrak g_{-i}^{\mathbb C}$  are isomorphic to 
  $\mathfrak g$ and $[\mathfrak g_i^{\mathbb C},   \mathfrak g_{-i}^{\mathbb C}]=0$, we have
     $\text{rank}_{I, \mathfrak g^{\mathbb C}}(X)\le R_I<2 r_I$.  On the other hand,   we have   $\text{rank}_{I,  \mathfrak g^{\mathbb C}}(X)\ge 2 r_I$
       for any
      $X\in V_{1, \mathfrak g^{\mathbb C}}\backslash (V_{1, \mathfrak g_i^{\mathbb C}}\cup V_{1,\mathfrak g_i^{\mathbb C}})$. To see this,   we write $X=X_++X_-$ with $0\not=X_+\in V_{1, \mathfrak g_i^{\mathbb C}}$, $0\not=X_-\in V_{1, \mathfrak g_{-i}^{\mathbb C}}$.  Then 
       $$\text{ad}_X(V_{I, \mathfrak g^{\mathbb C}})
       =\text{ad}_{X_+}(V_{I, \mathfrak g_i^{\mathbb C}})\oplus 
       \text{ad}_{X_-}(V_{I, \mathfrak g_{-i}^{\mathbb C}})$$  and so 
       $$\text{rank}_{I, \mathfrak g^{\mathbb C}}(X)=\text{rank}_{I,\mathfrak g_i^{\mathbb C}} (X_+)
       +\text{rank}_{I,\mathfrak g_{-i}^{\mathbb C}} (X_-)\ge 2 r_I.$$
        Here we used the facts that  
 $ \mathfrak g_i^{\mathbb C}$ and $ \mathfrak g_{-i}^{\mathbb C}$  are isomorphic to 
  $\mathfrak g$ and $[\mathfrak g_i^{\mathbb C},   \mathfrak g_{-i}^{\mathbb C}]=0$.
   Since  for any graded    $\mathbb C$-linear   isomorphism  
     $f: \mathfrak g_1 \ra \mathfrak g_2$   of   complex Carnot algebras  
         we have 
   $\text{rank}_{I, \mathfrak g_1}(X)=\text{rank}_{I, \mathfrak g_2}(f(X))$, we see that 
   $\phi^{\mathbb C}$  maps  $V_{1, \mathfrak g_i^{\mathbb C}}\cup V_{1, \mathfrak g_{-i}^{\mathbb C}}$ to itself.  Since  $\phi^{\mathbb C}$ is $\mathbb C$-linear,   $\phi^{\mathbb C}$   either 
    maps $\mathfrak g_{\pm i}^{\mathbb C}$ to itself or switches the two. 
\end{proof}

\bigskip
 The assumption in Lemma \ref{lem_complexified_group_auts}  is satisfied     for the
complexification   $\mathfrak g$   of  the   following classes of Carnot algebras:\newline
(1)  model filiform algebras; in this case,  $R_{\{1\}}=r_{\{1\}}=1$.   \newline
(2) free nilpotent Lie   algebras;  more generally, quotients of free  nilpotent Lie  algebras by graded ideals contained in the direct sum of  higher layers  $V_j$, $j\ge 3$;   in this case, $R_{\{1\}}=r_{\{1\}}>0$.\newline
(3)  Carnot algebras $\mathfrak h=\oplus_j V_j$  satisfying   $[X, V_i]=V_{i+1}$  for  all $0\not=X\in V_1$     and   some  fixed $i\ge 1$   with $\text{dim}(V_{i+1})$ odd.   These include 
nilpotent Lie algebras satisfying M\'etivier's hypothesis (H) (in particular,    $H$ type algebras)  whose centers have odd dimension.   To see that $\mathfrak g:=\mathfrak h^{\mathbb C}=\oplus_j V_j^{\mathbb C}$ satisfies the assumption of   Lemma \ref{lem_complexified_group_auts},  we denote 
 $\text{dim}_{\mathbb R}(V_{i+1})=2k+1$. Then 
  $\text{dim}_{\mathbb C}(V^{\mathbb C}_{i+1})=2k+1$  and so $R_{\{i\}}\le  2k+1$.   
   We show that  $\text{rank}_{\{i\},\mathfrak g}(X)\ge k+1$ for any $0\not=X\in V^{\mathbb C}_1$, which implies  $r_{\{i\}}\ge  k+1$  and so $R_{\{i\}}<2r_{\{i\}}$.     Write $X=X_1+iX_2$ with $X_1, X_2\in V_1$. We may assume $X_1\not=0$. 
We have $\real[X, V^{\mathbb C}_i]\supset \real[X, V_i]=V_{i+1}.$  Hence 
$\text{dim}_{\mathbb R}([X, V^{\mathbb C}_i])\ge 
\text{dim}_{\mathbb R}(V_{i+1})=2k+1$, which implies  $\text{rank}_{\{i\},\mathfrak g}(X)=\text{dim}_{\mathbb C}([X, V^{\mathbb C}_i])\ge   k+1$.

\bigskip

\bigskip
\bigskip
\begin{lemma}\label{lem_rank_1_span_step_2}
Let $(\fg,\{V_j\}_{j=1}^2)$ be a step $2$ graded Lie algebra over $\F$.  Suppose $\rank_\F(0,V_1)=\{0\}$, and $\Span_\F(\rank_\F(1,V_1))=V_1$.  Then there is a collection $\fg_1,\ldots,\fg_n$ of graded subalgebras (over $\F$) of $\fg$ such that:
\ben
\item Each $\fg_j$ is graded isomorphic over $\F$ to some Heisenberg algebra over $\F$.  
\item The first layers of the $\fg_j$s define a direct sum decomposition of $V_1$:
$$
V_1=\oplus_j(V_1\cap \fg_j)\,.
$$
\item The $\fg_j$s commute with one another:  $[\fg_j,\fg_k]=0$ for $1\leq j\neq k\leq n$.
\item  $\fg_j\cap \fg_k=\{0\}$ for $1\leq j\neq k\leq n$.
\item The collection is permuted by the graded automorphism group $\aut(\fg)$.
\een
Moreover conditions (1)-(4) determine $n$ and the collection $\fg_1,\ldots,\fg_n$ uniquely. 
\end{lemma}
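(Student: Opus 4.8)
The plan is to build the subalgebras $\fg_a$ directly from the rank-one locus in $V_1$, using the bracket as an alternating map $V_1\times V_1\ra V_2$ (note $[X,V_2]=\{0\}$ since $\fg$ has step $2$; cf.\ Lemma~\ref{lem_rank_1_commutes_layers_geq_2}). For $X\in\rank_\F(1,V_1)$ write $L_X:=[X,\fg]=[X,V_1]\subset V_2$, a line. The elementary observation driving everything is: if $X,X'\in\rank_\F(1,V_1)$ and $[X,X']\neq 0$, then $0\neq[X,X']\in L_X\cap L_{X'}$, so $L_X=L_{X'}$; contrapositively, rank-one elements with distinct lines commute. Accordingly, for each line $\ell\subset V_2$ set
\[
W_\ell:=\Span_\F\{X\in V_1\mid\rank_\F X=1,\ [X,V_1]=\ell\}\subset V_1,
\]
let $L_1,\dots,L_n$ be the (finitely many, as will be shown) lines with $W_{L_a}\neq\{0\}$, write $W_a:=W_{L_a}$, and propose $\fg_a:=W_a\oplus L_a\subset\fg$.

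Next I would record the structural facts. (i) $[W_\ell,W_{\ell'}]=\{0\}$ whenever $\ell\neq\ell'$, immediately from the observation above since $W_\ell$, $W_{\ell'}$ are spanned by rank-one elements with lines $\ell$, $\ell'$. (ii) $V_1=\bigoplus_\ell W_\ell$: the sum is all of $V_1$ because $\Span_\F\rank_\F(1,V_1)=V_1$ by hypothesis, and it is direct because $\rank_\F(0,V_1)=\{0\}$ forbids central elements in $V_1$ --- if $\sum_\ell v_\ell=0$ nontrivially with $v_{\ell_0}\neq 0$, then solving for $v_{\ell_0}$ and using (i) gives $[v_{\ell_0},W_\ell]=\{0\}$ for every $\ell$, hence $[v_{\ell_0},V_1]=\{0\}$, a contradiction. (iii) For each $a$, $[W_a,W_a]=L_a$: the inclusion $\subset$ is again the observation, and it is nonzero because, taking a rank-one $X\in W_a$ (hence noncentral) with $[X,Y]\neq 0$ and decomposing $Y$ via (ii), the only surviving term lies in $[W_a,W_a]$ by (i). In particular $\dim_\F W_a\geq 2$, so there are only finitely many nonzero $W_\ell$. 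Now $\fg_a=W_a\oplus L_a$ is a graded subalgebra (closed under bracket since $L_a$ is central and $[W_a,W_a]=L_a$), with $\dim_\F L_a=1$, whose first layer meets the center of $\fg_a$ trivially (if $[X,W_a]=\{0\}$ for $X\in W_a$ then $[X,V_1]=\{0\}$ by (i), so $X=0$); by Lemma~\ref{lem_characterization_heisenberg} it is graded isomorphic to a Heisenberg algebra over $\F$. This gives (1); (2) is (ii); (3) is (i); and (4) follows from (ii) and the distinctness of the lines $L_a$, since $\fg_a\cap\fg_b=(W_a\cap W_b)\oplus(L_a\cap L_b)=\{0\}$. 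For (5): a graded automorphism $\phi$ of $\fg$ preserves $\rank_\F$, and for rank-one $X$ one has $L_{\phi X}=\phi([X,V_1])=\phi(L_X)$, so $\phi$ carries $W_\ell$ to $W_{\phi(\ell)}$ and hence permutes $\{\fg_a\}$.

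For uniqueness, let $\fg_1',\dots,\fg_m'$ be another family satisfying (1)--(4), put $W_b':=\fg_b'\cap V_1$, $L_b':=\fg_b'\cap V_2$; by (1), $\dim_\F L_b'=1$ and $[X,W_b']=L_b'$ for every nonzero $X\in W_b'$ (nondegeneracy of the Heisenberg bracket). Using (2) and (3) for the primed family, for $0\neq X\in W_b'$ one gets $[X,V_1]=[X,W_b']=L_b'$, so $X$ has rank one with $L_X=L_b'$; hence $W_b'\subset W_{L_b'}$ and in particular $L_b'\in\{L_1,\dots,L_n\}$. Conversely, any rank-one $X\in V_1$, decomposed as $X=\sum_b X_b$ along $V_1=\bigoplus_b W_b'$, satisfies $[X,V_1]=\sum_{b:\,X_b\neq 0}L_b'$ (again by (3) and nondegeneracy); since the lines $L_b'$ are pairwise distinct (by (4), $L_b'\cap L_c'\subset\fg_b'\cap\fg_c'=\{0\}$) and $[X,V_1]=L_X$ is a single line, at most one --- hence exactly one --- $X_b$ is nonzero, i.e.\ $X\in W_b'$. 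The two inclusions together show $b\mapsto L_b'$ is a bijection onto $\{L_1,\dots,L_n\}$ (so $m=n$), and after matching indices $W_b'\subset W_b$; comparing dimensions in $V_1=\bigoplus_b W_b'=\bigoplus_b W_b$ forces $W_b'=W_b$, whence $\fg_b'=W_b'\oplus L_b'=W_b\oplus L_b=\fg_b$.

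The main obstacle is the directness in (ii) and the two-sided inclusion in the uniqueness step: these are exactly the places where the two hypotheses (plus condition (4) for a competing family) are needed, to exclude accidental central elements in $V_1$ and accidental coincidences of the center lines $L_a$. Everything else is the elementary linear algebra that two distinct lines in $V_2$ span a plane and that a Heisenberg bracket is nondegenerate on its first layer.
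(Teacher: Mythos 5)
Your proof is correct and follows essentially the same route as the paper's: your $W_\ell$ coincides with the paper's $K_j:=\{X\in V_1\mid [X,\fg]\subset L_j\}$ (both equal $\{0\}\cup\{\text{rank-one } X \text{ with } L_X=\ell\}$, a subspace by the no-rank-zero hypothesis), and the decomposition, commuting, Heisenberg-identification, and uniqueness steps all proceed as in the paper. Your uniqueness argument is somewhat more explicit than the paper's terse version, but it is the same idea: every rank-one element lies in a unique $W_\ell$, and the $\fg_j$ are determined by their first layers.
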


\begin{proof}  
Let $\{L_j\}_{j\in J}$ be the collection of $1$-dimensional subspaces of $V_2$ of the form $[X,\fg]$, where 
$X \in \rank_\F(1, V_1)$; here the index set $J$ might be infinite a priori.   For every $j\in J$, let $K_j:=\{X\in V_1\mid [X,\fg]\subset L_j\}$.  Then $K_j$ is a subspace of $V_1$, and by our assumption that $\Span_\F\rank_\F(1,V_1)=V_1$, the $K_j$s span $V_1$.  

Note that if $j\neq j'$ then $[K_j,K_{j'}]\subset L_j\cap L_{j'}=\{0\}$. Therefore for every $j_0\in J$ we have  $[K_{j_0},\sum_{j\neq j_0}K_j]=\{0\}$.   It follows that if  $X\in K_{j_0}\cap \sum_{j\neq j_0}K_j$ then $[X,K_j]=\{0\}$ for all $j$, and so $[X,\fg]=\{0\}$, forcing $X=0$ by assumption.  Hence we have a direct sum decomposition $V_1=\oplus_j K_j$, and in particular $J$ is finite.

For every $j\in J$, let $\fg_j:=K_j\oplus L_j$.  Since $\fg$ has step $2$, $\fg_j$ is a graded subalgebra of $\fg$.  Since $V_1\setminus\{0\}$ has no rank zero elements, and $[\fg_j,\fg_k]=\{0\}$ for $k\neq j$, it follows that $\fg_j$ has no rank zero first layer elements.  By Lemma~\ref{lem_characterization_heisenberg},  $\fg_j$ is isomorphic to a Heisenberg  algebra over $\F$, for every $j$.  

If $j\neq k$, we get $\fg_j\cap \fg_k=\{0\}$ from the fact that its projections to both layers are  $\{0\}$.  

To prove uniqueness, we observe that if $X\in V_1\setminus\cup_j\fg_j$, then $X$ has rank at least $2$.  Thus if $\fg_1',\ldots,\fg_{k'}'$ is another collection of subalgebras satisfying (1)-(4), then each $\fg_{j'}'\cap V_1$ must be contained in $\fg_j\cap V_1$ for some $1\leq j\leq n$, and vice-versa.  Since the $\fg_j$s are determined by their first layers, this gives uniqueness and consequently assertion (5) as well.
\end{proof}

\subsection{The classification}\label{subsec_irred_classification}

 We recall that from \cite{ottazzi_warhurst,doubrov_radko}, a Carnot group $G$ is nonrigid if and only if  the first layer of its complexification contains an element $X\neq 0$ with $\rank_\C X\leq 1$.  Hence the following theorem yields a dichotomy for nonrigid graded Lie algebras: either the first layer contains a special type of automorphism invariant subspace, or the graded algebra has a very special structure. 

\begin{theorem}\label{thm_irreducible_nonrigid_structure}
Let $(\fg,\{V_j\}_{j=1}^s)$ be a (real) graded Lie algebra, and $(\fg^\C,\{V_j^\C\}_{j=1}^s)$ be the complexification with its induced grading.
Suppose:
\begin{enumerate}[label=(\it{\alph*})]
\item There is no $\aut(\fg)$-invariant subspace $\{0\}\neq W\subsetneq V_1$ such that $[W,V_i]=\{0\}$ for all $i\geq 2$. 
\item There is a nonzero element $Z\in (V_1^\C)\setminus\{0\}$ such that $\rank_\C Z\leq 1$ (or equivalently, $(\fg,\{V_j\}_{j=1}^s)$ is the graded Lie algebra of a nonrigid Carnot group).
\end{enumerate}
Then either $\fg$ is abelian, or it has step $2$, and for some  $\F\in\{\R,\C\}$, $n\geq 1$, there is a collection $\fg_1,\ldots,\fg_n$ of graded subalgebras of $\fg$ with the following properties:
\ben
\item  \label{it:isomorphic_mth_heisenberg} For some $m$, each $\fg_j$ is graded isomorphic over $\R$ to the $m$-th Heisenberg algebra   over $\F$ (viewed as a graded Lie algebra over $\R$).  
\item The first layers of the $\fg_j$s define a direct sum decomposition of $V_1$:
$$
V_1=\oplus_j(V_1\cap \fg_j)\,.
$$
\item The $\fg_j$s commute with one another:  $[\fg_j,\fg_k]=0$ for $1\leq j\neq k\leq n$.
\item  \label{it:complex_not_distinct} If $\F=\R$, then the second layers are distinct:  $\fg_j\cap V_2\neq\fg_k\cap V_2$ for $1\leq j\neq k\leq n$.  If $\F=\C$, then the second layers need not be distinct.  However, for each $j$ we have a (graded) decomposition of the complexification $\fg_j^\C=(\fg_j^\C)_i\oplus(\fg_j^\C)_{-i}$  from Lemma~\ref{lem_complexification_complex_lie_algebras};
the second layers of $(\fg_j^\C)_{\pm i}$ are distinct and interchanged by 
complex conjugation, and we obtain distinct pairs of second layers as $j$ varies. 
\item $\aut(\fg)$ preserves the collection $\{\fg_j\}_{j=1}^n$, and permutes the $\fg_j$s transitively.
\een
Moreover the field $\F$, and the collection $\fg_1,\ldots,\fg_n$ are uniquely determined by $\fg$.
\end{theorem}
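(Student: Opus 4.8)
\emph{Strategy.} The plan is to run Lemma~\ref{lem_rank_1_span_step_2}, but applied to the complexification $\fg^\C$ rather than to $\fg$ directly: this detour is essentially forced, because in the cases at hand $V_1$ may contain no real rank-$1$ element whatsoever (already so for $\fg=\fh_m^\C$), whereas $V_1^\C$ always does. First I would show that low-rank elements span $V_1^\C$. By hypothesis (b) there is $Z\in V_1^\C\setminus\{0\}$ with $\rank_\C Z\le 1$, and Lemma~\ref{lem_rank_1_commutes_layers_geq_2}, applied over $\C$ to $\fg^\C$, gives $[Z,V_j^\C]=\{0\}$ for $j\ge 2$; the same holds for every element of $\rank_\C(\le 1,V_1^\C)$. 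Set $W^\C:=\Span_\C\rank_\C(\le 1,V_1^\C)$. Since complex conjugation and complexified graded automorphisms of $\fg$ preserve the bracket and the grading, they preserve $\rank_\C$; hence $W^\C$ is conjugation-invariant and invariant under $\aut(\fg)$, so $W^\C=W\otimes\C$ for a nonzero $\aut(\fg)$-invariant subspace $W\subset V_1$ with $[W,V_j]=\{0\}$ for all $j\ge 2$. Hypothesis (a) then forces $W=V_1$, so $[V_1,V_j]=\{0\}$ for $j\ge 2$; as $V_{j+1}=[V_1,V_j]$ this gives $V_3=\{0\}$. Thus either $\fg$ is abelian --- the first alternative --- or $\fg$ has step exactly $2$ and $\rank_\C(\le 1,V_1^\C)$ spans $V_1^\C$; assume the latter from now on.

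Next I would remove central first-layer elements. The center $\fz$ is characteristic, so $\fz\cap V_1$ is $\aut(\fg)$-invariant, $[\fz\cap V_1,V_j]=\{0\}$, and it cannot be all of $V_1$ (else $\fg$ is abelian); hypothesis (a) gives $\fz\cap V_1=\{0\}$, which (using again that $\fg^\C$ has real structure constants) upgrades to $\rank_\C(0,V_1^\C)=\{0\}$. Hence $\rank_\C(1,V_1^\C)$ spans $V_1^\C$, and Lemma~\ref{lem_rank_1_span_step_2} applies to $\fg^\C$ over $\C$: it produces a canonical commuting family $\fg^\C_1,\dots,\fg^\C_N$ of graded $\C$-subalgebras, each $\C$-isomorphic to a complex Heisenberg algebra, with pairwise-trivial intersections (so in particular pairwise-distinct $1$-dimensional second layers), first layers decomposing $V_1^\C$, and the family permuted by $\aut_\C(\fg^\C)$; conditions (1)--(4) of that lemma single the family out uniquely.

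The remaining and most delicate step is the descent to $\fg$. Complex conjugation $c$ is an $\R$-automorphism of $\fg^\C$ preserving all the conditions that characterize the family, so by uniqueness it permutes $\{\fg^\C_1,\dots,\fg^\C_N\}$, and as $c^2=\mathrm{id}$ the $c$-orbits are singletons or transposed pairs. The span of a $c$-orbit is $c$-invariant, hence the complexification of a real graded subalgebra $\fg_j\subset\fg$. I expect a singleton orbit $\{\fg^\C_a\}$ to descend to a real Heisenberg $\fg_j$ (a real step-$2$ algebra with $\dim_\R(\fg_j\cap V_2)=1$ and no central first-layer element, so Lemma~\ref{lem_characterization_heisenberg} applies), and a transposed pair $\{\fg^\C_a,c\fg^\C_a\}$ to descend to a \emph{complex} Heisenberg $\fg_j$ viewed over $\R$: here $Z\mapsto Z+cZ$ should be a real graded Lie algebra isomorphism $\fg^\C_a\to\fg_j$ --- the verification uses $[\fg^\C_a,c\fg^\C_a]=\{0\}$ --- and the decomposition $\fg_j^\C=(\fg_j^\C)_i\oplus(\fg_j^\C)_{-i}$ of Lemma~\ref{lem_complexification_complex_lie_algebras} recovers the pair. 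These $\fg_j$ commute and their first layers decompose $V_1$ (descend the $\C$-statements), and since every $\phi\in\aut(\fg)$ complexifies to a $\C$-automorphism commuting with $c$, $\aut(\fg)$ permutes $\{\fg_1,\dots,\fg_n\}$. If this action were intransitive, the sum of the first layers over one orbit would be a nonzero proper $\aut(\fg)$-invariant subspace $W\subsetneq V_1$ with $[W,V_j]=\{0\}$ for $j\ge 2$ (automatic in step $2$), contradicting (a); hence the action is transitive. Transitivity forces all $\fg_j$ to be $\R$-isomorphic, hence all equal to the $m$-th Heisenberg algebra over a single $\F\in\{\R,\C\}$ for one fixed $m$: this is conclusions (1) and (5), while (2)--(4) are the descended forms of the properties recorded in Lemma~\ref{lem_rank_1_span_step_2} (distinctness of the second layers for $\F=\R$, resp. of the conjugate pairs of second layers for $\F=\C$, coming from the trivial intersections there). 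Uniqueness of $\F$ and of $\fg_1,\dots,\fg_n$ follows from the uniqueness clause of Lemma~\ref{lem_rank_1_span_step_2} for $\fg^\C$ together with the canonicity of passing to $c$-orbits, with $\F$ recorded by whether those orbits are singletons or transposed pairs.

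The crux --- the part I expect to require genuine care rather than bookkeeping --- is this descent: working entirely inside $\fg^\C$, tracking the interaction of complex conjugation with the canonical Heisenberg decomposition, recognizing a conjugation-transposed pair as one complex Heisenberg factor over $\R$, and extracting transitivity of the \emph{real} automorphism group from hypothesis (a) through the interplay of $\aut(\fg)$, $\aut_\C(\fg^\C)$, and $c$. By contrast, the preliminary reductions (abundance of low-rank elements, step $\le 2$, absence of central first-layer elements) are routine once one invokes hypothesis (a) repeatedly in the guise ``an $\aut(\fg)$-invariant subspace of $V_1$ that kills all higher layers is $\{0\}$ or $V_1$''.
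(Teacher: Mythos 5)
Your proposal is correct, but it reorganizes the paper's argument in a way worth noting. The paper divides early into three cases according to the minimal \emph{real} rank of nonzero first-layer elements: $\rank_\R(0,V_1)\neq\{0\}$ yields the abelian case; $\rank_\R(1,V_1)\neq\emptyset$ is treated entirely over $\R$, applying Lemma~\ref{lem_rank_1_span_step_2} directly to $\fg$ to produce real Heisenberg factors without ever complexifying; only in the remaining case $\rank_\R(\leq 1,V_1)=\{0\}$ does the paper complexify, prove its five-part Claim (whose item~(4), $[\bar Z,\fg^\C]\neq[Z,\fg^\C]$, forces conjugation to act freely on the family, i.e.\ all $c$-orbits are transposed pairs), and then descend via $Z\mapsto Z+\bar Z$. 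You instead complexify uniformly from the outset: establish that $\Span_\C\rank_\C(\le 1,V_1^\C)$ is the complexification of an $\aut(\fg)$-invariant real $W$ killing higher layers, invoke~(a) to get $W=V_1$ and step $\le 2$, remove central first-layer elements through the center, apply Lemma~\ref{lem_rank_1_span_step_2} once over $\C$, and descend in all cases according to whether a $c$-orbit is a singleton or a transposed pair. The information the paper's Claim~(4) supplies — that when $\F=\C$ all orbits are pairs, hence there is no mixture of orbit types — you instead extract \emph{afterwards} from transitivity of $\aut(\fg)\acts\{\fg_j\}$, which forces all the descended factors to be $\R$-isomorphic and thereby rules out a mix of real and complex Heisenberg summands. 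Both routes are valid; yours invokes Lemma~\ref{lem_rank_1_span_step_2} only once and dispenses with the separate Claim, at the cost of a descent that must handle both orbit types simultaneously and lean on transitivity to sort out $\F$. The descent recipe $Z\mapsto Z+cZ$ for transposed pairs is exactly the one the paper uses in its own $\F=\C$ branch (equation~\eqref{eqn_fg_j}), so that key step is shared.
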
 

\bigskip
\begin{remark}
Note that in \eqref{it:complex_not_distinct} for $\F = \C$ the definition of $(\fg_j^\C)_{\pm i}$ requires a complex multiplication 
on $\fg_j$. Such a multiplication is induced by a choice of an $\R$-linear graded isomorphism from the $m$-th complex Heisenberg group to $\fg_j$ which exist by \eqref{it:isomorphic_mth_heisenberg}. In view of 
Lemma~\ref{lem_complex_heisenberg_auts}, different choices of an $\R$-linear graded isomorphism yield the same complex
structure on $\fg_j$, up to a possible change of sign. Changing the sign of the complex structure just exchanges
$(\fg_j^\C)_{i}$ and $(\fg_j^\C)_{-i}$. Thus the collection
 $\{ (\fg_j^\C)_{i}, (\fg_j^\C)_{-i} \}$ is uniquely determined by $\fg_j$.
\end{remark}

\bigskip
\begin{remark}
The reader may wonder why Hypothesis (a) in Theorem~\ref{thm_irreducible_nonrigid_structure} includes the restriction that $[W,V_i]=\{0\}$ for all $i\geq 2$.  The theorem would remain true if this restriction is dropped, since, due to the negation, it would make this a logically stronger hypothesis; however, for the results in Section~\ref{sec_qs_rigidity_carnot_groups} we need the stronger result of Theorem~\ref{thm_irreducible_nonrigid_structure}.
\end{remark}

\bigskip

\begin{proof}

We first sketch the overall logic before beginning the formal proof. 

By our assumptions, it follows readily that $V_1\setminus\{0\}$ contains an element $X$ with $\rank_\R(X)\in \{0,1,2\}$.  We then deal with the three possibilities by elimination.  If $\rank_\R(0,V_1)\neq\{0\}$, then one concludes that $\fg$ is abelian.  Assuming $\rank_\R(0,V_1)=\{0\}$ and $\rank_\R(1,V_1)\neq\emptyset$, we reduce to Lemma~\ref{lem_rank_1_span_step_2}.  Finally, assuming $\rank_\R(\leq 1,V_1)=\{0\}$, we analyze the situation by complexifying, applying Lemma~\ref{lem_rank_1_span_step_2} to the complexification, and interpreting the results back in the original graded algebra $\fg$.

We now return to the proof.

Suppose $\rank_\R(0,V_1)\neq\{0\}$.  Then $\rank_\R(0,V_1)$
is a nontrivial, $\aut(\fg)$-invariant subspace of $V_1$ which commutes with $\fg$.
Thus by hypothesis (a) we have $\rank_\R(0, V_1) = V_1$.
   
 Since $V_1$ generates $\fg$, it follows that $\fg$ is abelian, and we are done.  Therefore we may assume that 
\begin{equation}\label{eqn_rank_geq_1}
\rank_\R (0,V_1)=\{0\}\,.
\end{equation}

Now suppose $\rank_\R(1,V_1)\neq\emptyset$.  Then $W_1:=\Span_\R \rank_\R(1,V_1)$ is a nontrivial, $\aut(\fg)$-invariant subspace of $V_1$. 
By Lemma~\ref{lem_rank_1_commutes_layers_geq_2} we have $[W_1,V_j]=\{0\}$ for all $j\geq 2$. Thus hypothesis (a) implies that $W_1 = V_1$.
Using Lemma~\ref{lem_rank_1_commutes_layers_geq_2} again we see that $[V_1, \oplus_{j\ge 2} V_j] = \{0\}$ and we conclude that $\fg$ has step 
$2$.

By Lemma~\ref{lem_rank_1_span_step_2} we get graded subalgebras $\fg_1,\ldots,\fg_n$ satisfying (1)-(4) of  Lemma~\ref{lem_rank_1_span_step_2}   in this case, and by (5) of Lemma~\ref{lem_rank_1_span_step_2} they are permuted by $\aut(\fg)$.   
 By hypothesis (a) they must be permuted transitively, since otherwise an orbit would give rise to a nontrivial $\aut(\fg)$-invariant subspace of $V_1$ which commutes with $\oplus_{j \ge 2} V_j$.   Thus (5) holds.  In particular, all the $\fg_j$ are isomorphic to the same Heisenberg algebra and we are done in this case.

We now assume in addition that $\rank_\R(1,V_1)=\emptyset$, i.e.
\begin{equation}\label{eqn_rank_geq_2}
\dim_\R[X,\fg]\geq 2\quad\text{for every}\quad X\in V_1\setminus\{0\}\,.
\end{equation}

Recall that $\rank_\C(1,V_1^\C)$ is the collection of $Z\in V_1^\C$ such that $\rank_\C Z=1$.

\begin{claim}
Suppose $0\neq Z\in \rank_\C(\leq 1,V_1^\C)\subset \fg^\C$, and let
\begin{align*}
P_Z^\R&:=\Span_\R(\real Z,\imag Z)\\
P_Z^\C&:=\Span_\C(Z,\bar Z)=\C Z+\C\bar Z\,.
\end{align*}
  Then
\ben
\item $\dim_\C[Z,\fg^\C]=1$.
\item  $P_Z^\R\otimes\C=P_Z^\C$.
\item  $\dim_\R P_Z^\R=\dim_\C P_Z^\C=2$.
\item $[\bar Z,\fg^\C]\neq [Z,\fg^\C]$.
\item $\dim_\R[P_Z^\R,\fg]
=\dim_\C[P_Z^\C,\fg^\C]=2$.
\een
\end{claim}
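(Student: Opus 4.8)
The plan is to set $X:=\real Z$ and $Y:=\imag Z$, so that $Z=X+iY$ with $X,Y\in V_1$, and then to use repeatedly two elementary facts about complexification: (i) for a \emph{real} element $W\in\fg$ one has $[W,\fg^\C]=[W,\fg]\otimes\C$, so $\dim_\C[W,\fg^\C]=\dim_\R[W,\fg]$; and (ii) an $\R$-linearly independent subset of $\fg$ stays $\C$-linearly independent in $\fg^\C$, and more generally $[P_Z^\R,\fg]\otimes\C=[P_Z^\R\otimes\C,\fg^\C]$. The crux is assertion (3). If $\real Z$ and $\imag Z$ were $\R$-linearly dependent then, since $Z\neq 0$, we could write $Z=cW$ with $c\in\C\setminus\{0\}$ and $W\in V_1\setminus\{0\}$ a real vector; then $[Z,\fg^\C]=[W,\fg^\C]=[W,\fg]\otimes\C$, so $\dim_\C[Z,\fg^\C]=\dim_\R[W,\fg]\geq 2$ by \eqref{eqn_rank_geq_2}, contradicting $\rank_\C Z\leq 1$. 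Hence $\{X,Y\}$ is $\R$-linearly independent, giving $\dim_\R P_Z^\R=2$; as $\{X,Y\}$ is then a $\C$-basis of its complexification in $\fg^\C$ we also get $\dim_\C P_Z^\C=2$, proving (3). Moreover $Z+\bar Z=2X$ and $Z-\bar Z=2iY$ show $\C Z+\C\bar Z=\C X+\C Y=P_Z^\R\otimes\C$, which is (2).

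Next, (1): the inequality $\dim_\C[Z,\fg^\C]\leq 1$ is just the hypothesis $\rank_\C Z\leq 1$, so it remains to exclude $[Z,\fg^\C]=\{0\}$. Since $[Z,W]=[X,W]+i[Y,W]$ with $[X,W],[Y,W]\in\fg$ for every $W\in\fg$, vanishing of $[Z,\fg^\C]$ would force $[X,\fg]=[Y,\fg]=\{0\}$, hence $X=Y=0$ by \eqref{eqn_rank_geq_2} (which subsumes \eqref{eqn_rank_geq_1}) and thus $Z=0$, a contradiction. For (4), suppose $[\bar Z,\fg^\C]=[Z,\fg^\C]$ and call this common subspace $L$, one-dimensional over $\C$ by (1). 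Then $[X,\fg^\C]=[\tfrac12(Z+\bar Z),\fg^\C]\subseteq L$, so $\dim_\R[X,\fg]=\dim_\C[X,\fg^\C]\leq 1$; since $X\neq 0$ this contradicts \eqref{eqn_rank_geq_2}, proving (4).

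Finally (5) is obtained by assembling the pieces. Because $[Z,\fg^\C]$ and $[\bar Z,\fg^\C]$ are already $\C$-subspaces, $[P_Z^\C,\fg^\C]=[\C Z+\C\bar Z,\fg^\C]=[Z,\fg^\C]+[\bar Z,\fg^\C]$, a sum of two distinct lines by (1) and (4), hence two-dimensional over $\C$; and via (2) and the identity $[P_Z^\R,\fg]\otimes\C=[P_Z^\R\otimes\C,\fg^\C]=[P_Z^\C,\fg^\C]$ we conclude $\dim_\R[P_Z^\R,\fg]=\dim_\C[P_Z^\C,\fg^\C]=2$.

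I do not expect a genuine obstacle here; the only thing demanding care is the complexification bookkeeping — in particular the repeated identity $\dim_\C[W,\fg^\C]=\dim_\R[W,\fg]$ for real $W$, the behaviour of real and imaginary parts under the bracket, and the identification $[P_Z^\R,\fg]\otimes\C=[P_Z^\C,\fg^\C]$ — together with being careful to invoke the rank hypothesis \eqref{eqn_rank_geq_2} exactly at the two places where it is actually used, namely in the proofs of (3) and (4).
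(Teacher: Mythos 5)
Your proposal is correct and follows essentially the same route as the paper's proof: each part is established by tracking dimensions under complexification and invoking the rank lower bound \eqref{eqn_rank_geq_2} at the two decisive points. The only notable (and harmless) variations are that you establish \textup{(3)} first and then use $\real Z\neq 0$ directly to get \textup{(4)} where the paper phrases the same point via conjugation-invariance of $P_Z^\C$, and that for \textup{(5)} you prove the identity $[P_Z^\R,\fg]\otimes\C=[P_Z^\C,\fg^\C]$ outright rather than combining the inclusion with a second appeal to \eqref{eqn_rank_geq_2}, which if anything slightly streamlines the argument.
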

\begin{proof}[Proof of claim]
We cannot have $\dim_\C[Z,\fg^\C]=0$, since then we would have $\real Z,\imag Z\in V_1$ and $[\real Z,\fg]= [\imag Z,\fg]=0$, contradicting (\ref{eqn_rank_geq_1}).  Hence $\dim_\C[Z,\fg^\C]=1$ and (1) holds.

(2) is immediate from $\real Z=\frac12 (Z+\bar Z)$, $\imag Z=\frac{1}{2i}(Z-\bar Z)$.

Note that $P_Z^\C=P_Z^\R\otimes \C$ implies $\dim_\R P_Z^\R=\dim_\C P_Z^\C$.  We cannot have $\dim_\C P_Z^\C=1$, since then $P_Z^\C$ would contain a nonzero real element with complex rank $1$, and hence real rank $1$, contradicting (\ref{eqn_rank_geq_2}).  Thus $\dim_\C P_Z^\C=2$, and (3) holds.

We now consider the action of complex conjugation $Z \mapsto \bar Z$.  Suppose $[\bar Z,\fg^\C]=[Z,\fg^\C]$.  Then $P_Z^\C$ is a $\C$-linear subspace of $V_1^\C$ that is complex conjugation invariant, and $[P_Z^\C,\fg^\C]=[Z,\fg^\C]=[\bar Z, \fg^\C]$ is a $\C$-linear, complex conjugation invariant 
subspace with $\dim_\C[P_Z^\C,\fg^\C]=1$.  Therefore the real points $P_Z^\R=\real P_Z^\C$ have the property that $[P_Z^\R,\fg]$ lies in the real points of $[P_Z^\C,\fg^\C]$, which form an $\R$-linear subspace of dimension $1$ over $\R$.  This contradicts (\ref{eqn_rank_geq_2}).  It follows that $[\bar Z,\fg^\C]\neq[Z,\fg^\C]$, so (4) holds.

 Finally, (4) implies that $\dim_\C[P_Z^\C,\fg^\C]=2$. Since $[P_Z^\R,\fg]\otimes\C\subset [P_Z^\C,\fg^\C]$, we get $\dim_\R[P_Z^\R,\fg]\leq\dim_\C[P_Z^\C,\fg^\C]=2$.  By (\ref{eqn_rank_geq_2}) we cannot have $\dim_\R[P_Z^\R,\fg]\leq 1$, so (5) follows.
 \end{proof}

\medskip
Let $\hat W_1:=\Span_\R\{P_Z^\R\,|\, Z\in \rank_\C(1,V_1^\C)\}$.  This is a nontrivial $\aut(\fg)$ invariant  subspace of $V_1$.
If  $Z\in \rank_\C(1,V_1^\C)$, then for $j\geq 2$ we have
$$
[P_Z^\R,V_j]\subset [P_Z^\C,V_j^\C]=[Z,V_j^\C]+[\bar Z,V_j^\C]=\{0\}
$$
by Lemma~\ref{lem_rank_1_commutes_layers_geq_2} and the identity $\dim_\C[\bar Z, \fg^\C] = \dim_\C[Z, \fg^\C]$.
Thus by hypothesis (a) we get $\hat W_1 = V_1$. 
This implies that $\fg$ is a step $2$ graded algebra, and hence $\fg^\C$ also has step $2$.  Furthermore, 
\begin{align*}
V_1^\C&=V_1\otimes\C=\hat W_1\otimes\C=\Span_\C\{P_Z^\R\otimes\C\mid Z\in \rank_\C(1,V_1^\C)\}\\
&=\Span_\C\{P_Z^\C\mid Z\in \rank_\C(1,V_1^\C)\}\,.
\end{align*}
Thus
  $\rank_\C(1,V_1^\C)$ spans $V_1^\C$, and we can apply Lemma~\ref{lem_rank_1_span_step_2} to obtain a collection $\{\hat\fg_j\}_{j\in J_0}$ of graded subalgebras (over $\C$) of $\fg^\C$ as in that lemma. 
   By the uniqueness assertion in  Lemma~\ref{lem_rank_1_span_step_2}, the collection $\{\hat\fg_j\}_{j\in J_0}$ is preserved by both complex conjugation and the  action  $\aut(\fg)\acts \fg^\C$.
  Hence by (4) of the claim, complex conjugation will act freely on the collection $\{V_2^\C\cap \hat\fg_j\}$ of second layers (here we are using the fact that $[\hat\fg_j,\fg^\C]=[\hat\fg_j,V_1^\C]=[\hat\fg_j,\hat\fg_j]=V_2^\C\cap\hat\fg_j$).  Picking one representative from each orbit under complex conjugation, we get a subcollection $\{\hat\fg_j\}_{j\in J}$ such that we get a direct sum decomposition
$$
V_1^\C=\oplus_{j\in J}[(\hat\fg_j\oplus \ol{\hat\fg_j})\cap V_1^\C]
$$
and also for every $j\in J$ we have 
\begin{equation}  \label{fg_bar_fg_disjoint}
\hat\fg_j\cap\ol{\hat\fg_j}=\{0\}
\end{equation} 
by (4) of the claim.   For $j\in J$, define 
\begin{equation}
\label{eqn_fg_j}
\fg_j:=\real(\hat\fg_j\oplus\ol{\hat\fg_j})=\{X+\bar X\mid X\in \hat\fg_j\}\,;
\end{equation}
here we have used the fact that the real subspace is the $\R$-linear subspace of vectors which are fixed under complex conjugation. 
In view of  \eqref{fg_bar_fg_disjoint} the map $X \to X + \bar X$ is an $\R$-linear graded automorphism from $\hat \fg_j $ to $\fg_j$. 
 Using this graded isomorphism, we may pass the complex multiplication on $\hat\fg_j$ to $\fg_j$; then by applying Lemma~\ref{lem_complexification_complex_lie_algebras}, 
 we have identifications $\hat\fg_j=(\fg_j^\C)_i$, $\ol{\hat\fg}_j=(\fg_j^\C)_{-i}$.  
 Note that we have direct sum decomposition $V_1=\oplus_{j\in J}(\fg_j\cap V_1)$.  
 Since the action $\aut(\fg)\acts \fg^\C$ preserves
  the collection $\{ \hat\fg_j   \}_{j\in  J_0}$  
 and commutes with complex conjugation,
  it follows from (\ref{eqn_fg_j}) that $\aut(\fg)\acts\fg$ preserves $\{\fg_j\}_{j\in J}$.
 By hypothesis (a),  $\aut(\fg)$ permutes the $\fg_j$ transitively. Indeed a non-trivial orbit would generate a non-trivial $\aut(\fg)$ invariant
 subspace of $V_1$. This subspace trivially commutes with $V_2$ since we have already shown that $\fg$ has step $2$. Transitivity of the action implies that 
the $\fg_j$s are isomorphic to the $m$-th complex Heisenberg algebra for a fixed $m$ independent of $j$.   Assertions (1)-(4) now follow from the above discussion.

Thus we have established the existence of $\F$, $n$, and the collection $\{\fg_j\}_{j=1}^n$ satisfying (1)-(5).  We now prove uniqueness.  Suppose $\F'$, $n'$, and $\{\fg_j'\}_{j=1}^{n'}$  also satisfy (1)-(5).   Note that if either $\F=\R$ or $\F'=\R$, then $\rank_\R(1,V_1)\neq\emptyset$, while if $\F=\C$ or $\F'=\C$ then $\rank_\R(1,V_1)=\emptyset$ because $\dim_\R[X,\fg] \geq 2$ for every $X\in V_1\setminus\{0\}$.  Hence we must have $\F'=\F$.  
If $\F'=\F=\R$, then Lemma~\ref{lem_rank_1_span_step_2} implies that the collections of subalgebras $\{\fg_j\}$, $\{\fg_j'\}$ coincide.  If $\F'=\F=\C$, then by looking at the complexification $\fg^\C$, and using condition (4), we may argue as in the proof of Lemma~\ref{lem_rank_1_span_step_2} to see that the collections of pairs $\{(\fg^\C_j)_{\pm i}\}$, $\{(\fg^{'\C}_j)_{\pm i}\}$ coincide.  Taking real parts we see that the collections $\{\fg_j\}$, $\{\fg_j'\}$ are the same. 
\end{proof}

\bigskip
\subsection{A characterization using quotients}\label{subsec_characterization_quotients}
\mbox{}We now show that there is an alternative description of the graded algebras 
which satisfy hypotheses of Theorem~\ref{thm_irreducible_nonrigid_structure}  as a certain type of quotient of a product.

Let $(\fg,\{V_j\}_{j=1}^2)$ be a nonabelian graded Lie algebra as in Theorem~\ref{thm_irreducible_nonrigid_structure}, and let $\F$ and $\fg_1,\ldots,\fg_n$ be the uniquely determined graded subalgebras as supplied by that theorem.  

Let $(\tilde\fg,\{\tilde V_j\}_{j=1}^2):=(\oplus_{k=1}^n\fg_k,\{\oplus_{k=1}^nV_{j,k}\}_{j=1}^2)$, where $\fg_k=V_{1,k}\oplus V_{2,k}$ is the layer decomposition of $\fg_k$; here $\oplus_{k=1}^n \fg_k$ denotes the abstract direct sum (the subalgebras are typically not  independent in $\fg$).  Then we get an epimorphism of graded Lie algebras $\pi:\tilde\fg\ra \fg$ by sending $\fg_k$ to $\fg$ by the inclusion.  Since the $\fg_k$s are unique, they are permuted by $\aut(\fg)$; hence we get an induced action $\aut(\fg)\acts \tilde\fg$, and $\pi$ is $\aut(\fg)$-equivariant.

Let $K:=\ker(\pi)$. Since $\pi$ is $\aut(\fg)$-equivariant, it follows that $K$ is $\aut(\fg)$-invariant.  To summarize: 

\begin{lemma}
\label{lem_autg_lifts_to_auttildeg}
We have a canonical embedding $\aut(\fg)\hookrightarrow\aut(\tilde\fg)$, and the image is precisely $\stab(K,\aut(\tilde\fg))$, the stabilizer of $K$ in $\aut(\tilde\fg)$.
\end{lemma}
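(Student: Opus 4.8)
The plan is to identify the canonical homomorphism $\iota\colon\aut(\fg)\to\aut(\tilde\fg)$ (defined by the action $\aut(\fg)\acts\tilde\fg$) with the inverse of a ``descent'' map on the subgroup $\stab(K,\aut(\tilde\fg))$. First I would check that $\iota$ lands in $\stab(K,\aut(\tilde\fg))$: this is precisely the $\aut(\fg)$-invariance of $K=\ker\pi$ noted just before the statement, and it is immediate, since for $\Phi\in\aut(\fg)$ the automorphism $\tilde\Phi=\iota(\Phi)$ satisfies $\pi\circ\tilde\Phi=\Phi\circ\pi$ by definition of the action, so $\pi(\tilde\Phi(K))=\Phi(\pi(K))=\{0\}$ and $\tilde\Phi(K)=K$.

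Next I would construct the descent map $q\colon\stab(K,\aut(\tilde\fg))\to\aut(\fg)$. Since $\pi$ is a graded Lie algebra epimorphism, $K$ is a graded ideal and $\pi$ identifies $\fg$ with $\tilde\fg/K$ as graded Lie algebras; hence any $\tilde\Psi\in\aut(\tilde\fg)$ with $\tilde\Psi(K)=K$ descends to a graded Lie automorphism $\Psi=q(\tilde\Psi)$ of $\fg$, uniquely characterized by $\pi\circ\tilde\Psi=\Psi\circ\pi$ (unique because $\pi$ is surjective). Comparing the defining relation $\pi\circ\iota(\Phi)=q(\iota(\Phi))\circ\pi$ with $\pi\circ\iota(\Phi)=\Phi\circ\pi$ and cancelling the surjection $\pi$ on the right yields $q\circ\iota=\id_{\aut(\fg)}$. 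In particular $\iota$ is injective, giving the asserted embedding.

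The remaining point — and where the actual content lies — is that $q$ is injective; combined with $q\circ\iota=\id$ this forces $\iota$ and $q$ to be mutually inverse bijections between $\aut(\fg)$ and $\stab(K,\aut(\tilde\fg))$, so the image of $\iota$ is all of $\stab(K,\aut(\tilde\fg))$, which is the claim. To prove $q$ injective, suppose $\tilde\Psi\in\stab(K,\aut(\tilde\fg))$ with $q(\tilde\Psi)=\id$, so that $\tilde\Psi(x)-x\in K$ for every $x\in\tilde\fg$. The key observation is that $K\subset\tilde V_2$: indeed $\pi$ restricts on $\tilde V_1=\oplus_k V_{1,k}$ to the isomorphism onto $V_1=\oplus_k(V_1\cap\fg_k)$ supplied by conclusion (2) of Theorem~\ref{thm_irreducible_nonrigid_structure}, so $K\cap\tilde V_1=\{0\}$, and since $K$ is graded it lies in $\tilde V_2$. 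Hence for $x\in\tilde V_1$ we get $\tilde\Psi(x)-x\in K\cap\tilde V_1=\{0\}$, so $\tilde\Psi$ fixes $\tilde V_1$ pointwise; since $\tilde V_1$ generates $\tilde\fg$ and $\tilde\Psi$ is a Lie algebra homomorphism, $\tilde\Psi=\id$.

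I do not expect a serious obstacle: the argument is essentially formal once one notes that $K\subset\tilde V_2$ and that $\tilde V_1$ generates $\tilde\fg$. The only care needed is bookkeeping with the several identifications in play — $\tilde\fg/K\cong\fg$ via $\pi$, and the abstract summands $\fg_k$ of $\tilde\fg$ versus the (typically non-independent) subalgebras $\fg_k\subset\fg$ — and making sure ``graded'' is carried through so that $K$ is a graded ideal and all induced maps are graded automorphisms.
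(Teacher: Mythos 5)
Your argument is correct and follows the same construction the paper sketches in the two paragraphs preceding the lemma; the paper states the lemma ``to summarize'' without spelling out why the image is all of $\stab(K,\aut(\tilde\fg))$, and your descent map $q$ together with the observation that $K\subset\tilde V_2$ (so a stabilizing automorphism inducing the identity on $\fg$ must fix $\tilde V_1$ pointwise, hence all of $\tilde\fg$) is the natural way to fill in that gap.
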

Henceforth we will sometimes identify $\aut(\fg)$ with $\stab(K,\aut(\tilde\fg))$.

The first layers $\fg_j\cap V_1$ yield a direct sum decomposition of $V_1$, and therefore $K\subset \tilde V_2$.   Taken together, the conditions (2)-(5)  from Theorem~\ref{thm_irreducible_nonrigid_structure} imply that the following properites hold for the projection $\pi:\tilde\fg\ra\fg=\tilde\fg/K$:
\ben
\item The restriction of $\pi$ to $\fg_k$ is injective.
\item  If $\F=\R$, then for $j\neq k$, the second layers $V_{2,j}$, $V_{2,k}$ project under $\pi$ to distinct subspaces of $V_2\simeq \tilde V_2/K$.   
If $\F=\C$, then for each $j$ we have a (graded) decomposition of the complexification $$\fg_j^\C=(\fg_j^\C)_i\oplus(\fg_j^\C)_{-i}=(V^\C_{1,j})_i\oplus (V^\C_{2,j})_i\oplus(V^\C_{1,j})_{-i}\oplus(V^\C_{2,j})_{-i}$$  given by Lemma~\ref{lem_complexification_complex_lie_algebras}; the projections  $\pi((V^\C_{2,j})_i)$, $\pi((V^\C_{2,j})_{-i})$ are distinct subspaces which are interchanged by complex conjugation, and yield distinct pairs as $j$ varies. 
\item The action $\aut(\fg)\acts \tilde\fg$ permutes the summands $\{\fg_k\}$ transitively.
\een

The converse holds:
\begin{lemma}
\label{lem_converse_product_quotient}
Suppose $\F\in\{\R,\C\}$, $m\geq 1$, and we are given the graded direct sum $\tilde \fg=\oplus_{k=1}^n\tilde\fg_k$, where the $\tilde\fg_k$s are graded isomorphic over $\R$ to the $m$-th Heisenberg algebra over $\F$, and an $\R$-linear subspace $K\subset \tilde V_2$, such that the following hold:

\begin{enumerate}[start= 4, label=(\arabic*)]
\item \label{eqn_k_cap_fg_k} $K\cap \tilde\fg_k=\{0\}$ for all $1\leq k\leq n$.
\item \label{distinct_2nd_layers} If $\F=\R$ then we have $K+(\tilde\fg_j\cap \tilde V_2)\neq K+(\tilde\fg_k\cap \tilde V_2)$ for $j\neq k$. If $\F=\C$, then for each $j$ we have a (graded) decomposition of the complexification $\fg_j^\C=(\fg_j^\C)_i\oplus(\fg_j^\C)_{-i}=(V^\C_{1,j})_i\oplus (V^\C_{2,j})_i\oplus(V^\C_{1,j})_{-i}\oplus(V^\C_{2,j})_{-i}$ given by Lemma~\ref{lem_complexification_complex_lie_algebras};  the projections  $\pi((V^\C_{2,j})_i)$, $\pi((V^\C_{2,j})_{-i})$ are distinct subspaces  which are interchanged by complex conjugation, and yield  distinct pairs as $j$ varies.
\item \label{eqn_stabilizer_acts_transitively} The stabilizer of $K$ in $\aut(\tilde\fg)$  permutes the factors $\{\tilde\fg_k\}$ transitively.
\end{enumerate}

Then the quotient $\fg:=\tilde \fg/K$ with the grading $\{\tilde V_1,\tilde V_2/K\}$ induced from $\tilde\fg$ satisfies the hypotheses of Theorem~\ref{thm_irreducible_nonrigid_structure}.  
\end{lemma}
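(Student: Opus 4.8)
The plan is to verify directly that $\fg:=\tilde\fg/K$, equipped with the induced grading $\{\tilde V_1,\tilde V_2/K\}$, satisfies hypotheses (a) and (b) of Theorem~\ref{thm_irreducible_nonrigid_structure}. I would begin with the elementary bookkeeping: since $K\subset\tilde V_2$, the quotient map $\pi:\tilde\fg\ra\fg$ restricts to a graded isomorphism $\tilde V_1\ra V_1$, so setting $V_{1,k}:=\pi(\tilde\fg_k\cap\tilde V_1)$ we get $V_1=\oplus_{k=1}^n V_{1,k}$; and by hypothesis \ref{eqn_k_cap_fg_k} the restriction $\pi|_{\tilde\fg_k}$ is injective, so $\fg_k:=\pi(\tilde\fg_k)$ is graded isomorphic over $\R$ to the $m$-th Heisenberg algebra over $\F$, with $\fg_k\cap V_1=V_{1,k}$, and $\fg$ is non-abelian of step $2$.

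For hypothesis (b) I would exhibit a nonzero element of $V_1^\C$ of complex rank at most $1$. When $\F=\R$, pick $0\neq X\in V_{1,1}$; inside $\tilde\fg_1\cong\fh_m$ one has $\dim_\R[X,\tilde\fg_1]=1$, and since $[X,\tilde\fg]=[X,\tilde\fg_1]\subset\tilde\fg_1\cap\tilde V_2$ meets $K$ trivially by \ref{eqn_k_cap_fg_k}, the image $\pi(X)$ has real rank $1$ in $\fg$, hence $\rank_\C\pi(X)=1$. When $\F=\C$, fix a complex structure on $\tilde\fg_1$ coming from an $\R$-linear graded isomorphism with $\fh_m^\C$; by Lemma~\ref{lem_complexification_complex_lie_algebras} one has $\tilde\fg_1^\C=(\tilde\fg_1^\C)_i\oplus(\tilde\fg_1^\C)_{-i}$ with $(\tilde\fg_1^\C)_i$ graded isomorphic over $\C$ to $\fh_m^\C$ and $[(\tilde\fg_1^\C)_i,(\tilde\fg_1^\C)_{-i}]=\{0\}$, so for any nonzero $Z$ in the first layer of $(\tilde\fg_1^\C)_i$ one has $[Z,\tilde\fg^\C]=[Z,(\tilde\fg_1^\C)_i]$, of complex dimension $1$ (here using also that distinct $\tilde\fg_k$ commute); since $K^\C\subset\tilde V_2^\C$ the image $\pi^\C(Z)$ is a nonzero first-layer element of $\fg^\C$ with $\rank_\C\pi^\C(Z)\leq 1$. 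So (b) holds in either case.

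The substance of the argument is hypothesis (a). Because $\fg$ has step $2$, the condition ``$[W,V_i]=\{0\}$ for $i\geq 2$'' is automatic, so the task is to rule out an $\aut(\fg)$-invariant subspace $W$ with $\{0\}\neq W\subsetneq V_1$. The key is a supply of automorphisms: for each $k$, the graded automorphisms of $\tilde\fg$ that are the identity on $\tilde\fg_j$ for all $j\neq k$ and restrict on $\tilde\fg_k$ to a graded automorphism fixing the center $\tilde\fg_k\cap\tilde V_2$ pointwise form a subgroup $A_k$; by Lemma~\ref{lem_complex_heisenberg_auts} such an automorphism of $\tilde\fg_k$ is $\C$-linear when $\F=\C$, and in either case it acts on $V_{1,k}$ by an arbitrary element of $\sympl(m,\F)$, so $A_k$ acts on $V_{1,k}$ as $\sympl(m,\F)$ in its standard representation; moreover every element of $A_k$ fixes $\tilde V_2$ pointwise, hence stabilizes $K$ and descends to an automorphism of $\fg$ which is the identity on $V_{1,j}$ for $j\neq k$. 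Since $\sympl(m,\F)$ acts transitively on $V_{1,k}\setminus\{0\}$ (any nonzero vector lies in a symplectic basis, and $\sympl(m,\F)$ is transitive on symplectic bases), the only $A_k$-invariant $\R$-subspaces of $V_{1,k}$ are $\{0\}$ and $V_{1,k}$. Now take an $\aut(\fg)$-invariant $W\neq\{0\}$: for $w=\sum_j w_j\in W$ with $w_j\in V_{1,j}$ and $\phi\in A_k$ we have $\phi(w)-w=\phi(w_k)-w_k\in W$, so if $w_k\neq 0$ then $\Span_\R\{\phi(w_k)-w_k:\phi\in A_k\}$ is a nonzero $A_k$-invariant subspace of $V_{1,k}$, hence all of $V_{1,k}$. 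It follows that $W=\oplus_{k\in J}V_{1,k}$ with $J:=\{k:V_{1,k}\subset W\}\neq\emptyset$, and if $W\neq V_1$ then $J$ is proper. But by hypothesis \ref{eqn_stabilizer_acts_transitively} the stabilizer of $K$ in $\aut(\tilde\fg)$ descends into $\aut(\fg)$ and permutes $\{V_{1,k}\}$ transitively, so no proper nonempty $J$ can be invariant --- a contradiction. Hence $W=V_1$, which establishes (a) and the lemma.

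I expect the delicate point to be the identification and irreducibility of the $A_k$ in the complex case: one needs Lemma~\ref{lem_complex_heisenberg_auts} to see that a graded $\R$-automorphism of $\fh_m^\C$ fixing its center is automatically $\C$-linear, hence realized by $\sympl(m,\C)$ acting $\C$-linearly on $V_{1,k}\cong\C^{2m}$, and then the observation that this real representation --- which is not irreducible over $\C$ in any interesting way --- is nonetheless irreducible as a real representation, which I would deduce from transitivity of $\sympl(m,\C)$ on $\C^{2m}\setminus\{0\}$ (so the $\R$-span of any nonzero orbit is the whole space). The remaining steps are routine manipulation of $\pi$ and the direct sum decomposition; I note that hypothesis \ref{distinct_2nd_layers} does not appear to enter (a) or (b) directly, but it is what pins down the $\fg_k$ as the canonical subalgebras supplied by Theorem~\ref{thm_irreducible_nonrigid_structure}.
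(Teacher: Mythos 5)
Your proof is correct and takes essentially the same approach as the paper: both reduce hypothesis (a) to the transitivity of $\sympl(m,\F)$ on $\F^{2m}\setminus\{0\}$, realized through graded automorphisms of $\tilde\fg$ that act trivially on $\tilde V_2$ (and hence stabilize $K$ for free and descend to $\aut(\fg)$), followed by the transitivity hypothesis \ref{eqn_stabilizer_acts_transitively}. Your only departures are cosmetic: you re-derive the content of Lemma~\ref{lem_symplectic_transitive} inline rather than citing it, and in the $\F=\C$ case you argue real-irreducibility of the $\sympl(m,\C)$-representation directly, whereas the paper first passes to $\C$-linear $W$ via the automorphism acting by $i$ on $V_1$ and $-1$ on $V_2$.
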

\begin{proof}
Let $\pi:\tilde\fg\ra \fg$ be the quotient map.

Suppose $X\in V_1\setminus\{0\}$.  Pick $\tilde X\in \tilde V_1$ with $\pi(\tilde X)=X$, and let $\tilde X_1+\ldots+\tilde X_n$ be the decomposition induced by $\tilde \fg=\oplus_k\tilde\fg_k$.  Then $\tilde X_k\neq 0$ for some $k$, and there is a $\tilde Y\in \tilde\fg_k$ such that $[\tilde X,\tilde Y]\in V_{2,k}\setminus\{0\}$.  Now $[X,\pi(\tilde Y)]=\pi([\tilde X,\tilde Y])=\pi([X_k,\tilde Y])\neq 0$ by \ref{eqn_k_cap_fg_k}.  Therefore $V_1$ contains no nonzero elements of rank zero.

If $\F=\R$, then $\rank_\R(1,\tilde V_1)$ spans $\tilde V_1$, so $\rank_\C(1,V_1^\C)$ spans $V_1^\C$.  If $\F=\C$, then by Lemma~\ref{lem_complexification_complex_lie_algebras} we have the decomposition
$$
\fg^\C=\tilde \fg^\C/K^\C=(\oplus_{k=1}^n\tilde\fg_k^\C)/K^\C
=(\oplus_{k=1}^n[(\tilde\fg_k^\C)_i\oplus(\tilde\fg_k^\C)_{-i}])/K^\C\,,
$$
so $\rank_\C(1,V_1^\C)$ also spans $V_1^\C$ in this case.

To see that $\aut(\fg)$ acts irreducibly on the first layer suppose that $W \subset V_1$ is a nontrivial subspace invariant under $\aut(\fg)$ and consider first the case $\F = \R$. 
Then in particular $W$ is invariant under the action of the subgroup of $\aut(\tilde\fg)$ consisting of     elements that act trivially on $V_2$. So we may apply Lemma~\ref{lem_symplectic_transitive} to conclude that $W=\oplus_{j\in J}V_{1,j}$ for some nonempty subset $J\subset \{1,\ldots,n\}$.  
Now \ref{eqn_stabilizer_acts_transitively} implies that $J=\{1,\ldots,n\}$. 
Now consider the case $\F = \C$. Then the map $\Phi$ which acts as complex multiplication   by $i$    on the first layer and 
multiplication by $-1$ on the second layer belong to $\aut(\tilde \fg)$. Since it preserves  $K$ it also defines
an element of $\aut(\fg)$. Thus $W$ is a $\C$-linear subspace of $V_1$. Now we can conclude as before by
considering the subgroup of $\aut(\tilde\fg)$ consisting of $\C$-linear elements that act trivially on $V_2$.
Thus $(\fg,\{V_j\}_{j=1}^2)$ is a nonrigid graded algebra such that $\aut(\fg)$ acts  irreducibly on the first layer; in particular hypothesis (a) of Theorem~\ref{thm_irreducible_nonrigid_structure} holds.   This concludes the proof. 
\end{proof}

\bigskip
\begin{definition}
\label{def_product_quotient}
A {\em product quotient} is a Carnot group $G$ of the form $\tilde G/\exp(K)$, where the graded Lie algebra $\tilde\fg$ of $\tilde G$ and $K\subset \tilde\fg$ satisfy the assumptions of Lemma~\ref{lem_converse_product_quotient}, and in the case $\F=\R$ we have $n\geq 2$.  We will  use the term product quotient to refer to both Carnot groups and their graded Lie algebras.
\end{definition}

\begin{remark}
\label{rem_product_quotient_same_as_classification}
By virtue of Theorem~\ref{thm_irreducible_nonrigid_structure} and Lemma~\ref{lem_converse_product_quotient}, product quotients are precisely the Carnot groups whose associated graded algebra satisfies the hypotheses of Theorem~\ref{thm_irreducible_nonrigid_structure}, with the exception of  abelian groups and real Heisenberg groups.  We have chosen to exclude the latter two cases from the definition because our main objective is to study rigidity phenomena which fail to hold in those cases.  We emphasize that a complex Heisenberg group is a product quotient, while a real Heisenberg group is not.
\end{remark}

\bigskip\bigskip 
We close this subsection by discussing a few examples of product quotients. 

The simplest examples of product quotients are simply products, i.e. they are of the form $\fg=\tilde\fg=\oplus_{j=1}^n\tilde\fg_j$, $K=\{0\}$, where each summand $\tilde\fg_j$ is a copy of a fixed Heisenberg algebra over $\F\in\{\R,\C\}$, and $n\geq 2$ if $\F=\R$.

\begin{example}
\label{ex_diagonal_product_quotient} (Diagonal product quotient) Let $\tilde\fg=\oplus_{j=1}^n\tilde\fg_j$, where $n\geq 3$, $\tilde\fg_j$ is a copy of the first real Heisenberg group, with basis $\tilde X_{3j-2}, \tilde X_{3j-1}, \tilde X_{3j}$, $[\tilde X_{3j-2},\tilde X_{3j-1}]=-\tilde X_{3j}$.  Let $K\subset \tilde V_2$ be the diagonal subspace $K=\Span(\tilde X_3+\ldots+\tilde X_{3n})$.  Then the permutation action $S_n\acts\tilde\fg$ leaves $K$ invariant, and hence  Lemma~\ref{lem_converse_product_quotient}\ref{eqn_stabilizer_acts_transitively}  holds.  We may obtain similar examples by using the $m$-th real or complex Heisenberg group, instead of the first real Heisenberg group.
\end{example}

\bigskip
We emphasize that it is necessary to have $n\geq 3$ in Example~\ref{ex_diagonal_product_quotient} 
for the conditions in Lemma~\ref{lem_converse_product_quotient} to be satisfied, because otherwise Lemma~\ref{lem_converse_product_quotient}\ref{distinct_2nd_layers} would be violated.  We also point out that if $n=2$ and $K\subset \tilde V_2$ is the diagonal subspace, then:
\bit
\item If the $\tilde\fg_j$s are copies of the first real Heisenberg group, then $K$ does not satisfy the assumptions of Lemma~\ref{lem_converse_product_quotient},  the quotient of $\tilde\fg/K$ is the second real Heisenberg algebra which satisfies Theorem~\ref{thm_irreducible_nonrigid_structure}, but which is not a product quotient, since the real Heisenberg algebras are excluded.  
\item If the $\tilde\fg_j$s are copies of the first complex Heisenberg group, then although $K$ does not satisfy the assumptions of Lemma~\ref{lem_converse_product_quotient}.   The quotient  $\tilde\fg/K$ is the second complex Heisenberg algebra, which satisfies Theorem~\ref{thm_irreducible_nonrigid_structure}, and is a product quotient.  Note also that the same graded algebra has an alternate description as $\fh_2^\C/K'$, where $K':=\{0\}$, and in particular $\aut(\tilde\fg/K)$ acts transitively on the first layer.  
\eit

\begin{example}($\Z_5$-product quotients)
\label{ex_z5_product_quotient}
Let $\tilde\fg=\oplus_{j=1}^5\tilde\fg_j$, where $\tilde\fg_j$ is a copy of the first real Heisenberg group with basis $X_{3j-2}, X_{3j-1}, X_{3j}$, $[X_{3j-2},X_{3j-1}]=-X_{3j}$.  Then we have the permuation action $S_5\acts\tilde\fg$, and we restrict this to the subgroup $\Z_5$ which permutes the summands cyclically.  Then the representation $\Z_5\acts\tilde V_2$ is a copy of the permutation representation $\Z_5\acts\R^5$, which is isomorphic to the left regular representation of $\Z_5$.  This decomposes as a direct sum $\tilde V_2=K_1\oplus K_2 \oplus K_3$, where $K_1=\Span(\tilde X_3+\ldots+\tilde X_{15})$ is the diagonal, and 
$$K_2=\Span\{-aX_3+aX_6+X_9-X_{15},  -aX_3-X_6+X_{12}+aX_{15}\},$$
$$K_3=\Span\{X_3-X_6+aX_9-aX_{15},  
   -aX_3+X_6-X_9+aX_{12}\},$$
    with  $a=(\sqrt 5-1)/2$,   
are $2$-dimensional irreducible representations, where the generator of $\Z_5$ acts as rotation by  $\frac{2\pi}{5}$ and $\frac{4\pi}{5}$, respectively.   Then $K_i$ satisfies the assumptions of Lemma~\ref{lem_converse_product_quotient}.  
\end{example}

\medskip
\begin{example}(Permutation product quotients) We may generalize the two preceding examples as follows.  Let $\tilde\fg=\oplus_{j=1}^n\fg_j$, where $n\geq 2$ and $\fg_j$ is a copy of the $m$-th real or complex Heisenberg algebra.   Then permutation of the copies of $\tilde\fg_j$ gives an action $S_n\acts\tilde\fg$, and we may seek an $S_n$-invariant $\R$-linear subspace $K\subset \tilde V_2$ which satisfies \ref{eqn_k_cap_fg_k} and \ref{distinct_2nd_layers}.
\end{example}

\begin{example}
Let $\tilde\fg=\tilde\fg_1\oplus\tilde\fg_2$ where $\tilde\fg_i$ is a copy of the complex Heisenberg graded algebra $\fh_m^\C$.  Let $\tilde V_{2,i}$ be the second layer of $\tilde\fg_i$, so $\tilde V_2=\tilde V_{2,1}\oplus \tilde V_{2,2}$.  Letting $\phi:\tilde V_{2,1}\ra \tilde V_{2,2}$ be an $\R$-linear isomorphism, and $K\subset \tilde V_2=\tilde V_{2,1}\oplus \tilde V_{2,2}$ be its graph, we obtain an example satisfying Lemma~\ref{lem_converse_product_quotient}, see \cite{xie_classification_class_nonrigid} for details.
\end{example}

\begin{example}(Decomposable product quotients)
\label{ex_decomposable_product_quotients}
Suppose $\fg=\tilde\fg/K$ is a product quotient, and let $\fg'$ be   the direct sum of $N\geq 2$ copies of $\fg$.  Then $\fg'$ is also a product quotient, as the conditions of Lemma~\ref{lem_converse_product_quotient} clearly carry over to $\fg'$.   We shall show in the next subsection that every product quotient
can be written canonically as a direct sum of $N \ge 1$ so called conformal product quotients (see Definition~\ref{de:conformally_compact}).
\end{example}

\bigskip

\subsection{Decomposition of product quotients}\label{subsec_decomposition_conformally_compact}
The goal of this subsection is Lemma~\ref{lem_decomposition_conformally_compact}, which asserts that every product quotient admits a canonical decomposition as a  direct sum of indecomposable summands which are conformal, in the sense of Definition~\ref{de:conformally_compact} below.

Let  $(\fg,\{V_j\}_{j=1}^2)$ be a product quotient as discussed in the preceding subsection.  Hence there are canonically defined graded subalgebras $\fg_1,\ldots,\fg_n\subset\fg$, and a graded epimorphism $\pi:\tilde\fg:=\oplus_j\fg_j\ra\fg$, where $\tilde\fg$ has the direct sum grading, and the kernel $K\subset\tilde V_2$ satisfies   \ref{eqn_k_cap_fg_k}--
\ref{eqn_stabilizer_acts_transitively} above.  We have canonical actions $$\aut(\fg)=\stab(K,\aut(\tilde\fg))\acts\tilde \fg$$ and $\aut(\fg)\acts\{1,\ldots,n\}$.

\begin{definition}  \label{de:conformally_compact}
A product quotient $(\fg,\{V_j\}_{j=1}^2)$ is {\bf conformal}  if the action $\aut(\fg)\acts \tilde\fg$ preserves a conformal structure on $\tilde V_2$, i.e. if there is an inner product on $\tilde V_2$ that is $\aut(\fg)$-invariant up to scale.
\end{definition}

\begin{remark}
We emphasize to the reader that graded automorphisms of conformal product quotients act conformally on the second layer, but not necessarily on the first layer. 
\end{remark}

\begin{lemma}
\label{lem_decomposition_conformally_compact}
Every product quotient $\fg$ has graded direct sum decomposition $\fg=\oplus_k\hat\fg_k$, where the summands $\hat\fg_k$ are   conformal product quotients, and the decomposition is respected by $\aut(\fg)$, i.e. each $\Phi\in \aut(\fg)$ induces a permutation of the summands.  Moreover, this decomposition is unique.
\end{lemma}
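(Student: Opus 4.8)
Fix the canonical data of the product quotient: the direct sum $\tilde\fg=\oplus_{j=1}^{n}\fg_{j}$, the quotient $\pi\colon\tilde\fg\to\fg$ with $K=\ker\pi\subset\tilde V_{2}=\oplus_{j}V_{2,j}$, and the identification $A:=\aut(\fg)\cong\stab(K,\aut(\tilde\fg))$ of Lemma~\ref{lem_autg_lifts_to_auttildeg}; recall $A$ preserves $\{\fg_{j}\}$ and permutes $\{1,\dots,n\}$ transitively. For $B\subset\{1,\dots,n\}$ write $\tilde\fg_{B}=\oplus_{j\in B}\fg_{j}$, $\tilde V_{2,B}=\oplus_{j\in B}V_{2,j}$, $K_{B}=K\cap\tilde V_{2,B}$, $\hat\fg_{B}=\tilde\fg_{B}/K_{B}$. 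The plan is to obtain the decomposition from a canonical block system $\mathcal P$ of the transitive action $A\acts\{1,\dots,n\}$, namely the unique \emph{finest} one for which (i) $K=\oplus_{B\in\mathcal P}K_{B}$ and (ii) each $\hat\fg_{B}$ is a conformal product quotient. Granting that such a $\mathcal P$ exists, I would check that $\fg=\oplus_{B\in\mathcal P}\hat\fg_{B}$ is the asserted decomposition: distinct $\fg_{j}$ commute in $\fg$ by Theorem~\ref{thm_irreducible_nonrigid_structure}(3), so the summands commute; $\tilde\fg=\oplus_{B}\tilde\fg_{B}$ together with (i) makes this a graded direct sum; each $(\tilde\fg_{B},K_{B})$ still satisfies the hypotheses of Lemma~\ref{lem_converse_product_quotient} (the transitivity \ref{eqn_stabilizer_acts_transitively} because $\stab(B,A)$ restricts into $\stab(K_{B},\aut(\tilde\fg_{B}))$ and acts transitively on the block $B$), so each $\hat\fg_{B}$ is a product quotient, conformal by (ii); and since $\mathcal P$ is intrinsically defined, every $\Phi\in A$ permutes its blocks, which is the $\aut(\fg)$-equivariance. (The one edge case: when $\F=\R$ a singleton block $\{j\}$ gives $\hat\fg_{\{j\}}\cong\fh_{m}$, which happens precisely when $\fg$ is a pure product $\fh_{m}^{\oplus n}$; there the single Heisenberg factors should be read as the degenerate, one-factor conformal pieces, or this base case handled separately.)

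The engine is the dichotomy: \emph{a product quotient is conformal if and only if it is indecomposable} as a graded direct sum of nonzero product quotients. The ``only if'' direction I would prove first, and it is short: if $\hat\fg$ is conformal, with an inner product $q$ on $\tilde V_{2}$ that is $\aut(\hat\fg)$-invariant up to scale, and $\hat\fg=\fa_{1}\oplus\fa_{2}$ with both $\fa_{i}$ nonzero product quotients, then each $\fa_{i}$ is nonabelian of step $2$, so $\fa_{i}\cap\tilde V_{2}\neq\{0\}$; the dilation $\de_{r}$ of $\fa_{1}$, extended by the identity on $\fa_{2}$, lies in $\aut(\hat\fg)$ and multiplies $q$ by $r^{4}$ on $\fa_{1}\cap\tilde V_{2}$ while fixing it on $\fa_{2}\cap\tilde V_{2}$, contradicting invariance up to scale for $r\neq1$. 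The same computation shows a conformal product quotient is never a direct sum of two or more conformal product quotients, and this is what makes the finest $\mathcal P$ above well defined: the block systems satisfying (i)--(ii) are closed under common refinement, (i) elementarily and (ii) because a block $B\cap C$ of the refinement gives $\hat\fg_{B}=\oplus_{C'}\hat\fg_{B\cap C'}$, which by indecomposability of the conformal $\hat\fg_{B}$ rules out any nontrivial refinement of a block of one system by a block of the other. Uniqueness in the Lemma follows in Krull--Schmidt fashion: since $\rank_{\R}(\ad_{x})$ --- resp.\ $\rank_{\C}(\ad_{x})$ in $\fg^{\C}$ --- is unchanged on passing to a graded direct summand, the uniqueness in Theorem~\ref{thm_irreducible_nonrigid_structure} (together with Lemma~\ref{lem_rank_1_span_step_2}) forces any graded direct sum decomposition of $\fg$ into product quotients to partition the canonical collection $\{\fg_{j}\}$; hence a decomposition into conformal product quotients comes from a block system satisfying (i)--(ii), and by indecomposability and finest-ness it must be $\oplus_{B\in\mathcal P}\hat\fg_{B}$.

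The remaining and harder half of the dichotomy --- \emph{a non-conformal product quotient decomposes nontrivially} --- is the main obstacle, and I would attack it as follows. Let $A_{0}\normal A$ be the subgroup fixing every $\fg_{j}$. Each $\Phi\in A_{0}$ acts conformally on $V_{2,j}$: vacuously when $\F=\R$ (then $\dim_{\R}V_{2,j}=1$), and by Lemma~\ref{lem_complex_heisenberg_auts} when $\F=\C$. Fixing an inner product on $V_{2,1}$ and transporting it to the other $V_{2,j}$ by elements of $A$ (well defined up to scale by the previous sentence applied to $\stab(\fg_{1},A)$), one gets conformal factors $\lambda_{j}\colon A_{0}\to\R_{>0}$ and a homomorphism $\ell=(\log\lambda_{j})\colon A_{0}\to\R^{n}$ whose image $\Lambda$ is invariant under the coordinate permutations induced by $A$ and contains the diagonal $\R\mathbf 1$ (realised by the dilations $\de_{r}$ of $\fg$). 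One checks that $\Lambda=\R\mathbf 1$ would yield, together with the transported metrics and a matching of the permutation part, an inner product on $\tilde V_{2}$ that is $\aut(\fg)$-invariant up to scale, i.e.\ conformality; so if $\fg$ is not conformal then $\Lambda\supsetneq\R\mathbf 1$. Then $j\sim k\iff v_{j}=v_{k}$ for all $v\in\Lambda$ is a nontrivial $A$-block system, which one refines so that $K$ splits --- using \ref{eqn_k_cap_fg_k}, which kills $K_{B}\cap V_{2,j}$, together with \ref{distinct_2nd_layers} --- before passing to the finest admissible system above. I expect the bulk of the work to be exactly this reconciliation of the ``scaling'' block structure carried by $\Lambda$ with the ``algebraic'' one forced by $K$, made delicate by the fact that $\aut(\hat\fg_{B})$ is in general strictly larger than the restriction of $\stab(B,A)$; thus conformality of each $\hat\fg_{B}$ must be verified internally from $(\tilde\fg_{B},K_{B})$, concretely from the statement that the only diagonal rescalings of the $V_{2,j}$, $j\in B$, preserving $K_{B}$ are the uniform ones.
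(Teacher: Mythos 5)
Your ``engine'' dichotomy has a genuine gap, and it sits exactly at the heart of the lemma. You define $\mathcal P$ as the finest block system satisfying (i) ($K$ splits) \emph{and} (ii) (each $\hat\fg_B$ conformal), but the existence of any system satisfying both is not available a priori: the trivial one-block system satisfies (i) automatically, yet satisfies (ii) only if $\fg$ is already conformal. So constructing $\mathcal P$ presupposes that some conformal decomposition exists, which is the thing to be proved. The $\Lambda$-argument is supposed to supply it, but you leave the ``reconciliation'' (your words) of the scaling block structure of $\Lambda$ with $K$-compatibility, and the mismatch between $\aut(\hat\fg_B)$ and the restriction of $\stab(B,\aut(\fg))$, unresolved --- and these are precisely the hard points.

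The paper avoids the circularity by decoupling the two conditions. It first takes the finest \emph{$K$-compatible} partition $J_1\sqcup\dots\sqcup J_\ell$, which always exists by the same lattice/common-refinement argument you invoke for (i) alone (the trivial partition is always $K$-compatible); conformality is then a \emph{theorem} about the resulting pieces, not part of the defining property. The key step is Lemma~\ref{lem_eigenvalues_same_modulus}: if some $A$ in the finite-index, $\F$-linear subgroup $D\subset\aut(\fg)$ had eigenvalues of two different moduli on a block $\hat V_k$, the idempotents of the generalized eigenspace decomposition --- being polynomials in $A$ and hence preserving $K$ --- would produce a strictly finer $K$-compatible partition, contradicting finest-ness. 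Choosing an inner product on $\hat V_k$ making the $V_{2,j}$ (and, when $\F=\C$, complex multiplication) orthogonal then makes $D$ act conformally, and finite index upgrades this to the full stabilizer $\stab(J_k,\aut(\fg))$. This is the same eigenvalue-modulus mechanism as your $\Lambda\supsetneq\R\mathbf 1$ heuristic, but applied directly to the already-constructed partition, so the delicate comparison with $\aut(\hat\fg_B)$ never arises; the paper works throughout with the ambient $\aut(\fg)$-action on $\tilde V_2$. Your ``only if'' direction (conformal implies indecomposable, via independent dilations of two summands), your Krull--Schmidt uniqueness outline, and the $K=\{0\}$, $\F=\R$ singleton-block edge case you flag are all sound; the last is a real wrinkle in Definition~\ref{def_product_quotient} that the paper's statement glosses over.
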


The proof of the lemma will occupy the remainder of this subsection. We now fix a product quotient $(\fg,\{V_j\}_{j=1}^2)$.  

Our approach to proving Lemma~\ref{lem_decomposition_conformally_compact} is to find a partition of $\{1,\ldots,n\}$ that is compatible with $K$ and the action $\aut(\fg)\acts\{1,\ldots,n\}$.

\begin{definition}
A partition $\{1,\ldots,n\}=J_1\sqcup\ldots\sqcup J_\ell$ is {\bf $K$-compatible} if the direct sum decomposition 
\begin{equation}
\label{eqn_hat_v_2_decomposition}
\tilde V_2=(\oplus_{j\in J_1}\tilde V_{2,j})\oplus\ldots\oplus(\oplus_{j\in J_\ell}\tilde V_{2,j})\,
\end{equation}  
is compatible with $K$:
$$
K=(K\cap\oplus_{j\in J_1}\tilde V_{2,j})\oplus\ldots\oplus(K\cap\oplus_{j\in J_\ell}\tilde V_{2,j}).
$$
\end{definition}

\begin{lemma}

\mbox{}
\ben
\item A partition $\{1,\ldots,n\}=J_1\sqcup\ldots\sqcup J_\ell$ is $K$-compatible iff $K$ is invariant under each projection $\pi_{J_k}:\tilde V_2\ra \oplus_{j\in J_k}V_{2,j}$.  
\item A $K$-compatible partition gives rise to graded decomposition of $\fg$.  If the partition is invariant under the action $\aut(\fg)\acts \{1,\ldots,n\}$, then the resulting graded decomposition of $\fg$ is $\aut(\fg)$-invariant.
\item There exists a unique finest $K$-compatible partition.
\item The partition in (3) induces an $\aut(\fg)$-invariant graded  decomposition of $\fg$ into product quotients.
\een
\end{lemma}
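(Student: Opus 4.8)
The plan is to reduce everything to the ``projection characterization'' in part~(1): once $K$-compatibility is recast as invariance of $K$ under the coordinate projections of $\tilde V_2$, the remaining assertions follow from linear algebra together with the structure already set up for product quotients. For part~(1), write $W_k:=\oplus_{j\in J_k}\tilde V_{2,j}$, so that $\tilde V_2=\oplus_k W_k$ and $\pi_{J_k}$ is the projection onto $W_k$ along the complementary summands. If $K=\oplus_k(K\cap W_k)$, then every $v\in K$ has components $\pi_{J_k}(v)\in K\cap W_k\subset K$, so each $\pi_{J_k}$ preserves $K$. Conversely, if each $\pi_{J_k}$ preserves $K$, then for $v\in K$ the identity $v=\sum_k\pi_{J_k}(v)$ exhibits $v$ as a sum of elements of $K\cap W_k$, whence $K\subseteq\oplus_k(K\cap W_k)$; the reverse inclusion is automatic.

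For part~(2), given a $K$-compatible partition set $\tilde\fg^{(k)}:=\oplus_{j\in J_k}\fg_j$, $K_k:=K\cap W_k$, and $\hat\fg_k:=\pi(\tilde\fg^{(k)})$. Since $K\subset\tilde V_2$ the map $\pi$ is injective on $\tilde V_1$, so the subspaces $\pi(\tilde\fg^{(k)}\cap\tilde V_1)$ lie in direct sum; and since $K=\oplus_k K_k$ is compatible with $\tilde V_2=\oplus_k W_k$ we get $\tilde V_2/K=\oplus_k(W_k/K_k)$, so the second layers $\pi(W_k)\cong W_k/K_k$ also lie in direct sum. As $[\fg_j,\fg_{j'}]=\{0\}$ for $j\neq j'$ we obtain $[\hat\fg_k,\hat\fg_{k'}]=\{0\}$ for $k\neq k'$, so $\fg=\oplus_k\hat\fg_k$ is a graded direct sum decomposition of Lie algebras. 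If in addition the partition is $\aut(\fg)$-invariant, then each $\Phi\in\aut(\fg)$, viewed via Lemma~\ref{lem_autg_lifts_to_auttildeg} as an element of $\aut(\tilde\fg)$ preserving $K$, permutes the $\fg_j$ compatibly with a permutation of the blocks $J_k$, hence permutes the $\tilde\fg^{(k)}$, hence permutes the $\hat\fg_k$.

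For part~(3), the key point is that $K$-compatible partitions are closed under common refinement. If $P=\{J_a\}$ and $P'=\{J'_b\}$ are both $K$-compatible, a block of $P\wedge P'$ is a nonempty intersection $J_a\cap J'_b$, and the coordinate projection onto $\oplus_{j\in J_a\cap J'_b}\tilde V_{2,j}$ equals $\pi_{J_a}\circ\pi_{J'_b}$; since $K$ is invariant under $\pi_{J_a}$ and under $\pi_{J'_b}$, it is invariant under the composition, so by part~(1) the partition $P\wedge P'$ is $K$-compatible. Since the set of partitions of $\{1,\dots,n\}$ is finite, the common refinement of all $K$-compatible partitions is $K$-compatible, and it is by construction the unique finest $K$-compatible partition.

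For part~(4), let $P_0$ be the finest $K$-compatible partition. If $\sigma$ is the permutation of $\{1,\dots,n\}$ induced by some $\Phi\in\aut(\fg)$, with lift $\tilde\Phi\in\aut(\tilde\fg)$ preserving $K$ and sending $\tilde V_{2,j}$ to $\tilde V_{2,\sigma(j)}$, then $\pi_{\sigma(J_k)}=\tilde\Phi\circ\pi_{J_k}\circ\tilde\Phi^{-1}$, so $\pi_{\sigma(J_k)}(K)=\tilde\Phi(\pi_{J_k}(K))\subseteq\tilde\Phi(K)=K$; hence $\sigma(P_0)$ is $K$-compatible. Since $P_0$ is finest it refines $\sigma(P_0)$, and $\sigma(P_0)$ has the same number of blocks, so $P_0=\sigma(P_0)$; thus $\aut(\fg)$ permutes the blocks of $P_0$, and part~(2) then furnishes an $\aut(\fg)$-invariant graded decomposition $\fg=\oplus_k\hat\fg_k$. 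It remains to check that each $(\tilde\fg^{(k)},K_k)$ satisfies the hypotheses of Lemma~\ref{lem_converse_product_quotient}: conditions \ref{eqn_k_cap_fg_k} and \ref{distinct_2nd_layers} pass to $K_k\subseteq K$ directly (for instance $K_k+A=K_k+B$ implies $K+A=K+B$ after adding $K$ to both sides), and the crux is condition \ref{eqn_stabilizer_acts_transitively}. Given $j,j'$ in a common block $J_k$, transitivity of $\stab(K,\aut(\tilde\fg))$ on $\{1,\dots,n\}$ supplies $\tilde\Phi$ with $\tilde\Phi(\fg_j)=\fg_{j'}$; since $\tilde\Phi$ permutes the blocks of $P_0$ and the block $\sigma(J_k)$ contains $j'\in J_k$, disjointness forces $\sigma(J_k)=J_k$, so $\tilde\Phi$ restricts to an automorphism of $\tilde\fg^{(k)}$ preserving $W_k$ and hence $K_k=K\cap W_k$, witnessing transitivity within $J_k$. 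Therefore each $\hat\fg_k\cong\tilde\fg^{(k)}/K_k$ is a product quotient. The main obstacle is this last transitivity verification (one must pass from the global stabilizer of $K$ to stabilizers of the $K_k$ inside $\aut(\tilde\fg^{(k)})$); everything else is bookkeeping once part~(1) is in place.
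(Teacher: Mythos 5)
Your proof is correct and follows essentially the same route as the paper. Parts (1) and (2) are exactly the kind of bookkeeping the paper waves off as ``straightforward,'' and part (3) is literally the paper's argument: if $P$ and $P'$ are $K$-compatible then by (1) $K$ is invariant under $\pi_{J_a\cap J'_b}=\pi_{J_a}\circ\pi_{J'_b}$, so the common refinement $P\wedge P'$ is $K$-compatible, and finiteness of the partition lattice gives a unique finest such partition. For part (4) the paper only says ``follows readily from the definitions''; the details you supply --- the conjugation identity $\pi_{\sigma(J_k)}=\tilde\Phi\circ\pi_{J_k}\circ\tilde\Phi^{-1}$ showing $\sigma(P_0)$ is $K$-compatible, the block-count argument forcing $\sigma(P_0)=P_0$, and the verification of conditions \ref{eqn_k_cap_fg_k}--\ref{eqn_stabilizer_acts_transitively} of Lemma~\ref{lem_converse_product_quotient} for each $(\tilde\fg^{(k)},K_k)$, including the observation that an automorphism carrying $\fg_j$ to $\fg_{j'}$ with $j,j'\in J_k$ must fix $J_k$ setwise and hence descends to $\stab(K_k,\aut(\tilde\fg^{(k)}))$ --- are precisely the checks the paper has in mind.

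One cosmetic caveat, which affects the paper's assertion (4) as much as your write-up: when $\F=\R$ and $K=\{0\}$ the finest $K$-compatible partition is into singletons, so each summand $\hat\fg_k$ is a single real Heisenberg algebra, which Definition~\ref{def_product_quotient} explicitly excludes from the class of product quotients ($n\geq 2$ is required). This is a borderline case of terminology rather than substance: what the subsequent proof of Lemma~\ref{lem_decomposition_conformally_compact} actually needs from each summand is that it is nonabelian and indecomposable with a conformal second layer, and that still holds. It is worth being aware of, but it is not a gap in your argument relative to the paper's.
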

\begin{proof}
(1) and (2) are straightforward.

(3).  If $J_1\sqcup\ldots\sqcup J_\ell$ and $J_1'\sqcup\ldots\sqcup J_{\ell'}'$ are both $K$-compatible then (1) implies that $K$ is invariant under $\pi_{J_k\cap J_{k'}'}=\pi_{J_k}\circ \pi_{J_{k'}'}$ for all $k\in \{1,\ldots,\ell\}$, $k'\in\{1,\ldots,\ell'\}$.  Hence the common refinement $J_1''\sqcup\ldots\sqcup J_{\ell''}''$ is also $K$-compatible by (1).  This implies (3). 

(4).  This follows readily from the definitions.
\end{proof}

\bigskip
We now let $\{1,\ldots,n\}=J_1\sqcup\ldots\sqcup J_\ell$ be the finest $K$-compatible partition given by the lemma.  Letting  
\begin{align*}
\hat V_k:=\oplus_{j\in J_k}\tilde V_{2,j}\,,\qquad \hat K_k:=K\cap \hat V_k\,,\\
 \hat\fg_k:=(\oplus_{j\in J_k}V_{1,j})\oplus (\hat V_k/\hat K_k)\subset\fg\,,
\end{align*} 
we have $\aut(\fg)$-invariant decompositions
$$
\tilde V_2=\oplus_k\hat V_k\,,\qquad K=\oplus_k\hat K_k\,,\qquad \fg=\oplus_k\hat \fg_k\,.
$$

We consider the action $\aut(\fg)\acts \tilde\fg$.  Let $D\subset \aut(\fg)$ be the subgroup that leaves each summand $\fg_j\subset\tilde\fg$ invariant, and acts on each summand by $\F$-linear automorphisms.  Then $D$ has finite index in $\aut(\fg)$ because by Lemma~\ref{lem_complex_heisenberg_auts} the $\F$-linear automorphisms of $\fg_j$ have index at most $2$ in its full automorphism group (as a graded algebra over $\R$).  Thus $D$ acts on $\tilde V_2=\oplus_j\tilde V_{2,j}\simeq \F^n$ by linear transformations that are diagonalizable over $\F$, and even when $\F=\C$, these admit generalized eigenspace decompositions over $\R$ (i.e. direct sum decompositions into $2$-dimensional irreducible subspaces).  

\begin{lemma}
\label{lem_eigenvalues_same_modulus}
If $A\in D$ and $k\in \{1,\ldots,\ell\}$, then the restriction of $A$ to $\hat V_k$ has a single eigenvalue if $\F=\R$ or eigenvalues with the same modulus if $\F=\C$. 
\end{lemma}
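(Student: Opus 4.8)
The plan is to argue by contradiction, exploiting the \emph{minimality} of the partition $J_1\sqcup\dots\sqcup J_\ell$: if the assertion failed for some $A\in D$ and some index $k$, I would split $J_k$ into strictly finer pieces while preserving $K$-compatibility, contradicting the choice of the finest $K$-compatible partition.

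First I would record the relevant structure of $A$. Since $A\in D$ leaves each summand $\fg_j\subset\tilde\fg$ invariant and acts on it $\F$-linearly, $A$ preserves each line $\tilde V_{2,j}$ (a line over $\F$) and acts there as multiplication by a scalar $\mu_j\in\F\setminus\{0\}$; in particular $A$ preserves $\hat V_k=\oplus_{j\in J_k}\tilde V_{2,j}$, and since $A\in\aut(\fg)=\stab(K,\aut(\tilde\fg))$ it preserves $K$, hence also $\hat K_k=K\cap\hat V_k$. Thus $A|_{\hat V_k}$ is a real linear operator which, relative to the decomposition $\hat V_k=\oplus_{j\in J_k}\tilde V_{2,j}$, is block diagonal with blocks $\mu_j$, and in particular is diagonalizable over $\C$ with spectrum $\{\mu_j,\bar\mu_j:j\in J_k\}$.

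Next I would perform the spectral splitting. Group $J_k$ into $I_1,\dots,I_p$ according to the complex-conjugation orbit of $\mu_j$, i.e.\ $j$ and $j'$ lie in the same $I_q$ iff $\{\mu_j,\bar\mu_j\}=\{\mu_{j'},\bar\mu_{j'}\}$ (for $\F=\R$ this just says $\mu_j=\mu_{j'}$), and set $\hat V_k^{(q)}:=\oplus_{j\in I_q}\tilde V_{2,j}$. Because $A|_{\hat V_k}$ is diagonalizable over $\C$ and respects the $\tilde V_{2,j}$-decomposition, each $\hat V_k^{(q)}$ is exactly the kernel of $p_q(A|_{\hat V_k})$ for one of the distinct real-irreducible factors $p_q$ of the (squarefree) minimal polynomial of $A|_{\hat V_k}$, and the Chinese remainder theorem in $\R[t]$ produces $e_q\in\R[t]$ for which $e_q(A|_{\hat V_k})$ is the projection of $\hat V_k$ onto $\hat V_k^{(q)}$ along the other summands. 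The key point is that these projections have \emph{real} coefficients as polynomials in $A$, so they carry the merely real subspace $\hat K_k$ into itself; hence $\hat K_k=\bigoplus_{q}(\hat K_k\cap\hat V_k^{(q)})=\bigoplus_q(K\cap\hat V_k^{(q)})$.

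Finally I would conclude: if $p\ge 2$, then replacing the block $J_k$ by $I_1\sqcup\dots\sqcup I_p$ yields, in view of the last identity and the $K$-compatibility of $J_1\sqcup\dots\sqcup J_\ell$, a strictly finer $K$-compatible partition of $\{1,\dots,n\}$, contradicting minimality. So $p=1$, i.e.\ all $\mu_j$ with $j\in J_k$ coincide up to complex conjugation; for $\F=\R$ this says $A|_{\hat V_k}$ has a single eigenvalue, and for $\F=\C$ it says the eigenvalues $\mu_j,\bar\mu_j$ all have the same modulus $|\mu|$. The one place that needs care --- and really the only obstacle --- is the step flagged above: verifying that the primary projections separating the $\hat V_k^{(q)}$ are polynomials in $A$ with \emph{real} coefficients (equivalently, that conjugation-paired eigenvalues cannot be separated while differently located ones can), since this is exactly what makes the construction interact correctly with the real subspace $K$; the rest is routine linear algebra and bookkeeping with the definitions.
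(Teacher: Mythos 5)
Your proof is correct and follows essentially the same approach as the paper's: argue by contradiction using minimality of the finest $K$-compatible partition, decompose $\hat V_k$ into real-primary components of $A|_{\hat V_k}$, and use the fact that the primary projections are polynomials in $A$ with real coefficients (hence preserve $\hat K_k$) to produce a finer $K$-compatible partition. Your version is a bit more explicit on the Chinese Remainder/real-polynomial step and, by grouping by conjugation orbits of the scalars $\mu_j$ rather than merely by modulus, actually establishes the slightly stronger conclusion that for $\F=\C$ all $\mu_j$ with $j\in J_k$ lie in a single conjugation orbit $\{\mu,\bar\mu\}$ (which of course implies the stated same-modulus assertion).
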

\begin{proof}
Suppose $A\restr_{\hat V_k}$ has distinct eigenvalues if $\F=\R$, or two eigenvalues with distinct modulus, if $\F=\C$.  Then the (generalized) eigenspaces of $A\restr_{\hat V_k}$ on $\hat V_k$ give a nontrivial direct sum decomposition of $\hat V_k$ corresponding to a nontrivial partition  $J_k=\bar J_1\sqcup\ldots\sqcup \bar J_{\bar\ell}$:
\begin{equation}
\label{eqn_hat_v_k_decomposition}
\hat V_k=(\oplus_{j\in \bar J_1}\tilde V_{2,j})\oplus\ldots\oplus(\oplus_{j\in \bar J_\ell}\tilde V_{2,j})\,.
\end{equation}
Notice that $\id_{\tilde V_k}=\pi_1+\ldots+\pi_{\bar\ell}$ where the $\pi_i$s are idempotents realizing the decomposition (\ref{eqn_hat_v_k_decomposition}), and each $\pi_j$ is a polynomial in $A$ with coefficients in $\R$.     Since $K$ is $D$-invariant, we therefore obtain a nontrivial decomposition
$$
\hat K_k=(\hat K_k\cap\oplus_{j\in \bar J_1}\tilde V_{2,j})\oplus\ldots\oplus(\hat K_k\cap\oplus_{j\in \bar J_{\bar\ell}}\tilde V_{2,j})\,.
$$  
This contradicts the assumption that $J_1\sqcup\ldots\sqcup J_\ell$ is the finest $K$-compatible decomposition of $\tilde V_2$.  
\end{proof}  

\bigskip
Pick $k\in \{1,\ldots,\ell\}$. Choose an inner product $\langle\cdot,\cdot\rangle_k$ on $\hat V_k$ such that:
\begin{itemize}
\item The subspaces $\{V_{2,j}\}_{j\in J_k}$ are $\langle\cdot,\cdot\rangle_k$-orthogonal.
\item If $\F=\C$, then complex multiplication is $\langle\cdot,\cdot\rangle_k$-orthogonal.
\end{itemize}
In view of the definition of $D$ and Lemma~\ref{lem_eigenvalues_same_modulus},  $D$ acts conformally on $\langle\cdot,\cdot\rangle_k$.  Since $D$ has finite index in the stabilizer $\stab(J_k,\aut(\fg))$,  there is an inner product 
$\langle\cdot,\cdot\rangle_k'$ on $\hat V_k$ on which $\stab(J_k,\aut(\fg))$ acts conformally. 

We now turn to the uniqueness assertion.  

We will show that the summands of any decomposition as in the statement of Lemma~\ref{lem_decomposition_conformally_compact} are nonabelian and indecomposable; it then follows from \cite[Lemma 7.3]{KMX1}  that the decomposition is unique.   The  invariance under $\aut(\mathfrak g)$ follows from uniqueness. 

We claim that for every $k$, the summand $\hat\fg_k$ cannot be decomposed  nontrivially as a direct sum  of graded algebras.  Arguing indirectly, let $\hat\fg_k=\fh_1\oplus\fh_2$ be a nontrivial decomposition of $\hat\fg_k$ as a direct sum.  The $2$-parameter family of graded automorphisms arising from independent scalings of the summands will contradict the fact that $\hat\fg_k$ is conformal, unless one of the two summands $\fh_i$ has trivial second layer.  However, this would mean that $\fh_i$ is abelian, and moreover is an ideal of $\fg$ contained in the first layer.  This is impossible, since every nonzero element of the first layer of $\fg$ has rank $\geq 1$.   This proves the claim.

This completes the proof of Lemma~\ref{lem_decomposition_conformally_compact}.

\section{Rigidity of product-quotients}
\label{sec_rigidity_product_quotients}
The remainder of the paper is concerned with the rigidity of mappings $G\supset U\ra U'\subset G$ where $G$ is a product quotient -- one of the nonrigid Carnot groups that emerged from the structure theorem in Section~\ref{sec_structure_nonrigid_irreducible_first_layer}.  In this section we will state the main rigidity result for such groups (Theorem~\ref{thm_rigidity_product_quotient}),  
  and give the proof using results from Sections~\ref{sec_dim_k_equals_1}-\ref{sec_higher_product_quotients}.

For the remainder of the paper we let $G=\tilde G/\exp(K)$ be a product quotient (Definition~\ref{def_product_quotient}), and will use the notation from Lemma~\ref{lem_converse_product_quotient} for  $\tilde \fg=\oplus_{i=1}^n\tilde\fg_i$, $K\subset\tilde V_2$, etc.

\begin{theorem}
\label{thm_rigidity_product_quotient}
Let $G=\tilde G/\exp(K)$ be a product quotient of homogeneous dimension $\nu$, and $\fg= V_1\oplus  V_2$ be the layer decomposition of its graded Lie algebra.  Suppose $p>\nu$, $U\subset G$ is open, and $f:G\supset U\ra G$ is a $W^{1,p}_{\loc}$-mapping  such that the sign of the determinant of $D_Pf(x):\fg\ra\fg$ is  constant almost everywhere.    Then there is a locally constant assignment of a permutation $U\ni x\mapsto \si_x\in S_n$ such that:
\ben
\item For a.e. $x\in U$, the Pansu derivative $D_Pf(x)$ permutes the subalgebras $\fg_1,\ldots,\fg_n\subset\fg$ in accordance with the permutation $\si$:
$$
D_Pf(x)(\fg_i)=\fg_{\si_x(i)}\,.
$$
\item For every $x\in U$, cosets of $G_i$ are mapped to cosets of $G_{\si_x(i)}$ locally near $x$:  there is neighborhood $V_x$ of $x$ such that for every $1\leq j\leq n$, and every $y\in V$, 
$$
f(V_x\cap yG_i)\subset f(y)G_{\si_x(i)}\,.
$$
\een 
\end{theorem}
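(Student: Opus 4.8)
The plan is to deduce (1) formally from the structure theorem, and to reduce the local constancy of $x\mapsto\sigma_x$ together with assertion (2) to the product rigidity theorem of \cite{KMX1} and the case analyses of Sections~\ref{sec_dim_k_equals_1}--\ref{sec_conformal_case}. For (1): by \cite[Theorem 2.12]{KMX1} the Pansu differential $D_Pf(x):\fg\to\fg$ exists for a.e.\ $x\in U$, and since the hypothesis on the sign of $\det D_Pf$ entails $\det D_Pf(x)\neq 0$ for a.e.\ $x$ (which is automatic for quasiconformal homeomorphisms and for the maps arising in Theorem~\ref{thm_nonrigid_intro}), $D_Pf(x)$ is an invertible graded Lie algebra homomorphism $\fg\to\fg$, i.e.\ a graded automorphism, for a.e.\ $x$. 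By the uniqueness assertion in Theorem~\ref{thm_irreducible_nonrigid_structure} the collection $\{\fg_1,\dots,\fg_n\}$ is canonically attached to $\fg$, hence preserved as a set by every graded automorphism; this produces, for a.e.\ $x$, a permutation $\sigma_x\in S_n$ with $D_Pf(x)(\fg_i)=\fg_{\sigma_x(i)}$, which is assertion~(1).

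The heart of the matter is that $x\mapsto\sigma_x$ agrees a.e.\ with a locally constant map. I would argue using the canonical $\aut(\fg)$-invariant decomposition $\fg=\oplus_k\hat\fg_k$ into nonabelian indecomposable conformal product quotients supplied by Lemma~\ref{lem_decomposition_conformally_compact}. Since $[\hat\fg_k,\hat\fg_{k'}]=\{0\}$ for $k\neq k'$, the group $G$ is a direct product $\prod_k\hat G_k$ of indecomposable factors $\hat G_k$, so the product rigidity theorem of \cite{KMX1} applies: locally near any point, $f$ is a product of $W^{1,p}_{\loc}$-maps between the factors, matched by a locally constant permutation $\tau$ of the index set with $\hat G_k\cong\hat G_{\tau(k)}$. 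In particular $\sigma_x$ carries each block $J_k\subset\{1,\dots,n\}$ to $J_{\tau(k)}$ with $\tau$ locally constant, and composing with fixed isomorphisms $\hat G_{\tau(k)}\to\hat G_k$ reduces the problem to the following: for a $W^{1,p}_{\loc}$-self-map of a single conformal product quotient (with constant-sign Jacobian), the induced permutation of its canonical Heisenberg summands is locally constant. This is exactly what Sections~\ref{sec_dim_k_equals_1}--\ref{sec_conformal_case} establish, the proof splitting according to $\dim K$ and the type of the Heisenberg factors: $\dim K\le 1$ in Section~\ref{sec_dim_k_equals_1}, the complex-Heisenberg and higher-Heisenberg cases in Section~\ref{sec_higher_product_quotients}, and the remaining conformal case $\dim K\ge 2$ in Section~\ref{sec_conformal_case}. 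In every case the engine is the Pullback Theorem (Theorem~\ref{co:pull_back2}): one builds closed left invariant forms $\alpha,\beta$ on $G$ of complementary degree with $\wt(\alpha)+\wt(\beta)\le-\nu+1$ --- manufactured, via the interior-product calculus of Subsections~\ref{se:interior_products}--\ref{se:exterior_derivative_volume}, out of contractions of the bi-invariant volume form with vectors adapted to the $\fg_i$ and to $K$ --- and then the identity \eqref{eq:pull_back_identity_special} combined with Lemma~\ref{le:compact_directional_constancy} forces the coefficients of $f_P^*(\alpha)$ encoding $\sigma$ to be directionally constant, hence constant on balls.

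Granting this, assertion (2) follows from (1) and Lemma~\ref{lem_preservation_cosets}. Fix $x\in U$, choose $\Psi\in\aut(\fg)$ with $\Psi(\fg_{\sigma_x(i)})=\fg_i$ for all $i$, and let $\Phi$ be the induced graded automorphism of $G$. Since $D_P(\Phi\circ f)(y)=\Psi\circ D_Pf(y)$ and $\sigma$ is locally constant, we get $D_P(\Phi\circ f)(y)(\fg_i)=\fg_i$ for a.e.\ $y$ in a small ball about $x$. As each $\fg_i$ is horizontally generated (it is a Heisenberg algebra, generated by its first layer), Lemma~\ref{lem_preservation_cosets} provides a neighborhood $V_x$ of $x$ such that $(\Phi\circ f)(yG_i\cap V_x)$ lies in a single left coset of $G_i$ for every $y$; applying $\Phi^{-1}$ gives $f(yG_i\cap V_x)\subset f(y)G_{\sigma_x(i)}$, which is (2).

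The main obstacle is the step relegated to Sections~\ref{sec_dim_k_equals_1}--\ref{sec_conformal_case}: local constancy of the permutation for a conformal product quotient. When $K=\{0\}$ (the genuine product case) the factor volume forms already descend to closed forms on $G$ and the Pullback Theorem applies essentially directly; but for $K\neq\{0\}$ the obvious candidate forms on $\tilde G$ need not descend to $G=\tilde G/\exp(K)$, so one must instead produce closed left invariant forms on $G$ that are compatible with $K$ while keeping the codegree low and the weight bound \eqref{eq:weight_special} intact. Controlling this --- together with the combinatorics of which contractions of the volume form remain closed after passing to the quotient --- is what makes those sections long and forces the division into subcases according to $\dim K$ and the size of the Heisenberg factors.
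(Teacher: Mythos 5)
Your proposal is correct and relies on the same technical engine as the paper --- the Pullback Theorem applied to carefully chosen closed left invariant forms, carried out in Sections~\ref{sec_dim_k_equals_1}--\ref{sec_higher_product_quotients}, together with Lemma~\ref{lem_preservation_cosets} for assertion~(2) --- but the preliminary reduction is organized differently.  The paper branches first on the type of Heisenberg factor: when the $\tilde\fg_i$ are complex or higher real Heisenberg algebras, it invokes Section~\ref{sec_higher_product_quotients} directly, since that argument works for arbitrary $K$ and does not require indecomposability or the conformal decomposition; the decomposition of Lemma~\ref{lem_decomposition_conformally_compact} together with the product rigidity theorem \cite[Theorem~1.1]{KMX1} is used \emph{only} in the subcase where the $\tilde\fg_i$ are copies of the first real Heisenberg algebra and $\fg$ decomposes, to reduce to the indecomposable case of Sections~\ref{sec_dim_k_equals_1} and~\ref{sec_conformal_case}.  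You instead apply the conformal decomposition and product rigidity uniformly up front in all cases.  This is more systematic but inserts an unnecessary reduction step in the complex/higher Heisenberg case; both routes are valid.  Two small inaccuracies in your section attributions:  Section~\ref{sec_dim_k_equals_1} treats only $\dim K=1$ (not $\dim K\le 1$), and that section together with Section~\ref{sec_conformal_case} is specific to first-real-Heisenberg factors, so the correct split of your reduced problem is: higher/complex factors to Section~\ref{sec_higher_product_quotients}, first-real-Heisenberg with $\dim K=1$ to Section~\ref{sec_dim_k_equals_1}, and first-real-Heisenberg with $\dim K\ge 2$ to Section~\ref{sec_conformal_case}.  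This still covers all cases after the reduction, since an indecomposable summand with $n\ge 2$ first-real-Heisenberg factors necessarily has $\dim K\ge 1$ and is automatically conformal.  Finally, your deduction of~(2) from~(1) works but is slightly roundabout: once $\sigma$ is locally constant you may apply Lemma~\ref{lem_preservation_cosets} directly with $H=G_i$, $H'=G_{\sigma_x(i)}$ on a ball where $\sigma\equiv\sigma_x$; the postcomposition with $\Phi$ is not needed.
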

\begin{proof}
Note that (2) follows from (1) by Lemma~\ref{lem_preservation_cosets}.

We may assume that $n\geq 2$ since otherwise the theorem is trivial.

{\em Case 1. Each $\tilde \fg_i$ is either a copy of a complex Heisenberg algebra or a real Heisenberg algebra $\fh_m$ with $m\geq 2$.} This case is handled in Section~\ref{sec_higher_product_quotients}.

{\em Case 2. Each $\tilde\fg_i$ is a copy of the first real Heisenberg algebra for all $i$.}

{\em Case 2a. $\fg$ does not have a nontrivial decomposition as a  direct sum of graded Lie algebras.}  Then $\dim K\geq 1$, and the cases when $\dim K =1 $ and $\dim K\geq 2$ are treated in Sections~\ref{sec_dim_k_equals_1} and \ref{sec_conformal_case}, respectively. 

{\em Case 2b. $\fg$ has a nontrivial direct sum decomposition $\fg=\oplus_j\hat\fg_j$ where the   $\hat\fg_j$s  are indecomposable.} By \cite[Theorem 1.1]{KMX1}, after shrinking the domain and postcomposing with an isometry $G\ra G$ which permutes the factors, we may assume that $f$ is a product mapping.  Applying Case 2a to each factor mapping, the theorem follows. 
\end{proof}

\bigskip
We remark that the proofs in Cases 1, 2a, and 2b are all ultimately based on an application of the Pullback Theorem (Theorem~\ref{co:pull_back2}),  
$$ \int_U  f_P^*(\alpha) \wedge d(\varphi \beta) = 0,$$
with $\varphi \in C_c^\infty(U)$ and suitable closed left invariant forms $\alpha$ and $\beta$  which satisfy the conditions
 $\deg \alpha + \deg \beta = N -1$
and $\wt(\alpha) + \wt(\beta) \le - \nu +1$.   
A more detailed motivation  for the choice of the forms $\alpha$ and $\beta$ is given at the beginning of 
Sections~\ref{sec_dim_k_equals_1}-\ref{sec_higher_product_quotients}  where the three different cases are treated. 
Here we just outline some guiding principles which helped us to identify good classes of left invariant forms.
\begin{enumerate}[label=(\roman*)]
\item The pullback of $\alpha$ by the Pansu differential $D_P f(x)$ (or its lift to $D_P \tilde f(x)  \in \stab(K, \aut{\tilde \fg})$, see Lemma~\ref{lem_autg_lifts_to_auttildeg}) should
detect the permutation $\sigma_x$ without depending on  too detailed information on $D_P f$;
\item We should work with closed forms  $\alpha$ modulo exact forms (see \cite[Lemma 4.8]{KMX1});
\item There should be a sufficient supply of ``complementary'' forms $\beta$ which are closed but not exact.
\een
Regarding  (i)  we found it useful to look for forms which are related to the action of $D_Pf$ on the second layer $V_2$
(or its lift $\tilde V_2$)
as this action captures the permutation but is in general much simpler.
This would suggest to take $\alpha$ as linear combination of  wedge products of forms in $\Lambda^1  V_2$.
Such forms are, however, in general not closed:  the exterior derivative maps $\Lambda^1 V_2$ to $\Lambda^2 V_1$ and in fact to 
$\oplus_{i=1}^n \Lambda^2 V_{1,i}$.  Thus  we consider linear combinations of wedge products of  $\Lambda^2 V_{1,i}$ and $\Lambda^1 V_2$, and look for closed forms modulo exact forms.

\section{Product-quotients with $\dim K=1$}
\label{sec_dim_k_equals_1}
In this section and the next we prove Theorem~\ref{thm_rigidity_product_quotient} for product quotients $G=\tilde G/\exp(K)$ where $\tilde G$ is a product of copies of the first Heisenberg group;  we remark that in this case it suffices to assume that the Pansu differential is an isomorphism almost everywhere -- the sign condition on the determinant will not be used. In Subsection~\ref{subsec_notation_products_first_heisenberg} we fix notation that will remain in force for this section and the next.  In Subsection~\ref{subsec_reduction_to_diagonal} we show that the case when $\dim K=1$ reduces to the  case that $K$  is the diagonal subspace $\sum_m\tilde Y_m$.      In Subsection~\ref{subsec_local_constancy_permutation_dim_k_equals_1} we carry out the proof of the rigidity theorem, after first  motivating the argument by comparing with the product case.

\bigskip
\subsection{Notation for product quotients of the first Heisenberg group}
\label{subsec_notation_products_first_heisenberg}
\mbox{}  Let $\fh$ denote the first real Heisenberg algebra. We denote by $\tilde X_1$, $\tilde X_2$, $\tilde X_3$
its standard basis with $[\tilde X_1, \tilde X_2] = - \tilde X_3$. As usual we identify
elements of the algebra with left invariant vector fields of the first real Heisenberg group $H$.
The elements of dual basis are denoted by
$\tilde \theta_i$ and satisfy $d \tilde \theta_3 = \tilde \theta_1 \wedge \tilde \theta_2$. 

Let $\tilde \fg = \oplus_{i=1}^n \tilde\fg_i$ where $\tilde\fg_i$ is a copy of $\fh$, and let 
$\tilde X_1, \ldots \tilde X_{3n}$ be the basis of $\tilde \fg$, where $\tilde X_{3i-2},\tilde X_{3i-1},\tilde X_{3i}$     form  a standard basis for $\tilde \fg_i$, i.e. 
$[\tilde X_{3i-2}, \tilde X_{3i-1}] = - \tilde X_{3i}$. The dual basis is $\tilde \theta_{1}, \ldots \tilde \theta_{3i}$
with $d \tilde \theta_{3i} = \tilde \theta_{3i-2} \wedge \tilde \theta_{3i-1}$. 
The first layer subspace $\tilde V_{1,i}$  is spanned by $\tilde X_{3i-2}$ and $\tilde X_{3i-1}$ and the second layer
subspace $\tilde V_{2,i}$  is spanned by $\tilde X_{3i}$. 
To shorten notation,  we will identify $\tilde V_{1,j}$ with $V_{1,j}$, and drop the tilde when denoting first layer vectors 
and forms. We further introduce the shorthand notation
\begin{equation} \label{eq:define_tau_Y}
 \tilde \tau_i = \tilde \theta_{3i},  \quad \tilde Y_i = \tilde X_{3i}
 \end{equation}
and
\begin{equation}  \label{eq:define_gamma_Z}  \gamma_i = \theta_{3i-2} \wedge  \theta_{3i-1}, 
\quad Z_i = X_{3i-2} \wedge  X_{3i-1}.
\end{equation}
We also consider 
\begin{equation} \label{eq:define_tau}
 \hbox{$\tau$, \quad a volume form  on $V_2 = \tilde V_2 /K$,}
 \end{equation}
and 
\begin{equation}  \label{eq:define_omega}
 \hbox{$\omega = \gamma_1 \wedge \ldots \wedge \gamma_n \wedge \tau$, \quad a volume form on $\fg$.}
 \end{equation}

\subsection{Reduction to convenient $K$ by anisotropic dilation}
\label{subsec_reduction_to_diagonal}
\mbox{} For the remainder of this section, we will assume that $\dim K=1$. 

We now show that without loss of generality we may take $K=\Span(\sum_{i=1}^n\tilde Y_i)$.

\begin{lemma}
\label{eq:dim1_diagonal}
There exists a graded automorphism $\Psi: \tilde\fg\ra\tilde\fg$ such that $\Psi(\Span(\sum_{i=1}^n\tilde Y_i))=K$.  In particular, $\Psi$ induces an isomorphism of graded algebras $\tilde\fg/(\sum_i\tilde Y_i)\ra  \tilde\fg/K$.  
\end{lemma}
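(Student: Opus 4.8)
The plan is to produce $\Psi$ as a ``diagonal'' automorphism that rescales the central directions of the factors. Write $K=\Span(v)$ with $v=\sum_{i=1}^n a_i\tilde Y_i$ in the basis fixed in Subsection~\ref{subsec_notation_products_first_heisenberg}. The crux is to show that every coefficient $a_i$ is nonzero; granting this, on each factor $\tilde\fg_i$ define the graded linear map $\Psi_i$ by $\tilde X_{3i-2}\mapsto a_i\tilde X_{3i-2}$, $\tilde X_{3i-1}\mapsto \tilde X_{3i-1}$, $\tilde X_{3i}\mapsto a_i\tilde X_{3i}$. Since $[a_i\tilde X_{3i-2},\tilde X_{3i-1}]=-a_i\tilde X_{3i}$, the map $\Psi_i$ respects the single bracket relation of $\tilde\fg_i$, and it is invertible because $a_i\neq 0$; hence $\Psi_i$ is a graded automorphism of $\tilde\fg_i$ carrying $\tilde Y_i$ to $a_i\tilde Y_i$. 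Setting $\Psi:=\oplus_{i=1}^n\Psi_i$ produces a graded automorphism of $\tilde\fg$ with $\Psi(\sum_i\tilde Y_i)=\sum_i a_i\tilde Y_i=v$, so $\Psi(\Span(\sum_i\tilde Y_i))=\Span(v)=K$. The ``in particular'' clause is then immediate: $\Psi$ maps the subspace $\Span(\sum_i\tilde Y_i)$ onto $K$, so it descends to a graded isomorphism $\tilde\fg/\Span(\sum_i\tilde Y_i)\ra\tilde\fg/K$.

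The main work is the claim that all $a_i\neq 0$, and this is exactly where the hypothesis that $G$ is a product quotient enters. By the discussion following Lemma~\ref{lem_autg_lifts_to_auttildeg} together with condition~\ref{eqn_stabilizer_acts_transitively} of Lemma~\ref{lem_converse_product_quotient}, the group $\stab(K,\aut(\tilde\fg))=\aut(\fg)$ permutes the factors $\{\tilde\fg_i\}$ and does so transitively. Given $\Phi\in\stab(K,\aut(\tilde\fg))$ inducing the permutation $\pi\in S_n$, the restriction $\Phi|_{\tilde\fg_i}\colon\tilde\fg_i\ra\tilde\fg_{\pi(i)}$ is a graded isomorphism of Heisenberg algebras, hence scales the center: $\Phi(\tilde Y_i)=c_i\tilde Y_{\pi(i)}$ with $c_i\neq 0$. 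Since $\Phi$ stabilizes $K=\Span(v)$ we have $\Phi(v)=\lambda v$ for some $\lambda\neq 0$, and comparing the coefficient of $\tilde Y_j$ on both sides gives $a_{\pi^{-1}(j)}c_{\pi^{-1}(j)}=\lambda a_j$ for every $j$; in particular $a_j=0$ forces $a_{\pi^{-1}(j)}=0$. Now suppose some $a_{j_0}=0$. For each $i$, transitivity on factors supplies a $\Phi\in\stab(K,\aut(\tilde\fg))$ whose permutation sends $i$ to $j_0$, and the previous sentence then yields $a_i=0$. Hence $v=0$, a contradiction; so all $a_i\neq 0$.

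I expect the only delicate point to be bookkeeping rather than any real difficulty. One must remember that a general graded automorphism of $\tilde\fg=\oplus_i\fh$ need not permute the factors, so the permutation $\pi$ is available only because $\Phi$ stabilizes $K$ (this is precisely what is recorded after Lemma~\ref{lem_autg_lifts_to_auttildeg}), and one should note that conditions~\ref{eqn_k_cap_fg_k} and~\ref{distinct_2nd_layers} alone do not force all $a_i\neq 0$ once $n\geq 4$, so the transitivity in~\ref{eqn_stabilizer_acts_transitively} is genuinely used. Everything else is a short direct computation.
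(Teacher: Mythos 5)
Your proposal is correct and follows essentially the same route as the paper: both first use the transitivity of $\stab(K,\aut(\tilde\fg))$ on the factors to rule out any vanishing component of the spanning vector of $K$ (the paper phrases this at the level of the containment $K\subset\oplus_{i\neq i'}V_{2,i}$, you phrase it at the level of coefficients $a_i$, but these are the same argument), and then both build $\Psi$ as a direct sum of factorwise graded automorphisms scaling each center $\tilde Y_i$ by the $i$-th coefficient. The only cosmetic difference is that you spell out $\Psi_i$ in coordinates, while the paper just asserts its existence.
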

\begin{proof}
Suppose that $K \subset \oplus_{i \ne i'} V_{2,i}$ for some $i'$. 
Since $\stab(K, \aut(\tilde \fg))$ acts transitively on the collection of subspaces $\{V_{1,i}\}_{1\leq i\leq n}$    and hence also on the collection $\{V_{2,i}\}_{1\leq i\leq n}$
 we deduce that $K \subset \oplus_{i \ne i'} V_{2,i}$ for all $i'$, so $K\subset \cap_{i'}(\oplus_{i \ne i'} V_{2,i})=\{0\}$. This contradicts the fact that $\dim K=1$.

Therefore we have $K=\Span(\sum_i\mu_i\tilde Y_i)$ where $\mu_i\neq 0$ for all $i$.  

For every $i$, let $\Psi_i:\fh\simeq \tilde\fg_i\ra\tilde \fg_i$ be a graded automorphism with $\Psi_i\mid_{\tilde V_{2,i}}=\mu_i\id_{\tilde V_{2,i}}$, and define $\Psi:=\oplus\Psi_i:\tilde\fg\ra \tilde\fg$.  Then $\Psi(\sum_i\tilde Y_i)=\sum_i\mu_i\tilde Y_i$ as desired.
\end{proof}

In view of Lemma~\ref{eq:dim1_diagonal} we will from now on assume without loss of generality that
\begin{equation}  \label{eq:K_diagonal}
 K =  \Span( \sum_{i=1}^n \tilde Y_i).
\end{equation}

\begin{proposition}  \label{le:auto_preserve_diagonal}

If $\Phi\in\aut(\fg)$ and $\tilde\Phi \in \stab(K, \aut(\tilde \fg))$ is the unique lift provided by Lemma~\ref{lem_autg_lifts_to_auttildeg}, then there exist
a $\lambda \not= 0$ and a permutation $\sigma \in S_n$ such that
\begin{equation}  \label{eq:pull_back_dim_1}
\tilde\Phi^*\tilde  \tau_i = \lambda \tilde \tau_{\sigma^{-1}(i)},\quad \tilde\Phi^*\tilde\gamma_i=\lambda\tilde\gamma_{\sigma^{-1}(i)}, \quad \Phi^* \gamma_i = \lambda \gamma_{\sigma^{-1}(i)}
\quad \forall i =1, \ldots, n,  
\end{equation}
or, equivalently, 
\begin{equation}  \tilde\Phi_*\tilde  Y_i = \lambda \tilde Y_{\sigma(i)}, \quad \tilde\Phi_* (\tilde Z_i) =
\lambda \tilde Z_{\sigma(i)},\quad \Phi_* (Z_i) =
\lambda Z_{\sigma(i)}
\quad \forall i =1, \ldots, n
\end{equation}
where $Z_i = X_{3i-2} \wedge X_{3i-1}$. 
\end{proposition}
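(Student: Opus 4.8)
The plan is to first describe an arbitrary graded automorphism $\tilde\Phi$ of the product $\tilde\fg=\oplus_{i=1}^n\tilde\fg_i$, and only afterwards bring in the hypothesis that it stabilizes $K$. For $X=\sum_iX_i\in\tilde V_1$ with $X_i\in\tilde V_{1,i}$ one has $[X,\tilde V_1]=\oplus_{\{i\,:\,X_i\neq0\}}\tilde V_{2,i}$, so $\rank_\R X=\#\{i:X_i\neq0\}$; thus the first-layer elements of minimal positive rank form exactly the set $\bigcup_i(\tilde V_{1,i}\setminus\{0\})$, and since rank is intrinsic $\tilde\Phi$ preserves it. Because over $\R$ a linear subspace contained in a finite union of subspaces is contained in one of them, the $\tilde V_{1,i}$ are the maximal subspaces inside $\bigcup_j\tilde V_{1,j}$, and hence $\tilde\Phi$ permutes them: $\tilde\Phi(\tilde V_{1,i})=\tilde V_{1,\sigma(i)}$ for some $\sigma\in S_n$. (This also follows directly from property \ref{eqn_stabilizer_acts_transitively} in the definition of a product quotient.) Since $\tilde\Phi$ respects brackets and $\tilde V_{2,i}=[\tilde V_{1,i},\tilde V_{1,i}]$, we get $\tilde\Phi(\tilde\fg_i)=\tilde\fg_{\sigma(i)}$; writing $\tilde\Phi|_{\tilde V_{1,i}}$ as a $2\times2$ matrix $A_i$ with respect to the standard bases of $\tilde V_{1,i}$ and $\tilde V_{1,\sigma(i)}$, the relation $[\tilde X_{3i-2},\tilde X_{3i-1}]=-\tilde Y_i$ forces $\tilde\Phi_*\tilde Y_i=\lambda_i\tilde Y_{\sigma(i)}$ with $\lambda_i:=\det A_i\neq0$, and likewise $\tilde\Phi_*\tilde Z_i=(\det A_i)\,\tilde Z_{\sigma(i)}=\lambda_i\tilde Z_{\sigma(i)}$ for the bivector $\tilde Z_i=\tilde X_{3i-2}\wedge\tilde X_{3i-1}$.

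Next I would impose $\tilde\Phi\in\stab(K,\aut(\tilde\fg))$ with $K=\Span(\sum_i\tilde Y_i)$: the image $\tilde\Phi_*(\sum_i\tilde Y_i)=\sum_j\lambda_{\sigma^{-1}(j)}\tilde Y_j$ must be a nonzero multiple of $\sum_j\tilde Y_j$, which forces $\lambda_1=\cdots=\lambda_n=:\lambda$. Hence $\tilde\Phi_*\tilde Y_i=\lambda\tilde Y_{\sigma(i)}$ and $\tilde\Phi_*\tilde Z_i=\lambda\tilde Z_{\sigma(i)}$. Dualizing gives the pullback formulas: $\tilde\Phi$ preserves the layer decompositions $\tilde V_1=\oplus_j\tilde V_{1,j}$ and $\tilde V_2=\oplus_j\tilde V_{2,j}$, the form $\tilde\tau_i=\tilde\theta_{3i}$ is dual to $\tilde Y_i$ and annihilates $\tilde V_1$, and $\tilde\gamma_i$ is the pullback to $\tilde V_1$ of the standard area form on $\tilde V_{1,i}$ along the projection $\tilde V_1\to\tilde V_{1,i}$; evaluating $\tilde\Phi^*\tilde\tau_i$ on basis vectors and $\tilde\Phi^*\tilde\gamma_i$ on pairs of first-layer vectors then yields $\tilde\Phi^*\tilde\tau_i=\lambda\tilde\tau_{\sigma^{-1}(i)}$ and $\tilde\Phi^*\tilde\gamma_i=\lambda\tilde\gamma_{\sigma^{-1}(i)}$. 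Conversely these pullback identities give back the pushforward ones, so the two displays in the statement are equivalent.

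It remains to pass from $\tilde\fg$ to $\fg=\tilde\fg/K$. Since $K\subset\tilde V_2$, the quotient map $\tilde\fg\to\fg$ restricts to a linear isomorphism $\tilde V_1\to V_1$, and under the identification of Lemma~\ref{lem_autg_lifts_to_auttildeg} it intertwines $\tilde\Phi|_{\tilde V_1}$ with $\Phi|_{V_1}$; consequently $\Phi^*$ agrees with $\tilde\Phi^*$ on $\Lambda^2V_1=\Lambda^2\tilde V_1$ and $\Phi_*$ agrees with $\tilde\Phi_*$ on $\Lambda_2V_1$. As $\gamma_i$ and $Z_i$ are, by the conventions of Subsection~\ref{subsec_notation_products_first_heisenberg}, the images of $\tilde\gamma_i$ and $\tilde Z_i$ under this identification, we obtain $\Phi^*\gamma_i=\lambda\gamma_{\sigma^{-1}(i)}$ and $\Phi_*Z_i=\lambda Z_{\sigma(i)}$, which finishes the proof. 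The only genuinely structural input is the first step — that a graded automorphism of a direct sum of Heisenberg algebras permutes the summands; the rest is bookkeeping with the standard bases, duality, and the explicit form of $K$.
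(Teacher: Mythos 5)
Your proof is correct and takes essentially the same route as the paper's: both first establish that $\tilde\Phi$ permutes the summands $\tilde\fg_i$ (the paper cites this, you rederive it via a rank count and via property~\ref{eqn_stabilizer_acts_transitively}), then write $\tilde\Phi_*\tilde Y_i=\mu_i\tilde Y_{\sigma(i)}$ and use that $\sum_j\tilde Y_j$ spans $K$ and hence must be an eigenvector to force $\mu_1=\cdots=\mu_n=\lambda$, after which the remaining identities follow by duality and descent to $\fg=\tilde\fg/K$. You simply fill in more details (e.g.\ identifying $\mu_i=\det A_i$ and writing out the dualization) than the paper's terse ``from this identity all the assertions follow easily.''
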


\begin{proof} Since $\Phi$ permutes the collections $\{\tilde V_{1,i}\}$, $\{\tilde V_{2,i}\}$  there exist
$\mu_i$ such that
$$ \Phi \tilde Y_i = \mu_i \tilde Y_{\si(i)}.$$
Since $\Phi$ preserves $K$ the vector $\sum_{j=1}^n \tilde Y_j$ must be an eigenvector of $\Phi$
$$ \Phi \sum_{j=1}^n \tilde Y_j = \lambda \sum_{j=1}^n \tilde Y_j.$$
Comparison gives $\mu_i = \lambda$ for all $i=1, \ldots, n$. 
From this identity all the assertions follow easily. 
\end{proof}

\bigskip
\subsection{Local constancy of the permutation}
\label{subsec_local_constancy_permutation_dim_k_equals_1}

\mbox{}
Let $f:G\supset U\ra G$ be a $W^{1,p}_{\loc}$-mapping for some $p>\nu$, and assume that the Pansu differential $D_Pf(x):\fg\ra\fg$ is an isomorphism for a.e. $x\in U$.

Our approach to controlling $f$ is motivated by the treatment of products $\prod_j\H$, where we applied the pullback theorem to the wedge product 
$ f_P^*(d \tilde \tau_j \wedge \tilde \tau_j)\wedge d(\varphi\hat \be)$ 
where $1\leq j\leq n$ and $\hat\be$ is a suitably chosen closed codegree $4$ form.  The form $\hat\be$ may be expressed as $i_X\be$ where 
$$
\be=i_{Y_m} i_{X_{3m-2}} i_{X_{3m-1}} \omega=\pm d\tilde\tau_1\wedge\tilde\tau_1\wedge\ldots\widehat{d\tilde\tau_m\wedge\tilde\tau_m}\wedge\ldots\wedge d\tilde\tau_n\wedge\tilde\tau_n
$$
for some  $m \in \{1, \ldots, n\}$ and $X\in \oplus_{j\neq m}V_{1,j}$.

Note that in the present context the forms $\tilde\tau_i$ are not well-defined because they do not descend to the quotient $\fg=\tilde\fg/K$.  We observe that the differences $\tilde\tau_{i,j}:=\tilde\tau_i-\tilde\tau_j$ descend to well-defined forms $\tau_{i,j}$ for all $i\neq j$.  This motivates our choice of the  $3$-forms
\begin{equation}  \label{eq:define_omega_ij}
\om_{ij}:=(\ga_i+\ga_j)\wedge (\tau_{i,j})
\end{equation}
whose pullback to $\tilde G$ is $(d\tilde \tau_i  + d\tilde\tau_j) \wedge(\tilde\tau_i -\tilde\tau_j)$ for $1\leq i\neq j\leq n$.  We will apply the pullback theorem to $f_P^*\om_{ij}\wedge d(\varphi \,i_X\beta)$ where $\beta$ is a closed codegree $3$ form given below.
It follows from Proposition~\ref{le:auto_preserve_diagonal} that
\begin{equation}  \label{eq:pullback_dim_K_1}
 f_P^*(\omega_{ij}) = \lambda^2(x) (\omega_{\sigma_x^{-1}(i) \sigma_x^{-1}(j)}).
 \end{equation}

At the heart of the matter are  the following two identities.

\begin{lemma} \label{le:key_identities_dim_K_equals_1}
Let $m \in \{1, \ldots, n\}$ and set 
\begin{equation} \label{eq:define_beta_dim_1}
\beta = i_{Y_m} i_{X_{3m-2}} i_{X_{3m-1}} \omega
\end{equation}
where $Y_m:=\tilde Y_m+K$ is the image of $\tilde Y_m$ under the projection $\tilde\fg\ra \fg$.  Then 
\begin{equation} \label{eq:beta_dim_1_closed}
 d(i_X \beta) = 0   \quad \forall X \in \oplus_{j \ne m}V_{1,j}
\end{equation}
and $i_X \beta$ is a form of codegree $4$ and weight $\le - \nu + 5$.
Moreover, 
\begin{align}  \label{eq:key_wedge_product_diagonal}
& \, \omega_{kl} \wedge d( \varphi i_X \beta) =
 \begin{cases}
0 & \hbox{if $k \ne m$ and $l \ne m$} \\
-(X \varphi) \omega  & \hbox{if $k=m$ and $l \ne m$} \\
 (X \varphi) \omega & \hbox{if $k \ne m$ and $l=m$.}
\end{cases}
\end{align}
\end{lemma}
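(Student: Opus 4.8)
The plan is to establish all three assertions by direct computation inside the quotient algebra $\fg=\tilde\fg/K$, using only the exterior-algebra identities of Subsection~\ref{se:interior_products} and the exterior-derivative formulas of Lemma~\ref{lem_volume_form_contraction_exterior_derivative}; no passage back to $\tilde\fg$ is required. The structural facts I would use are the bracket relations in $\fg$: since $\fg$ has step $2$, $[V_1,V_2]=\{0\}$; since the factors $\tilde\fg_j$ pairwise commute, $[V_{1,j},V_{1,m}]=\{0\}$ for $j\neq m$; and $[X_{3m-2},X_{3m-1}]=-Y_m$. In particular, every $X\in\oplus_{j\neq m}V_{1,j}$ commutes with each of $Y_m$, $X_{3m-2}$, $X_{3m-1}$.

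First I would prove the stronger statement that $\beta$ itself is closed. By \eqref{eq:exterior_codegree3},
\[
d\beta=i_A\omega,\qquad A=Y_m\wedge[X_{3m-2},X_{3m-1}]+X_{3m-2}\wedge[X_{3m-1},Y_m]+X_{3m-1}\wedge[Y_m,X_{3m-2}],
\]
and the bracket relations above give $A=Y_m\wedge(-Y_m)=0$, so $d\beta=0$. Then, since $X$ commutes with $Y_m,X_{3m-2},X_{3m-1}$, the special case \eqref{eq:exterior_codegree5} applies with $Z=X$ and gives $d(i_X\beta)=-i_X\,d\beta=0$, which is \eqref{eq:beta_dim_1_closed}. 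The degree and weight bounds are then immediate: $\omega$ has top degree, so $\deg\omega=N$ and $\wt(\omega)=-\nu$, and contracting with a first-layer (resp.\ second-layer) vector lowers the degree by one and raises the weight by one (resp.\ two); hence $\beta$ has codegree $3$ and weight $\le-\nu+4$, so $i_X\beta$ has codegree $4$ and weight $\le-\nu+5$.

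For the wedge-product identity \eqref{eq:key_wedge_product_diagonal} I would use that $\beta$ is closed to get $d(\varphi\,i_X\beta)=d\varphi\wedge i_X\beta$, and then rewrite the left invariant factor via repeated use of \eqref{eq:i_circ_i} as $i_X\beta=i_W\omega$ with the simple $4$-vector $W=X_{3m-1}\wedge X_{3m-2}\wedge Y_m\wedge X$. Since $\omega_{kl}\wedge d\varphi$ is a $4$-form, \eqref{eq:inner_product_volume_form} gives
\[
\omega_{kl}\wedge d(\varphi\,i_X\beta)=(\omega_{kl}\wedge d\varphi)\wedge i_W\omega=\bigl((\omega_{kl}\wedge d\varphi)(W)\bigr)\,\omega,
\]
so the lemma reduces to evaluating the scalar function $(\omega_{kl}\wedge d\varphi)(W)$. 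Expanding $\omega_{kl}=(\gamma_k+\gamma_l)\wedge\tau_{k,l}$ (see \eqref{eq:define_omega_ij}) and expanding the $4$-form on the simple vector $W$, one checks term by term that $\omega_{kl}$ annihilates every triple extracted from $W$ except $(X_{3m-1},X_{3m-2},Y_m)$: a triple containing a first-layer vector from a factor $\ne m$ together with $Y_m$ is killed by $\gamma_k+\gamma_l$, and a triple of first-layer vectors is killed by $\tau_{k,l}$. Hence the only surviving term is $\omega_{kl}(X_{3m-1},X_{3m-2},Y_m)\cdot(X\varphi)$, and using $\gamma_k(X_{3m-1},X_{3m-2})=-\delta_{mk}$ together with $\tau_{k,l}(Y_m)=\delta_{mk}-\delta_{ml}$ (the latter from the definition of $\tau_{k,l}$ as the descent of $\tilde\tau_k-\tilde\tau_l=\tilde\theta_{3k}-\tilde\theta_{3l}$ evaluated on $\tilde Y_m=\tilde X_{3m}$), this equals $-(\delta_{mk}+\delta_{ml})(\delta_{mk}-\delta_{ml})(X\varphi)$. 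Reading off the cases $m\notin\{k,l\}$, $m=k$, $m=l$ gives exactly \eqref{eq:key_wedge_product_diagonal}.

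The one delicate point is sign bookkeeping in the last step: the order of the factors in $W$, the orientation of $\omega$, and the signs in the multilinear expansion of $(\omega_{kl}\wedge d\varphi)(W)$ all have to be tracked consistently. This is routine once a sign convention is fixed, and I expect no conceptual difficulty anywhere in the argument.
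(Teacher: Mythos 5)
Your argument is correct and follows essentially the same route as the paper's: the closedness of $i_X\beta$ is derived from \eqref{eq:exterior_codegree3} and \eqref{eq:exterior_codegree5}, and \eqref{eq:key_wedge_product_diagonal} is obtained by rewriting $i_X\beta=i_W\omega$ via \eqref{eq:i_circ_i}, applying \eqref{eq:inner_product_volume_form}, and evaluating $\omega_{kl}(X_{3m-1},X_{3m-2},Y_m)\,d\varphi(X)$. The only cosmetic difference is that you first establish $d\beta=0$ and then contract with $X$, whereas the paper anticommutes $i_X$ to the innermost position before differentiating; the lemmas invoked and the resulting delta-function arithmetic are identical.
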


\begin{proof}
Using (\ref{eq:i_circ_i}),   \eqref{eq:exterior_codegree5},   (\ref{eq:exterior_codegree3}),   
  and the identity
$[X_{3m-2}, X_{3m-1}] = - Y_m$ we get 
\begin{align*}
d(i_X \beta) = & \, - d(i_{Y_m} i_{X_{3m-2}} i_{X_{3m-1}} i_X \omega) 
=   i_{Y_m} d(i_{X_{3m-2}} i_{X_{3m-1}} i_{X} \omega) \\
= & \,- i_{Y_m}  i_X   i_{[X_{3m-2}, X_{3m-1}]} \omega =  i_{Y_m} i_X i_{Y_m} \omega = 0
\end{align*}
since $i_{Y_m} \circ i_{Y_m} = 0$.

Applying (\ref{eq:i_circ_i}) and (\ref{eq:inner_product_volume_form}), we get: 
\begin{align*}
& \,  \omega_{kl} \wedge d( \varphi i_X \beta)\\
 = & \, 
 \omega_{kl} \wedge d\varphi \wedge i_X  i_{Y_m} i_{X_{3m-2}} i_{X_{3m-1}} \omega \\
 = & \,  (\omega_{kl} \wedge d\varphi)(  {X_{3m-1}}, X_{3m-2}, Y_m, X)  \, \omega\\
 = & \, \omega_{kl} (X_{3m-1}, X_{3m-2}, Y_m) \,   d\varphi(X) \, \omega.
\end{align*}
In the last step we used that 
$$
\omega_{kl}(X, X_{3m-2}, X_{3m-1}) = (\ga_k+\ga_l)\wedge\tau_{k,l}(X, X_{3m-2}, X_{3m-1})=0
$$ 
because $ \tau_{k,l}$ vanishes on $V_1$,
and $\omega_{kl}(X, X_{3m-i'}, \cdot) = 0$ for $i' \in \{1, 2\}$  because $\gamma_j(X, X') = 0$ for $X' \in V_{m,1}$ and $X \notin V_{m,1}$. 
Since $\gamma_j$ vanishes on $V_2$  and $\tau_{k,l}$ vanishes on $V_1$  we get
\begin{eqnarray*}
\omega_{kl} (X_{3m-1}, X_{3m-2}, Y_m) &=& 
    (\gamma_k +  \gamma_l)(X_{3m-1}, X_{3m-2})  \,  \tau_{k,l}(Y_m)  \\
&=&
     - (\delta_{km} + \delta_{lm})  \, (\delta_{km} - \delta_{lm})\\
     &=&-(\delta_{km} - \delta_{lm})
     \end{eqnarray*}
This concludes  the proof of  \eqref{eq:key_wedge_product_diagonal}.
\end{proof}

\bigskip
We now choose $X$ and $\beta$ as in Lemma  \ref{le:key_identities_dim_K_equals_1}
and let 
\begin{equation} \label{eq:define_P_mi}
P_{mi}(x) = \delta_{m \sigma_x^{-1}(i)} = \delta_{\sigma_x(m) i}\,.
\end{equation}
Since $i_X \beta$ is closed, has codegree $4$ and  weight $\le - \nu + 5$, while $\omega_{ij}$ is closed, has degree $3$ and weight $-4$ we can apply Theorem~\ref{co:pull_back2}  to
$f_P^*(\omega_{ij}) \wedge d(\varphi i_X \beta)$
and using      \eqref{eq:pullback_dim_K_1}  and \eqref{eq:key_wedge_product_diagonal}
we get
\begin{equation}
\label{eqn_x_directional_derivative}
X[\la^2(P_{mi}-P_{mj})]=0
\end{equation}
in the sense of distributions, for every $i,j,m\in\{1,\ldots,n\}$  and every  $X\in V_{1,\ell}$ with 
$l \ne m$.  

\begin{lemma}   \label{le:constant_permute_K_equals_1}
The permutation $\sigma$ and function $\lambda^2$ are constant almost everywhere.
\end{lemma}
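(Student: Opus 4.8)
The plan is to feed the distributional identities \eqref{eqn_x_directional_derivative} into the directional‑constancy Lemma~\ref{le:compact_directional_constancy}, first to show that $\lambda^2$ is locally constant and then that the permutation is. Throughout I would use that $\lambda^2$ and the functions $Q_{mi}:=\lambda^2P_{mi}$ lie in $L^1_{\loc}(U)$ (they are, up to sign, coefficients of the $L^1_{\loc}$-form $f_P^*\om_{ij}$, cf.\ \eqref{eq:pullback_dim_K_1}), that $\lambda^2>0$ a.e.\ since $D_Pf$ is an isomorphism a.e., and that at a.e.\ $x$ the row $(Q_{m1}(x),\dots,Q_{mn}(x))$ has a single nonzero entry, equal to $\lambda^2(x)$. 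I would also record the standard consequence of Lemma~\ref{le:compact_directional_constancy}(1) together with the bracket‑generating property of $V_1$ (used as in the proofs of Lemmas~\ref{lem_quantitative_accessibility} and \ref{lem_preservation_cosets}): any $g\in L^1_{\loc}(U)$ with $Xg=0$ distributionally for every $X\in V_1$ agrees a.e.\ with a locally constant function.

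\emph{Extracting $\lambda^2$.} Fix $m$. By \eqref{eqn_x_directional_derivative} (and linearity in $X$), the map $x\mapsto\bigl(Q_{mj}(x)-Q_{mn}(x)\bigr)_{j=1}^n\in\R^n$ has each component annihilated by every $X\in\oplus_{\ell\ne m}V_{1,\ell}$, so I would apply Lemma~\ref{le:compact_directional_constancy}(2) with the Borel, linearly growing function $h(t):=\max_kt_k-\min_kt_k$; since $h$ applied to this map equals $\lambda^2$ a.e., I get $X\lambda^2=0$ for $X\in\oplus_{\ell\ne m}V_{1,\ell}$. As every $V_{1,\ell_0}$ lies in $\oplus_{\ell\ne m}V_{1,\ell}$ for some $m$ (using $n\ge2$), varying $m$ gives $X\lambda^2=0$ for all $X\in V_1$; hence $\lambda^2$ is locally constant, and being positive a.e.\ it equals a positive constant $c$ on each connected component of $U$.

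\emph{Extracting $\sigma$.} On a component where $\lambda^2\equiv c$, dividing \eqref{eqn_x_directional_derivative} by $c$ gives $X(P_{mi}-P_{mj})=0$ for all $i,j,m$ and all $X\in\oplus_{\ell\ne m}V_{1,\ell}$. Here $n\ge3$: when $n=2$, after the normalization \eqref{eq:K_diagonal} one would have $K+(\tilde\fg_1\cap\tilde V_2)=\tilde V_2=K+(\tilde\fg_2\cap\tilde V_2)$, contradicting condition \ref{distinct_2nd_layers} of Lemma~\ref{lem_converse_product_quotient}, so this case does not occur. Applying Lemma~\ref{le:compact_directional_constancy}(2) with $h=|\cdot|$ to $P_{mi}-P_{mj}$ and then summing over $j$, the pointwise identity $\sum_j|P_{mi}-P_{mj}|=1+(n-2)P_{mi}$ yields $X\bigl((n-2)P_{mi}\bigr)=0$, hence $XP_{mi}=0$ for $X\in\oplus_{\ell\ne m}V_{1,\ell}$. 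For the remaining direction $V_{1,m}$ I would use the column relation $\sum_{m'}P_{m'i}=1$: since $V_{1,m}\subseteq\oplus_{\ell\ne m'}V_{1,\ell}$ for $m'\ne m$, each $P_{m'i}$ with $m'\ne m$ is annihilated by $V_{1,m}$, hence so is $P_{mi}=1-\sum_{m'\ne m}P_{m'i}$. Thus $XP_{mi}=0$ for all $X\in V_1$, so $P_{mi}$ is locally constant; being $\{0,1\}$-valued it is constant on each component, and since $\sigma_x(m)$ is the unique $i$ with $P_{mi}(x)=1$, the permutation is constant on each connected component, as desired.

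\emph{Where the difficulty lies.} The only genuinely non‑formal point is the first step: one must recover the scalar $\lambda^2$ from \eqref{eqn_x_directional_derivative} without knowing $\sigma_x$. The key observation is that $\lambda^2$ is the \emph{range} of the row $(Q_{mi})_i$ (equivalently $\tfrac1{2(n-1)}\sum_{i,j}|Q_{mi}-Q_{mj}|$), a quantity built from the $X$-invariant differences $Q_{mi}-Q_{mj}$, to which the nonlinear clause of Lemma~\ref{le:compact_directional_constancy} applies directly. Everything after that is elementary bookkeeping with the row and column relations of a scaled permutation matrix.
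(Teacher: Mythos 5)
Your proof is correct and follows essentially the same strategy as the paper's: feed \eqref{eqn_x_directional_derivative} into the nonlinear clause of Lemma~\ref{le:compact_directional_constancy}, then use the row relation $\sum_i P_{mi}=1$ and the column relation $\sum_{m'}P_{m'i}=1$ to cover the missing direction $V_{1,m}$. The only difference is the choice of nonlinear function: the paper takes $h(t)=\max(t,0)$, for which $h\bigl(\lambda^2(P_{mi}-P_{mj})\bigr)=\lambda^2P_{mi}$ whenever $j\neq i$, so a single application plus $\sum_iP_{mi}=1$ recovers both $\lambda^2$ and then $P_{mi}$ in one pass — this avoids the two-stage extraction and does not require the $n\ge3$ fact (which, as you correctly observe, does hold for any $\dim K=1$ product quotient over $\R$ by condition \ref{distinct_2nd_layers}).
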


\begin{proof} Let $h: \R \to [0, \infty)$ be given by $h(t) = \max(t,0)$. Then by (\ref{eqn_x_directional_derivative}) and
Lemma~\ref{le:compact_directional_constancy}
 $$X h\big( \lambda^2 (P_{mi} - P_{mj})\big) = 0$$
 for every $i,j,m\in\{1,\ldots,n\}$  and every  $X\in V_{1,\ell}$ with 
$l \ne m$.  
 Now assume $j \ne i$ and note that $P_{mi} - P_{mj} \in \{-1, 0, 1\}$
 and that $P_{mi}  - P_{mj} = 1$ if and only if $P_{mi} = 1$. 
 Thus 
 $$ h\big( \lambda^2 (P_{mi} - P_{mj})\big) = \lambda^2 P_{mi}.$$
 Hence
 $$ \forall i,m \, \, \forall l \ne m \, \, \forall X \in V_{1,l}   \qquad X (\lambda^2 P_{mi})= 0.$$
 Since $\sum_{i=1}^n P_{mi} = 1$ we get
 $$ X \lambda^2 = 0$$
 for all $X \in V_{1,l}$ and $l \ne m$. Choosing $m=1$ and $m=2$ we conclude that $X \lambda^2=0$
 for all $X \in V_1$ and hence that $\lambda^2$ is constant a.e.\ and 
 $$ \forall i,m \, \, \forall l \ne m \, \, \forall X \in V_{1,l}   \qquad X P_{mi}= 0.$$
In particular
$$ X P_{m'i} = 0   \quad \hbox{for $m' \ne m$ and $X \in V_{1,m}$.}$$
Since $P_{mi} = 1- \sum_{m' \ne m} P_{m'i}$ we deduce that $X P_{mi}=0$
for all $X \in V_1$. Thus the matrix $P$ and the permutation $\sigma$ are constant a.e. 
\end{proof}

\section{Indecomposable product-quotients with $\dim K\geq 2$}
\label{sec_conformal_case}
In this section we will retain the notation and conventions from Subsection~\ref{subsec_notation_products_first_heisenberg}.  The goal of this section is to prove Theorem~\ref{thm_rigidity_product_quotient} for indecomposable product quotients $G = (\prod_{i=1}^n G_i)/\exp(K)$  where $G_i$ is a copy of the first Heisenberg group and  $\dim K \ge 2$.      The argument in this case  is significantly more complicated than the case when $\dim K=1$, so we will first give some motivation and an overview in Subsection~\ref{subsec_overview_dim_k_geq_2}.

\subsection{Overview of the proof}
\label{subsec_overview_dim_k_geq_2}

\mbox{}
We recall that in the $\dim K=1$ case, our approach was to apply the pullback theorem to the closed left invariant forms $\om_{ij}$ and $i_X\be$, which are of degree $3$ and codegree $4$, respectively, and have appropriate weight.  It turns out  that when $\dim K\geq 2$ such forms typically do not exist, so we use a different strategy.  

We first reduce to the case when the lift $\tilde\Phi\in\stab(K,\aut(\tilde\fg))$ of every graded automorphism $\Phi\in \aut(\fg)$ acts orthogonally on $\tilde V_2$ with respect to the inner product induced by the basis $\{\tilde Y_1,\ldots,\tilde Y_n\}\subset \tilde V_2$.  In particular letting $\tilde D_Pf(x)$ denote the lift of the Pansu differential $D_Pf(x)$ of our
  Sobolev mapping    $f:U\ra G$, we note that $f$ is somewhat reminiscent of a conformal mapping, in the sense that its differential is conformal, although only when restricted to the second layer.

The above observation suggests that we look to the proof of Liouville's theorem for inspiration. In its simplest form it states that 
a Lipschitz  map $f: U \subset \R^n \to \R^n$ which satisfies $\nabla f \in SO(n)$ a.e. \  has locally constant 
gradient. The key idea is to use the relations $\mathop{\mathrm{curl}} \nabla f = 0$ and
$\Div \cof \nabla f = 0$ for every Lipschitz map. In connection with the pointwise identity 
$\cof \nabla f = \nabla f$ a.e. \  they imply that $f$ is weakly harmonic and hence smooth.  
The differential constraints on $f$ can be  derived 
by pulling back constant differential forms of degree $1$ and codegree $1$. 
In our setting we are interested in the action of the Pansu differential on the second layer. 
This action can be captured by using special degree $2$ forms in the first layer (namely elements of
$\oplus_{i=1}^n\Lambda^2 V_{1,i}$). Thus the analogy with the proof of Liouville's theorem suggests    the use of the closed degree $2$  forms $\gamma_i = \theta_{3i-2} \wedge \theta_{3i-1}$ and closed  codegree  2 
   forms of the type $i_Z \omega$ where
$\omega$ is a volume form on $G$ (see   \eqref{eq:define_omega}) and $Z$ is a two-vector in  $\oplus_{i=1}^n \Lambda_2 V_{1,i}$.  
Indeed Lemma~\ref{le:compact1}  can be seen as a counterpart of the property $\mathop{\mathrm{curl}} \nabla f = 0$
while (\ref{eqn_x_sp_t}) can be viewed   as a counterpart of  the identity $\Div \cof \nabla f = 0$.

The use of these forms will show that the lift $\tilde D_Pf(x)\in \stab(K,\aut(\tilde\fg))$ of the Pansu differential $D_P f(x)$ acts as a constant on $K$
(see Theorem~\ref{th:trivial_action_on_K}   below).
We then look at the group $H'$  of graded  automorphisms of $\tilde \fg = \oplus_{i=1}^n  \tilde\fg_i$ which act as the identity on
$K$.  If the induced action $H'\acts I=\{1,\ldots,n\}$  (which reflects the permutation of the collection $\{V_{2,i}\}$  by such a graded automorphism)    is trivial we are done. 
If not, then we have the induced decomposition $I=I_1\sqcup\ldots I_k$ into $H'$-orbits, 
and it is easy to see that the projection of $K$ onto each of the corresponding subspaces 
$V_{I_{j}} =
 \oplus_{k \in I_{j}}  V_{2, k}$     is one-dimensional (see Lemma~\ref{lem_dim_k_j_equals_1}   below).
 Then we can use the argument for the case $\dim K = 1$ to show that the permutation induced by $D_P f(x)$
 is locally constant on each orbit and hence locally constant (see Section~\ref{subsec_constancy_permutation} below).

\subsection{Set-up}
\label{subsec_setup_dim_k_geq_2}  Retaining the notation from Section~\ref{subsec_notation_products_first_heisenberg} we consider  $\tilde \fg =  \sum_{i=1}^n \tilde\fg_i$ where $\tilde\fg_i$ is a copy of the first Heisenberg algebra, and let $\fg = \tilde \fg / K$ be an indecomposable   product-quotient.  
By  Lemma~\ref{lem_decomposition_conformally_compact} there exists a scalar product $b$ on $\tilde V_2$ such that the action of  $\stab(K, \aut(\tilde \fg))$
is conformal, i.e., invariant up to scaling; in other words, $\fg$ is a conformal product quotient, see Definition~\ref{de:conformally_compact}. 

\begin{proposition}  \label{eq:conformal_standard_scalar_product}
There exists an anisotropic dilation 
  $\delta_{\boldsymbol{\mu}}$  such that 
the action of $\stab(\delta_{\boldsymbol{\mu}}K, \aut(\tilde \fg))$ is conformal with respect to the standard scalar product on $V_2$
which is characterized by the identities
$(\tilde Y_i, \tilde Y_j) = \delta_{ij}$.
\end{proposition}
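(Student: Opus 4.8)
The plan is to reduce the statement to a purely linear-algebraic fact about the given scalar product $b$, and then to produce the required dilation explicitly.

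First I would record how an anisotropic dilation acts on the layers. For $\boldsymbol{\mu}=(\mu_1,\dots,\mu_n)\in(0,\infty)^n$ the associated anisotropic dilation $\delta_{\boldsymbol{\mu}}\in\aut(\tilde\fg)$ scales $\tilde V_{1,i}$ by $\mu_i$ and hence $\tilde V_{2,i}=\R\tilde Y_i$ by $\mu_i^2$; thus on $\tilde V_2$ it is the diagonal map $\diag(\mu_1^2,\dots,\mu_n^2)$ in the basis $\{\tilde Y_i\}$. Since $\delta_{\boldsymbol\mu}\in\aut(\tilde\fg)$ and $\delta_{\boldsymbol\mu}K$ is just the image of $K$, we have $\stab(\delta_{\boldsymbol\mu}K,\aut(\tilde\fg))=\delta_{\boldsymbol\mu}\,\stab(K,\aut(\tilde\fg))\,\delta_{\boldsymbol\mu}^{-1}$, and the elementary identity $(\delta_{\boldsymbol\mu} g\delta_{\boldsymbol\mu}^{-1})^*c=\lambda c\iff g^*(\delta_{\boldsymbol\mu}^*c)=\lambda(\delta_{\boldsymbol\mu}^*c)$ for a bilinear form $c$ shows that this conjugated group acts conformally on the standard scalar product $\langle\cdot,\cdot\rangle$ if and only if $\Gamma:=\stab(K,\aut(\tilde\fg))$ acts conformally on $\delta_{\boldsymbol\mu}^{*}\langle\cdot,\cdot\rangle$, which is the scalar product making the $\tilde Y_i$ pairwise orthogonal with $\langle\tilde Y_i,\tilde Y_i\rangle$ replaced by $\mu_i^4$. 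Hence it suffices to exhibit a scalar product on $\tilde V_2$ which is diagonal in the basis $\{\tilde Y_i\}$ and with respect to which $\Gamma$ acts conformally; the $\mu_i$ can then simply be read off.

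Next I would produce such a diagonal scalar product by taking the \emph{diagonal part} of $b$. Because $\fg=\tilde\fg/K$ is a product quotient, every element of $\Gamma$ permutes the canonically defined subalgebras $\tilde\fg_i=\tilde V_{1,i}\oplus[\tilde V_{1,i},\tilde V_{1,i}]$ (it descends to an element of $\aut(\fg)$ permuting the canonical $\fg_i$, and $\tilde V_{1,i}=\pi^{-1}(\fg_i\cap V_1)\subset\tilde V_1$ is canonical), hence permutes the lines $\tilde V_{2,i}=\R\tilde Y_i$; so each $\Phi\in\Gamma$ satisfies $\Phi\tilde Y_i=c_i(\Phi)\,\tilde Y_{\sigma_\Phi(i)}$ for some $c_i(\Phi)\neq0$ and $\sigma_\Phi\in S_n$, i.e.\ $\Phi$ acts on $\tilde V_2$ by a monomial matrix in the basis $\{\tilde Y_i\}$. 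Let $b_0$ be defined by $b_0(\tilde Y_i,\tilde Y_j):=b(\tilde Y_i,\tilde Y_i)\,\delta_{ij}$; it is again positive definite since each $b(\tilde Y_i,\tilde Y_i)>0$. Evaluating $\Phi^{*}b=\lambda(\Phi)\,b$ on the pair $(\tilde Y_i,\tilde Y_i)$ gives $c_i(\Phi)^2\,b(\tilde Y_{\sigma_\Phi(i)},\tilde Y_{\sigma_\Phi(i)})=\lambda(\Phi)\,b(\tilde Y_i,\tilde Y_i)$, and feeding this into the direct computation of $\Phi^{*}b_0$ — using that conjugating a diagonal form by a monomial matrix again yields a diagonal form, so the off-diagonal entries vanish identically on both sides — yields $\Phi^{*}b_0=\lambda(\Phi)\,b_0$. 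Thus $\Gamma$ acts conformally on $b_0$. Choosing $\mu_i:=b(\tilde Y_i,\tilde Y_i)^{1/4}$ gives $\delta_{\boldsymbol\mu}^{*}\langle\cdot,\cdot\rangle=b_0$, and by the first step $\stab(\delta_{\boldsymbol\mu}K,\aut(\tilde\fg))$ acts conformally on the standard scalar product, which is the assertion.

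The only step with genuine content is the observation that the diagonal part $b_0$ of $b$ remains conformally invariant under $\Gamma$; this rests entirely on $\Gamma$ acting by monomial matrices, i.e.\ on the fact (built into the notion of product quotient) that graded automorphisms stabilizing $K$ permute the factors $\tilde\fg_i$. Everything else — the action of $\delta_{\boldsymbol\mu}$ on the two layers, the conjugation formula for stabilizers, and the interplay of pullback with conjugation — is routine bookkeeping, so I do not anticipate a real obstacle. (One could even note that the scalar product produced in the proof of Lemma~\ref{lem_decomposition_conformally_compact} is already diagonal in the basis $\{\tilde Y_i\}$ in the present real-Heisenberg situation, which would dispense with the diagonal-part step; but passing to $b_0$ keeps the argument independent of that particular construction.)
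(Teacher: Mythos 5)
Your proof is correct and follows essentially the same strategy as the paper: reduce via the conjugation identity $\stab(\delta_{\boldsymbol\mu}K,\aut(\tilde\fg))=\delta_{\boldsymbol\mu}\stab(K,\aut(\tilde\fg))\delta_{\boldsymbol\mu}^{-1}$ to a statement about $\Gamma=\stab(K,\aut(\tilde\fg))$ and a pulled-back scalar product, then exploit the fact that $\Gamma$ acts on $\tilde V_2$ by monomial matrices to pass from conformal invariance of $b$ to conformal invariance of the standard inner product. The only organizational difference is that you extract the diagonal part $b_0$ of $b$ and verify its conformal invariance before choosing $\boldsymbol\mu$, whereas the paper first chooses $\boldsymbol\mu$ to normalize the diagonal entries of $b$ to $1$ and then argues that the diagonal operators have constant eigenvalue modulus; these are the same computation, and the different power ($b^{1/4}$ vs.\ the paper's $b^{1/2}$) is just a different normalization of what $\delta_{\boldsymbol\mu}$ does on $\tilde V_2$.
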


\begin{proof}
We first note that for every anisotropic dilation   $\delta_{\boldsymbol{\mu}}$ the quotient
$\tilde \fg/ \delta_{  \boldsymbol{\mu}}K$ is an indecomposable product quotient and
that 
 the push-forward scalar product
$(\delta_{\boldsymbol{\mu}})_* b$ is $\aut(\tilde \fg/ \delta_{\boldsymbol{\mu}}K)$ invariant up to scaling.
Indeed,  $\delta_{\boldsymbol{\mu}}$  is a graded 
automorphism of $\tilde g$ that sends the ideal $K$ to $\delta_{\boldsymbol{\mu}}K$. Thus $\delta_{\boldsymbol{\mu}}$
 induces a graded  isomorphism
 from $\tilde g/K$  to $\tilde g/{\delta_{\bf \mu}K}$.  
 Moreover   $\delta_{\boldsymbol{\mu}}: (\tilde V_2, b)\rightarrow (\tilde V_2, (\delta_{\boldsymbol{\mu}})_*b)$ is a linear isometry
 and  $\stab(\delta_{\boldsymbol{\mu}}K, \tilde \fg) = \delta_{\boldsymbol{\mu}}
  \circ \stab(K, \fg) \circ \delta_{\boldsymbol{\mu}}^{-1}$. Hence $\stab(\delta_{\boldsymbol{\mu}}K, \tilde \fg)$ preserves 
 $ (\delta_{\boldsymbol{\mu}})_*b$ up to scaling.

We now choose a special dilation by taking  $\mu_i = (b(Y_i, Y_i))^{1/2}$.
Then
\begin{equation} \label{eq:norm_pushforward}
 ( \delta_{\boldsymbol{\mu}})_* b(\tilde Y_i, \tilde Y_i) =
 b(  \delta_{\boldsymbol{\mu}}^{-1} \tilde Y_i,   \delta_{\boldsymbol{\mu}}^{-1} \tilde Y_i) 
 = \mu_i^{-2} b(\tilde Y_i, \tilde Y_i) = 1.
 \end{equation}

 The action of $\Phi \in \stab(\delta_{\boldsymbol{\mu}} K, \aut(\tilde \fg))$ on the second layer
 can be written as a product of a permutation and an invertible   diagonal operator.
 In view of   \eqref{eq:norm_pushforward}   permutations preserve $ (\delta_{\boldsymbol{\mu}})_* b$.  Since $\Phi$ preserves $(\delta_{\boldsymbol{\mu}})_* b$ up to a scalar factor the diagonal operator must also preserve that scalar product up to a scalar factor. 
It follows  that the norm of the eigenvalues of the diagonal operator is constant. 
Then  the diagonal operator also preserves the standard scalar product up to a scalar factor. Since permutations preserve the standard scalar  product the assertion follows. 
\end{proof}

To simplify the notation we will assume from now on without loss of generality
\begin{align}  \label{eq:conformal_standard_product}
& \hbox{$ \stab(K, \aut(\tilde \fg))$ acts conformally on the second layer } \\
& \hbox{with respect to  the standard scalar product.} \nonumber 
\end{align}

\begin{proposition}  \label{pr:action_phi_K_ge2}
Let $\Phi \in  \stab(K, \aut(\tilde \fg))$ and let $\sigma = \sigma_\Phi$ be the induced permutation of the 
subspaces $V_{1,i}$ (and $V_{2,i}$). Then there exist
 $\lambda_{\Phi} \in (0, \infty)$, a diagonal matrix $S_\Phi$ with entries $s_i \in \{-1, 1\}$,  and a permutation
 matrix $P_\Phi$ with the following properties. If $m \in \R^n$ and
 $$ Y = \sum_{j=1}^n m_j Y_j,  \quad Z = \sum_{j=1}^n m_j Z_j$$
 then 
 $$ \Phi(Y) = \sum_{i=1}^n ( \lambda_\Phi S_\Phi P_\Phi m)_i Y_i, \quad 
 \Phi_*(Z) =  \sum_{i=1}^n ( \lambda_\Phi S_\Phi P_\Phi m)_i Z_i.
$$
Moreover $(P_\Phi)_{ij} = \delta_{i \sigma_\Phi(j)}$ 
and $S_\Phi$ and $\lambda_\Phi$ are uniquely determined
and 
\begin{equation}  \label{eq:S_P_orthorgonal}
S_\Phi \in O(n), \quad P_\Phi \in O(n).
\end{equation}
(Note that this definition of $P$ differs from the one in Sections~\ref{sec_higher_product_quotients} and \ref{sec_dim_k_equals_1} by a transpose.)
\end{proposition}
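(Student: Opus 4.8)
The plan is to observe that $\Phi$ acts on the second layer $\tilde V_2$ and on $\oplus_{i=1}^n\Lambda_2 V_{1,i}$ by one and the same matrix, and then to pin that matrix down using the conformality normalization \eqref{eq:conformal_standard_product}.

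First I would note that, since $G$ is a product quotient, the hypothesis \ref{eqn_stabilizer_acts_transitively} built into Definition~\ref{def_product_quotient} forces $\stab(K,\aut(\tilde\fg))$ to permute the Heisenberg factors $\tilde\fg_1,\ldots,\tilde\fg_n$; hence $\Phi(\tilde\fg_i)=\tilde\fg_{\sigma(i)}$ for $\sigma=\sigma_\Phi\in S_n$, and, being a graded automorphism, $\Phi$ restricts to a graded isomorphism $\tilde\fg_i\to\tilde\fg_{\sigma(i)}$ between copies of the first Heisenberg algebra, with $\Phi(\tilde V_{1,i})=\tilde V_{1,\sigma(i)}$ and $\Phi(\tilde V_{2,i})=\tilde V_{2,\sigma(i)}$. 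The algebraic heart of the matter is that a \emph{single} nonzero scalar $c_i$ governs $\Phi$ on the $i$-th factor: because $\Phi(X_{3i-2})\wedge\Phi(X_{3i-1})$ lies in the line $\Lambda_2\tilde V_{1,\sigma(i)}=\R Z_{\sigma(i)}$, we may write $\Phi(X_{3i-2})\wedge\Phi(X_{3i-1})=c_i Z_{\sigma(i)}$, i.e. $\Phi_*(Z_i)=c_i Z_{\sigma(i)}$; applying the bracket map $\Lambda_2\tilde\fg\to\tilde\fg$ to this identity and using $[X_{3\sigma(i)-2},X_{3\sigma(i)-1}]=-\tilde Y_{\sigma(i)}$ gives $\Phi(\tilde Y_i)=-\Phi([X_{3i-2},X_{3i-1}])=c_i\tilde Y_{\sigma(i)}$, with the very same $c_i$. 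Consequently, in the bases $\{\tilde Y_i\}$ of $\tilde V_2$ and $\{Z_i\}$ of $\oplus_i\Lambda_2 V_{1,i}$, both $\Phi$ and $\Phi_*$ act by the matrix $A$ with $A_{ij}=c_j\delta_{i\sigma(j)}=(PC)_{ij}$, where $P$ is the permutation matrix $P_{ij}=\delta_{i\sigma(j)}$ and $C=\diag(c_1,\ldots,c_n)$.

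Next I would invoke \eqref{eq:conformal_standard_product}: $\Phi$ acts conformally on $\tilde V_2$ for the standard product $(\tilde Y_i,\tilde Y_j)=\delta_{ij}$, so $A^{T}A=\lambda^2 I$ for some $\lambda>0$; since $P^{T}P=I$ and $C^{T}=C$ this reads $C^2=\lambda^2 I$, i.e. $c_j=s_j\lambda$ with $s_j\in\{-1,1\}$. Setting $\lambda_\Phi:=\lambda$, $P_\Phi:=P$ and $S_\Phi:=P\,\diag(s_1,\ldots,s_n)\,P^{-1}=\diag(s_{\sigma^{-1}(1)},\ldots,s_{\sigma^{-1}(n)})$ — a diagonal matrix with entries in $\{-1,1\}$ — one gets $A=\lambda_\Phi S_\Phi P_\Phi$ and $(P_\Phi)_{ij}=\delta_{i\sigma_\Phi(j)}$, and the two displayed formulas for $\Phi(Y)$ and $\Phi_*(Z)$ follow by writing $Y=\sum_j m_j\tilde Y_j$, $Z=\sum_j m_j Z_j$ and reading off $Am$; since $\Phi$ preserves $K$, the formulas descend to $\fg=\tilde\fg/K$ as well. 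Uniqueness is then automatic: $\sigma_\Phi$, hence $P_\Phi$, is determined by $\Phi$, so $\lambda_\Phi S_\Phi=AP_\Phi^{-1}$ is a determined diagonal matrix all of whose entries have modulus $\lambda_\Phi>0$, which pins down $\lambda_\Phi$ and then $S_\Phi$; and $S_\Phi,P_\Phi\in O(n)$ since they are respectively a diagonal $\pm1$ matrix and a permutation matrix.

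I do not expect any serious obstacle: the statement is essentially bookkeeping. The only step with genuine content beyond linear algebra is the ``single scalar $c_i$'' observation, which uses that in the first Heisenberg algebra the centre is canonically identified with $\Lambda_2$ of the first layer via the Lie bracket; once that is in hand, conformality is exactly what forces $|c_i|$ to be independent of $i$. The points requiring care are purely notational — keeping straight $\sigma$ versus $\sigma^{-1}$, and the order of the factors in $S_\Phi P_\Phi$ versus $P\,\diag(s_j)$.
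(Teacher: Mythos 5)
Your proof is correct and follows essentially the same route as the paper's: reduce to the scalar $\mu_{\sigma(j)}$ (your $c_j$) in each factor, use conformality of the restriction to $\tilde V_2$ to force $|c_j|=\lambda$ constant, and then read off $S_\Phi$ and $P_\Phi$. The one stylistic difference is that you establish $\Phi_*(Z_i)=c_iZ_{\sigma(i)}$ first and transfer it to $\Phi(\tilde Y_i)$ via the bracket map $L_{[\,]}$, which is precisely the ``alternative'' argument the paper records after first treating $\Phi(Y)$ directly.
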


\begin{proof} There exist $\mu_i \ne 0$ such that $\Phi(Y_j) = \mu_{\sigma(j)} Y_{\sigma(j)}$. 
It follows from   \eqref{eq:conformal_standard_product} that there exist a $\lambda \in (0,\infty)$ such that
$|\mu_{\sigma(j)}| = \lambda$ for all $j =1, \ldots, n$. Set $s_i = \mu_i/ \lambda$. Then $s_i \in \{-1, 1\}$ and 
$\Phi(Y_j) = \sum_{i=1}^n (\lambda S P)_{ij} Y_i$. Thus the formula for $\Phi(Y)$ follows by linearity. 

The formula for $\Phi(Z)$ is proved similarly.  Alternatively one can use the following facts. There is a unique linear map $L_{[\, ]}: \Lambda_2 V_1 \to V_2$
such that $L_{[\, ]}(X \wedge Y) = [X, Y]$. This map commutes with every graded homomorphism $\Phi$, 
i.e., $\Phi \circ L_{[ \, ]} = L_{[ \, ]} \circ \Phi_*$. For the first Heisenberg group the restriction of $L_{[ \, ]}$
to $\oplus_{i=1}^n \Lambda_2V_{1,i}$ is a
linear isomorphism onto  $V_2$ and $L_{[\, ]}Z_i = - Y_i$.

Uniqueness of $\lambda = | \Phi(Y_j)|$ and the $s_i$ is
verified easily. The property  \eqref{eq:S_P_orthorgonal} follows directly from the definition of $P$ and $S$. 
\end{proof}

\bigskip

 \subsection{Restrictions on the Pansu differential arising from the pullback of  $2$-forms}
 \label{se:restrictions_pansu_differentiable}

\mbox{}
 
For the remainder of Section~\ref{sec_conformal_case} we fix  for some $p>\nu$ a $W^{1,p}_{\loc}$-mapping $f:G\supset U\ra G$, where $U$ is open and the sign of the determinant of the Pansu differential $D_Pf(x):\fg\ra\fg$ is constant almost everywhere.  
 At each point $x\in U$ of Pansu differentiability the Pansu differential, viewed as an automorphism of the algebra
 $\fg = \tilde \fg/ K$,  can be identified with an element of $ \stab(K, \aut(\tilde \fg))$, which we denote by   
 \begin{equation}
 \tilde D_Pf(x) \in  \stab(K, \aut(\tilde \fg)).
 \end{equation}

 Recalling the notation in Proposition~\ref{pr:action_phi_K_ge2} we set
 \begin{equation}
 \lambda(x) = \lambda_{\tilde D_P f(x)}, \quad 
  S(x) = S_{\tilde D_P f(x)}, \quad
   P(x) = P_{\tilde D_P f(x)}.
 \end{equation}

We also define
\begin{equation}
\hat K := \{ m \in \R^n : \sum_{i=1}^n m_i Y_i \in K\}.
\end{equation}

Note that $\tilde D_P f(x)$ preserves $K$ and thus by Proposition~\ref{pr:action_phi_K_ge2}
\begin{equation}
S(x)P(x)  \hat K = \hat K.
\end{equation}

\bigskip
The key calculation is contained in the following result.  Recall that $\ga_i=\th_{2i-1}\we\th_{2i}$ and $Z_i=X_{3i-2}\we X_{3i-1}$.

\begin{lemma} \label{le:key_calculation_degree2_K2}
 Let $k \in \{1, \ldots, n\}$,  $X \in V_{1,k}$
and 
$Z = \sum_{i=1}^n m_i Z_i$. Then 
\begin{eqnarray}
d(i_Z \omega) &=& 0 \quad \forall m \in \hat K,   \label{eq:dimK_ge2_codegree2} \\
d(i_X i_Z \omega) &=& 0 \quad \forall m \in \hat K \cap e_k^\perp
  \label{eq:dimK_ge2_codegree3}.
\end{eqnarray}
For $\varphi \in C_c^\infty(U)$ we have
\begin{equation}  \label{eq:pull_back_dimK2_degree2}
f_P^*(\gamma_i)  \wedge  d (\varphi i_X i_Z \omega) = (\lambda SPm)_i  X \varphi  \, \omega
\quad \forall m \in \hat K \cap e_k^\perp.
\end{equation}
\end{lemma}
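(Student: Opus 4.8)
The plan is to derive all three identities from the formulas for exterior derivatives of contracted volume forms in Lemma~\ref{lem_volume_form_contraction_exterior_derivative}, together with the pairing identity (\ref{eq:inner_product_volume_form}) and the composition rule (\ref{eq:i_circ_i}), working directly on the Carnot group $\fg=\tilde\fg/K$ (which is unimodular, so $\omega$ is bi-invariant and Lemma~\ref{lem_volume_form_contraction_exterior_derivative} applies). Since $K\subset\tilde V_2$, the first layers are canonically identified, so $Z_j$ and $X$ are vectors in $V_1$ and $Y_j:=\tilde Y_j+K$ lies in $V_2$; the condition $m\in\hat K$ says precisely that $\sum_j m_j Y_j=0$ in $\fg$. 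The single computation underlying everything is that, writing $i_{Z_j}=i_{X_{3j-1}}i_{X_{3j-2}}$ by (\ref{eq:i_circ_i}) and applying (\ref{eq:exterior_codegree2}) with $[X_{3j-1},X_{3j-2}]=Y_j$, one obtains $d(i_{Z_j}\omega)=i_{Y_j}\omega$.

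For (\ref{eq:dimK_ge2_codegree2}), linearity gives $d(i_Z\omega)=\sum_j m_j\,d(i_{Z_j}\omega)=i_{\sum_j m_j Y_j}\omega=0$ for $m\in\hat K$. For (\ref{eq:dimK_ge2_codegree3}), I would first observe that for $X\in V_{1,k}$ one has $i_Xi_{Z_k}=i_{Z_k\wedge X}=0$, because $Z_k\wedge X=X_{3k-2}\wedge X_{3k-1}\wedge X=0$ ($X$ lies in the plane spanned by the two factors of $Z_k$); hence only the indices $j\ne k$ contribute to $i_Xi_Z\omega$. For each $j\ne k$, applying (\ref{eq:exterior_codegree3}) to $i_Xi_{Z_j}=i_Xi_{X_{3j-1}}i_{X_{3j-2}}$ and using that $X$ commutes with both $X_{3j-2}$ and $X_{3j-1}$ (distinct Heisenberg factors) collapses the expression (\ref{eq:exterior_codegree3bis}) to $X\wedge Y_j$, so $d(i_Xi_{Z_j}\omega)=i_{X\wedge Y_j}\omega$. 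Summing, $d(i_Xi_Z\omega)=i_{X\wedge(\sum_{j\ne k}m_j Y_j)}\omega$, which vanishes for $m\in\hat K\cap e_k^\perp$ since then $\sum_{j\ne k}m_j Y_j=\sum_j m_j Y_j=0$ in $\fg$.

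For (\ref{eq:pull_back_dimK2_degree2}), identity (\ref{eq:dimK_ge2_codegree3}) reduces the left-hand side to $f_P^*(\gamma_i)\wedge d\varphi\wedge i_Xi_Z\omega$. I would then compute $f_P^*(\gamma_i)$ from Proposition~\ref{pr:action_phi_K_ge2}: identifying the Pansu differential with $\tilde D_Pf(x)\in\stab(K,\aut(\tilde\fg))$, which permutes the $V_{1,j}$ by $\sigma_x$, its pullback sends $\gamma_i$ to a multiple of $\gamma_{\sigma_x^{-1}(i)}$, and pairing with $Z_{\sigma_x^{-1}(i)}$ (using that $\tilde D_Pf(x)$ acts on the $Z_j$ via $\lambda SP$) identifies that multiple as $\lambda(x)s_i(x)$, so $f_P^*(\gamma_i)=\lambda(x)s_i(x)\,\gamma_{\sigma_x^{-1}(i)}$. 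Writing $i_Xi_Z\omega=i_{Z\wedge X}\omega$ with $Z\wedge X=\sum_{j\ne k}m_j\,Z_j\wedge X$ and pairing the $3$-form $\gamma_{\sigma_x^{-1}(i)}\wedge d\varphi$ with the $3$-vector $Z\wedge X$ via (\ref{eq:inner_product_volume_form}), the task reduces to evaluating $(\gamma_{\sigma_x^{-1}(i)}\wedge d\varphi)(Z_j\wedge X)$ for $j\ne k$; expanding the wedge product, the two cross terms vanish because $\gamma_l$ annihilates any $2$-vector whose two legs lie in distinct factors (and here $X\in V_{1,k}$ with $k\ne j$), leaving $\gamma_{\sigma_x^{-1}(i)}(Z_j)\,X\varphi=\delta_{\sigma_x^{-1}(i),j}\,X\varphi$. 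Collecting terms gives $\lambda(x)s_i(x)\,m_{\sigma_x^{-1}(i)}\,(X\varphi)\,\omega$, which equals $(\lambda SPm)_i\,X\varphi\,\omega$ because $(Pm)_i=m_{\sigma_x^{-1}(i)}$; in the case $\sigma_x^{-1}(i)=k$ both sides vanish since $m\in e_k^\perp$.

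The computations are elementary multilinear algebra; the steps requiring care are keeping the interior-product conventions and signs straight (the sign in $d(i_{Z_j}\omega)=+i_{Y_j}\omega$, the vanishing $Z_k\wedge X=0$) and transposing correctly the action of $\tilde D_Pf(x)$ on the $Z_j$ to its action on the $\gamma_i$. I expect the main obstacle, modest as it is, to be the last point: matching the scalar $\lambda s_i m_{\sigma_x^{-1}(i)}$ with the $i$-th component of $\lambda SPm$ without index errors.
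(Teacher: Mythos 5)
Your argument is correct and follows essentially the same route as the paper: all three identities are derived from Lemma~\ref{lem_volume_form_contraction_exterior_derivative} together with \eqref{eq:i_circ_i} and \eqref{eq:inner_product_volume_form}, and the final pairing is read off from Proposition~\ref{pr:action_phi_K_ge2} exactly as in the paper. The only cosmetic difference is that you make $f_P^*(\gamma_i)=\lambda s_i\gamma_{\sigma_x^{-1}(i)}$ explicit before contracting with $Z\wedge X$, whereas the paper contracts first and then evaluates $f_P^*(\gamma_i)(Z)$; these are the same computation.
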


\begin{proof}

Since $Z_i = X_{3i-2} \wedge X_{3i-1}$ and $[X_{3i-2}, X_{3i-1}] = - \tilde Y_i$  
we get from   \eqref{eq:i_circ_i} and   \eqref{eq:exterior_codegree2}
$$d ( i_{Z_i} \omega) = d( i_{X_{3i-1}} i_{X_{3i-2}} \omega) = i_{Y_i} \omega$$
where $i_{Y_i} \omega$ is a shorthand for $i_{Y_i + K} \omega$
(see the end of Section~\ref{se:interior_products}). 
If $m \in \hat K$ then $Y := \sum_{i=1}^n m_i Y_i \in K$ and thus
$$ d( i_Z \omega) = i_Y \omega = 0$$
since $\omega$ annihilates $K$.  
Similarly to prove   \eqref{eq:dimK_ge2_codegree3} we use \eqref{eq:exterior_codegree3}  and get
$$ d (i_X i_{Z_i}  \omega) = -   i_{X} i_{Y_i} \omega   \quad \forall i \ne k.$$
Multiplying by $m_i$, summing over $i$ and using that  $m_k=0$ we get
$d(i_X i_Z \omega) = - i_X i_Y \omega = 0$ since $Y \in K$.

To show   \eqref{eq:pull_back_dimK2_degree2} we compute
\begin{align*}
& \,  f_P^*(\gamma_i) \wedge d(\varphi \, i_X i_Z \omega) = 
 f_P^*(\gamma_i) \wedge d\varphi \wedge i_X i_Z \omega \\
= & \, (f_P^*(\gamma_i) \wedge d\varphi)(Z \wedge X) \,  \omega 
=  f_P^*(\gamma_i)(Z)  \, d\varphi(X) \, \omega. \\
\end{align*}
To get the second equality we used the identity $i_X \circ  i_Z = i_{Z \wedge X}$
(see  \eqref{eq:i_circ_i}) and  \eqref{eq:inner_product_volume_form}.
For the last equality we used $m_k = 0$
and $f_P^*(\gamma_i)(X,Y) = 0$ if $X \in V_{1,k}$ and $Y \in V_{1,j}$ with $j \ne k$
since $f_P^*(\gamma_i)$ is a linear combination of the forms $\gamma_j$. 
Now it follows from Proposition~\ref{pr:action_phi_K_ge2} that 
$$ f_P^*(\gamma_i)(Z) = \gamma_i ( (D_Pf)_* Z) = \sum_{j=1}^m (\lambda S P)_{ij} m_j = (\lambda SP m)_i.$$
This concludes the proof of  \eqref{eq:pull_back_dimK2_degree2}.
\end{proof}

\bigskip
 
  \begin{lemma} \label{le:compact1} Let $k \in \{1, \ldots, n\}$.
  Then 
  \begin{equation}  \label{eq:dimK_ge2_distributional_degree2}
  X  (\lambda S P m)    = 0   \quad \forall X \in V_{1,k},  \, \, m \in \hat K \cap e_k^\perp,
  \end{equation}
  in the sense of distributions on $U$
  and 
  \begin{equation}  \label{eq:dimK_ge2_lambda_const}
  \hbox{$\lambda$ is  locally constant almost everywhere in $U$.}
  \end{equation}
 \end{lemma}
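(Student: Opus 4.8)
The plan is to read off \eqref{eq:dimK_ge2_distributional_degree2} from the Pullback Theorem applied to the forms produced in Lemma~\ref{le:key_calculation_degree2_K2}, and then to obtain \eqref{eq:dimK_ge2_lambda_const} from the orthogonality \eqref{eq:S_P_orthorgonal} of $S(x)$ and $P(x)$ together with Lemma~\ref{le:compact_directional_constancy}. First I would fix $k$, a vector $X\in V_{1,k}$, and $m\in\hat K\cap e_k^\perp$, put $Z=\sum_i m_i Z_i$, and apply Theorem~\ref{co:pull_back2} with $\alpha=\gamma_i$ and $\beta=i_X i_Z\omega$. Both are left invariant; $\gamma_i$ is closed because in each Heisenberg factor the dual forms $\theta_{3j-2},\theta_{3j-1}$ are closed, and $i_X i_Z\omega$ is closed by \eqref{eq:dimK_ge2_codegree3}. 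The degree and weight constraints are met: $\deg\gamma_i+\deg(i_X i_Z\omega)=2+(N-3)=N-1$, while $\wt(\gamma_i)=-2$ and $\wt(i_X i_Z\omega)=-\nu+3$ (the volume form $\omega$ has weight $-\nu$, and contracting it with three first-layer vectors raises the weight by $3$), so the weight sum is exactly $-\nu+1$. Thus $\int_U f_P^*(\gamma_i)\wedge d(\varphi\, i_X i_Z\omega)=0$ for every $\varphi\in C_c^\infty(U)$, which by \eqref{eq:pull_back_dimK2_degree2} says $\int_U (\lambda SPm)_i\,(X\varphi)\,\omega=0$. Since $f$ is $W^{1,p}_{\loc}$ with $p>\nu$, the coefficients of $f_P^*\gamma_i$, and hence the function $(\lambda SPm)_i=f_P^*(\gamma_i)(Z)$, lie in $L^1_{\loc}(U)$, so this is precisely the distributional identity $X(\lambda SPm)_i=0$; letting $i$ range over $\{1,\dots,n\}$ gives \eqref{eq:dimK_ge2_distributional_degree2}.

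For \eqref{eq:dimK_ge2_lambda_const} I would fix $k$ and use $\dim\hat K=\dim K\geq 2$ and the fact that $e_k^\perp$ has codimension one to pick a nonzero $m^{(k)}\in\hat K\cap e_k^\perp$. By the first step and Lemma~\ref{le:compact_directional_constancy}(1), each component of $\lambda SP m^{(k)}$ is, locally on $U$, invariant under the right-translation flow generated by any $X\in V_{1,k}$; hence so is $\sum_i(\lambda SP m^{(k)})_i^2$, which by \eqref{eq:S_P_orthorgonal} equals $\lambda^2\,|m^{(k)}|^2$. So $\lambda^2$ is locally invariant under the flows generated by all of $V_{1,k}$, and, running over all $k$ and using $V_1=\oplus_k V_{1,k}$, under the flows generated by every horizontal left invariant vector field. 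Since $V_1$ generates $\fg$ and $G$ is connected, an accessibility argument in the spirit of Lemma~\ref{lem_quantitative_accessibility} (together with Fubini) shows that a measurable function with this invariance property is locally constant a.e.; applying this to $\lambda^2$ and taking square roots gives \eqref{eq:dimK_ge2_lambda_const}.

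The substantive point is the second step: the orthogonality of $S$ and $P$ is exactly what turns the \emph{linear} directional-constancy relation \eqref{eq:dimK_ge2_distributional_degree2} into directional constancy of the single scalar $\lambda^2$, and the hypothesis $\dim K\geq 2$ enters only to guarantee a nonzero admissible $m^{(k)}$ for each index $k$. The last ingredient --- promoting ``constant along every horizontal one-parameter flow'' to ``locally constant'' --- is routine but genuinely needed, and I expect it to be the only place requiring a short extra argument (or a citation to the standard fact about unimodular groups) beyond what the preceding subsections already supply.
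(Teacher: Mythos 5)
Your proof is correct and follows essentially the same route as the paper: the first half (the distributional identity) is read off from the Pullback Theorem exactly as in the text, via $\alpha=\gamma_i$, $\beta=i_Xi_Z\omega$, and \eqref{eq:pull_back_dimK2_degree2}, with the same degree/weight bookkeeping. The one place you diverge is small and cosmetic: the paper's proof takes $h(v)=|v|$ in Lemma~\ref{le:compact_directional_constancy}(2) --- this $h$ satisfies the linear growth hypothesis of that lemma, and since $|\lambda SPm|=\lambda|SPm|=\lambda$ for unit $m$ (by \eqref{eq:S_P_orthorgonal} and $\lambda>0$), it yields the distributional identity $X\lambda=0$ directly, staying entirely inside the framework set up by that lemma. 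You instead square first, working with $\lambda^2|m^{(k)}|^2=\sum_i(\lambda SPm^{(k)})_i^2$; since $|v|^2$ fails the growth condition in part (2), you are forced to go through the pointwise formulation in part (1), and you are right that this is where the extra care (integrability of $\lambda^2$, and the accessibility step from ``invariant under each horizontal flow'' to ``locally constant'') is genuinely needed. That last step is also implicitly used by the paper when it says ``it suffices to show $X\lambda=0$ for all $X\in V_{1,k}$''; you simply make it explicit. Net effect: same argument, with the paper's choice of $h(v)=|v|$ being marginally cleaner because it avoids the $\lambda^2$ detour.
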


\begin{proof} By Lemma~\ref{le:key_calculation_degree2_K2}, the form $i_Xi_Z \om$ is a closed codegree $3$ form with weight $\leq -\nu+3$.  Thus $d(\varphi i_Xi_Z\om)=d\varphi\wedge i_Xi_Z\om$ is a codegree $2$ with weight $\leq -\nu+2$.   Moreover $\ga_i$ is a closed $2$-form of weight $-2$.  Hence the identity   \eqref{eq:dimK_ge2_distributional_degree2}
follows from the Theorem~\ref{co:pull_back2} and equation (\ref{eq:pull_back_dimK2_degree2}).

To prove    \eqref{eq:dimK_ge2_lambda_const} it suffices to show $X \lambda = 0$
for all $X \in V_{1,k}$ and all $k =1, \ldots, n$. Fix $k$. Since $\dim K \ge 2$ the set
$\hat K \cap e_k^\perp$ contains an element $m$ with $|m|=1$. Since $P$ and $S$ are
isometries  and $\lambda > 0$ we have $\lambda =|\lambda SPm|$.
Thus the desired assertion $X \lambda = 0$
follows from Lemma~\ref{le:compact_directional_constancy}  by taking $h(v) = |v|$. 
\end{proof}

\bigskip

\subsection{Restrictions from codegree $2$  forms and constant action on $K$}
\mbox{}
We continue the analysis of the Sobolev map $f:U\ra  G$.
The map $SP$ is an isometry of $\R^n$ and maps $\hat K$ to itself. Hence $(SP)^T = (SP)^{-1}$ also
maps $\hat K$ to itself. Thus (\ref{eq:dimK_ge2_distributional_degree2})  is equivalent to 
\begin{equation}  \label{eq:pull_back_degree2_variant}
 X ((\lambda SP)^T a, m) = 0 \quad \forall a \in \hat K, \, m \in \hat K \cap e_k^\perp, \,  X \in V_{1,k}.
 \end{equation}
We already know that $\lambda$ is locally constant. 
To show that $SP|_{ \hat K}$  is locally constant we need to show that in addition
\begin{equation}  
X ( (SP)^T a, e_k) = 0 \quad \forall a \in \hat K, \,  X \in V_{1,k}.
\end{equation}

This will be achieved by pulling back suitable closed codegree $2$ forms.
The main identities are contained in the following lemma.

\begin{lemma} \label{le:compact_adjugate} Let $\Phi \in \stab(K, \aut(\tilde \fg))$, viewed as an element of  $\aut(\fg)$, 
 let $\lambda_\Phi$, $S_\Phi$
and $P_\Phi$ be the quantities introduced in Proposition~\ref{pr:action_phi_K_ge2}. Let $m \in \hat K$ and
set $Z = \sum_{i=1}^n m_i Z_i$. 
Then 
\begin{eqnarray}   \label{eq:det_Phi_adjugate}
|\det \Phi| &=& \lambda_\Phi^{2n - \dim K},\\
d(i_Z \omega) &=& 0,
\end{eqnarray}
and for every two-form $\alpha$
\begin{equation}  \label{eq:pull_back_codegree2}
\Phi^*(i_Z \omega) \wedge \alpha = \det \Phi \, \alpha( \Phi^{-1} Z) \, \omega.
\end{equation}
\end{lemma}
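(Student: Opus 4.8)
The plan is to prove Lemma~\ref{le:compact_adjugate} by a direct computation in the exterior algebra of $\fg$, using the structural information about $\Phi$ supplied by Proposition~\ref{pr:action_phi_K_ge2} together with the interior-product identities from Subsection~\ref{se:interior_products}.

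First I would establish the determinant formula \eqref{eq:det_Phi_adjugate}. Since $\Phi$ is graded, $\det\Phi = \det(\Phi|_{V_1})\cdot\det(\Phi|_{V_2})$. By Proposition~\ref{pr:action_phi_K_ge2}, on the second layer $\Phi$ acts (after passing to the quotient $V_2 = \tilde V_2/K$) via $\lambda_\Phi S_\Phi P_\Phi$; since $S_\Phi P_\Phi$ is orthogonal by \eqref{eq:S_P_orthorgonal}, it maps $\hat K$ and $\hat K^\perp$ isometrically, and the induced map on the $(n-\dim K)$-dimensional quotient is $\lambda_\Phi$ times an orthogonal map, so $|\det(\Phi|_{V_2})| = \lambda_\Phi^{\,n-\dim K}$. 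On the first layer, the compatibility $\Phi\circ L_{[\,]} = L_{[\,]}\circ\Phi_*$ noted in the proof of Proposition~\ref{pr:action_phi_K_ge2} forces $\Phi_*$ on $\oplus_i\Lambda_2 V_{1,i}$ to be $\lambda_\Phi S_\Phi P_\Phi$ as well (via $L_{[\,]}Z_i = -Y_i$); I would use this, or a direct eigenvalue/modulus argument on each $2$-dimensional block $V_{1,i}$, to conclude $|\det(\Phi|_{V_1})| = \lambda_\Phi^{\,n}$, giving $|\det\Phi| = \lambda_\Phi^{\,2n-\dim K}$.

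Next, the closedness $d(i_Z\omega)=0$ for $Z = \sum_i m_i Z_i$ with $m\in\hat K$ is exactly the computation already carried out in the proof of Lemma~\ref{le:key_calculation_degree2_K2}, equation \eqref{eq:dimK_ge2_codegree2}: by \eqref{eq:i_circ_i} and \eqref{eq:exterior_codegree2}, $d(i_{Z_i}\omega) = d(i_{X_{3i-1}}i_{X_{3i-2}}\omega) = i_{Y_i}\omega$, and summing against $m$ gives $d(i_Z\omega) = i_Y\omega$ where $Y = \sum_i m_iY_i\in K$; since $\omega$ annihilates $K$ this vanishes. I would simply cite that computation. The identity \eqref{eq:pull_back_codegree2} is the main point. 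Starting from \eqref{eq:inner_product_volume_form}, for any $p$-vector $X$ and $p$-form $\alpha$ one has $\alpha\wedge i_X\omega = \alpha(X)\,\omega$; applied with $X = Z$ (a $2$-vector) this gives $\alpha\wedge i_Z\omega = \alpha(Z)\omega$ for any $2$-form $\alpha$. Now I would use naturality of the pullback: $\Phi^*(i_Z\omega)\wedge\alpha = \Phi^*\big(i_Z\omega\wedge (\Phi^{-1})^*\alpha\big)$ — wait, more carefully, $\Phi^*(\eta)\wedge\alpha = \Phi^*\big(\eta\wedge\Phi_*^{-*}\alpha\big)$ is not quite the cleanest route; instead I would write $\Phi^*(i_Z\omega)\wedge\alpha = (-1)^{?}\,\alpha\wedge\Phi^*(i_Z\omega)$ and then use that $\Phi^*(i_Z\omega) = \Phi^*\big(i_Z\omega\big)$ evaluated via the transformation rule for the interior product with the top form: $\Phi^*(i_Z\omega) = (\det\Phi)\, i_{\Phi^{-1}Z}\,(\Phi^*\omega)/(\det\Phi) $ — the clean statement is $\Phi^*(i_Z\omega) = i_{\Phi^{-1}Z}(\Phi^*\omega) = (\det\Phi)\, i_{\Phi^{-1}Z}\omega$, since $\Phi^*\omega = (\det\Phi)\omega$ for a top form. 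Then $\Phi^*(i_Z\omega)\wedge\alpha = (\det\Phi)\, i_{\Phi^{-1}Z}\omega\wedge\alpha = (\det\Phi)\,(-1)^{\deg\alpha(n{-}\deg\alpha)}\alpha\wedge i_{\Phi^{-1}Z}\omega$; for $\alpha$ a $2$-form and using \eqref{eq:inner_product_volume_form} in the form $\alpha\wedge i_{\Phi^{-1}Z}\omega = \alpha(\Phi^{-1}Z)\omega$, the sign works out and we obtain $\Phi^*(i_Z\omega)\wedge\alpha = \det\Phi\,\alpha(\Phi^{-1}Z)\,\omega$, as claimed.

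The main obstacle I anticipate is bookkeeping: getting the sign in the commutation $i_{\Phi^{-1}Z}\omega\wedge\alpha = \pm\,\alpha\wedge i_{\Phi^{-1}Z}\omega$ and in the identity $\Phi^*(i_Z\omega) = (\det\Phi)i_{\Phi^{-1}Z}\omega$ exactly right, and checking that the two signs cancel so that the final formula has $\det\Phi$ and not $|\det\Phi|$ or a stray $(-1)$. Since both $\omega$ and $\alpha$ have even degree in the application this is not serious, but I would verify it carefully on basis elements using \eqref{eq:inner_product_volume_form}, exactly as the paper suggests checking that identity "using the standard basis and dual basis." Everything else reduces to the already-proven Lemma~\ref{le:key_calculation_degree2_K2} and Proposition~\ref{pr:action_phi_K_ge2}.
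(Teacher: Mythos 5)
Your proof is correct and follows essentially the same strategy as the paper: for \eqref{eq:det_Phi_adjugate} you split $\det\Phi$ into first- and second-layer contributions (the paper equivalently pulls back the two factors $\wedge_i\gamma_i$ and $\tau$ of $\omega$), for closedness you correctly cite the earlier computation \eqref{eq:dimK_ge2_codegree2}, and for \eqref{eq:pull_back_codegree2} your intermediate identity $\Phi^*(i_Z\omega)=(\det\Phi)\,i_{\Phi^{-1}Z}\omega$ combined with \eqref{eq:inner_product_volume_form} is the same multilinear-algebra argument as the paper's chain $\Phi^*(i_Z\omega)\wedge\alpha=\Phi^*(i_Z\omega\wedge(\Phi^{-1})^*\alpha)=\det\Phi\,\big((\Phi^{-1})^*\alpha\big)(Z)\,\omega$, merely with the application of naturality moved from the wedge to the interior product. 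The sign worry you raise is indeed harmless since $\alpha$ has even degree.
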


\begin{proof} The determinant of $\Phi$ is  characterised by the identity
$\Phi^*(\omega) = \det \Phi\,  \omega$. It follows from Proposition~\ref{pr:action_phi_K_ge2}
that 
\begin{equation} \label{eq:det_Phi_first_layer} \Phi^*(\wedge_{i=1}^n \gamma_i)  = \lambda^n \det(SP)  \wedge_{i=1}^n \gamma_i =
\pm \lambda^n \wedge_{i=1}^n \gamma_i.
\end{equation}
Moreover $V_2= \tilde V_2/ K$ can be identified with $K^\perp$ and $\lambda^{-1} \Phi$, viewed as a map on $\tilde V_2$,  is an isometry and preserves $K$ and hence
$K^\perp$. Thus $\lambda^{-1} \Phi$ restricted to $K^\perp$ has determinant $\pm 1$ and hence
$$ \Phi^*(\tau) = \pm \lambda^{n - \dim K} \tau$$
for every volume form $\tau$ on $V_2$. Together with  \eqref{eq:det_Phi_first_layer} this implies 
 \eqref{eq:det_Phi_adjugate}.

The second assertion is just  \eqref{eq:dimK_ge2_codegree2}.

To show  \eqref{eq:pull_back_codegree2} we compute
\begin{align*}
& \,  \Phi^*(i_Z \omega) \wedge \alpha 
=  \Phi^*( i_Z \omega \wedge (\Phi^{-1})^*\alpha) \\
=  & \, \det \Phi  \,  i_Z \omega \wedge (\Phi^{-1})^*\alpha\\
= & \, \det \Phi  \, \big((\Phi^{-1})^*\alpha\big)(Z)  \, \omega 
=  \det \Phi \, \alpha(\Phi^{-1} Z) \, \omega.
\end{align*}
\end{proof}

\begin{theorem}  \label{th:trivial_action_on_K}
The restriction of 
$\lambda S P $ to $\hat K$  is  locally constant almost everywhere in $U$. 
\end{theorem}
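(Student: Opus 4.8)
The plan is to promote the distributional identities from Lemmas \ref{le:compact1} and \ref{le:compact_adjugate} into the statement that $\lambda SP$ restricted to $\hat K$ is locally constant a.e. Since $\lambda$ is already known to be locally constant by \eqref{eq:dimK_ge2_lambda_const}, it suffices to show that $SP|_{\hat K}$ is locally constant, and since $SP$ is an isometry of $\R^n$ preserving $\hat K$, the transpose $(SP)^T=(SP)^{-1}$ also preserves $\hat K$; thus it is enough to show that for each fixed $a\in\hat K$ the $\R^n$-valued function $x\mapsto (S(x)P(x))^Ta$ is locally constant, which in turn reduces (via Lemma \ref{le:compact_directional_constancy} with $h$ linear, or rather $h(v)=$ a component function) to showing $X[(SP)^Ta]=0$ for all $X\in V_{1,k}$, all $k$, in the sense of distributions.

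First I would record what is already available: \eqref{eq:pull_back_degree2_variant} gives $X\langle (\lambda SP)^Ta,\,m\rangle=0$ for all $a\in\hat K$, $m\in\hat K\cap e_k^\perp$, $X\in V_{1,k}$; combined with local constancy of $\lambda$ this yields $X\langle (SP)^Ta,\,m\rangle=0$ for $m$ in the codimension-$\le 1$ subspace $\hat K\cap e_k^\perp$ of $\hat K$. So the only missing component of the vector $(SP)^Ta$, relative to an orthonormal basis of $\hat K$, is its pairing with a vector in $\hat K$ not orthogonal to $e_k$ — equivalently, I must produce the identity $X\langle (SP)^Ta,\,e_k\rangle=0$ for $X\in V_{1,k}$. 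Here is where I would invoke Lemma \ref{le:compact_adjugate}: applying the Pullback Theorem (Theorem \ref{co:pull_back2}) to $f_P^*(i_Z\omega)\wedge d(\varphi\beta)$ for suitable closed forms, with $Z=\sum m_iZ_i$ for $m\in\hat K$, and using \eqref{eq:det_Phi_adjugate}, \eqref{eq:pull_back_codegree2} to evaluate $f_P^*(i_Z\omega)=\tilde D_Pf(x)^*(i_Z\omega)$ in terms of $\lambda^{2n-\dim K}$ and $\Phi^{-1}Z=(SP)^{-1}$ acting on the $m$-coefficients. The natural complementary form $\beta$ to pair against a codegree-$2$ form is a closed $1$-form, e.g. a linear combination of the $\theta$'s of weight $-1$ (the first-layer duals), chosen so that $d(\varphi\beta)=d\varphi\wedge\beta$ has the right weight and degree; pulling back should produce, after the wedge with $f_P^*(i_Z\omega)$ and integration against $d\varphi$, exactly the directional-derivative relation $X\langle(\lambda SP)^{-1}m,\,e_k\rangle\,(\text{const})=0$, and then one feeds this through Lemma \ref{le:compact_directional_constancy}.

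The main obstacle I anticipate is the bookkeeping in choosing the complementary closed form $\beta$ and verifying that $f_P^*(i_Z\omega)\wedge d(\varphi\beta)$ does satisfy the weight hypothesis $\wt\alpha+\wt\beta\le -\nu+1$ of Theorem \ref{co:pull_back2}: the form $i_Z\omega$ has weight roughly $-\nu+2$ (since $Z\in\oplus\Lambda_2V_{1,i}$ removes weight $2$ from the top form $\omega$ of weight $-\nu$), so $\beta$ must have weight $\le -1$, forcing $\beta$ to be a first-layer one-form — and then one must check $\beta$ is closed (first-layer one-forms $\theta_{3i-2},\theta_{3i-1}$ have $d\theta=0$ since $[V_1,V_1]\subset V_2$ and there is nothing below, so they are closed) and that the relevant pullback does not vanish identically. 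A secondary subtlety is that $V_2=\tilde V_2/K$ has dimension $n-\dim K$, so the identification of $\det\Phi$ with $\lambda^{2n-\dim K}$ and the adjugate computation must be handled carefully; I would lean entirely on Lemma \ref{le:compact_adjugate} for this. Once $X\langle(SP)^Ta,e_k\rangle=0$ is established for every $k$ and every $X\in V_{1,k}$, combining with the already-known relations for $m\in\hat K\cap e_k^\perp$ gives $X[(SP)^Ta]=0$ for all $X\in\oplus_k V_{1,k}=V_1$, and Lemma \ref{le:compact_directional_constancy} (applied componentwise, using that these are genuine distributional vanishing-derivative statements on the connected components of $U$) yields that $(SP)^Ta$, hence $SP|_{\hat K}$, hence $\lambda SP|_{\hat K}$, is locally constant a.e.
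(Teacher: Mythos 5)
Your proposal is correct and follows essentially the same route as the paper: it reduces to producing $X\langle(SP)^Ta,e_k\rangle=0$ for $X\in V_{1,k}$, obtains this by applying the Pullback Theorem to $f_P^*(i_Z\omega)\wedge d(\varphi\,\theta_{3k-1})$ (and $\theta_{3k-2}$) via Lemma~\ref{le:compact_adjugate}, and then combines with \eqref{eq:pull_back_degree2_variant} and local constancy of $\lambda$ to conclude. The one detail worth making explicit is that Lemma~\ref{le:compact_adjugate} only gives $|\det\Phi|=\lambda^{2n-\dim K}$, so you must invoke the standing hypothesis that $\operatorname{sgn}\det D_Pf$ is constant a.e.\ (and post-compose with an automorphism if needed) to replace $\det D_Pf$ by $\lambda^{2n-\dim K}$ in the pullback identity.
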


\begin{proof} Let $Z$ be as in Lemma~\ref{le:compact_adjugate}, and pick  $k \in \{1, \ldots, n\}$.  Let $\eta = \varphi \theta_{3k-1}$  and $\varphi\in C^\infty_c(U)$.
Thus  $d\eta = d\varphi \wedge \theta_{3k-1}$. Now  $i_Z \omega$ has codegree $2$ and weight $-\nu + 2$ while the form $d\eta$ has weight $\le -2$. Thus the pullback theorem applies to $f_P^*(i_Z \omega) \wedge d\eta$. Since the sign of $\det D_Pf$ is constant almost everywhere by hypothesis, after post-composing with a graded automorphism if necessary, we may assume without loss of generality that    $\det D_P f > 0$ a.e.  and thus 
 Lemma~\ref{le:compact_adjugate} implies that
 $$ f_P^*(i_Z \omega) \wedge d\eta = \lambda^{2n - \dim K} 
 (d\varphi \wedge \theta_{3k-1})((D_Pf)^{-1}(Z)) \, \omega.
 $$
 Since $(D_Pf)^{-1}(Z) \in \oplus_{i=1}^n \Lambda_2 V_{1,i}$   
   only the term with $i=k$ contributes and we get
\begin{align*} 
(d\varphi \wedge& \theta_{3k-1})((D_Pf)^{-1}(Z)) = (X_{3k-2} \varphi)  \, \gamma_k((D_Pf)^{-1} Z)\\
 =& (X_{3k-2} \varphi) \, \lambda^{-1} ((SP)^{-1}m, e_k).
\end{align*}
 Since $\lambda$ is locally constant it follows that
 $$ X_{3k-2} ((SP)^{-1} m, e_k) = 0 \quad \forall m \in \hat K. $$
Using the form $\eta = \varphi \theta_{3k-2}$ we get the same assertion with 
$X_{3k-2}$ replaced by $X_{3k-1}$. Using that $(SP)^{-1} = (SP)^T$ we thus conclude that
 \begin{equation}
 \label{eqn_x_sp_t}  
X ( (SP)^T a, e_k) = 0 \quad \forall a \in \hat K, \,  X \in V_{1,k}.
\end{equation}
Combining this with   \eqref{eq:pull_back_degree2_variant} 
we deduce that $X (SP)^T a = 0$ for all $a \in \hat K$, all $X \in V_{1,k}$ and all $k$. Therefore $\la SP$ is locally constant, as desired. 
\end{proof}

\bigskip
\subsection{Constancy of the permutation}
\label{subsec_constancy_permutation}
\mbox{}
It follows from Theorem \ref{th:trivial_action_on_K} that after composing $f$ with a graded automorphism of $G$ and shrinking $U$ we may assume without loss of generality that
$\tilde D_P f$  
  acts  as the identity on  $K$. In this subsection we analyse the graded automorphisms which act
as the identity on $K$ and then show that the permutation induced by $\tilde D_P f$   is locally constant.

To simplify notation, we let $H:=\aut(\fg)$ be the graded automorphism group of $\fg$.  By Lemma~\ref{lem_autg_lifts_to_auttildeg} we know that $H$ canonically embeds in $\aut(\tilde\fg)$ as the stabilizer of $K\subset\tilde V_2\subset \tilde\fg$.  Therefore we have actions $H\acts\fg$, $H\acts\tilde\fg$ by graded automorphisms, as well as the induced action $H\acts I=\{1,\ldots,n\}$ on the (set of indices of the) summands $\tilde\fg_i$, and the restriction action $H\acts K$.  We let  
$$
H':=\{h\in H\mid h\cdot W=W\,, \;\forall W\in K\}\subset H
$$ 
be the ineffective kernel of the action $H\acts K$; since $H'$ is the kernel of a homomorphism, it is a normal subgroup of $H$.

We let $I=I_1\sqcup\ldots\sqcup I_k$ be the decomposition of $I$ into the distinct orbits of the action $H'\acts I$; as $H'$ is normal in $H$, the $H'$-orbit decomposition is respected by $H$.

We let $\tilde V_{2,I_j}=\oplus_{i\in I_j}\tilde V_{2,i}$, so we have direct sum decomposition $\tilde V_2=\oplus_j\tilde V_{2,I_j}$. For every $j\in \{1,\ldots,k\}$, let $K_j:=\pi_{\tilde V_{2,I_j}}(K)$.   
\begin{lemma}
\label{lem_dim_k_j_equals_1}
$\dim K_j=1$ for all $j\in \{1,\ldots,k\}$.
\end{lemma}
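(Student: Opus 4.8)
The plan is to reduce the statement to an elementary fact about transitive signed-permutation actions, the only real input being the conformal normalization \eqref{eq:conformal_standard_product}. First I would observe that for $h\in H'$ the scaling factor is trivial: since $\dim K\geq 2$ we may fix a nonzero $w\in K$, and $h$ fixes $w$ because $h$ acts as the identity on $K$; on the other hand, by \eqref{eq:conformal_standard_product} and Proposition~\ref{pr:action_phi_K_ge2}, $h$ acts on $\tilde V_2$ as $\lambda_h$ times the orthogonal signed permutation $S_hP_h$, so $\lambda_h\|w\|=\|w\|$ and hence $\lambda_h=1$. Thus $H'$ acts on $\tilde V_2$ by honest signed permutations of the coordinate lines $\tilde V_{2,1},\dots,\tilde V_{2,n}$; moreover each $h\in H'$ carries $\tilde V_{2,i}$ to $\tilde V_{2,h(i)}$, so since $I_j$ is an $H'$-orbit, $h$ preserves the block $\tilde V_{2,I_j}$ and permutes the lines $\{\tilde V_{2,i}\}_{i\in I_j}$ among themselves, transitively.

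For the upper bound $\dim K_j\leq 1$ I would use the following observation: if a group $\Gamma$ acts on $\R^d$ by signed permutations of the standard coordinate axes and acts transitively on the set of axes, then the space of $\Gamma$-invariant vectors has dimension at most $1$. Indeed, comparing coefficients in $gv=v$ shows that an invariant vector has all coordinates of equal absolute value, so if that common value is nonzero the vector is determined by any one of its coordinates, i.e.\ ``value of a fixed coordinate'' is injective on invariants; equivalently, $\R^d$ is a module induced from a $\{\pm1\}$-character of a point stabilizer, whose invariants are at most one-dimensional by Frobenius reciprocity. Applying this to the action of $H'$ on $\tilde V_{2,I_j}$, and noting that the projection $\pi_{\tilde V_{2,I_j}}$ commutes with the $H'$-action (because $H'$ preserves the decomposition $\tilde V_2=\oplus_{j'}\tilde V_{2,I_{j'}}$) so that $K_j=\pi_{\tilde V_{2,I_j}}(K)$ is fixed pointwise by $H'$, we conclude $K_j\subseteq(\tilde V_{2,I_j})^{H'}$ and therefore $\dim K_j\leq 1$.

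For the lower bound $\dim K_j\geq 1$ I would argue by contradiction. Suppose $K_{j_0}=\{0\}$ for some $j_0$, i.e.\ every element of $K$ has vanishing components in $\tilde V_{2,I_{j_0}}$, so $K\subseteq\tilde V_{2,I\setminus I_{j_0}}$. Since $H'$ is normal in $H=\stab(K,\aut(\tilde\fg))$, the group $H$ permutes the $H'$-orbits $I_1,\dots,I_k$, and by condition~\ref{eqn_stabilizer_acts_transitively} of Lemma~\ref{lem_converse_product_quotient} $H$ acts transitively on $\{1,\dots,n\}$, hence transitively on the blocks $I_1,\dots,I_k$. Any $h\in H$ with $h(I_{j_0})=I_j$ preserves $K$ and carries $\tilde V_{2,I\setminus I_{j_0}}$ onto $\tilde V_{2,I\setminus I_j}$, so it maps $K$ into $\tilde V_{2,I\setminus I_j}$; since $h(K)=K$ this forces $K_j=\{0\}$. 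Thus $K_j=\{0\}$ for all $j$, whence $K=\{0\}$, contradicting $\dim K\geq 2$. Combining the two bounds gives $\dim K_j=1$ for every $j$.

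I expect the only non-bookkeeping step to be the reduction $\lambda_h=1$ for $h\in H'$: it is what turns the $H'$-action on each block into a genuine signed-permutation action rather than a merely conformal one, and it is precisely here that one uses both the conformal normalization \eqref{eq:conformal_standard_product} and the hypothesis $K\neq\{0\}$. Everything else is the short piece of representation theory above together with the transitivity already recorded in Lemma~\ref{lem_converse_product_quotient}\ref{eqn_stabilizer_acts_transitively}.
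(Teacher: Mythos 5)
Your proof is correct. The lower bound argument is essentially the paper's (phrased as a contrapositive): $H$ permutes the blocks $\{I_j\}$ transitively, so the $K_j$ all have the same dimension, and they cannot all vanish since $K\neq\{0\}$.

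For the upper bound you take a slightly different route. You first use the conformal normalization \eqref{eq:conformal_standard_product} together with $K\neq\{0\}$ to deduce $\lambda_h=1$ for $h\in H'$, so that $H'$ acts on $\tilde V_2$ by honest signed permutations, and then invoke the fact that the space of $\Gamma$-invariants in an induced $\{\pm1\}$-character is at most one-dimensional (Frobenius reciprocity). The paper's proof is more elementary and does not use the conformal structure at all: it merely notes that $K_j$ is fixed pointwise by $H'$ (as you also observe), and that if $\dim K_j\geq 2$ one can find a nonzero $W\in K_j$ lying in a coordinate hyperplane $\oplus_{i\in I_j\setminus\{i_0\}}\tilde V_{2,i}$; the support $J=\{i\in I_j: \pi_{\tilde V_{2,i}}(W)\neq 0\}$ is then $H'$-invariant, nonempty, and a proper subset of the orbit $I_j$ — a contradiction. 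This support argument only uses that $H'$ permutes the lines $\{\tilde V_{2,i}\}_{i\in I_j}$ transitively and fixes $W$, so the detour through $\lambda_h=1$ is avoidable. Your argument buys a cleaner conceptual picture (induced representations), while the paper's stays entirely hands-on; both are fine, and your observation that $\lambda_h=1$ for $h\in H'$ is correct and is in fact implicitly relevant elsewhere in the section.
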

\begin{proof}
We first claim that $H$ preserves the collection $\{K_j\}_{1\leq j\leq k}$, and acts transitively on it.  To see this, note that $H$ preserves $K$ and the direct sum decomposition  $\tilde V_2=\oplus_j\tilde V_{2,I_j}$.  
   Therefore if $h\in H$ and $h(I_j)=I_{j'}$, then
       $h\circ\pi_{\tilde V_{2,I_j}}=\pi_{\tilde V_{2,I_{j'}}}\circ h$,  
     so $h(K_j)=K_{j'}$.  Since $H\acts \{I_1,\ldots,I_k\}$ is transitive, this proves the claim.

It follows that all the $K_js$ have the same dimension, and since $K\subset \oplus_jK_j$ we must have $\dim K_j\geq 1$ for all $j$. 

Now suppose $\dim K_j\geq 2$ for some $j$.  Choose $i_0\in I_j$.  Since $|I_j|\geq \dim K_j\geq 2$, the set $I_j\setminus\{i_0\}$ is nonempty, and it follows that $K_j$ intersects the hyperplane $\oplus_{i\in I_j\setminus\{i_0\}}\tilde V_{2,i}$ in some vector $W\neq 0$.  Let $J:=\{i\in I_j\mid \pi_{\tilde V_{2,i}}(W)\neq 0\}$.  Note that $J$ is invariant under $H'$, since $H'$ fixes $W$ and preserves the direct sum decomposition $\tilde V_2=\oplus_i \tilde V_{2,i}$.  Since $I_j$ is an $H'$-orbit, we must have $J=I_j$.   This contradicts the fact that $J\subset I_j\setminus\{i_0\}$. 
\end{proof}

\bigskip
Using an argument similar to Lemma~\ref{eq:dim1_diagonal},  up to a graded isomorphism, we may assume without loss of generality that each $K_j\subset V_{2,I_j}$ is ``diagonal'', i.e. $K_j=\Span(\sum_{i\in I_j}Y_i)$ for all $1\leq j\leq k$.

\begin{theorem}  \label{th:local_constancy_K_equals_2}
The permutation $\sigma_x$ induced by the Pansu differential $\tilde D_P f(x)$ is locally constant almost everywhere in $U$. 
\end{theorem}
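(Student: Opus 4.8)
\noindent
The plan is to reduce to the computation of Section~\ref{sec_dim_k_equals_1}, carried out separately on each orbit $I_j$. By the reduction at the start of Subsection~\ref{subsec_constancy_permutation}, after post-composing $f$ with a graded automorphism of $G$ and shrinking $U$ we may assume that $\tilde D_P f(x)$ fixes $K$ pointwise for a.e.\ $x\in U$; in particular $\tilde D_P f(x)\in H'$ a.e., so the induced permutation $\si_x$ preserves each orbit $I_j$. Since $\si_x$ is determined by the restrictions $\si_x|_{I_j}$ and there are finitely many orbits, it suffices to prove that $\si_x|_{I_j}$ is locally constant a.e.\ for each $j$; this is vacuous when $|I_j|=1$, so I fix $j$ with $|I_j|\ge 2$. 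Recall also that by Lemma~\ref{lem_dim_k_j_equals_1} and the normalization of Subsection~\ref{subsec_constancy_permutation} we have $K_j=\Span(\sum_{i\in I_j}\tilde Y_i)$.

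First I would record the structural consequence of the reduction: \emph{every} $\Phi\in H'$ acts on the second layer (and hence on $\oplus_i\La_2 V_{1,i}$) by a pure permutation, i.e.\ $\lambda_\Phi=1$ and $S_\Phi=\Id$ in the notation of Proposition~\ref{pr:action_phi_K_ge2}. Indeed, $\Phi$ fixing $K$ pointwise means that $\lambda_\Phi S_\Phi P_\Phi$ fixes $\hat K$ pointwise. As $\si_\Phi$ preserves the orbit partition, $\lambda_\Phi S_\Phi P_\Phi$ is block diagonal for that partition; applying the coordinate projection $\pi_{I_j}\colon\R^n\to\R^{I_j}$ to the relation $\lambda_\Phi S_\Phi P_\Phi a=a$ ($a\in\hat K$), and using that $\pi_{I_j}(\hat K)=\R\cdot(\sum_{i\in I_j}e_i)$ while the $I_j$-block of $P_\Phi$ fixes $\sum_{i\in I_j}e_i$, we obtain $\lambda_\Phi (S_\Phi)_{ii}=1$ for all $i\in I_j$; since $(S_\Phi)_{ii}\in\{-1,1\}$ and $\lambda_\Phi>0$ this forces $\lambda_\Phi=1$ and $(S_\Phi)_{ii}=1$ for $i\in I_j$ (letting $j$ vary, $\lambda_\Phi=1$ and $S_\Phi=\Id$). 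Consequently, for $\Phi\in H'$ one has $\Phi^*\ga_i=\ga_{\si_\Phi^{-1}(i)}$ and $\Phi^*\tilde\tau_i=\tilde\tau_{\si_\Phi^{-1}(i)}$.

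Next I would introduce the differential forms adapted to $I_j$, mirroring Section~\ref{sec_dim_k_equals_1}. For $i,i'\in I_j$, the one-form $\tilde\tau_i-\tilde\tau_{i'}$ annihilates $K$: if $W\in K$, its component in $\tilde V_{2,I_j}$ lies in $K_j=\Span(\sum_{i''\in I_j}\tilde Y_{i''})$, on which $\tilde\tau_i-\tilde\tau_{i'}$ vanishes, while $\tilde\tau_i,\tilde\tau_{i'}$ vanish on the summands $\tilde V_{2,I_l}$, $l\ne j$. Hence $\tilde\tau_i-\tilde\tau_{i'}$ descends to a form $\tau_{i,i'}\in\La^1\fg$, and I set $\om_{ii'}:=(\ga_i+\ga_{i'})\wedge\tau_{i,i'}$, a closed left-invariant $3$-form of weight $-4$; by the previous paragraph $f_P^*(\om_{ii'})=\om_{\si_x^{-1}(i)\,\si_x^{-1}(i')}$ a.e. For $m\in I_j$ set $\be_m:=i_{Y_m}i_{X_{3m-2}}i_{X_{3m-1}}\om$ with $Y_m:=\tilde Y_m+K$; then the proof of Lemma~\ref{le:key_identities_dim_K_equals_1} applies verbatim, showing that $i_X\be_m$ is closed of codegree $4$ and weight $\le-\nu+5$ for $X\in\oplus_{l\ne m}V_{1,l}$, and that $\om_{kl}\wedge d(\varphi\, i_X\be_m)$ vanishes unless exactly one of $k,l$ equals $m$, in which case it equals $\pm(X\varphi)\om$.

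Finally I would conclude exactly as in Lemma~\ref{le:constant_permute_K_equals_1}. Applying the Pullback Theorem~\ref{co:pull_back2} to $f_P^*(\om_{ii'})\wedge d(\varphi\, i_X\be_m)=0$ and combining it with the two displays above yields, with $P_{mi}(x):=\delta_{m\,\si_x^{-1}(i)}$, the distributional identity $X(P_{mi}-P_{mi'})=0$ for all $i\ne i'$ in $I_j$, all $m\in I_j$ and all $X\in V_{1,l}$ with $l\ne m$. Feeding this through Lemma~\ref{le:compact_directional_constancy} with $h(t)=\max(t,0)$ gives $XP_{mi}=0$ for $X\in V_{1,l}$, $l\ne m$; since $\si_x$ preserves $I_j$ one has $\sum_{m\in I_j}P_{mi}=1$, and (as $|I_j|\ge 2$) this upgrades to $XP_{mi}=0$ for all $X\in V_1$, so $P_{mi}$, and hence $\si_x|_{I_j}$, is locally constant a.e. Running this for every $j$ completes the proof. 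The main obstacle is the structural step in the second paragraph: showing that after the reduction the Pansu differential acts on the forms $\ga_i$ and $\tilde\tau_i$ within each orbit \emph{exactly} as a permutation, with no residual scaling or sign, is precisely what makes the $\dim K=1$ machinery of Section~\ref{sec_dim_k_equals_1} transfer orbit by orbit; once this is in hand, the remainder is a routine rerun of that section.
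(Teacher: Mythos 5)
Your proposal is correct and follows essentially the same route as the paper: after reducing to the case where $\tilde D_Pf(x)\in H'$ a.e.\ and each $K_j$ is diagonal, one observes that the action on the second layer is a pure permutation on each $H'$-orbit (the paper refers back to the proof of Proposition~\ref{le:auto_preserve_diagonal} to get $\lambda=1$, while you spell this out via the block structure of $\lambda S P$ from Proposition~\ref{pr:action_phi_K_ge2} — the same computation), and then reruns the $\dim K=1$ machinery of Section~\ref{sec_dim_k_equals_1} orbit by orbit using the forms $\omega_{ii'}$ and $i_X\beta_m$ with indices confined to a single orbit. The concluding bootstrap via $\sum_{m\in I_j}P_{mi}=1$ to upgrade $XP_{mi}=0$ from $X\in V_{1,l}$, $l\ne m$, to all $X\in V_1$ is identical to the paper's.
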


\begin{proof}  
Recall that we have reduced to the case when $\tilde D_Pf(x)$ acts as the identity on $K$ and  $K_j=\Span(\sum_{i\in I_j}Y_i)$ for all $j\in \{1,\ldots k\}$. 
Thus $\tilde D_P f(x) \in H'$ for a.e. $x\in U$.   If $H' = \{ \id\}$ we are done.   So we assume that $H'\neq\{\id\}$ and will show that $\tilde D_Pf(x)$ acts as a constant permutation on each $H'$-orbit $I_j$, for $j\in \{1,\ldots,k\}$.
Then we can argue as in the case $\dim K=1$ to deduce constancy of the permutation.

We provide some details for the convenience of the reader. Let $I$ be an orbit and $\tilde V_{2,I} = \oplus_{i \in I} \tilde V_{2,i}$. The map $\tilde D_P f(x)$
preserves $\tilde V_{2,I}$ and acts as the identity on the subspace $K_I = \Span\{ \sum_{i \in I} \tilde Y_{2,i}\}$. 
Thus the argument in the proof of Proposition~\ref{le:auto_preserve_diagonal} shows that the identities in
 that proposition  hold with $\lambda = 1$. 
Recall that $P_{mi}(x) = \delta_{m \sigma_x^{-1}(i)}$.
Applying the pullback theorem to the forms $f_P^*\omega_{ij} \wedge d(\varphi i_X \beta)$ 
where  $\omega_{ij}$ and $\beta$ are  as in \eqref{eq:define_omega_ij} 
and \eqref{eq:define_beta_dim_1}  with $i, j, m\in I$  and where  $X  \in V_{1,l}$ with $l \ne m$ we get as in the proof of 
Lemma   \eqref{le:constant_permute_K_equals_1} the distributional identity
 $X P_{mi} = 0$ for all $m, i \in I$ and all $X  \in V_{1,l}$ with $l \ne m$.
Since $\sigma_x(I) = I$ we have $\sum_{m \in I} P_{mi} = 1$ for all $i \in I$. Now for $X \in V_{1,m}$
we get
$ X P_{mi} = X (1 - \sum_{m' \in I \setminus \{m\}} P_{m'i}) = 0$.
Thus $P_{mi}$ is constant for $m, i \in I$ and hence $\sigma_x|_{I}$ is constant. 
\end{proof}

\bigskip

\section{Product quotients of complex Heisenberg groups and higher real Heisenberg groups}
\label{sec_higher_product_quotients}
In this section we prove Theorem~\ref{thm_rigidity_product_quotient} for product quotients $G=\tilde G/\exp(K)$ where $\tilde G$ is a product of copies of a higher real Heisenberg group or complex Heisenberg group.  We retain the notation for product quotients from Section~\ref{sec_rigidity_product_quotients}.
 The approach used in this section is different, and conceptually
  simpler, than the ones used in Sections~\ref{sec_dim_k_equals_1} and \ref{sec_conformal_case}.  
The main difference between these two situations is that for the higher real and the complex Heisenberg group
there exist linearly   independent vectors $X,Y$ in $V_{1,i}$ with $[X,Y] = 0$ while this is not the
case for the first Heisenberg group.  More systematically, consider the map  
\begin{equation}
L_{[\, ]}: \Lambda_2 V_1 \to  V_2, \qquad
L_{[ \, ]}(X \wedge Y)= [X,Y].
\end{equation}
For the complex Heisenberg groups and higher real Heisenberg groups 
\begin{equation}
\label{eqn_ker_i}
\ker_i := \ker L_{[ \, ]} \cap V_{1,i}
\end{equation} 
is non-trivial (as follows from a dimension count) while for the first real  Heisenberg group
$L_{[ \, ]} : \Lambda_2 V_{1,i} \to     V_{2}$ is injective. 
 Any graded automorphism --  in particular the Pansu differential -- permutes the subspaces $\ker_i$. 
Now for complex or higher real Heisenberg product quotients  we can pull back a collection of forms $\alpha \in \Lambda^2 V_{1,i}$ which restrict to a basis of the dual space $\ker_i'$  and take $\beta = i_X i_Z \omega$ where 
$X \in \ker_k$ and $Z \in \oplus_{k' \ne k} V_{1,k'}$.  
The choice of this collection of forms $\alpha$ may look somewhat adhoc at this point, but it is in fact related to our
general strategy to look for closed forms modulo exact forms. The point is that 
the map $L_{[]}$ is the dual of the exterior differential 
  $d: \Lambda^1 V_2 \to \Lambda^2 V_1 \simeq (\Lambda_2 V_1)'$, see Remark~\ref{re:higher_dual}  after the end of the proof.
 With the above choice of $\alpha$ and $\beta$
 a short argument similar to the one used for products shows that the permutation of the subspaces
induced by $D_P f$ must be locally constant. The subspace $K$ plays no role in this argument. In fact the argument
can also be used to obtain an alternative proof for products of the complex or the higher real Heisenberg groups.

\bigskip
We now turn to the formal proof.
Note first that   \eqref{eq:exterior_codegree3},   \eqref{eq:exterior_codegree3bis},
the relation $[V_{1,i}, V_{1,j}] = 0$ for $i \ne j$ and \eqref{eq:i_circ_i}
imply that
\begin{equation}   \label{eq:exterior_codegree4}
d(i_{Y} i_{Y'} i_Z  \omega) =  - i_Z  i_{[Y,Y']} \omega \quad 
\hbox{$\forall \, \, Y, Y' \in  V_{1,i}, \, \, Z \in V_{1,j}$ with $i\ne j$.}
\end{equation}

\begin{lemma} \label{le:varphi_codegree3} Let $k \ne k'$,  $Y, Y'  \in V_{1,k}$,
$Z \in V_{1,k'}$ and $\gamma \in \oplus_{j=1}^n \Lambda^2(V_{1,j})$.
For $U \subset G$ open let $\varphi  \in C^1(U)$. 
Then the following identities hold in $U$: 
\begin{align}  \label{eq:ref_formula_d_eta}
d  (\varphi \,  i_Y i_{Y'} i_Z \omega) = & \, (Y \varphi) \, i_{Y'} i_{Z} \omega - \, (Y' \varphi) i_Y i_{Z} \omega + 
(Z\varphi)\,  i_{Y} i_{Y'} \omega\\
  + & \,  \varphi \, d ( i_Y i_{Y'} i_Z \omega),
    \nonumber 
\end{align}
\begin{equation}    \label{eq:wedge_d_eta}
\gamma \wedge d(\varphi \,  i_Y i_{Y'} i_Z \omega)  = -  \gamma(Y, Y')  \,  (Z \varphi) \,  \omega +
 \varphi \, \gamma \wedge d(i_Y i_{Y'} i_Z \omega).
\end{equation}
If $X \in \Lambda_2 V_{1,k}$ 
then
\begin{equation}  \label{eq:wedge_d_eta_bis}
\gamma \wedge d(\varphi \, i_X i_Z \omega) =  \gamma(X)  \,  (Z \varphi) \,  \omega + \varphi \, \gamma \wedge d(i_X i_Z \omega).
\end{equation}
\end{lemma}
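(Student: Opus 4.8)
The plan is to prove the three identities by pure calculation, using only the interior-product identities from Subsection~\ref{se:interior_products} and the exterior-derivative formulas from Lemma~\ref{lem_volume_form_contraction_exterior_derivative}. The key structural point that makes everything work is that $\omega$ is a top-degree form, so \eqref{eq:inner_product_volume_form} lets us convert wedge products against a contracted volume form into evaluation of a form on a multivector; and the graded Leibniz rule \eqref{eq:Leibniz_interior} together with Cartan's magic formula controls how $d$ interacts with multiplication by a scalar function $\varphi$.

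First I would establish \eqref{eq:ref_formula_d_eta}. Write $\eta := i_Y i_{Y'} i_Z \omega$, a form of codegree $3$, hence degree $N-3$. Then $d(\varphi\,\eta) = d\varphi \wedge \eta + \varphi\, d\eta$. The whole content is to expand $d\varphi \wedge i_Y i_{Y'} i_Z \omega$. Using \eqref{eq:i_circ_i} we have $i_Y i_{Y'} i_Z = i_{Z\wedge Y'\wedge Y}$, and since $d\varphi$ is a $1$-form, \eqref{eq:inner_product_volume_form} (or rather its interior-product companion, $\alpha \wedge i_X\omega = i_X\alpha \wedge \omega$ combined with the Leibniz rule peeling off one contraction at a time) gives $d\varphi \wedge i_Y i_{Y'} i_Z \omega = i_{d\varphi}(i_{Y'}i_Z\omega)$-type terms. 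Concretely, the cleanest route is to use \eqref{eq:Leibniz_interior} in reverse: for a $1$-form $\beta$ one has $\beta \wedge i_X \omega = (i_X \beta)\,\omega - i_X(\beta \wedge \omega) = (\beta(X))\,\omega$ when $\omega$ is top degree, and for higher contractions one iterates. Doing the bookkeeping with signs — contracting against $Y$, $Y'$, $Z$ in turn produces exactly the three terms $(Y\varphi)\,i_{Y'}i_Z\omega$, $-(Y'\varphi)\,i_Y i_Z\omega$, $(Z\varphi)\,i_Y i_{Y'}\omega$ — is the part requiring care, but it is routine. This gives \eqref{eq:ref_formula_d_eta}.

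Next, \eqref{eq:wedge_d_eta} follows by wedging \eqref{eq:ref_formula_d_eta} with $\gamma \in \oplus_j \Lambda^2 V_{1,j}$ and using \eqref{eq:inner_product_volume_form} to evaluate each of the first three terms. For the term $(Y\varphi)\,\gamma \wedge i_{Y'}i_Z\omega = (Y\varphi)\,\gamma(Z\wedge Y')\,\omega$; since $\gamma$ is a sum of $2$-forms $\gamma_j = \theta_{3j-2}\wedge\theta_{3j-1}$ supported in a single $V_{1,j}$, and $Y'\in V_{1,k}$, $Z\in V_{1,k'}$ with $k\ne k'$, we get $\gamma(Z\wedge Y') = 0$. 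Likewise $\gamma \wedge i_Y i_Z\omega = \gamma(Z\wedge Y)\,\omega = 0$. The only surviving term is $(Z\varphi)\,\gamma \wedge i_Y i_{Y'}\omega = (Z\varphi)\,\gamma(Y'\wedge Y)\,\omega = -\gamma(Y,Y')\,(Z\varphi)\,\omega$, which is exactly the stated right-hand side plus the $\varphi\,\gamma\wedge d(i_Yi_{Y'}i_Z\omega)$ term. Finally \eqref{eq:wedge_d_eta_bis}: for $X\in\Lambda_2 V_{1,k}$, write (using bilinearity) $X$ as a combination of simple $2$-vectors $Y\wedge Y'$ with $Y,Y'\in V_{1,k}$; since $i_{Y\wedge Y'} = i_Y i_{Y'}$ by \eqref{eq:i_circ_i} — note the ordering convention, $i_X\circ i_Y = i_{Y\wedge X}$, so one must track which order — reduce to \eqref{eq:wedge_d_eta}, observing that the $(Y\varphi)$ and $(Y'\varphi)$ terms wedged with $\gamma$ again vanish because $\gamma$ evaluated on a mixed pair from $V_{1,k}\times V_{1,k'}$ is zero, leaving $\gamma(X)\,(Z\varphi)\,\omega$ (the sign flips relative to \eqref{eq:wedge_d_eta} precisely because $\gamma(Y'\wedge Y) = -\gamma(Y\wedge Y') = -\gamma(X)$ for the simple piece).

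The only genuine obstacle is sign bookkeeping: the conventions in \eqref{eq:i_circ_i} ($i_X\circ i_Y = i_{Y\wedge X}$) and the placement of the hatted slot in \eqref{eq:interior_product_higher} interact with the graded Leibniz rule \eqref{eq:Leibniz_interior}, and since $\omega$ has degree $N$ and the contracted forms have degrees $N-1, N-2, N-3$, the parity factors $(-1)^{\deg}$ must be tracked consistently. I would do this by peeling off one interior product at a time via \eqref{eq:Leibniz_interior} applied to $d\varphi \wedge (\,\cdot\,)$, checking the three-term signs against a single explicit example (e.g. $\omega = \gamma_1\wedge\cdots\wedge\gamma_n\wedge\tau$ with $Y = X_1$, $Y' = X_2$, $Z = X_4$) to fix the overall sign convention, then invoking linearity. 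No deep idea is needed beyond this; the lemma is a bookkeeping lemma whose role is to feed closed low-codegree forms into the Pullback Theorem.
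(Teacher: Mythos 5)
Your proof is correct and follows the paper's own argument essentially step for step: expand $d(\varphi\,\eta)=d\varphi\wedge\eta+\varphi\,d\eta$, peel off interior products one at a time via the graded Leibniz rule \eqref{eq:Leibniz_interior} and \eqref{eq:inner_product_volume_form} to get the three-term formula \eqref{eq:ref_formula_d_eta}, then kill the first two terms upon wedging with $\gamma$ because $\gamma$ vanishes on any pair with one vector in $V_{1,k}$ and the other in $V_{1,k'}$, $k\ne k'$. One small sign slip: the identity you quote, $i_{Y\wedge Y'}=i_Y i_{Y'}$, contradicts \eqref{eq:i_circ_i}, which gives $i_Y\circ i_{Y'}=i_{Y'\wedge Y}=-i_{Y\wedge Y'}$; you flag the ordering issue, but note that the paper sidesteps the bookkeeping entirely by writing simple $2$-vectors as $X=-Y\wedge Y'$, so that $i_X=i_Y i_{Y'}$ and $\gamma(X)=-\gamma(Y,Y')$ hold simultaneously, making \eqref{eq:wedge_d_eta_bis} an immediate relabeling of \eqref{eq:wedge_d_eta} followed by linearity.
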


\begin{proof} 
We  have
$$ d  (\varphi \,  i_Y i_{Y'} i_Z \omega)  - \varphi \, d (i_Y i_{Y'} i_Z \omega)
= d \varphi \wedge   i_Y i_{Y'} i_Z \omega.$$
Now the graded Leibniz rule \eqref{eq:Leibniz_interior}   for the interior derivative  gives
\begin{eqnarray*}
&  & d\varphi \wedge  i_Y i_{Y'} i_Z \omega \\
&=& (Y \varphi) i_{Y'} i_{Z} \omega -  i_Y (d \varphi \wedge i_{Y'} i_{Z} \omega) \\
&=& (Y \varphi) i_{Y'} i_{Z} \omega  - i_Y ((Y' \varphi) i_Z \omega) + i_Y i_{Y'} (d\varphi \wedge i_Z \omega)\\
&=& (Y \varphi) i_{Y'} i_{Z} \omega  - (Y' \varphi) i_Y i_Z \omega +(Z \varphi)  i_Y i_{Y'} \omega.
\end{eqnarray*}
In the last step we used \eqref{eq:inner_product_volume_form}.
This proves  \eqref{eq:ref_formula_d_eta}.

The  assertion    \eqref{eq:wedge_d_eta} now  follows from \eqref{eq:inner_product_volume_form}
and the fact that $\gamma(Y',Z) = \gamma(Y,Z) = 0$ since $Z$ lies in a different first layer subspace than $Y$ and $Y'$. Note that   \eqref{eq:i_circ_i}  gives the identity
$i_Y \circ i_{Y'}  = - i_{Y \wedge Y'}$ and thus $i_Y i_{Y'} \gamma = - \gamma(Y, Y')$.

Finally for $X = - Y \wedge Y'$  \eqref{eq:wedge_d_eta_bis} is the same
as   \eqref{eq:wedge_d_eta} and for general $X$  the identity  \eqref{eq:wedge_d_eta_bis}  follows by linearity.
\end{proof}

\bigskip

We now assume that $f:G\supset U\ra G$ is a $W^{1,p}_{\loc}$-mapping for some $p>\nu$ such that $D_Pf(x)$ is an isomorphism for a.e. $x\in U$. 
Recall that $\ker_i$ is defined in (\ref{eqn_ker_i}).  The main point  in the proof of Theorem~\ref{thm_rigidity_product_quotient} for product quotients of the higher groups
  is to show the following identity
\begin{align} \label{eq:key_identity_higher_groups}
&   \int_U \alpha((D_P f)_*X)  \, (Z \varphi) = 0   \qquad  
 \forall \varphi \in C_c^\infty(U), \\ & \forall   \alpha \in \oplus_{j=1}^n \Lambda^2 V_{1,j}, \, \, 
 X \in \ker_k   , \, \, Z \in V_{1,k'} \quad \hbox{with $k' \ne k$.} 
 \nonumber
 \end{align}
 Then we can deduce easily that the permutation induced by $D_P f$ is locally constant.

 To prove  \eqref{eq:key_identity_higher_groups} fix $\alpha$, $X$, $Z$ and $\varphi$ and consider the codegree $3$
 forms
  $$ \beta = i_{X \wedge Z} \omega \quad \hbox{and} \quad  \eta = \varphi \beta.$$
 It follows from   \eqref{eq:exterior_codegree4} and linearity that $\beta$ is closed. Moreover $\wt(\beta) = -\nu + 3$.
 Thus   Theorem~\ref{co:pull_back2} 
 implies that
\begin{equation} \label{eq:key_identity_higher_groups_bis}
 \int_U  f_P^*\alpha \wedge d \eta = 0.
 \end{equation}
Since $D_P f$ permutes the first layer subspaces $V_{1,j}$ it follows that 
$f_P^*\alpha  \in \oplus_{j=1}^n \Lambda^2 V_{1,j}$.
 Thus we get from  \eqref{eq:wedge_d_eta_bis}    and  \eqref{eq:exterior_codegree4}
$$ f_P^*\alpha \wedge d\eta =  f_P^*\alpha(X) \, (Z \varphi) \, \omega.$$
This yields \eqref{eq:key_identity_higher_groups}.

To deduce properties of the permutation $\sigma_x$ induced by $D_P f(x)$ recall that
$D_P f(x)$ maps $\ker_k  \subset \Lambda_2 V_{1,k}$ to 
$\ker_{\sigma_x(k)} $. 
Let 
$$ 
P_{kl}(x)= \delta_{\sigma_x(k) l}.$$

If we choose basis vectors $X_{k,m}$ of $\ker_k $ then there exist measurable functions $G_{mm'}: U \to \R$
such that
$$ (D_P f)_* X_{k,m}  = \sum_{l=1}^n  P_{kl} \sum_{m'} G_{m'm} X_{l,m'}$$
and the matrix $G(x)$ is invertible for a.e.\ $x$.  
Let $\alpha_{k,m'} \in  \Lambda^2 V_{1,k}$ be chosen such that the restrictions to $\ker_k$ 
yield a basis of the dual space $\ker_k'$ which is dual to the basis $X_{k,m}$, 
   i.e. $\alpha_{k,m'}(X_{k,m}) = \delta_{m m'}$ for each $k=1, \ldots, n$. 
Applying \eqref{eq:key_identity_higher_groups} with $X = X_{k,m}$ and $\alpha = \alpha_{l,m'}$
we get for each $k$ and $l$
\begin{equation}  \label{eq:key_identity_higher_groups_ter}
\int_U   P_{kl}  G   \, (Z \varphi) \, \omega = 0 \quad \forall \varphi \in C_c^\infty(U),  \, \, Z \in V_{1,k'} 
\quad \hbox{with $k' \ne k$}.
\end{equation}
Define a function $h$ on the space of matrices by $h(G) =1$ if $\det G \ne 0$ and $h(G) = 0$ else. 
Then $h$ is a Borel function. Since the matrix $G$ in  \eqref{eq:key_identity_higher_groups_ter}
is invertible a.e. in $U$, we have 
 $h(P_{kl} G) = P_{kl}$  a.e.\ and
it follows from Lemma~\ref{le:compact_directional_constancy}     that
\begin{equation}  \label{eq:key_identity_higher_groups_quart}
\int_U   P_{kl}   \, (Z \varphi) \, \omega = 0 \quad \forall \varphi \in C_c^\infty(U),  \, \, Z \in V_{1,k'} 
\quad \hbox{with $k' \ne k$}.
\end{equation}
In other words, 
\begin{equation}  \label{eq:permute_const_higher}
 Z P_{kl} = 0   \quad \forall  l, \quad \forall Z\in V_{1,k'}, \, \, k' \ne k  \, \, 
 \end{equation}
in the sense of distributions. 
By exchanging $k$ and $k'$ we get
\begin{equation}  \label{eq:permute_const_higher2}
 Z P_{k'l} = 0   \quad \forall  l, \quad \forall Z \in V_{1,k}, \, \, k' \ne k.  \, \, 
 \end{equation}
 Since $P_{kl} = 1 - \sum_{k' \ne k} P_{k'l}$ we deduce that $Z P_{kl} = 0$
 for all $Z \in V_1$ and all $k,l$. Thus $P_{kl}$ is locally constant and hence the permutation 
 of the first layer subspaces 
 induced by $D_Pf(x)$  is locally constant. This completes the proof.

\bigskip\bigskip

 \begin{remark}  \label{re:higher_dual} 
  The choice of the forms $\alpha_{k,l}$ can be motivated by our guiding principle to look for closed left invariant forms
 modulo exact left invariant forms  (see \cite[Lemma 4.8]{KMX1}) ). First note by   \eqref{eq:exterior_derivative_on_algebra} we have
 $d\alpha(X,Y) = - \alpha([X,Y])$ for $\alpha \in \Lambda^1 V_2$.
 If we identify two-forms on $V_1$ with the dual space of two-vectors by setting $\alpha(X \wedge Y) = \alpha(X,Y)$,
 then  we see  that exterior differentiation $d: \Lambda^1 V_2 \to (\Lambda_2 V_1)'$
  is dual map of $- L_{[]}$. Now consider first the case that $\fg = \oplus_i \fh_i$ is a direct sum of $n$ copies of the same
  (higher) Heisenberg group. A natural space of closed left invariant forms to detect the permutation $\sigma_x$ induced by
  the Pansu differential is the space $\oplus_i \Lambda^2 V_{1,i}$. 
   In this case the relevant space of exact forms is given by $d \Lambda^1 V_2 = \oplus_i d \Lambda^1 V_{2,i}$.
  By the duality between $d$ and $-L_{[]}$ all elements of $d\Lambda^1 V_2$ vanish on all the space $\ker_i$ and the sets
  $\ker_i$ are characterized by this condition. Thus to find a basis for the quotient space $\oplus_i  \Lambda^2 V_{1,i}/ d\Lambda^1 V_2$
  it is natural to look for elements of  $\Lambda^2 V_{1,i}$ whose restriction to $\ker_i$  does not vanish. This leads to the basis $\alpha_{k,l}$
  used in the proof.
  The fact that  there exist  non-zero $X \in \ker_i$  is crucial  for finding {\it closed} codegree  forms $\beta = i_Z i_X \omega$
  which interact well with the forms $\alpha_{k,l}$.

 If $\fg$ is a product quotient $\oplus_i \tilde\fg_i / K$
  then it is still true that each element of $d\Lambda^1 V_2$ vanishes on each space $\ker_i$. Hence the collection of cosets
  $\alpha_{k,l} + d\Lambda^1 V_2$  is still linearly independent  in the quotient space and the proof shows that it is sufficiently rich to detect the permutation
  $\sigma_x$.
   \end{remark}

\bigskip

\bibliography{product_quotient}

\providecommand{\bysame}{\leavevmode\hbox to3em{\hrulefill}\thinspace}
\providecommand{\MR}{\relax\ifhmode\unskip\space\fi MR }
\providecommand{\MRhref}[2]{%
  \href{http://www.ams.org/mathscinet-getitem?mr=#1}{#2}
}
\providecommand{\href}[2]{#2}
\begin{thebibliography}{KMX21b}

\bibitem[Aus]{austin_235}
A.~Austin, \emph{The contact mappings of a flat (2,3,5)-distribution},
  arXiv:2006.03464.

\bibitem[Bal01]{balogh_non_bilipschitz}
Z.~M. Balogh, \emph{Hausdorff dimension distribution of quasiconformal mappings
  on the {H}eisenberg group}, J. Anal. Math. \textbf{83} (2001), 289--312.
  \MR{1828495}

\bibitem[Cap99]{capogna}
L.~Capogna, \emph{Regularity for quasilinear equations and {$1$}-quasiconformal
  maps in {C}arnot groups}, Math. Ann. \textbf{313} (1999), no.~2, 263--295.
  \MR{1679786}

\bibitem[Car04]{cartan_1904}
\'E Cartan, \emph{Sur la structure des groupes infinis de transformation}, Ann.
  Sci. \'{E}cole Norm. Sup. (3) \textbf{21} (1904), 153--206. \MR{1509040}

\bibitem[CC06]{capogna_cowling}
L.~Capogna and M.~Cowling, \emph{Conformality and {$Q$}-harmonicity in {C}arnot
  groups}, Duke Math. J. \textbf{135} (2006), no.~3, 455--479. \MR{2272973}

\bibitem[CG90]{corwin_greenleaf_nilpotent_lie_groups}
L.~J. Corwin and F.~P. Greenleaf, \emph{Representations of nilpotent {L}ie
  groups and their applications. {P}art {I}}, Cambridge Studies in Advanced
  Mathematics, vol.~18, Cambridge University Press, Cambridge, 1990, Basic
  theory and examples. \MR{1070979}

\bibitem[CO15]{cowling_ottazzi}
M.~G. Cowling and A.~Ottazzi, \emph{Global contact and quasiconformal mappings
  of {C}arnot groups}, Conform. Geom. Dyn. \textbf{19} (2015), 221--239.
  \MR{3402499}

\bibitem[CR03]{cowling_reimann_three_examples}
M.~Cowling and H.~G. Reimann, \emph{Quasiconformal mappings on {C}arnot groups:
  three examples}, Harmonic analysis at {M}ount {H}olyoke ({S}outh {H}adley,
  {MA}, 2001), Contemp. Math., vol. 320, Amer. Math. Soc., Providence, RI,
  2003, pp.~111--118. \MR{1979935}

\bibitem[DR10]{doubrov_radko}
B.~Doubrov and O.~Radko, \emph{Graded nilpotent {L}ie algebras of infinite
  type}, J. Lie Theory \textbf{20} (2010), no.~3, 525--541. \MR{2743103}

\bibitem[GS64]{guillemin_sternberg_algebraic_model_transitive_differential_geometry}
V.~W. Guillemin and S.~Sternberg, \emph{An algebraic model of transitive
  differential geometry}, Bull. Amer. Math. Soc. \textbf{70} (1964), 16--47.
  \MR{170295}

\bibitem[Hei95]{heinonen_calculus_carnot_groups}
J.~Heinonen, \emph{Calculus on {C}arnot groups}, Fall {S}chool in {A}nalysis
  ({J}yv\"{a}skyl\"{a}, 1994), Report, vol.~68, Univ. Jyv\"{a}skyl\"{a},
  Jyv\"{a}skyl\"{a}, 1995, pp.~1--31. \MR{1351042}

\bibitem[HK98]{heinonen_koskela}
J.~Heinonen and P.~Koskela, \emph{Quasiconformal maps in metric spaces with
  controlled geometry}, Acta Math. \textbf{181} (1998), no.~1, 1--61.
  \MR{1654771}

\bibitem[HK00]{hajlasz_koskela_sobolev_met_poincare}
P.~Haj{\l}asz and P.~Koskela, \emph{Sobolev met {P}oincar\'{e}}, Mem. Amer.
  Math. Soc. \textbf{145} (2000), no.~688, x+101. \MR{1683160}

\bibitem[HSX15]{xie_classification_class_nonrigid}
M.~R. Hughes, M.~D. Staic, and X.~Xie, \emph{Classification of a class of
  nonrigid {C}arnot groups}, J. Lie Theory \textbf{25} (2015), no.~3, 717--732.
  \MR{3384993}

\bibitem[Jac85]{jacobson_basic_algebra}
N.~Jacobson, \emph{Basic algebra. {I}}, second ed., W. H. Freeman and Company,
  New York, 1985. \MR{780184}

\bibitem[KMXa]{kmx_in_preparation}
B.~Kleiner, S.~M\"{u}ller, and X.~Xie, In preparation.

\bibitem[KMXb]{kmx_approximation_low_p}
\bysame, \emph{Pansu pullback and exterior differentiation for {S}obolev maps
  on {C}arnot groups}, arXiv:2007.06694.

\bibitem[KMX20]{KMX1}
\bysame, \emph{Pansu pullback and rigidity of mappings between {C}arnot
  groups}, 2020.

\bibitem[KMX21a]{kmx_iwasawa}
\bysame, \emph{Rigidity of flag manifolds}, 2021.

\bibitem[KMX21b]{kmx_rumin}
\bysame, \emph{Sobolev mappings and the {R}umin complex}, 2021.

\bibitem[Kur59]{kuranisha_local_theory_continuous_pseudogroups_i}
M.~Kuranishi, \emph{On the local theory of continuous infinite pseudo groups.
  {I}}, Nagoya Math. J. \textbf{15} (1959), 225--260. \MR{116071}

\bibitem[LDX16]{LeDonne_Xie}
E.~Le~Donne and X.~Xie, \emph{Rigidity of fiber-preserving quasisymmetric
  maps}, Rev. Mat. Iberoam. \textbf{32} (2016), no.~4, 1407--1422. \MR{3593530}

\bibitem[Lee09]{Lee}
J.~M. Lee, \emph{Manifolds and differential geometry}, Graduate Studies in
  Mathematics, vol. 107, American Mathematical Society, Providence, RI, 2009.
  \MR{2572292}

\bibitem[Lel]{lelmi}
J.~Lelmi, \emph{On the smoothness of {$C^1$}-contact maps in {$C^\infty$}-rigid
  {C}arnot groups}, arXiv:2006.06772, to appear in Indiana Univ. Math. J.

\bibitem[Mic08]{Michor}
P.~W. Michor, \emph{Topics in differential geometry}, Graduate Studies in
  Mathematics, vol.~93, American Mathematical Society, Providence, RI, 2008.
  \MR{2428390}

\bibitem[OW11a]{ottazzi_warhurst_algebraic_prolongation}
A.~Ottazzi and B.~Warhurst, \emph{Algebraic prolongation and rigidity of
  {C}arnot groups}, Monatsh. Math. \textbf{162} (2011), no.~2, 179--195.
  \MR{2769886}

\bibitem[OW11b]{ottazzi_warhurst}
\bysame, \emph{Contact and 1-quasiconformal maps on {C}arnot groups}, J. Lie
  Theory \textbf{21} (2011), no.~4, 787--811. \MR{2917692}

\bibitem[Pan89]{pansu}
P.~Pansu, \emph{M\'{e}triques de {C}arnot-{C}arath\'{e}odory et
  quasiisom\'{e}tries des espaces sym\'{e}triques de rang un}, Ann. of Math.
  (2) \textbf{129} (1989), no.~1, 1--60. \MR{979599}

\bibitem[Rei01]{reimann_h_type}
H.~M. Reimann, \emph{Rigidity of {$H$}-type groups}, Math. Z. \textbf{237}
  (2001), no.~4, 697--725. \MR{1854087}

\bibitem[RR00]{reimann_ricci}
H.~M. Reimann and F.~Ricci, \emph{The complexified {H}eisenberg group},
  Proceedings on {A}nalysis and {G}eometry ({R}ussian) ({N}ovosibirsk
  {A}kademgorodok, 1999), Izdat. Ross. Akad. Nauk Sib. Otd. Inst. Mat.,
  Novosibirsk, 2000, pp.~465--480. \MR{1847532}

\bibitem[SS65]{singer_sternberg_infinite_groups_lie_cartan}
I.~M. Singer and S.~Sternberg, \emph{The infinite groups of {L}ie and {C}artan.
  {I}. {T}he transitive groups}, J. Analyse Math. \textbf{15} (1965), 1--114.
  \MR{217822}

\bibitem[SX12]{Shan_Xie}
N.~Shanmugalingam and X.~Xie, \emph{A rigidity property of some negatively
  curved solvable {L}ie groups}, Comment. Math. Helv. \textbf{87} (2012),
  no.~4, 805--823. \MR{2984572}

\bibitem[Tan70]{tanaka_differential_systems_graded_lie_algebras}
N.~Tanaka, \emph{On differential systems, graded {L}ie algebras and
  pseudogroups}, J. Math. Kyoto Univ. \textbf{10} (1970), 1--82. \MR{266258}

\bibitem[War03]{warhurst_filiform}
B.~Warhurst, \emph{Contact and quasiconformal mappings on real model filiform
  groups}, Bull. Austral. Math. Soc. \textbf{68} (2003), no.~2, 329--343.
  \MR{2016308}

\bibitem[War05]{warhurst_jet_spaces}
\bysame, \emph{Jet spaces as nonrigid {C}arnot groups}, J. Lie Theory
  \textbf{15} (2005), no.~1, 341--356. \MR{2115247}

\bibitem[War07]{warhurst_tanaka_prolongation_free}
\bysame, \emph{Tanaka prolongation of free {L}ie algebras}, Geom. Dedicata
  \textbf{130} (2007), 59--69. \MR{2365778}

\bibitem[Xie13a]{Xie_quasiconformal_on_non_rigid}
X.~Xie, \emph{Quasiconformal maps on non rigid carnot groups}, arxiv1308.3031,
  2013.

\bibitem[Xie13b]{Xie_Pacific2013}
\bysame, \emph{Quasisymmetric homeomorphisms on reducible {C}arnot groups},
  Pacific J. Math. \textbf{265} (2013), no.~1, 113--122. \MR{3095115}

\bibitem[Xie15]{xie_filiform}
\bysame, \emph{Quasi-conformal maps on model filiform groups}, Michigan Math.
  J. \textbf{64} (2015), no.~1, 169--202. \MR{3326585}

\end{thebibliography}
\bibliographystyle{amsalpha}

\end{document}